\documentclass[10pt,oneside,reqno]{amsart}
\usepackage{hyperref}
\usepackage{amsmath}
\usepackage{amssymb}
\usepackage{amsxtra}
\usepackage{amsthm}

\allowdisplaybreaks[4]
\usepackage{stmaryrd}
\usepackage{bm}
\usepackage{bbm}
\usepackage{upgreek}
\usepackage{mathrsfs}
\usepackage{thmtools}
\usepackage{mathtools}
\usepackage{geometry}
\usepackage{enumitem}
\usepackage{mathpazo}
\usepackage{domitian}
\usepackage[T1]{fontenc}

\AtBeginDocument{
 \DeclareSymbolFont{AMSb}{U}{msb}{m}{n}
 \DeclareSymbolFontAlphabet{\mathbb}{AMSb}
 }
 
\let\SSec\S

\newcommand\lopen{\mathopen{]}}
\newcommand\ropen{\mathclose{[}}

\newcommand\rmd{\mathrm{d}}
\newcommand\rme{\mathrm{e}}
\newcommand\rmi{\mathrm{i}}

\newcommand\cK{\mathcal{K}}

\newcommand\cO{\mathcal{O}}

\newcommand\cS{\mathcal{S}}

\newcommand\cX{\mathcal{X}}

\renewcommand\AA{\mathbb{A}}

\newcommand\CC{\mathbb{C}}

\newcommand\FF{\mathbb{F}}

\newcommand\QQ{\mathbb{Q}}
\newcommand\RR{\mathbb{R}}

\newcommand\ZZ{\mathbb{Z}}

\newcommand\mf[1]{\mathfrak{#1}}
\newcommand\ms[1]{\mathscr{#1}}

\newcommand\A{\mathsf{A}}
\newcommand\B{\mathsf{B}}

\newcommand\G{\mathsf{G}}
\renewcommand\H{\mathsf{H}}

\newcommand\M{\mathsf{M}}
\newcommand\N{\mathsf{N}}

\renewcommand\S{\mathsf{S}}
\newcommand\T{\mathsf{T}}

\newcommand\Z{\mathsf{Z}}

\newcommand\SL{\mathsf{SL}}
\newcommand\GL{\mathsf{GL}}

\newcommand\triv{\mathbbm{1}}
\newcommand\dpii{2\uppi\rmi}
\newcommand\bs{\backslash}
\newcommand\diff{\partial}

\renewcommand\Re{\mathop{\mathrm{Re}}}

\DeclareMathOperator\Tr{Tr}
\DeclareMathOperator\norm{N}
\DeclareMathOperator\sgn{sgn}

\DeclareMathOperator\res{res}
\DeclareMathOperator\fp{f.p.}

\DeclareMathOperator\ord{ord}

\newcommand\fin{\mathrm{fin}}

\newcommand\el{\mathrm{ell}}

\newcommand\cusp{\mathrm{cusp}}

\newcommand\Kl{\mathrm{Kl}}
\newcommand\Sym{\mathrm{Sym}}
\DeclareMathOperator\vol{vol}
\DeclareMathOperator\orb{orb}
\renewcommand\leq{\leqslant}
\renewcommand\geq{\geqslant}
\let\oldsslash\sslash
\renewcommand\sslash{{\oldsslash}}

\newcommand\legendresymbol[2]{\genfrac{(}{)}{}{}{#1}{#2}}

\makeatletter
\def\subsection{\@startsection{subsection}{2}%
  \z@{3pt\@plus0pt}{-.5em}%
  {\normalfont\bfseries}}
\makeatother

\makeatletter
\def\@seccntformat#1{
  \protect\textup{\protect\@secnumfont
    \ifnum\pdfstrcmp{subsection}{#1}=0 \bfseries\fi
    \csname the#1\endcsname
    \protect\@secnumpunct
  }
}  
\makeatother

\newtheoremstyle{THEOREM}
{2.5pt}
{2pt}
{\itshape}
{}
{\bfseries}
{.}
{.5em}
{\thmname{#1}\thmnumber{ #2}\thmnote{ (#3)}}

\newtheoremstyle{DEFINITION}
{2.5pt}
{2pt}
{}
{}
{\bfseries}
{.}
{.5em}
{\thmname{#1}\thmnumber{ #2}\thmnote{ (#3)}}

\newtheoremstyle{EXERCISE}
{2pt}
{2pt}
{}
{}
{\scshape}
{.}
{.5em}
{\thmname{#1}\thmnumber{ #2}\thmnote{ (#3)}}

\theoremstyle{THEOREM}
\newtheorem{theorem}{Theorem}[section]
\newtheorem{lemma}[theorem]{Lemma}
\newtheorem{proposition}[theorem]{Proposition}
\newtheorem{corollary}[theorem]{Corollary}

\theoremstyle{DEFINITION}
\newtheorem{definition}[theorem]{Definition}
\newtheorem{assumption}[theorem]{Assumption}

\theoremstyle{EXERCISE}
\newtheorem{remark}[theorem]{Remark}

\numberwithin{equation}{section}

\makeatletter
\renewenvironment{proof}[1][\proofname]{\par
  \vspace{-6pt}
  \pushQED{\qed}
  \normalfont \topsep6\p@\@plus6\p@\relax
  \trivlist
  \item[\hskip\labelsep\rmfamily\bfseries
    #1\@addpunct{:}]\ignorespaces
}{
  \popQED\endtrivlist\@endpefalse
  \vspace{-6pt}
}
\makeatother

\makeatletter
\newenvironment{insertproof}[1][\proofname]{
  \par
  \vspace{-6pt}
  \pushQED{\qed}
  \normalfont \topsep6\p@\@plus6\p@\relax
  \trivlist
  \item[\hskip\labelsep\rmfamily\bfseries
    #1\@addpunct{:}]\ignorespaces
}{
  \popQED\endtrivlist\@endpefalse
  \vspace{-6pt}
}
\makeatother

\begin{document}
\setlength \lineskip{3pt}
\setlength \lineskiplimit{3pt}
\setlength \parskip{1pt}
\setlength \partopsep{0pt}

\title{Beyond endoscopy for the symmetric square representation: The simple trace formula case}
\author{Yuhao Cheng}
\address{Qiuzhen College, Tsinghua University, 100084, Beijing, China}
\email{chengyuhaomath@gmail.com}
\keywords{beyond endoscopy, trace formula}
\subjclass[2020]{11F70; 11F72}
\dedicatory{Ya tutarsa——Nasreddin Hoca}
\date{28 July, 2026}
\begin{abstract}
At the beginning of this century, Langlands introduced a strategy known as \emph{Beyond Endoscopy} to attack the principle of functoriality. Altu\u{g} studied $\GL_2$ over $\QQ$ in the unramified setting for the standard representation. We consider the case with ramification at $S=\{\infty,q_1,\dots,q_r\}$ with $2\in S$ and derive an asymptotic formula for the symmetric square representation adding some additional conditions on the test function so that the trace formula is simple. The limit is nonzero in general and we may detect the dihedral forms by using such limit form of the trace formula which is similar to Venkatesh's thesis. The proof involves a second Poisson summation, computation of the transformed Kloosterman sum and the corresponding series, and giving an asymptotic formula for the main term by residue analysis and using technical analysis to deal with the error term.
\end{abstract}

\maketitle
 
\tableofcontents

\section{Introduction}
\subsection{Background}
Let $\G$ be a connected reductive algebraic group over the field $\QQ$ of rational numbers and let $\AA$ denote the ad\`ele ring of $\QQ$. Let $\prescript{L}{}\G$ be the $L$-group of $\G$. Let $\pi$ be an automorphic representation of $\G$ over $\QQ$ and $\rho$ be a finite dimensional complex representation of $\prescript{L}{}\G$. For a finite set $S$ of places of $\QQ$ containing the archimedean place 
$\infty$, the partial $L$-function $L^{S}(s,\pi,\rho)$ can be defined (see \cite{getz2024} for example) and expressed as a Dirichlet series
\[
L^{S}(s,\pi,\rho)=\sum_{\substack{n=1\\ \gcd(n,S)=1}}^{+\infty}\frac{a_{\pi,\rho}(n)}{n^s}
\]
for $\Re s$ sufficiently large.

In general, Langlands' \emph{Beyond Endoscopy} proposal \cite{langlands2004} aims to provide a formula for
\begin{equation}\label{eq:beyondendoscopy}
\sum_{\pi}m_{\pi}\ord_{s=1}L^{S}(s,\pi,\rho)\prod_{v\in S}\Tr(\pi_v(f_v))
\end{equation}
for cuspidal representations $\pi$ and suitable functions $f_v$ on $\G(\QQ_v)$ for each $v\in S$, where $m_\pi$ denotes the multiplicity of $\pi$ in $L^2_{\mathrm{cusp}}(\G(\QQ)\bs \G(\AA)^1)$. Using this, we can analyze $\ord_{s=1}L^S(s,\pi,\rho)$ and determine whether $\pi$ arises from an automorphic representation of a smaller group $\H$.

By applying the trace formula, we are able to evaluate
\[
\sum_{\pi}m_\pi a_{\pi,\rho}(p)\prod_{v\in S}\Tr(\pi_v(f_v))=I_{\mathrm{cusp}}(f^{p,\rho}),
\]
where $f^{p,\rho}=\bigotimes_{v\in S}f_v\otimes\bigotimes_{v\notin S}'f_v^{p,\rho}$, with $f_v^{p,\rho}$ spherical for $v\notin S$, and $\Tr(\pi_p(f_p^{p,\rho}))=a_{\pi,\rho}(p)$ and $\Tr(\pi_\ell(f_\ell^{p,\rho}))=1$ if $\ell\notin S\cup\{p\}$.

If $L^S(s,\pi,\rho)$ admits meromorphic continuation on $\Re s>1-\delta$ with neither zeros nor poles on the vertical line $s=1+\rmi t$ except at $s=1$, by Ikehara theorem, we expect the following asymptotic formula:
\[
\lim_{X\to +\infty}\frac{1}{X}\sum_{\substack{p<X\\ p\notin S}}a_{\pi,\rho}(p)\log p=\ord_{s=1}L^S(s,\pi,\rho),
\]
where $p$ runs over all primes less than $X$.
For example, for $L^S(s,\pi,\rho)=\zeta(s)$, the above formula reduces to
\[
\lim_{X\to +\infty}\frac{1}{X}\vartheta(X)=\lim_{X\to +\infty}\frac{1}{X}\sum_{p<X}\log p =1,
\]
which is equivalent to the prime number theorem.

If such asymptotic formula holds, \eqref{eq:beyondendoscopy} can be rewritten as
\[
\lim_{X\to +\infty}\frac{1}{X}\sum_{\pi}m_\pi\prod_{v\in S}\Tr(\pi_v(f_v))\sum_{\substack{p<X\\ p \notin S}}a_{\pi,\rho}(p)\log p .
\] 

For certain functions $f^{p,\rho}$ and certain representations $\pi$ depending on $f^{p,\rho}$, we have
\[
\frac{1}{X}\sum_{\pi}m_\pi\prod_{v\in S}\Tr(\pi_v(f_v))\sum_{\substack{p<X\\ p \notin S}}a_{\pi,\rho}(p)\log p 
=\frac{1}{X}\sum_{\substack{p<X\\ p \notin S}}\log pI_{\mathrm{cusp}}(f^{p,\rho}).
\] 
Therefore, we expect that 
\[
\lim_{X\to +\infty}\frac{1}{X}\sum_{\substack{p<X\\ p \notin S}}\log pI_{\mathrm{cusp}}(f^{p,\rho})=\sum_{\pi}m_\pi \prod_{v\in S}\Tr(\pi_v(f_v))\ord_{s=1}L^S(s,\pi,\rho),
\]
where $\pi$ runs over certain representations of $\G(\AA)$ depending on $f^{p,\rho}$. 

We can consider a more general setting, which was proposed by Sarnak \cite{sarnak2001}. For $\gcd(n,S)=1$, we have
\[
\sum_{\pi}m_\pi a_{\pi,\rho}(n)\prod_{v\in S}\Tr(\pi_v(f_v))=I_{\mathrm{cusp}}(f^{n,\rho}),
\]
where $f^{n,\rho}=\bigotimes_{v\in S}f_v\otimes\bigotimes_{v\notin S}'f_v^{n,\rho}$, with $f_v^{n,\rho}$ spherical for $v\notin S$, and $\Tr(\pi_p(f_p^{n,\rho}))=a_{\pi,\rho}(p^{v_p(n)})$ for all $p\notin S$. By Ikehara theorem, we expect to have
\[
\lim_{X\to +\infty}\frac{1}{X}\sum_{\substack{n<X\\ \gcd(n,S)=1}}a_{\pi,\rho}(n)=\res_{s=1}L^S(s,\pi,\rho),
\]
and we expect that
\[
\lim_{X\to +\infty}\frac{1}{X}\sum_{\substack{n<X\\ \gcd(n,S)=1}}I_{\mathrm{cusp}}(f^{n,\rho})=\sum_{\pi}m_\pi \prod_{v\in S}\Tr(\pi_v(f_v))\res_{s=1}L^S(s,\pi,\rho),
\]
where $\pi$ runs over certain representations of $\G(\AA)$ depending on $f^{n,\rho}$. 

We temporarily assume that $\G=\GL_2$, $\rho=\mathrm{Std}$ is the standard representation.
For any prime number $p$ and $r\in \ZZ_{\geq 0}$, we define
\[
\cX_p^{m}=\{X\in \M_2(\ZZ_p)\,|\, \mathopen{|}\det X\mathclose{|}_p = p^{-m}\}.
\]
For example, if $r=0$, $\cX_p^{r}$ is just $\cK_p=\GL_2(\ZZ_p)$. By Hecke operator theory, we can choose $f^{n}$ to be $\bigotimes_{v\in \mf{S}}f^{n}_v$, where $\mf{S}$ denotes the set of places of $\QQ$, and
\begin{enumerate}[itemsep=0pt,parsep=0pt,topsep=2pt,leftmargin=0pt,labelsep=3pt,itemindent=9pt,label=\textbullet]
  \item If $v=p$, $f^{n}_p=p^{-n_p/2}\triv_{\cX_p^{n_p}}$, where $n_p=v_p(n)$.
  \item If $v=\infty$,  $f^{n}_\infty=f_\infty\in C^\infty(Z_+\bs \G(\RR))$ such that the orbital integrals are compactly supported modulo $Z_+$, and other than this condition they are arbitrary.
\end{enumerate}
In this case, $\pi$ runs over all unramified cuspidal representations. Venkatesh \cite{venkatesh2002,venkatesh2004} established an asymptotic formula for the residue case for $d\leq 2$, using the Petersson-Kuznetsov trace formula.
As a new approach, Altu\u{g} \cite{altug2015,altug2017,altug2020} used the Arthur-Selberg trace formula to prove a variant asymptotic formula for the case $d=1$. More precisely, Altu\u{g} proved that
\[
\sum_{n<X}\Tr(T_m(n))\ll_{m,\varepsilon} X^{31/32+\varepsilon},
\]
where $T_m(n)$ denotes the (normalized) $n^{\mathrm{th}}$ Hecke operator acting on $S_m(\SL_2(\ZZ))$, the space of holomorphic cusp forms of weight $m$ for the modular group $\Gamma=\SL_2(\ZZ)$.

\subsection{Main result in this paper}
In this paper, we will prove an analogous formula for \cite{venkatesh2004} using the simple trace formula on $\GL_2$,
partially answering the question raised by Sarnak in the case over $\QQ$ with ramification and  $n$ such that $\gcd(n,S)=1$ and $d=2$ for a type of function.
We consider $S=\{\infty,q_1,\dots,q_r\}$ for primes $q_1,\dots,q_r$ such that $2\in S$, and a corresponding function $f^{n}=\bigotimes_{v\in \mf{S}}'f^n_v$ for the standard representation such that the local components at places in $S$ are arbitrary. 
Specifically, for $\gcd(n,S)=1$, the function $f^n$ is defined as follows:
\begin{enumerate}[itemsep=0pt,parsep=0pt,topsep=2pt,leftmargin=0pt,labelsep=3pt,itemindent=9pt,label=\textbullet]
  \item If $v=p\notin S$,  $f^{n}_v=p^{-n_p}\triv_{\cX_p^{n_p}}$, where $n_p=v_p(n)$.
  \item If $v=q_i\in S$ and is a prime,  $f^{n}_v=f_{q_i}$ is an arbitrary function in $C_c^\infty(\G(\QQ_{q_i}))$.
  \item If $v=\infty$,  $f^{n}_\infty=f_\infty\in C^\infty(Z_+\bs \G(\RR))$ such that the orbital integral is compactly supported modulo $Z_+$, and other than this condition it is arbitrary.
\end{enumerate}
Note that in this case, $f_v^{n}$ is spherical for $v\notin S$, and $\Tr(\pi_p(f_p^{n}))=a_{\pi}(p^{n_p})$ for all $p\notin S$. 

We want to find analogous functions $f^{n,2}$ for the symmetric square case and wish to give an asymptotic expansion of
\[
\sum_{\substack{n<X\\ \gcd(n,S)=1}}I_{\cusp}(f^{n,2})
\]
so that by the following identity
\[
\lim_{X\to +\infty}\frac{1}{X}\sum_{\substack{n<X\\\gcd(n,S)=1}}I_\cusp(f^{n,2})=\sum_{\pi}m_\pi \prod_{v\in S}\Tr(\pi_v(f_v))\res_{s=1}L^S(s,\pi,\Sym^2),
\]
we can detect the automorphic representations $\pi$ such that $L^S(s,\pi,\Sym^2)$ has a pole at $s=1$. However, we do not use such functions in this paper. Instead, we will consider the following asymptotic formula, which is actually considered in \cite{venkatesh2002} and \cite{venkatesh2004}:
\begin{equation}\label{eq:relationstandardtosym2}
\lim_{X\to +\infty}\frac{1}{X}\sum_{\substack{n<X\\\gcd(n,S)=1}}I_\cusp(f^{n^2})=\sum_{\pi}m_\pi \prod_{v\in S}\Tr(\pi_v(f_v))\res_{s=1}\frac{L^S(s,\pi,\Sym^2)}{L^S(2s,\chi_\pi^2)},
\end{equation} 
(The formula will be proved in \autoref{sec:sym2std}.)
We can also use this formula to detect the automorphic representations $\pi$ such that $L^S(s,\pi,\Sym^2)$ has a pole at $s=1$. This case is easy to deal with since we have many tools and results \cite{cheng2025,cheng2025b,cheng2025c} for the standard representation case.

Let $v$ be a place of $\QQ$. We say an absolutely integrable function $f_v$ on $\G(\QQ_v)$ \emph{supercuspidal} if for any $g,h\in \G(\QQ_v)$, 
\[
\int_{\N(\QQ_v)}f_v(gnh)\rmd n=0,
\]
where $\N$ denotes the unipotent radical of the standard Borel subgroup $\B$ of $\G$.

We will prove the main result \autoref{thm:totalestimatefinal} in this paper under an additional assumption. We only consider the simple trace formula case. Namely,

\begin{assumption}\label{ass:nonarchimedean}
There exist two places $v_1\neq v_2\in S$ such that $f_{v_1}$ is supercuspidal, and $f_{v_2}$ is supercuspidal or supported on the elliptic elements. For simplicity, we assume that $v_1=\infty$ and $v_2=q_1$. The other cases are similar but with different results.
\end{assumption}

\begin{theorem}\label{thm:totalestimatefinal}
Suppose that $\varrho<1/8$ is a bound towards the Ramanujan conjecture. $f^{n}=\bigotimes_{v\in \mf{S}}f^{n}_v$ satisfying \autoref{ass:nonarchimedean}. Then for any $\varepsilon>0$, we have
\[
\sum_{\substack{n<X\\\gcd(n,S)=1}}I_{\mathrm{cusp}}(f^{n^2})=\ms{A}X+\ms{B}X+\ms{C}X+O (X^{\frac43\varrho+\frac56+\varepsilon}),
\]
where $\ms{A}$ is defined in \autoref{thm:contributionxi0} and $\ms{B},\ms{C}$ are defined in \autoref{thm:contributionxinot0}. The implied constant depends only on $f_\infty$, $f_{q_i}$ and $\varepsilon$.
\end{theorem}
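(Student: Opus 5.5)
Under \autoref{ass:nonarchimedean} the trace formula is simple. Since $f_\infty$ is supercuspidal, the continuous spectrum, the residual spectrum and the non-elliptic geometric terms all vanish. Since $f_{q_1}$ is supercuspidal or supported on elliptic elements, the hyperbolic and unipotent contributions vanish. What survives is
\[
I_{\cusp}(f^{n^2})=\sum_{\gamma\ \text{elliptic}}\vol(\gamma)\,\orb(f^{n^2};\gamma)+(\text{central terms}).
\]
I would group the elliptic conjugacy classes by the characteristic polynomial $x^2-tx+\det$ with $\det=\pm n^2$ times $S$-units, as in \cite{cheng2025,cheng2025b}. Writing each class number via $L(1,\chi_{t^2\mp 4n^2})$ and the local orbital integrals as in those papers, the elliptic term becomes a sum over $\pm$ and over $t$ of
\[
\theta_\infty\!\left(\tfrac{t}{2n}\right)\prod_{q\in S}\theta_q(t,n)\;L^{S}\!\left(1,\tfrac{t^2\mp4n^2}{f^2}\right)\cdot(\text{finite Euler factors}).
\]
Here $\theta_\infty$ is compactly supported, so the sum in $t$ has length about $n$.

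Next I would expand $L(1,\cdot)$ with a smooth approximate functional equation of length about $n^{\alpha}$. Then I would apply Poisson summation in $t$ modulo the conductor $k$, in the manner of Altu\u{g}. This produces dual variable $\xi$ and Kloosterman-type sums $\Kl_k(\xi,n^2)$ built from Legendre symbols of $x^2\mp4n^2$. The sum over $n<X$ is then split into the $\xi=0$ term and the $\xi\neq0$ terms.

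For the $\xi=0$ term I invoke \autoref{thm:contributionxi0}. The sum over $n<X$ of the zero-frequency term is a Dirichlet series in $n$ with a simple pole at $s=1$, and shifting contours gives $\ms{A}X$ plus an error. For $\xi\neq0$ I perform the second Poisson summation, now in $n$ (equivalently, over $n^2$ along residue classes). This turns $\sum_{n<X}$ into transformed Kloosterman sums. I would evaluate these multiplicatively, prime by prime, including the ramified primes in $S$ with $2\in S$, and express the resulting series through Dirichlet $L$-functions and $\zeta$. Residue analysis at $s=1$ then produces $\ms{B}X$ and $\ms{C}X$, as stated in \autoref{thm:contributionxinot0}. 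In the $\ms{C}$ term the square $\xi=\square$ contributions produce the dihedral detection.

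The main obstacle is the error term. After the residues are extracted, the remaining contour integrals involve $L$-functions of quadratic characters on the critical line and sums of Kloosterman sums. Here I would use the bound towards Ramanujan $\varrho$, which enters through the spectral side: the Hecke eigenvalues $a_\pi(n^2)\ll n^{2\varrho}$ control how far one may shift. I would then balance the length of the approximate functional equation, the cutoff in $\xi$ and the cutoff in $k$. The choices $k\le X^{\beta}$ and a dyadic split in $\xi$ should optimize to $X^{\frac43\varrho+\frac56+\varepsilon}$, which is below $X$ exactly when $\varrho<1/8$. The tail terms are bounded by the trivial Weil bound for Kloosterman sums together with convexity for $L(1/2+\rmi t,\chi)$, and the uniform dependence on $f_\infty,f_{q_i}$ comes from the compact supports.
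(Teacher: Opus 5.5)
Your reduction of the main terms to \autoref{thm:contributionxi0} and \autoref{thm:contributionxinot0} is fine. However, those results (combined in \autoref{thm:contributionxig}) concern the smoothly weighted sum $S_G(X)=\sum_n G(n/X)\Sigma^{n^2}(\xi)$ with $G\in C_c^\infty(\lopen 1/4,5/4\ropen)$, not the sharp sum over $n<X$. Their error term is $O(\|G\|_{3,1}X^{1/2+\varepsilon})$, which contains no $\varrho$ at all. The genuine gap is that you never pass from the smooth sum to the sharp one, and that step is exactly where the exponent $\frac43\varrho+\frac56$ comes from.

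The mechanism you propose instead does not match what happens. You suggest that the Ramanujan bound controls how far one may shift contours, and that one balances the approximate-functional-equation length against cutoffs in $k$ and $\xi$. In the smoothed problem the contour is moved to $\Re s=\frac12+\varepsilon$, and what limits it is the continuation of $\prescript{}{c}\Phi_{\xi,\alpha}(s,\chi)$ and the factor $L^{S_\delta}(2s,\chi^2)^{-1}$, not $\varrho$. No cutoffs in $k$ or $\xi$ are taken either: the $\xi,\alpha$ sums converge absolutely because $\Phi$ decays rapidly while $\prescript{2}{c}D^S_{\xi,\alpha}$ grows only polynomially (\autoref{thm:kloostermanmoderate}). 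So your balancing step produces no derivation of the stated exponent.

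What is missing is the unsmoothing argument. Decompose $n<X$ dyadically. On each block $[\beta Y,Y]$, replace $\triv_{[\beta,1]}$ by $G_{Y^\delta,\beta}=\triv_{[\beta,1]}*\phi_{Y^\delta}$. This function agrees with the indicator except on windows of length $\asymp Y^\delta$ (in $n$) around the endpoints, and it satisfies $\widetilde{G}_{Y^\delta,\beta}(1)=1-\beta$ and $\|G_{Y^\delta,\beta}\|_{3,1}\ll Y^{2(1-\delta)}$. \autoref{thm:contributionxig} then gives $(\ms{A}+\ms{B}+\ms{C})(1-\beta)Y+O(Y^{\frac52-2\delta+\varepsilon})$ for each block.

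The windows are bounded termwise. Under \autoref{ass:nonarchimedean} one has $\Sigma^{n^2}(\square)=0$, because the hyperbolic orbital integrals of the supercuspidal component vanish (\autoref{prop:ramanujan}). Hence $\Sigma^{n^2}(\xi)=I_{\cusp}(f^{n^2})\ll n^{2\varrho+\varepsilon}$, and the windows cost $O(Y^{\delta+2\varrho+\varepsilon})$. This is the only place $\varrho$ enters.

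Balancing $\delta+2\varrho=\frac52-2\delta$ gives $\delta=\frac56-\frac23\varrho$. Summing the geometric series over blocks then yields the error $X^{\frac43\varrho+\frac56+\varepsilon}$, which is $o(X)$ exactly when $\varrho<1/8$.

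A minor correction: $\ms{C}$ does not come from ``square $\xi$''. It comes from $\alpha^2-4c\xi^2=0$, that is, $c$ a square and $\alpha=\pm\xi$ after rescaling to $c=1/4$.
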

\begin{remark}
We can also get the asymptotic formula of the elliptic part without \autoref{ass:nonarchimedean} by dealing with the proof of \autoref{thm:contributionxi0}, especially examining the residues more delicately. However, the formula may be extremely complicated.
\end{remark}
\begin{remark}
The sharpest bound towards the Ramanujan conjecture is $\varrho=7/64<1/8$ now (see \cite{sarnak2005}), so \autoref{thm:totalestimatefinal} is valid with
\[
\sum_{\substack{n<X\\\gcd(n,S)=1}}I_{\mathrm{cusp}}(f^{n^2})=\ms{A}X+\ms{B}X+\ms{C}X+ (X^{\frac{47}{48}+\varepsilon}).
\]
Maybe one can improve the bound such that $\varrho=1/4$ proved in \cite{cheng2025b} can be used.
\end{remark}

By \eqref{eq:relationstandardtosym2} we thus obtain a formula for the average of the residues at $s=1$ of the symmetric square case.

\begin{corollary}
Suppose that $\varrho<1/8$ is a bound towards the Ramanujan conjecture and \autoref{ass:nonarchimedean} holds. Then
\[
\sum_{\pi}m_\pi \prod_{v\in S}\Tr(\pi_v(f_v))\res_{s=1}\frac{L^S(s,\pi,\Sym^2)}{L^S(2s,\chi_\pi^2)}=\ms{A}+\ms{B}+\ms{C},
\]
where $\ms{A}$ is defined in \autoref{thm:contributionxi0} and $\ms{B},\ms{C}$ are defined in \autoref{thm:contributionxinot0}.
\end{corollary}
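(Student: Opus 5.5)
The plan is to divide the asymptotic of \autoref{thm:totalestimatefinal} by $X$, let $X\to+\infty$, and identify the left-hand side with the limit in \eqref{eq:relationstandardtosym2}.

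First, since $\varrho<1/8$, the error exponent satisfies
\[
\tfrac43\varrho+\tfrac56<\tfrac16+\tfrac56=1 .
\]
Taking $\varepsilon$ small enough, \autoref{thm:totalestimatefinal} gives
\[
\lim_{X\to+\infty}\frac1X\sum_{\substack{n<X\\ \gcd(n,S)=1}}I_\cusp(f^{n^2})=\ms{A}+\ms{B}+\ms{C}.
\]
Here $\ms{A},\ms{B},\ms{C}$ do not depend on $X$, as defined in \autoref{thm:contributionxi0} and \autoref{thm:contributionxinot0}.

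Second, I invoke \eqref{eq:relationstandardtosym2}, proved in \autoref{sec:sym2std}, to rewrite this same limit spectrally. The route is as follows.

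\textbf{Spectral side.} Under \autoref{ass:nonarchimedean}, only finitely many cuspidal $\pi$ (with $K^S$-fixed vectors and fixed type at $S$) satisfy $\prod_{v\in S}\Tr(\pi_v(f_v))\neq0$. This is because the Hecke functions $f^{n^2}_p$ are spherical and $f_\infty,f_{q_i}$ are fixed. Hence
\[
I_\cusp(f^{n^2})=\sum_\pi m_\pi\prod_{v\in S}\Tr(\pi_v(f_v))\,a_\pi(n^2)
\]
is a finite sum.

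\textbf{Dirichlet series.} I use the standard identity
\[
\sum_{\gcd(n,S)=1}a_\pi(n^2)n^{-s}=\frac{L^S(s,\pi,\Sym^2)}{L^S(2s,\chi_\pi^2)},
\]
which comes from the Hecke relations $a_\pi(p^{2k})$ in terms of Satake parameters, with the normalization $p^{-n_p}$ so that the central character is unitary.

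\textbf{Tauberian step.} The Gelbart--Jacquet lift gives meromorphic continuation of each quotient to $\Re s\geq1$, with at most a simple pole at $s=1$. There are no zeros of $L^S(2s,\chi_\pi^2)$ or poles elsewhere on the line $\Re s=1$. Ikehara's theorem then gives
\[
\frac1X\sum_{n<X}a_\pi(n^2)\to\res_{s=1}\frac{L^S(s,\pi,\Sym^2)}{L^S(2s,\chi_\pi^2)}.
\]
Summing over the finitely many $\pi$ yields \eqref{eq:relationstandardtosym2}.

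Equating the two expressions for the limit gives the corollary. The only delicate point is the finiteness of the $\pi$-sum, which justifies interchanging the limit with the sum over $\pi$. Beyond that and the citation of \eqref{eq:relationstandardtosym2}, no new estimate is needed.
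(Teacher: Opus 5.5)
Your skeleton is the paper's own argument. The paper divides \autoref{thm:totalestimatefinal} by $X$ (the exponent $\frac43\varrho+\frac56$ is $<1$ exactly when $\varrho<1/8$) and identifies the limit through \eqref{eq:relationstandardtosym2}. Note that \autoref{sec:sym2std} only proves the identity $\sum_n a_\pi(n^2)n^{-s}=L^S(s,\pi,\Sym^2)/L^S(2s,\chi_\pi^2)$, and states the passage to $\lim_X\frac1X\sum_{n<X}$ only as a ``philosophy''. The trouble is in the two justifications you add for that passage.

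First, the $\pi$-sum need not be finite. Fixing $f_{q_i}$ fixes the level at the $q_i$. Supercuspidality of $f_\infty$ kills all principal series at $\infty$, so every contributing $\pi$ has $\pi_\infty$ a discrete series $D_k$ with $k\geq 2$. But $f_\infty$ is only assumed smooth with orbital integrals compactly supported modulo $Z_+$; it is not assumed $K_\infty$-finite. The trace $\Tr D_k(f_\infty)$ pairs the elliptic orbital integrals of $f_\infty$ with the character of $D_k$, so it is essentially a Fourier coefficient in $k$ and need not vanish for large $k$. Meanwhile each weight carries a growing number of forms of the fixed level. Your finiteness claim therefore holds only for $K_\infty$-finite $f_\infty$. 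In general, exchanging $\lim_X$ with $\sum_\pi$ needs a domination argument: rapid decay of $\Tr D_k(f_\infty)$ in $k$, together with a bound $\bigl|\frac1X\sum_{n<X}a_\pi(n^2)\bigr|\ll k^A$ uniform in $X\geq 1$.

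Second, the Wiener--Ikehara theorem needs nonnegative coefficients. For the complex numbers $a_\pi(n^2)$, meromorphic continuation to $\Re s\geq 1$ with at most a simple pole at $s=1$ does not by itself give $\frac1X\sum_{n<X}a_\pi(n^2)\to\res_{s=1}\bigl(L^S(s,\pi,\Sym^2)/L^S(2s,\chi_\pi^2)\bigr)$. What does work is a Perron's-formula argument, using three inputs:

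\begin{itemize}
\item Up to finitely many Euler factors, $L^S(s,\pi,\Sym^2)$ is the $L$-function of the Gelbart--Jacquet lift. So it continues slightly to the left of $\Re s=1$, with polynomial growth in vertical strips that is also polynomial in $k$.
\item $1/L^S(2s,\chi_\pi^2)$ is bounded in that region.
\item Since $\pi_\infty$ is discrete series, Deligne's bound $|a_\pi(n^2)|\leq \bm{d}(n^2)$ controls the truncation error in Perron's formula.
\end{itemize}

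Shifting the contour then gives
$\sum_{n<X}a_\pi(n^2)=X\res_{s=1}\frac{L^S(s,\pi,\Sym^2)}{L^S(2s,\chi_\pi^2)}+O(k^AX^{1-\eta})$ for some $\eta>0$. This supplies both the limit for each $\pi$ and the uniform bound needed for the interchange. With these two replacements your proof is complete.
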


\subsection{An overview of the proof}
By the simple trace formula we have (see \cite[Chapter 16]{getz2024} for example)
\[
\sum_{\substack{n<X\\\gcd(n,S)=1}}I_{\mathrm{cusp}}(f^{n^2})= \sum_{\substack{n<X\\\gcd(n,S)=1}}I_{\mathrm{ell}}(f^{n^2}),
\]
and by \cite{cheng2025} we have
\[
\sum_{\substack{n<X\\\gcd(n,S)=1}}I_{\mathrm{ell}}(f^{n^2})= \sum_{\substack{n<X\\\gcd(n,S)=1}}\Sigma^{n^2}(\xi)-\sum_{\substack{n<X\\\gcd(n,S)=1}}\Sigma^{n^2}(\square).
\]
The square term vanishes under \autoref{ass:nonarchimedean} (see the proof of \autoref{prop:ramanujan}). Hence it suffices to consider the term after Poisson summation, namely
\[
S(X)=\sum_{\substack{n<X\\\gcd(n,S)=1}}\Sigma^{n^2}(\xi).
\]
We first consider the asymptotic formula of the smooth variant
\[
S_G(X)=\sum_{\substack{n=1\\\gcd(n,S)=1}}^{+\infty}G\legendresymbol{n}{X}\Sigma^{n^2}(\xi).
\]
We split the sum into two parts: the $\xi=0$ part and the $\xi\neq 0$ part.

For the $\xi=0$ part, we will use the analytic continuation of Kloosterman-type series
\[
\sum_{n\in \ZZ_{(S)}^{>0}}\frac{\chi(n)}{n^{u-s}}\sum_{k,f\in \ZZ_{(S)}^{>0}}\frac{\Kl_{k,f}^S(0,\pm n^2q^\nu)}{k^{1+s}f^{1+2s}},
\]
which will be proved in \autoref{lem:kloostermandirichlet}. The proof of this lemma uses Corollary 5.6 of \cite{cheng2025}, which is based on the computation of \cite[Section 5]{altug2015}. Then we use the contour shifting method to estimate the sum $S_G^{\xi=0}(X)$. The main term comes from the residues and the error term comes from the integral over the new contour. The method is similar to that in the proof of \cite[Section 4.1.2]{altug2020}.

Next we consider the $\xi\neq 0$ term. We will do a second Poisson summation with respect to the determinant and obtain 
\begin{align*}
S_G^{\xi\neq 0}(X)&=\frac12\sum_{c\in \{\pm 1\}\times q^{\FF_2^r}}\sum_{k,f\in \ZZ_{(S)}^{>0}}\frac{1}{k^3f^5}\sum_{\substack{\xi,\alpha\in \ZZ^S\\\xi\neq 0}} \prescript{}{c} {\widehat{J}}_{k,f}(X,\xi,\alpha)\prescript{2}{c}{\widehat{\Kl}}_{k,f}^S(\xi,\alpha)\\
&=\prod_{i=1}^{r}(1-q_i^{-1})^{-1}\sum_{\chi}\int_{(\sigma)} \widetilde{G}(s)\sum_{\substack{\xi,\alpha\in \ZZ^S\\\xi\neq 0}}\prescript{2}{c}D_{\xi,\alpha}^S(s,\chi)\prescript{}{c}\Phi_{\xi,\alpha}(s,\chi)X^s\rmd s,
\end{align*}
where the integral transform $_c\widehat{J}_{k,f}(X,\xi,\alpha)$ and the transformed Kloosterman sum $\prescript{2}{c}{\widehat{\Kl}}_{k,f}^S(\xi,\alpha)$ defined in \autoref{sec:secondpoisson},
\[  
\prescript{2}{c}D_{\xi,\alpha}^S(s,\chi)=\sum_{k,f\in \ZZ_{(S)}^{>0}}\frac{\prescript{2}{c}{\widehat{\Kl}}_{k,f}^{S}(\xi,\alpha)\chi(kf^2)}{k^{1+s}f^{1+2s}}.
\]
and the integral $\prescript{}{c}\Phi_{\xi,\alpha}(s,\chi)$ is defined in \autoref{sec:mainanalysis}.

Then we compute $\prescript{2}{c}{\widehat{\Kl}}_{k,f}^S(\xi,\alpha)$ explicitly and analyze the poles and the bounds of the Dirichlet series $\prescript{2}{c}D_{\xi,\alpha}^S(s,\chi)$ and estimate the integral. Finally, we use contour shift method, computing the main term from the residues and the error term from the integral over the new contour. Thus we get an asymptotic formula of $S_G(X)$.

The last thing to do is to compare $S(X)$ and $S_G(X)$, which is similar to the method of \cite{cheng2025c}. Namely, we control their difference due to the bound $\varrho<1/8$ towards the Ramanujan conjecture. Hence we obtain an asymptotic formula for $S(X)$ and finally we get the result by proving \eqref{eq:relationstandardtosym2} in \autoref{sec:sym2std}.

\subsection{Notations}
\begin{enumerate}[itemsep=0pt,parsep=0pt,topsep=0pt,leftmargin=0pt,labelsep=3pt,itemindent=9pt,label=\textbullet]
  \item $\# X$ denotes the number of elements in a set $X$.
  \item For $A\subseteq X$, $\triv_A$ denotes the characteristic function on $X$, defined by $\triv_A(x)=1$ for $x\in A$ and $\triv_A(x)=0$ for $x\notin A$.
  \item $\triv$ also denotes the trivial character or the trivial representation.
  \item For $x\in \RR$, $\lfloor x\rfloor$ denotes the greatest integer that is less than or equal to $x$.
  \item We often use the notation $a\equiv b\,(n)$ to denote $a\equiv b\pmod n$.
  \item For $D\equiv 0,1\pmod 4$, $\legendresymbol{D}{\cdot}$ denotes the Kronecker symbol.
  \item If $R$ is a ring (which we \emph{always} assume to be commutative with $1$), $R^\times$ denotes its group of units.  
  \item $\mf{S}$ denotes the set of all places of $\QQ$.
  \item For $S=\{\infty, q_1,\dots,q_r\}$, $\ZZ^S$ denotes the ring of $S$-integers in $\QQ$. That is
    \[
    \ZZ^S=\{\alpha\in \QQ\ |\ v_p(\alpha)\geq 0\ \text{for all}\ p\notin S\}.
    \]
    Additionally, we define
    \[
    \QQ_S=\prod_{v\in S}\QQ_v=\RR\times \QQ_{q_1}\times\dots\times\QQ_{q_r}\quad\text{and}\quad\QQ_{S_\fin}=\prod_{v\in S_\fin}\QQ_v=\QQ_{q_1}\times\dots\times\QQ_{q_r}
    \]
  and
    \[
    \ZZ_{S_\fin}=\prod_{v\in S_\fin}\ZZ_v=\ZZ_{q_1}\times\dots\times\ZZ_{q_r}.
    \]
  \item For $n\in \ZZ$, we write $\gcd(n,S)=1$, or $n\in \ZZ_{(S)}$, if 
  $p\nmid n$ for all $p\in S$. 
  We write $n\in \ZZ_{(S)}^{>0}$ if additionally $n>0$.
  \item Let $p$ be a prime. For $a\in\QQ$, we define $a_{(p)}$ to be the $p$-part of $a$, that is, 
\[
a_{(p)}=p^{v_p(a)},
\]
and we define $a^{(p)}=a/a_{(p)}$.
Moreover, we define 
  \[
    a_{(q)}=\prod_{i=1}^{n}a^{v_{q_i}(a)}\quad\text{and}\quad a^{(q)}=\frac{a}{a_{(q)}}=\prod_{p\notin S}a^{v_{p}(a)}.
  \]
  \item Let $p$ be a prime and $k\in \ZZ_{\geq 0}$. $p^k\parallel a$ means that $v_p(a)=k$.
  \item For $n\in \ZZ_{>0}$, ${\bm d}(n)$ denotes the number of divisors of $n$, ${\bm \sigma}(n)$ denotes the sum of divisors of $n$, ${\bm \phi}(n)$ denotes the Euler totient function. $\bm{\mu}(n)$ denotes the M\"obius function, defined by
  \[
  \bm{\mu}(n)=\begin{cases}
    1, & \text{if $n=1$}, \\
    (-1)^m, & \text{if $n$ is a product of $m$ distinct primes}, \\
    0, & \text{otherwise}.
  \end{cases}
  \]
  \item $\Gamma(s)$ denotes the gamma function, defined by
  \[
  \Gamma(s)=\int_{0}^{+\infty}\rme^{-x}x^s\frac{\rmd x}{x},
  \]
  for $\Re s>0$, analytically continued to $\CC$. $\zeta(s)$ denotes the Riemann zeta function, defined by
  \[
    \zeta(s)=\sum_{n=1}^{+\infty}\frac{1}{n^s}=\prod_{p}\frac{1}{1-p^{-s}}
  \]
  for $\Re s>1$, analytically continued to $\CC$.
  \item For any Dirichlet character $\chi$, we define
  \[
  L(s,\chi)=\sum_{n=1}^{+\infty}\frac{\chi(n)}{n^s}=\prod_{p}\frac{1}{1-\chi(p)p^{-s}}.
  \]
  for $\Re s>1$, analytically continued to $\CC$. Moreover, we define
  \[
  L^S(s,\chi)=\sum_{\substack{n=1\\\gcd(n,S)=1}}^{+\infty}\frac{\chi(n)}{n^s}=\prod_{p\notin S}\frac{1}{1-\chi(p)p^{-s}}.
  \]
  for $\Re s>1$, analytically continued to $\CC$. Finally, we set
  \[
  \zeta^S(s)=L^S(s,\triv)=\sum_{\substack{n=1\\\gcd(n,S)=1}}^{+\infty}\frac{1}{n^s}=\prod_{p\notin S}\frac{1}{1-p^{-s}}=\prod_{i=1}^{r}(1-q_i^{-s})\zeta(s).
  \]
  \item $(\sigma)$ denotes the vertical contour from $\sigma-\rmi\infty$ to $\sigma+\rmi\infty$.
  \item Let $f$ be a meromorphic function near $z=z_0$ and suppose that 
  \[
  f(z)=\sum_{n\in \ZZ}a_n(z-z_0)^n
  \]
  is its Laurent expansion near $z_0$, we denote $\fp_{z=z_0}f(z)=a_0$.
  \item We define $\cS$ be the set of smooth functions $\Phi$ on $\lopen 0,+\infty\ropen$ such that for any $k\in \ZZ_{\geq 0}$, $\Phi^{(k)}(x)$ are of rapid decay as $x\to +\infty$.
  \item For $G\in C_c^\infty(\RR)$, $p\in [1,+\infty]$ and $k\in \ZZ_{\geq 0}$, $\|G\|_p$ denotes the $L^p$-norm of $G$, and $\|G\|_{k,p}$ denotes the $W^{k,p}$-Sobolev norm of $G$, namely
    \[  
    \|G\|_{k,p}=\sum_{j=0}^{k}\|G^{(j)}\|_{p}.
    \]
  \item We define $\rme(x)=\rme_\infty(x)=\rme^{\dpii x}$. For a prime $p$ and $x\in \QQ_p$, we define $\langle x\rangle_p$ to be the "fractional part" of $x$. Namely, if $x=\sum_{n\geq -N}a_np^n\in \QQ_p$, then 
      \[
      \langle x\rangle_p=\sum_{n=-N}^{-1}a_np^n\in \QQ.
      \]
      We then define $\rme_p(x)=\rme(-\langle x\rangle_p)$ for $x\in \QQ_p$. \emph{Note the minus sign}.
  \item We use $f(x)=O(g(x))$ or $f(x)\ll g(x)$ to denote that there exists a constant $C$ such that $|f(x)|\leq C|g(x)|$ for all $x$ in a specified set. If the constant depends on other variables, they will be subscripted under $O$ or $\ll$. 
  \item The notation $f(x)\asymp g(x)$ indicates that $f(x)\ll g(x)$ and $g(x)\ll f(x)$. If the constant depends on other variables, they will be subscripted under $\asymp$.
  \item We use $\square$ as the end of the proof and use $\blacksquare$ as the end of the proof of a lemma that is inserted in the middle of another proof.
\end{enumerate}

\section{Preliminaries}\label{sec:preliminaries}

\subsection{The elliptic part of the trace formula and measure normalizations}
Let $\mf{S}$ be the set of places of $\QQ$ and let $\AA$ be the ad\`ele ring of $\QQ$. We want to compute the elliptic part of the trace formula for certain functions of the form $f=\bigotimes_{v\in \mf{S}}' f_v$ on $\G(\AA)$. The elliptic part of the trace formula is
\begin{equation}\label{eq:elliptictrace}
I_\el(f)=\sum_{\gamma\in \G(\QQ)^\#_\el}\vol(\gamma)\prod_{v\in \mathfrak{S}}\orb(f_v;\gamma).
\end{equation}
where $ \G(\QQ)^\#_\el$ denotes the set of elliptic conjugacy classes in $\G(\QQ)$, and
\[
\orb(f_v;\gamma)=\int_{\G_\gamma(\QQ_v)\bs \G(\QQ_v)} f_v(g^{-1}\gamma g)\rmd g,
\qquad
\vol(\gamma)=\int_{Z_+\G_\gamma(\QQ)\bs \G_\gamma(\AA)} \rmd g.
\]
with $Z_+$ the connected component of the identity matrix in the center $\Z(\RR)$ of $\G(\RR)$. The measures of $\G_\gamma(\QQ_v)$ and $\G(\QQ_v)$ are normalized as follows. For $\G(\QQ_v)$, 
\begin{enumerate}[itemsep=0pt,parsep=0pt,topsep=0pt,leftmargin=0pt,labelsep=3pt,itemindent=9pt,label=\textbullet]
  \item If $v=p$ is a prime, we normalize the Haar measure on $\G(\QQ_p)$ such that the volume of $\G(\ZZ_p)$ is $1$.
  \item If $v=\infty$, we choose an arbitrary Haar measure on $\G(\RR)$ (which does not affect the arguments below).
\end{enumerate}

For $\G_\gamma(\QQ_v)\cong E_v^\times$ (by Proposition 2.1 of \cite{cheng2025}),
\begin{enumerate}[itemsep=0pt,parsep=0pt,topsep=0pt,leftmargin=0pt,labelsep=3pt,itemindent=9pt,label=\textbullet]
  \item If $v=p$ is a prime, we know that $E_p$ is either $\QQ_p^2$ or a quadratic extension of $\QQ_p$. In the first case, we let $\G_\gamma(\ZZ_p)=\ZZ_p^\times \times \ZZ_p^\times$. In the second case, we let $\G_\gamma(\ZZ_p)=\cO_{E_p}^\times$
  In each case, we normalize the Haar measure on $\G_\gamma(\QQ_p)=E_p^\times$ such that the volume of $\G_\gamma(\ZZ_p)$ is $1$.
  \item If $v=\infty$, we know that $E_v$ is either $\RR^2$ or $\CC$. If $E_v=\RR^2$, we define the measure on $E_v^\times=\RR^\times \times \RR^\times$ as $\rmd x\rmd y/|xy|$, where $(x,y)$ is the coordinate of $\RR^2$. The action of $Z_+$ on $E_v$ is $a\cdot(x,y)=(\sqrt{a}x,\sqrt{a}y)$, and we define the measure on $Z_+\bs E_v$ to be the measure of the quotient of $E_v$ by the measure $\rmd a/a$ on $Z_+$. If $E_v=\CC$, we choose the  measure on $E_v^\times=\CC^\times$ to be $2\rmd r\rmd \theta/r$, where we use the polar coordinate $(r,\theta)$ on $\CC^\times$. The action of $Z_+$ on $E_v$ is $a\cdot z=\sqrt{a}z$, and we define the measure on $Z_+\bs E_v$ to be the measure of the quotient of $E_v$ by the measure $\rmd a/a$ on $Z_+$.
\end{enumerate} 
These measure normalizations are taken from \cite{langlands2004}.

\subsection{The modified norm}
For any prime $p$, the modified norm $|\cdot|_p'$ is as follows. For $y\in \QQ_p$, if $p\neq 2$, we have
\[
|y|_p'=p^{-2\lfloor v_p(y)/2\rfloor},
\]
that is, $|y|_p'=|y|_p$ if $v_p(y)$ is even and $|y|_p'=p|y|_p$ if $v_p(y)$ is odd. If $p=2$, then for odd $v_p(y)$ we have $|y|_p'=p^{-v_p(y)+3}=p^3|y|_p$, while for even $v_p(y)$, if $y=p^{v_p(y)}y_0$, then
\[
|y|_p'=\begin{cases}
  p^{-v_p(y)}=|y|_p, & \text{if $y_0\equiv 1\pmod 4$}, \\
  p^{-v_p(y)+2}=p^2|y|_p, & \text{if $y_0\equiv 3\pmod 4$}.
\end{cases}
\]

For any regular element $\gamma\in\G(\QQ_p)$, we denote $T_\gamma=\Tr\gamma$ and $N_\gamma=\det\gamma$. $k_\gamma$ is defined such that $p^{k_\gamma}=|T_\gamma^2-4N_\gamma|_p'^{-1/2}$. This coincides with the original definition of \cite{cheng2025} (see Proposition 2.6 of loc. cit.).

\subsection{Singularities of the orbital integrals}\label{subsec:singularities}
We define
\[
\omega_\infty(x)=\begin{cases}
             0, & x>0, \\
             1, & x<0
           \end{cases}
\]
for $x\in \RR$ with $x\neq 0$ and
\[
\omega_p(y)=\legendresymbol{y|y|_{p}'}{p}
\]
for $p\in S$ and $y\in \QQ_p$ with $y\neq 0$. When $p=q_i$, we often write $\omega_p=\omega_i$.
For $\iota\in \{0,1\}$ we define
\[
X_{\iota}=\{x\in \RR\ |\ \omega_\infty(x)=\iota\}
\]
and for $\epsilon_i\in \{0,\pm 1\}$, we define
\[
Y_{\epsilon_i}=\{y_i\in \QQ_{q_i}\ |\ \omega_i(y_i)=\epsilon_i\}. 
\]
The notation here is different from \cite{cheng2025b}. $x$ and $y$ here represent \emph{discriminants} but not traces in this paper. In loc. cit., the determinant $\pm nq^\nu$ is almost fixed but we will vary it in this paper. So it is better to use discriminant as the domain of $\omega_\infty$ and $\omega_p$. Also, we set $\omega_p(T,N)=\omega_p(T^2-4N)$ so that $\omega_p(y)$ defined in \cite{cheng2025b} is $\omega_p(y,\pm nq^\nu)$.

\subsubsection{Nonarchimedean case}
For any prime $p\in S$, we define
\[
\theta_{p}(\gamma)=\frac{1}{\mathopen{|}\det\gamma\mathclose{|}_p^{1/2}}\left(1-\frac{\chi(p)}{p}\right)^{-1}p^{-{k_\gamma}}\orb(f_{p};\gamma),
\]
where $\chi(p)=\omega_p(\Tr\gamma,\det\gamma)$. By Corollary 2.11 of \cite{cheng2025}, the local behavior of $\theta_{p}$ at $z=aI$ is
\begin{equation}\label{eq:shalikalocal}
\theta_{p}(\gamma)=\lambda_1\left(1-\frac{\chi(p)}{p}\right)^{-1}p^{-{k_\gamma}} \frac{1-\chi(p)}{1-p}+\lambda_2.
\end{equation}

Clearly $\theta_{p}(\gamma)$ is invariant under conjugation. Thus $\theta_{p}(\gamma)$ can be parametrized by $T=\Tr\gamma$ and $N=\det\gamma$, i.e., $\theta_{p}(\gamma)=\theta_p(T,N)$. 
Since $\theta_p(\gamma)$ is smooth away from the center, $\theta_p(T,N)$ is smooth except at $T^2=4N$. Moreover, for any $\epsilon\in \{0,\pm 1\}$, by \eqref{eq:shalikalocal} we have
\[
\theta_p(T,N)=\theta_{p,1}(T,N)+\theta_{p,0}(T,N),
\]
where $\theta_{p,\tau}(T,N)={|T^2-4N|_p'}^{\tau/2}\psi_\tau(T,N)$ for $\tau\in \{0,1\}$, where $\psi_\tau(T,N)$ is a smooth function on $\{(T,N)\in \QQ_p^2\,|\,\omega_p(T^2-4N)=\epsilon\}$ and up to boundary, with bounded support, for any $\epsilon\in \{0,\pm 1\}$.

For the invariance with respect to center we have the following lemma:
\begin{lemma}\label{lem:orbitalintegralsmooth}
Let $p\in S$. Then there exists $L\in \ZZ_{>0}$ such that for any $c\in 1+p^{L}\ZZ_p$ and any $T\in \QQ_p$ and $N\in \QQ_p^\times$, we have $\theta_p(cT,c^2N)=\theta_p(T,N)$.
\end{lemma}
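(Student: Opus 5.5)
The pair $(cT,c^2N)$ is exactly the (trace, determinant) of $c\gamma$, where $\gamma$ has trace $T$ and determinant $N$. So the claim says that $\theta_p(c\gamma)=\theta_p(\gamma)$ for every regular $\gamma$ and every $c\in 1+p^L\ZZ_p$. The plan is to check that each factor in the definition of $\theta_p$ is unchanged under $\gamma\mapsto c\gamma$, and to choose $L$ so that the orbital integral is unchanged as well. (Where $T^2=4N$ the function $\theta_p$ is defined by the same formula or by the limiting expression \eqref{eq:shalikalocal}, so the same argument applies.)

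\textbf{The elementary factors.} For $c\in 1+p\ZZ_p$ (and for $p=2$, $c\in 1+8\ZZ_2$), $c$ is a unit, and in fact a square in $\ZZ_p^\times$.
\begin{itemize}
\item The determinant factor is unchanged, since $|\det(c\gamma)|_p=|c|_p^2|\det\gamma|_p=|\det\gamma|_p$.
\item The discriminant becomes $c^2(T^2-4N)$. Since $c^2$ is a unit square, it is $\equiv 1\pmod 8$ when $p=2$, so it preserves $v_p$ and also the residue class mod $4$ of the unit part $y_0$.
\item Hence $|\cdot|_p'$ is unchanged, and so is $k_\gamma$.
\item $\omega_p(y)=\legendresymbol{y|y|_p'}{p}$ is unchanged, because multiplying by a square does not change the Legendre/Kronecker symbol. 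Therefore $\chi(p)$ and the factor $(1-\chi(p)/p)^{-1}$ are unchanged.
\end{itemize}

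\textbf{The orbital integral.} We have $\G_{c\gamma}=\G_\gamma$, so the quotient measures agree, and
\[
\orb(f_p;c\gamma)=\int_{\G_\gamma(\QQ_p)\bs\G(\QQ_p)}f_p(c\,g^{-1}\gamma g)\,\rmd g.
\]
Since $f_p\in C_c^\infty(\G(\QQ_p))$, it is bi-invariant under some open compact subgroup $K_L=1+p^L\M_2(\ZZ_p)$. The scalar matrix $cI$ lies in $K_L$ whenever $c\in 1+p^L\ZZ_p$. Hence $f_p(cx)=f_p(x)$ for all $x$, and so $\orb(f_p;c\gamma)=\orb(f_p;\gamma)$.

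\textbf{Choice of $L$.} Take $L$ at least the level of $f_p$, and also $L\geq 3$ if $p=2$. Then every factor in the definition of $\theta_p$ is invariant, which gives $\theta_p(cT,c^2N)=\theta_p(T,N)$.

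The only delicate point is $p=2$: there $|\cdot|_2'$ depends on the unit part of the discriminant modulo $4$, and the Kronecker symbol depends on it modulo $8$. Both are handled by requiring $c\equiv 1\pmod 8$, so that $c^2\equiv 1\pmod 8$; this is why we impose $L\geq 3$. Everything else is routine.
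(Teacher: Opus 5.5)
Your proof is correct and follows the paper's argument: pick $L$ so that $f_p$ is invariant under the scalars $cI$, deduce $\orb(f_p;c\gamma)=\orb(f_p;\gamma)$, observe that $|\det\gamma|_p$, $k_\gamma$ and $\chi(p)$ are unchanged because the discriminant is only multiplied by the unit square $c^2$ (the paper phrases this as $c\gamma$ having the same split/inert/ramified type), and then handle $T^2=4N$. For that degenerate case the paper argues by continuity, which is cleaner than your appeal to \eqref{eq:shalikalocal}; your extra condition $L\geq 3$ for $p=2$ is harmless but unnecessary, since $c^2\equiv 1\pmod 8$ for every $c\in\ZZ_2^\times$.
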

\begin{proof}
Since $f_p$ is smooth and compactly supported, there exists $L\geq 2$ such that $f_p(bg)=f_p(g)$ for any $c\in 1+p^{L}\ZZ_p$ and $g\in \G(\QQ_p)$. Hence $\orb(f_p;c\gamma)=\orb(f_p;\gamma)$ for any $c\in 1+p^{L}\ZZ_p$ and regular elements $\gamma$. Clearly $c\gamma$ is split (resp. inert, ramified) if and only if $\gamma$ is. Hence $\theta_p(c\gamma)=\theta_p(\gamma)$ for any $c\in 1+p^{L}\ZZ_p$ and regular elements $\gamma$. Thus for any $T\in \QQ_p$ and $N\in \QQ_p^\times$ with $T^2-4N\neq 0$, we have
\[
\theta_p(cT,c^2N)=\theta_p(T,N).
\]
By continuity, the above equation also holds for $T^2-4N=0$.
\end{proof}

\subsubsection{Archimedean case}
Recall the following theorem  (see Theorem 2.12 in \cite{cheng2025}):
\begin{theorem}\label{thm:archimedeanintegral}
For any $f_\infty\in C^\infty(\G(\RR))$, any maximal torus $\T(\RR)$ in $\G(\RR)$ and any $z$ in the center of $\G(\RR)$, there exists a neighborhood $N$ in $\T(\RR)$ of $z$ and smooth functions $g_1,g_2\in C^\infty(N)$ (depending on $f_\infty$ and $z$) such that
\begin{equation}\label{eq:archimedeanintegral}
\orb(f_\infty;\gamma)=g_1(\gamma)+\frac{|\gamma_1\gamma_2|^{1/2}}{|\gamma_1-\gamma_2|}g_2(\gamma)
\end{equation}
for any $\gamma\in \T(\RR)$, where $\gamma_1$ and $\gamma_2$ are the eigenvalues of $\gamma$.  Moreover, $g_1$ and $g_2$ can be extended smoothly to all split and elliptic elements, remaining invariant under conjugation, with $g_1(\gamma)=0$ if $\T(\RR)$ is split, and $g_2$ can further be extended smoothly to the center. If $f_\infty$ is $Z_+$-invariant, then $g_1$ and $g_2$ are also $Z_+$-invariant.
\end{theorem}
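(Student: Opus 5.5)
The statement is local and archimedean, so I would work entirely in $\G(\RR)=\GL_2(\RR)$. I would first use the $Z_+$-invariance (or simply factor out the central direction) to reduce to $\gamma$ of the form $\gamma=z\gamma_0$ with $\gamma_0$ near $\pm I$ in the torus. Up to conjugacy there are only two maximal tori, the split one $\A(\RR)$ and the elliptic one $\RR_{>0}\cdot\mathrm{SO}_2(\RR)$, and I would treat them separately. Throughout I would replace $f_\infty$ by its $K$-conjugation average $\int_K f_\infty(k^{-1}\cdot k)\,\rmd k$, which leaves every orbital integral unchanged.

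\emph{Split case.} Take $\gamma=\mathrm{diag}(\gamma_1,\gamma_2)$ regular. Use the Iwasawa decomposition $\G=\A NK$ to write $\G_\gamma\bs\G\cong N\times K$. The change of variables $n\mapsto\gamma^{-1}n^{-1}\gamma n$ on $N\cong\RR$ is linear with Jacobian $|1-\gamma_2/\gamma_1|$. This gives
\[
\orb(f_\infty;\gamma)=\frac{|\gamma_1\gamma_2|^{1/2}}{|\gamma_1-\gamma_2|}\,\Big(|\gamma_1/\gamma_2|^{1/2}\int_K\int_N f_\infty(k^{-1}\gamma nk)\,\rmd n\,\rmd k\Big).
\]
The bracket is Harish-Chandra's Abel transform $F_f(\gamma)$. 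It is visibly smooth in $(\gamma_1,\gamma_2)$, including across $\gamma_1=\gamma_2$, because the integrand depends smoothly on $\gamma$ and, for $\gamma$ in a compact set, the integral is absolutely convergent with locally uniform decay. So on the split torus I take $g_1=0$ and $g_2=F_f$, and $g_2$ extends smoothly to the center.

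\emph{Elliptic case.} Here $\gamma=\rho\,k_\theta$ with $k_\theta$ a rotation, and $\G_\gamma\bs\G\cong \RR_{>0}\cdot\mathrm{SO}_2\bs\GL_2(\RR)$, essentially the upper half-plane. Use polar coordinates $g=k_\phi a_t$ with measure $\sinh t\,\rmd t\,\rmd\phi$. Then $\orb(f_\infty;\gamma)=\int_0^\infty\Phi_\theta(t)\sinh t\,\rmd t$, where $\Phi_\theta$ is smooth in $(\theta,t)$ and its decay is governed by $\sin\theta\sinh t$. The key step is the substitution $u=|\sin\theta|\sinh t$. This turns the integral into
\[
\frac{1}{|\sin\theta|}\int_0^\infty\Psi_\theta(u)\,\frac{u\,\rmd u}{\sqrt{u^2+\sin^2\theta}}
\]
with $\Psi$ smooth and rapidly decreasing. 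Writing $\frac{u}{\sqrt{u^2+s^2}}=1-\big(1-\frac{u}{\sqrt{u^2+s^2}}\big)$, the first piece gives $\frac{1}{|\sin\theta|}\times$(smooth). Since $|\gamma_1\gamma_2|^{1/2}/|\gamma_1-\gamma_2|=1/(2|\sin\theta|)$, this is exactly the $g_2$-term. The second piece, divided by $|\sin\theta|$, must be shown to equal a smooth even function of $\sin\theta$; that function is $g_1$. I would verify this by splitting the $u$-integral at $u\asymp|\sin\theta|$ and rescaling $u=|\sin\theta|v$, which turns it into $\int_0^\infty\Psi_\theta(|\sin\theta|v)\,(1-v/\sqrt{v^2+1})\,\rmd v$, a smooth function of $|\sin\theta|$.

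The main obstacle is this elliptic step. Two things must be checked: that the remainder is smooth in $\theta$ itself and not merely in $|\sin\theta|$, which needs an evenness/parity argument in $\theta\mapsto-\theta$ via conjugation by $\mathrm{diag}(1,-1)$; and that the elliptic $g_2$ is given, up to the normalizing constant, by the same Abel transform $F_f$ at the central element. For the latter I would first try Harish-Chandra's jump relation $\lim_{\theta\to0}|\sin\theta|\orb(f;\rho k_\theta)=c\,F_f(\rho I)$. This would let me define $g_2$ on a neighborhood of the center consistently across both tori, and $g_1$ on elliptic elements only.

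Conjugation invariance of $g_1,g_2$ is automatic from the construction. $Z_+$-invariance follows because every formula above commutes with scaling by $Z_+$ when $f_\infty$ is $Z_+$-invariant.
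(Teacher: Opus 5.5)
The paper does not prove this statement; it quotes it from the author's earlier work, so I judge your argument on its own. Your split-torus argument is fine. The elliptic step has a genuine gap: with your decomposition, $g_1$ is \emph{not} smooth in general, and the parity argument you propose cannot repair it. Every function of $|\sin\theta|$ is already even under $\theta\mapsto-\theta$. What is actually needed is evenness in $s=|\sin\theta|$, i.e.\ smoothness as a function of $\sin^2\theta$, and your remainder fails this.

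To see the failure, write $g=x_0I+yJ+X$ with $J=\left(\begin{smallmatrix}0&-1\\1&0\end{smallmatrix}\right)$ and $X$ symmetric traceless. The $\mathrm{O}(2)$-average is then $\bar f=G_0(x_0,y^2,-\det X)$ with $G_0$ smooth, and along the orbit $\Psi_\theta(u)=G_0(\rho\cos\theta,\rho^2(u^2+\sin^2\theta),\rho^2u^2)$. This joint smoothness comes from that structure, which uses invariance under conjugation by $\mathrm{diag}(1,-1)$; it does not follow from smoothness of $\Phi_\theta(t)$. Now take $\rho=1$ and $G_0$ depending only on its second argument, so $\Psi_\theta(u)=\phi(u^2+\sin^2\theta)$ with $\phi\in C_c^\infty$. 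Substituting $w=\sqrt{u^2+s^2}$ gives $\int_0^\infty\Psi_\theta(u)\frac{u\,\rmd u}{\sqrt{u^2+s^2}}=\int_s^\infty\phi(w^2)\,\rmd w$. Comparing this with $\int_0^\infty\phi(u^2+s^2)\,\rmd u=\int_0^\infty\phi(u^2)\,\rmd u+s^2\int_0^\infty\phi'(u^2)\,\rmd u+O(s^4)$ yields
\[
\frac1s\int_0^\infty\Psi_\theta(u)\Bigl(1-\frac{u}{\sqrt{u^2+s^2}}\Bigr)\rmd u=\int_0^1\phi(s^2v^2)\,\rmd v+s\int_0^\infty\phi'(u^2)\,\rmd u+O(s^3).
\]
This has a corner at $\theta=0$ whenever $\int_0^\infty\phi'(u^2)\,\rmd u\neq0$, for example if $\phi$ is nonincreasing and $\equiv1$ near $0$.

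The root cause is that $\int_0^\infty\Psi_\theta(u)\,\rmd u$ is the wrong $g_2$. It differs from the correct one by a smooth term of size $\sin^2\theta$, which after division by $|\sin\theta|$ becomes an odd power of $|\sin\theta|$ inside $g_1$. The same discrepancy makes your elliptic $g_2$ agree with the split Abel transform at the center only to order zero. Separately, your rescaling argument does not prove smoothness in $s$ as stated. The weight $1-v/\sqrt{v^2+1}$ decays only like $v^{-2}$, and each $s$-derivative brings down a factor $v$, so differentiation under the integral fails from the second derivative on.

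The repair is to substitute $w=|\sin\theta|\cosh t$ (equivalently $w=\sqrt{u^2+\sin^2\theta}$) instead. Set $\Xi_\theta(w)=G_0(\rho\cos\theta,\rho^2w^2,\rho^2(w^2-\sin^2\theta))$, after extending $G_0$ smoothly to negative values of its last argument. Then
\[
|\sin\theta|\orb(f_\infty;\rho k_\theta)=c\int_0^\infty\Xi_\theta(w)\,\rmd w-c\,|\sin\theta|\int_0^1\Xi_\theta(|\sin\theta|v)\,\rmd v,
\]
and both integrals are visibly smooth functions of $(\cos\theta,\sin^2\theta)$; they give $g_2$ and $g_1$. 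The same computation on the split torus produces $\int_0^\infty G_0(\rho\cosh\tau,\rho^2w^2,\rho^2(w^2+\sinh^2\tau))\,\rmd w$. So, up to the normalizing constants of the torus measures, both $g_2$'s are one smooth expression in $(T,N)$ near the center, which is what the smooth extension of $g_2$ to the center requires, and no jump relation is needed.

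Alternatively, you can keep your route. Prove one-sided smoothness of your remainder in $s$, split it into even and odd parts in $s$, and move $s$ times the odd part into $g_2$.
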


We define
\[
\theta_\infty(\gamma)=\frac{|\gamma_1-\gamma_2|}{|\gamma_1\gamma_2|^{1/2}}\orb(f_\infty;\gamma)= \frac{|\gamma_1-\gamma_2|}{|\gamma_1\gamma_2|^{1/2}}g_1(\gamma)+g_2(\gamma).
\]
Since $g_1,g_2$ and $\theta_\infty$ are invariant under conjugation, we can parametrize them by $T_\gamma$ and $N_\gamma$ as in the nonarchimedean case.

Since $T_{z\gamma}=aT_\gamma$ and  $N_{z\gamma}=a^2N_\gamma$ for $z=aI$ with $a>0$, we have $g_i(T_\gamma,N_\gamma)=g_i(aT_\gamma,a^2N_\gamma)$ and $\theta_\infty(T_\gamma,N_\gamma)=\theta_\infty(aT_\gamma,a^2N_\gamma)$ for $i=1,2$ and any $a>0$. We also set 
\[
\theta_{\infty,0}(T,N)=g_2(T,N)\quad\text{and}\quad\theta_{\infty,1}(T,N)=\left|\frac{T^2-4N}{N}\right| ^{\frac{1}{2}}g_1(T,N).
\]
Hence $\theta_\infty(T,N)=\theta_{\infty,1}(T,N)+\theta_{\infty,0}(T,N)$. Moreover, $\theta_{\infty,\sigma}(T,N)=|T^2-4N|^{\sigma/2}\varphi_\sigma(T,N)$ for $\sigma\in \{0,1\}$, where $\varphi_\sigma(T,N)$ is a smooth function on $\{(T,N)\in \RR^2\,|\,\omega_\infty(T^2-4N)=\iota\}$ up to boundary, with bounded support, for any $\iota\in \{0,1\}$.

Also, we set $\theta_\infty^\pm(x)=\theta_\infty(x,\pm 1/4)$, which coincides with the notation in \cite{cheng2025} and \cite{cheng2025b}.

\subsection{Global notations} For $\nu\in \ZZ^r$, we usually denote $\nu_i$ by its $i^{\mathrm{th}}$ component. We define
\[
q^\nu=q_1^{\nu_1}\dots q_r^{\nu_r}.
\]

For any $y=(y_1,\dots,y_n)\in \QQ_{S_\fin}=\QQ_{q_1}\times\dots\times\QQ_{q_r}$ and $N\in \QQ$, we define
\[
\theta_{q}(y,N)=\prod_{i=1}^{r}\theta_{q_i}(y_i,N),\qquad|y|_q'=\prod_{i=1}^{r}|y_i|_{q_i}',\qquad \rme_q(y)=\prod_{i=1}^{r}\rme_{q_i}(y_i).
\]
Moreover, for $\tau=(\tau_1,\dots,\tau_n)\in \{0,1\}^n$, we define
\[
\theta_{q,\tau}(y,N)=\prod_{i=1}^{r}\theta_{q_i,\tau_i}(y_i,N).
\]
We usually embed $\QQ$ in $\QQ_S$ or $\QQ_{S_\fin}$ diagonally. 

\subsection{The partial Zagier $L$-function and the approximate functional equation}\label{subsec:deffv}
 For $\delta\in \ZZ^S$ that is not a square, we define the \emph{partial Zagier $L$-function} as
\[
L^{S}(s,\delta)=\sum_{f^2\mid \delta}\frac{1}{f^{2s-1}}L^{S}\left(s,\legendresymbol{\delta/f^2}{\cdot}\right),
\]
where the sum of $f$ is over all $\ZZ_{\geq 0}$ such that $f\in \ZZ_{(S)}$ and $\delta/f^2\in \ZZ^S$, and
\[
L^{S}\left(s,\legendresymbol{\delta/f^2}{\cdot}\right)=\sum_{k\in \ZZ_{(S)}^{>0}}\frac{1}{k^s}\legendresymbol{\delta/f^2}{k}=\prod_{p\notin S}\left(1-\legendresymbol{\delta/f^2}{p}p^{-s}\right)^{-1}
\]
for $s$ such that the series converges absolutely, extended to $\CC$ by analytic continuation. 

Let
\[
F(x)=\frac{1}{2K_0(2)}\int_{x}^{+\infty}\rme^{-t-1/t}\frac{\rmd t}{t},
\]
where $K_s(y)$ is the modified Bessel function of the second kind. The Mellin transform of $F$ is
\begin{equation}\label{eq:mellinf}
\widetilde{F}(s)=\frac{1}{s}\frac{K_s(2)}{K_0(2)}.
\end{equation}
From this one can show that (see Proposition 3.7 in \cite{cheng2025} for example) $\widetilde{F}$ is an odd function, and for $s=\sigma+\rmi t\in \CC$ such that $\sigma_1\leq \sigma\leq \sigma_2$, we have
\begin{equation}\label{eq:frapiddecay}
\widetilde{F}(s)\ll_{\sigma_1,\sigma_2} |s|^{|\sigma|-1}\rme^{-\uppi|t|/2}.
\end{equation}

\begin{theorem}[\cite{cheng2025}, Corollary 3.10]
For $\delta\in \ZZ^S$ that is not a square, and any $A>0$, we have
\begin{equation}\label{eq:approximatefeat1}
L^{S}(1,\delta)=\sum_{f^2\mid \delta}\sum_{k\in \ZZ_{(S)}^{>0}}\frac{1}{kf}\legendresymbol{\delta/f^2}{k}\left(F\legendresymbol{kf^2}{A}+\frac{kf^2}{\sqrt{|\delta|_{\infty,q}'}}V_{\iota,\epsilon}\legendresymbol{kf^2A}{|\delta|_{\infty,q}'}\right),
\end{equation}
where 
\begin{equation}\label{eq:defv}
V_{\iota,\epsilon}(y)=V_{\iota,\epsilon,1}(y)=\frac{\uppi^{1/2}}{2\uppi \rmi}\int_{(1)}\widetilde{F}(s)\prod_{i=1}^{r}\frac{1-\epsilon_i q_i^{s-1}}{1-\epsilon_i q_i^{-s}}\frac{\Gamma((\iota+s)/2)}{\Gamma((\iota+1-s)/2)}(\uppi y)^{-s}\rmd s.
\end{equation}
\end{theorem}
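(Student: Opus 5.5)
The plan is the standard Mellin-transform derivation of an approximate functional equation: write $L^S(1,\delta)$ as a residue of $\widetilde F(s)A^s L^S(1+s,\delta)$ at $s=0$, shift the contour to the left, and use the functional equation of the partial Zagier $L$-function to turn the left-hand integral into a second rapidly convergent Dirichlet series. Fix $\delta\in\ZZ^S$, not a square. Let $\iota=\omega_\infty(\delta)$ and $\epsilon_i=\omega_i(\delta)$.

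\textbf{Step 1: Dirichlet series and the first sum.} For $\Re s>0$, swapping the sums in the definition gives
\[
L^S(1+s,\delta)=\sum_{f^2\mid\delta}\sum_{k\in\ZZ_{(S)}^{>0}}\frac{1}{kf}\legendresymbol{\delta/f^2}{k}(kf^2)^{-s}.
\]
By \eqref{eq:mellinf}, $\widetilde F(s)=s^{-1}K_s(2)/K_0(2)$ has a simple pole at $s=0$ with residue $1$. Mellin inversion gives $F(x)=\frac{1}{2\uppi\rmi}\int_{(1)}\widetilde F(s)x^{-s}\,\rmd s$.

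Set $I(A)=\frac{1}{2\uppi\rmi}\int_{(1)}\widetilde F(s)A^sL^S(1+s,\delta)\,\rmd s$. Expanding the Dirichlet series termwise, which is justified by absolute convergence and \eqref{eq:frapiddecay}, gives
\[
I(A)=\sum_{f^2\mid\delta}\sum_{k}\frac{1}{kf}\legendresymbol{\delta/f^2}{k}F\legendresymbol{kf^2}{A}.
\]
This is the first sum in \eqref{eq:approximatefeat1}.

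\textbf{Step 2: Contour shift.} Next I move the contour from $(1)$ to $(-1)$. Since $\delta$ is not a square, $L^S(1+s,\delta)$ is entire: it is a finite combination of $L^S(1+s,\chi)$ with $\chi$ a nontrivial quadratic character, times finite Dirichlet polynomials. It also has polynomial growth in vertical strips (Phragmén–Lindelöf). Combined with the exponential decay in \eqref{eq:frapiddecay}, the shift is legitimate. The only pole crossed is at $s=0$, with residue $L^S(1,\delta)$. Hence
\[
L^S(1,\delta)=I(A)-\frac{1}{2\uppi\rmi}\int_{(-1)}\widetilde F(s)A^sL^S(1+s,\delta)\,\rmd s.
\]

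\textbf{Step 3: The functional equation (main obstacle).} The key input is a functional equation relating $L^S(1+s,\delta)$ and $L^S(-s,\delta)$ in the shape
\[
L^S(1+s,\delta)=\uppi^{\frac12+s}\frac{\Gamma((\iota-s)/2)}{\Gamma((\iota+1+s)/2)}\,{|\delta|'_{\infty,q}}^{-\frac12-s}\prod_{i=1}^r\frac{1-\epsilon_iq_i^{-s-1}}{1-\epsilon_iq_i^{s}}\,L^S(-s,\delta).
\]
I would prove this in three moves.
\begin{enumerate}[itemsep=0pt,parsep=0pt,topsep=2pt,leftmargin=0pt,labelsep=3pt,itemindent=9pt,label=\textbullet]
  \item Write $\delta=Df_0^2$ with $D$ a fundamental discriminant. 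Use the Euler product of the complete Zagier $L$-function, $L(s,\delta)=L(s,\chi_D)\prod_{p\mid f_0}(\text{explicit polynomial in }p^{-s})$.
  \item Each local polynomial satisfies $P_p(s)=p^{v_p(f_0)(1-2s)}P_p(1-s)$. Together with the functional equation of $L(s,\chi_D)$, this gives $\Lambda(s,\delta)=\Lambda(1-s,\delta)$ for $\Lambda(s,\delta)=(|\delta|/\uppi)^{s/2}\Gamma((s+\iota)/2)L(s,\delta)$, with $|\delta|$ playing the role of $|D|f_0^2$.
  \item Remove the Euler factors at $q_i$. At a prime $q_i$ the removed local factor is what produces the ratio $(1-\epsilon_iq_i^{-s-1})/(1-\epsilon_iq_i^{s})$, where $\epsilon_i$ is $\chi_D(q_i)$ adjusted by the $q_i$-part of $f_0$. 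It also replaces $|\delta|_{q_i}$ by the modified norm $|\delta|'_{q_i}$.
\end{enumerate}
This last local computation, especially at $q_i=2$ with the $\bmod\ 4$ and $2^3$ conventions in the definition of $|\cdot|'_2$, is where I expect the real work. I would check it case by case according to $v_{q_i}(\delta)$ even or odd and, for $q_i=2$, according to $\delta_0\bmod 4$, using the local behaviour recorded in \eqref{eq:shalikalocal} and Proposition 2.6 of \cite{cheng2025}.

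\textbf{Step 4: The second sum.} Insert the functional equation into the integral over $(-1)$ and substitute $s\mapsto -s$, using that $\widetilde F$ is odd. The integral becomes one over $(1)$ of $\widetilde F(s)$ times the gamma ratio, the product over $i$ of $(1-\epsilon_iq_i^{s-1})/(1-\epsilon_iq_i^{-s})$, $\uppi^{1/2-s}$, $A^{-s}{|\delta|'}^{s-1/2}$, and $L^S(s,\delta)$. Now expand
\[
L^S(s,\delta)=\sum_{f,k}\frac{1}{kf}\legendresymbol{\delta/f^2}{k}(kf^2)^{1-s},
\]
which is valid on $(1)$ after a slight shift to the right if needed. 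Termwise this yields
\[
\frac{kf^2}{\sqrt{|\delta|'_{\infty,q}}}\,V_{\iota,\epsilon}\legendresymbol{kf^2A}{|\delta|'_{\infty,q}}
\]
with $V_{\iota,\epsilon}$ exactly as in \eqref{eq:defv}, up to checking the constant $\uppi^{1/2}$ and the sign from the orientation of the contour. Adding this to Step 1 gives \eqref{eq:approximatefeat1}.
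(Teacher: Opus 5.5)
Your proposal is correct and follows the standard route by which this approximate functional equation is obtained in \cite{cheng2025} (after Altu\u{g}): Mellin inversion of $F$, a shift past the simple pole of $\widetilde F$ at $s=0$, the functional equation of the partial Zagier $L$-function, and termwise expansion after moving slightly to the right of $\Re s=1$. Your displayed functional equation, the contour orientation and the constant $\uppi^{1/2}$ all check out.

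The local work is lighter than you fear if you skip the complete Zagier function, which is awkward at $2$ and for non-integral $\delta\in\ZZ^S$. Since the $f$-sum is finite and prime to $S$, you can write $L^S(s,\delta)=L^S(s,\chi_D)\prod_{p\notin S}P_p(s)$ directly. Everything at the $q_i$ then reduces to two facts: $\omega_i(\delta)=\chi_D(q_i)$, and $|\delta|'_{q_i}|\delta|_{q_i}^{-1}$ is the $q_i$-part of the conductor of $\chi_D$. The germ expansion \eqref{eq:shalikalocal} plays no role.
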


They following propositions are proved in \cite{cheng2025b}:
\begin{proposition}\label{prop:propertyf}
We have $F\in \cS$. Moreover, $\widetilde{F}(s)$ has a meromorphic continuation to the whole complex plane, holomorphic except for a simple pole at $s=0$, and is of rapid decay for $t$ when $\sigma$ is fixed.
\end{proposition}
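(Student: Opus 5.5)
We prove the three assertions of \autoref{prop:propertyf} in order: $F\in\cS$, the meromorphic continuation of $\widetilde{F}$ with a single simple pole at $s=0$, and the decay of $\widetilde{F}$ in vertical strips.

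\emph{Step 1: $F\in\cS$.} By definition $F$ is smooth on $\lopen 0,+\infty\ropen$, and
\[
F'(x)=-\frac{1}{2K_0(2)}\frac{\rme^{-x-1/x}}{x}.
\]
Every higher derivative $F^{(k)}$, $k\geq 1$, is a finite sum of terms $P(1/x)\,\rme^{-x-1/x}$ with $P$ a polynomial. Each such term decays like $\rme^{-x}$ times a polynomial as $x\to+\infty$, so it is of rapid decay. For $k=0$ we bound directly:
\[
|F(x)|\leq \frac{1}{2K_0(2)}\int_x^{+\infty}\rme^{-t}\,\rmd t\ll \rme^{-x}\qquad (x\geq 1).
\]
Hence $F\in\cS$. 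We also record that as $x\to0^+$ the factor $\rme^{-1/x}$ kills all singular powers of $1/x$. Consequently every $F^{(k)}$ with $k\geq1$ vanishes to infinite order at $0$, and $F(0^+)=\frac{1}{2K_0(2)}\int_0^\infty \rme^{-t-1/t}\,\frac{\rmd t}{t}=1$, using the integral representation $K_0(2)=\tfrac12\int_0^\infty \rme^{-t-1/t}\,\frac{\rmd t}{t}$.

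\emph{Step 2: meromorphic continuation.} For $\Re s>0$ we integrate by parts; the boundary terms vanish by Step 1. This gives
\[
\widetilde{F}(s)=\int_0^\infty F(x)x^{s}\,\frac{\rmd x}{x}=-\frac1s\int_0^\infty F'(x)x^{s}\,\rmd x=\frac{1}{s}\cdot\frac{1}{2K_0(2)}\int_0^\infty \rme^{-x-1/x}x^{s}\,\frac{\rmd x}{x}.
\]
The last integral equals $2K_s(2)$ by the standard integral representation of $K_s$, which recovers \eqref{eq:mellinf}. That integral converges absolutely for every $s\in\CC$, because $\rme^{-x-1/x}$ decays rapidly at both $0$ and $\infty$; hence $s\mapsto K_s(2)$ is entire. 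Therefore $\widetilde{F}(s)=K_s(2)/(sK_0(2))$ is meromorphic on $\CC$. Its only possible pole is at $s=0$, and that pole is simple with residue $1$ since $K_0(2)\neq0$. Oddness of $\widetilde F$ follows from the symmetry $K_{-s}=K_s$ (substitute $x\mapsto1/x$).

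\emph{Step 3: rapid decay in $t$ (the main point).} Fix $\sigma$ and write $s=\sigma+\rmi t$. One route is simply to invoke \eqref{eq:frapiddecay}, which is already available from \cite{cheng2025} and gives exponential decay $\rme^{-\uppi|t|/2}$. If we want a self-contained argument, we only need decay faster than any polynomial, and repeated integration by parts suffices. Substitute $x=\rme^{u}$ to get
\[
2K_s(2)=\int_{\RR}\rme^{-2\cosh u}\rme^{\sigma u}\rme^{\rmi t u}\,\rmd u.
\]
The function $u\mapsto\rme^{-2\cosh u+\sigma u}$ is Schwartz, so integrating by parts $N$ times yields $K_s(2)\ll_{\sigma,N}(1+|t|)^{-N}$, uniformly for $\sigma$ in compact sets. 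Dividing by $s$ preserves this bound for $|t|\geq1$, so $\widetilde{F}$ is of rapid decay in $t$ on every vertical line. The only delicate issue is uniformity near $\sigma=0$, where the pole sits; this is handled by staying away from $t=0$, which is harmless for a statement about $|t|\to\infty$.
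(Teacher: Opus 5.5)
Your proof is correct and follows the same route the paper takes. The paper itself only cites \cite{cheng2025b} and \cite{cheng2025}, but the structure is the same: identify $\widetilde{F}(s)=K_s(2)/(sK_0(2))$ as in \eqref{eq:mellinf}, then read off the meromorphic continuation, the simple pole at $s=0$ with residue $1$, and the vertical decay. The only difference is that the paper quotes the exponential bound \eqref{eq:frapiddecay}, whereas your integration by parts in $u=\log x$ gives superpolynomial decay, which is all the statement requires.
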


\begin{proposition}\label{prop:propertyh}
For any $\iota$ and $\epsilon$, we have $V_{\iota,\epsilon}\in \cS$. Moreover, $\widetilde{V_{\iota,\epsilon}}(s)$ is holomorphic on $\Re s>0$, and is of rapid decay for $t$ when $\sigma>0$ is fixed.
\end{proposition}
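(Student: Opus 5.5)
The plan is to read off everything from the defining Mellin--Barnes integral \eqref{eq:defv}. Write
\[
H_{\iota,\epsilon}(s)=\uppi^{1/2}\,\widetilde{F}(s)\prod_{i=1}^{r}\frac{1-\epsilon_i q_i^{s-1}}{1-\epsilon_i q_i^{-s}}\,\frac{\Gamma((\iota+s)/2)}{\Gamma((\iota+1-s)/2)}\,\uppi^{-s},
\]
so that $V_{\iota,\epsilon}(y)=\frac{1}{\dpii}\int_{(1)}H_{\iota,\epsilon}(s)y^{-s}\rmd s$. The first step is to show that $H_{\iota,\epsilon}$ is holomorphic on $\Re s>0$, which amounts to locating the poles of each factor.
\begin{itemize}
\item By \autoref{prop:propertyf}, $\widetilde{F}$ has its only pole at $s=0$.
\item The poles of $\Gamma((\iota+s)/2)$ lie at $s=-\iota-2m$ with $m\in\ZZ_{\geq0}$, all with $\Re s\leq 0$.
\item $1/\Gamma((\iota+1-s)/2)$ is entire.
\item The denominator $1-\epsilon_i q_i^{-s}$ vanishes only if $\epsilon_i\neq0$ and $|q_i^{-s}|=1$, that is, only when $\Re s=0$.
\end{itemize}
Hence $H_{\iota,\epsilon}$ is holomorphic on $\Re s>0$.

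The second step is a uniform bound on each vertical line $\Re s=\sigma$ with $\sigma$ in a compact subset of $\lopen0,+\infty\ropen$. The finite product is bounded there, since $|1-\epsilon_iq_i^{-s}|\geq 1-q_i^{-\sigma}>0$ and the numerator is $\ll_\sigma 1$. Stirling's formula gives
\[
\frac{\Gamma((\iota+s)/2)}{\Gamma((\iota+1-s)/2)}\ll_\sigma (1+|t|)^{\sigma-1/2},
\]
which grows only polynomially. By \eqref{eq:frapiddecay}, $\widetilde F(s)\ll |s|^{|\sigma|-1}\rme^{-\uppi|t|/2}$. Together these give $H_{\iota,\epsilon}(\sigma+\rmi t)\ll_{\sigma}(1+|t|)^{C}\rme^{-\uppi|t|/2}$, so $H_{\iota,\epsilon}$ is of rapid decay in $t$, locally uniformly in $\sigma>0$.

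The third step shows that $V_{\iota,\epsilon}\in\cS$. Because of the exponential decay, we may differentiate under the integral sign arbitrarily often. The $k$-th derivative is
\[
V^{(k)}_{\iota,\epsilon}(y)=\frac{1}{\dpii}\int_{(1)}H_{\iota,\epsilon}(s)\,(-s)(-s-1)\cdots(-s-k+1)\,y^{-s-k}\rmd s,
\]
and the polynomial factor in $s$ is absorbed by the exponential decay. So $V_{\iota,\epsilon}$ is smooth on $\lopen0,+\infty\ropen$. Since there are no poles to the right of $\Re s=0$, and the decay on horizontal segments is uniform, we may shift the contour from $(1)$ to $(\sigma)$ for any $\sigma\geq1$. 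This yields $V^{(k)}_{\iota,\epsilon}(y)\ll_{\sigma,k}y^{-\sigma-k}$ as $y\to+\infty$ for every $\sigma$, which is rapid decay of all derivatives. Finally, Mellin inversion, valid because $H_{\iota,\epsilon}$ is integrable on vertical lines in $\Re s>0$, identifies $\widetilde{V_{\iota,\epsilon}}(s)=H_{\iota,\epsilon}(s)$ on $\Re s>0$. The asserted holomorphy and rapid decay of $\widetilde{V_{\iota,\epsilon}}$ then follow from the first two steps.

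The only mildly delicate point is making the Stirling bound and the lower bound on $|1-\epsilon_iq_i^{-s}|$ uniform, which is needed to justify the contour shifts. Near $\Re s=0$ these bounds degenerate, which is why the statement is restricted to $\Re s>0$. For membership in $\cS$, only the behavior as $y\to+\infty$ matters, so we never need to cross that line.
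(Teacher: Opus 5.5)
Your proof is correct. All poles of the factors of $H_{\iota,\epsilon}$ lie in $\Re s\leq 0$, and Stirling together with \eqref{eq:frapiddecay} gives exponential decay on vertical lines, locally uniformly in $\sigma>0$. Shifting the contour to the right then gives $V^{(k)}_{\iota,\epsilon}(y)\ll y^{-\sigma-k}$. Shifting also to any line $0<c<1$, which your uniform bounds allow, justifies $\widetilde{V_{\iota,\epsilon}}=H_{\iota,\epsilon}$ on the whole half-plane $\Re s>0$.

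The paper does not prove this proposition itself but quotes it from \cite{cheng2025b}, so there is no in-paper argument to compare against. Your Mellin--Barnes argument is a self-contained proof that uses only \autoref{prop:propertyf} and \eqref{eq:frapiddecay}.
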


\subsection{The generalized Kloosterman sum} The \emph{partial generalized Kloosterman sum} is defined by
\[
\Kl_{k,f}^S(\xi,m)=
  \sum_{\substack{a \bmod kf^2\\ a^2-4m\equiv 0\,(f^2)}}\legendresymbol{(a^2-4m)/f^2}{k}\rme\legendresymbol{a\xi}{kf^2}\rme_{q}\legendresymbol{a\xi}{kf^2},
\]
for $f,k\in \ZZ_{(S)}$ and $\xi,m\in \ZZ^S$. 
For any prime $p\notin S$ and $\xi,m\in \ZZ_p$, we define the \emph{local generalized Kloosterman sum} to be
\begin{equation}\label{eq:localgeneralizedkloosterman}
\Kl_{p^u,p^v}^{(p)}(\xi,m)=
\sum_{\substack{a \bmod p^{u+2v}\\ a^2-4m\equiv 0\,(p^{2v})}}\legendresymbol{(a^2-4m)/p^{2v}}{p^u}\rme_p\legendresymbol{-a\xi}{p^{u+2v}}.
\end{equation}

As shown in \cite[Section 2.6]{cheng2025b}, for $\xi\in \ZZ^S$, we have
\begin{equation}\label{eq:localgeneralizedkloostermaneq}
\Kl_{p^u,p^v}^{(p)}(\xi,m)=
\sum_{\substack{a \bmod p^{u+2v}\\ a^2-4m\equiv 0\,(p^{2v})}}\legendresymbol{(a^2-4m)/p^{2v}}{p^u} \rme\legendresymbol{a\xi}{p^{u+2v}}\rme_{q}\legendresymbol{a\xi}{p^{u+2v}}.
\end{equation}

\subsection{Norm with respect to the Mellin transform} Fix $\delta>0$. We consider functions $G$ in $C_c^\infty(\lopen \delta^{-1},\delta\ropen)$. Then we have the following results proved in \cite{cheng2025c}:

\begin{lemma}\label{lem:grapiddeacy}
Let $\widetilde{G}(u)$ be the Mellin transform of $G$, namely
\[
\widetilde{G}(u)=\int_{0}^{+\infty}G(x)x^{u}\frac{\rmd x}{x}.
\]
Then $\widetilde{G}(u)$ is well defined on the whole complex plane. Moreover $G$ has rapid decay vertically. More precisely, for any $\sigma\in \RR$ and $A\geq 0$,
\[
\widetilde{G}(\sigma+\rmi t)\ll_{\sigma,A,\delta}\|G\|_{\lceil A\rceil,1}(1+|t|)^{-A}.
\]
\end{lemma}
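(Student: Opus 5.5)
The lemma has two parts: $\widetilde{G}$ is entire, and it decays vertically at every polynomial rate, with the Sobolev norm $\|G\|_{\lceil A\rceil,1}$ as the constant. The plan is to get the first from compact support and the second from repeated integration by parts.

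\emph{Entireness.} Since $G\in C_c^\infty(\lopen\delta^{-1},\delta\ropen)$, the integrand $G(x)x^{u-1}$ is supported in the compact interval $[\delta^{-1},\delta]$. On that interval $x^{u-1}=\rme^{(u-1)\log x}$ is entire in $u$, and it is bounded uniformly for $u$ in compact sets. By differentiation under the integral sign (or Morera's theorem), $\widetilde{G}$ is therefore entire. The same compactness gives the trivial bound
\[
|\widetilde{G}(\sigma+\rmi t)|\leq \int_{\delta^{-1}}^{\delta}|G(x)|x^{\sigma-1}\rmd x\ll_{\sigma,\delta}\|G\|_1 ,
\]
since $x^{\sigma-1}\leq \max(\delta^{\sigma-1},\delta^{1-\sigma})$ on the support.

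\emph{Decay.} For $u\neq 0$, integrate by parts using $x^{u-1}=\frac{\rmd}{\rmd x}(x^u/u)$. The boundary terms vanish because of the compact support, so
\[
\widetilde{G}(u)=-\frac{1}{u}\int_0^{+\infty}G'(x)\,x^{u}\,\rmd x=-\frac{1}{u}\int_0^{+\infty}\bigl(xG'(x)\bigr)x^{u}\frac{\rmd x}{x}.
\]
Iterating $k$ times gives $\widetilde{G}(u)=\frac{(-1)^k}{u(u+1)\cdots(u+k-1)}\int_0^{+\infty}G^{(k)}(x)\,x^{u+k-1}\rmd x$. On the support, $x^{\sigma+k-1}$ is bounded by a constant depending only on $\sigma,k,\delta$, so the integral is $\ll_{\sigma,k,\delta}\|G^{(k)}\|_1$.

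\emph{Converting to the stated form.} For the prefactor, bound $|u+j|\geq |t|$ for each $j$, which gives $\widetilde{G}(\sigma+\rmi t)\ll_{\sigma,k,\delta}\|G^{(k)}\|_1|t|^{-k}$. This is the useful bound when $|t|\geq 1$. When $|t|\leq 1$, use the trivial bound from the first step instead. Combining the two cases gives $\widetilde{G}(\sigma+\rmi t)\ll(\|G\|_1+\|G^{(k)}\|_1)(1+|t|)^{-k}\leq \|G\|_{k,1}(1+|t|)^{-k}$. Taking $k=\lceil A\rceil$ and using $(1+|t|)^{-\lceil A\rceil}\leq(1+|t|)^{-A}$ yields the claimed estimate.

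The only point needing care is near the poles of $1/(u(u+1)\cdots)$, i.e.\ small $|t|$ with $\sigma$ a nonpositive integer. Splitting into the two regimes $|t|\leq1$ and $|t|\geq1$ handles this, so there is no real obstacle.
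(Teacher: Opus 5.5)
Your proof is correct: compact support in $\lopen\delta^{-1},\delta\ropen$ gives entireness and the trivial bound $\ll_{\sigma,\delta}\|G\|_1$, and $k=\lceil A\rceil$ integrations by parts together with the split at $|t|=1$ give exactly the stated dependence on $\|G\|_{\lceil A\rceil,1}$. The paper does not reprove this lemma but imports it from \cite{cheng2025c}, and your repeated integration by parts is the standard argument for this statement.
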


For $G\in C_c^\infty(\lopen\delta^{-1},\delta\ropen)$ and $K,\sigma\in \RR$, we define the norm
\[
\|G\|_{M^K_\sigma}=\int_{(\sigma)}|\widetilde{G}(u)|(1+|u|)^{K}\rmd |u|.
\]
\begin{corollary}\label{cor:mellinnorm}
For any $G\in C_c^\infty(\lopen\delta^{-1},\delta\ropen)$ and $K\geq -1$, we have
\[
\|G\|_{M^K_\sigma}\ll \|G\|_{\lfloor K\rfloor +2,1},
\]
and for $K<-1$ we have
\[
\|G\|_{M^K_\sigma}\ll \|G\|_{1},
\]
where the implied constants depend only on $K,\sigma$ and $\delta$.
\end{corollary}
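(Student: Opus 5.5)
The plan is to feed the pointwise decay bound of \autoref{lem:grapiddeacy} into the definition of $\|G\|_{M^K_\sigma}$ and integrate along the line $(\sigma)$. I first parametrize $u=\sigma+\rmi t$, so that $\rmd|u|=\rmd t$. Since $\sigma$ is fixed, $1+|u|\asymp_\sigma 1+|t|$, and hence $(1+|u|)^K\asymp_{\sigma,K}(1+|t|)^K$ for either sign of $K$. Therefore
\[
\|G\|_{M^K_\sigma}\ll_{\sigma,K}\int_{-\infty}^{+\infty}|\widetilde{G}(\sigma+\rmi t)|(1+|t|)^{K}\rmd t .
\]

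\emph{Case $K\geq -1$.} I apply \autoref{lem:grapiddeacy} with $A=\lfloor K\rfloor+2$. This is a nonnegative integer, so $\lceil A\rceil=A$, and it gives
\[
\widetilde{G}(\sigma+\rmi t)\ll_{\sigma,K,\delta}\|G\|_{\lfloor K\rfloor+2,1}(1+|t|)^{-\lfloor K\rfloor-2}.
\]
The integrand is then bounded by $\|G\|_{\lfloor K\rfloor+2,1}(1+|t|)^{K-\lfloor K\rfloor-2}$. Since $0\leq K-\lfloor K\rfloor<1$, the exponent is strictly less than $-1$. Hence $\int_\RR(1+|t|)^{K-\lfloor K\rfloor-2}\rmd t$ is finite, and it is bounded uniformly because the exponent is at most $-1-\eta$ with $\eta=1-(K-\lfloor K\rfloor)>0$ depending only on $K$. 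This yields the first claim.

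\emph{Case $K<-1$.} Here no decay of $\widetilde{G}$ is needed, since $(1+|t|)^K$ is already integrable. I use only the trivial bound
\[
|\widetilde{G}(\sigma+\rmi t)|\leq\int_{\delta^{-1}}^{\delta}|G(x)|x^{\sigma-1}\rmd x\leq \max_{x\in[\delta^{-1},\delta]}x^{\sigma-1}\,\|G\|_1\ll_{\sigma,\delta}\|G\|_1,
\]
which is the $A=0$ case of \autoref{lem:grapiddeacy}. Integrating against $(1+|t|)^K$ gives $\|G\|_{M^K_\sigma}\ll_{K,\sigma,\delta}\|G\|_1$.

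There is no real obstacle in this argument. The only points needing care are choosing $A$ so that it is an integer and the exponent $K-A$ lies strictly below $-1$, and checking that all implied constants depend only on $K$, $\sigma$ and $\delta$. The first point is exactly why the statement uses the index $\lfloor K\rfloor+2$ rather than $\lceil K\rceil+1$: the latter fails when $K$ is an integer.
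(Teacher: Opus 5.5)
Your proposal is correct: integrating the vertical decay bound of \autoref{lem:grapiddeacy} with $A=\lfloor K\rfloor+2$ against $(1+|t|)^K$, and using the trivial $L^1$ bound when $K<-1$, is exactly how the corollary follows from that lemma. The paper itself gives no proof here and defers to \cite{cheng2025c}.
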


\section{A second Poisson summation}\label{sec:secondpoisson}
In \cite[Section 4]{cheng2025} with $\vartheta=1/2$, we computed that
\[
I_{\el}(f^n)+\Sigma^n(\square)=\Sigma^n(\xi)=\sum_{\pm}\sum_{\nu\in \ZZ^r}\sum_{k,f\in \ZZ_{(S)}^{>0}}\frac{1}{k^2f^3}\sum_{\xi\in \ZZ^S}\Kl_{k,f}^S(\xi, \pm nq^\nu)I_{k,f}(\xi, \pm nq^\nu),
\]
where (cf. \cite[Section 3]{cheng2025c})
\begin{align*}
&I_{k,f}(\xi,m)=2\int_{x\in\RR}\int_{y\in\QQ_{S_\fin}}\theta_\infty(x,m)\theta_{q}(y, m)\left[F\legendresymbol{kf^2}{|x^2-4m|_\infty^{1/2}|y^2-4m|_q'^{1/2}}\right.\\
+&\left.\frac{kf^2}{\sqrt{|x^2-4m|_\infty|y^2-4m|_q'}} V\legendresymbol{kf^2}{|x^2-4m|_\infty^{1/2} |y^2- 4m|_q'^{1/2}}\right]\rme\legendresymbol{-x\xi }{kf^2}\rme_{q}\legendresymbol{-y\xi}{kf^2}\rmd x\rmd y.
\end{align*}

Our goal in the following sections is to an asymptotic formula of
\[
S(X):=\sum_{\substack{n<X \\ \gcd(n,S)=1}}\Sigma^{n^2}(\xi).
\]

Let $K=\lfloor\log_2 X\rfloor$. Then we have
\begin{align*}
  S(X) & =\sum_{j=1}^{K}\sum_{\substack{2^{j-1}\leq n<2^{j}\\n\in \ZZ_{(S)}^{>0}}}\sum_{\pm}\sum_{\nu\in \ZZ^r}\sum_{k,f\in \ZZ_{(S)}^{>0}}\sum_{\xi\in \ZZ^S}\frac{1}{k^2f^3}\Kl_{k,f}^S(\xi,\pm n^2q^\nu)I_{k,f}(\xi,\pm n^2q^\nu)\\
  &+\sum_{\substack{2^{K}\leq n<X\\n\in \ZZ_{(S)}^{>0}}}\sum_{\pm}\sum_{\nu\in \ZZ^r}\sum_{k,f\in \ZZ_{(S)}^{>0}}\sum_{\xi\in \ZZ^S}\frac{1}{k^2f^3}\Kl_{k,f}^S(\xi,\pm n^2q^\nu)I_{k,f}(\xi,\pm n^2q^\nu).
\end{align*}
Now we define
\[
S(X/2,X)=\sum_{\substack{X/2\leq n<X\\n\in \ZZ_{(S)}^{>0}}}\sum_{\pm}\sum_{\nu\in \ZZ^r}\sum_{k,f\in \ZZ_{(S)}^{>0}}\sum_{\xi\in \ZZ^S}\frac{1}{k^2f^3}\Kl_{k,f}^S(\xi,\pm n^2q^\nu)I_{k,f}(\xi,\pm n^2q^\nu).
\]

Observe that $\ZZ^S-\{0\}=\{\pm 1\}\times \ZZ_{(S)}^{>0}\times q^{\ZZ^r}$ Hence the quotient
\[
(\{\pm 1\}\times (\ZZ_{(S)}^{>0})^2\times q^{\ZZ^r})/(\ZZ^S-\{0\})^2
\]
is $\{\pm 1\}\times q^{\FF_2^r}$. Moreover, the map $\ZZ^S-\{0\}\to(\ZZ^S-\{0\})^2, a\mapsto a^2$ is a $2:1$ map. Hence
\[
S(X/2,X)=\frac12\sum_{c\in \{\pm 1\}\times q^{\FF_2^r}}\sum_{\substack{X/2\leq |n^{(q)}|<X\\n\in \ZZ^S}}\sum_{k,f\in \ZZ_{(S)}^{>0}}\sum_{\xi\in \ZZ^S}\frac{1}{k^2f^3}\Kl_{k,f}^S(\xi,cn^2)I_{k,f}(\xi,cn^2).
\]
Similarly, the last term can be written as
\[
S(2^K,X)=\frac12\sum_{c\in \{\pm 1\}\times q^{\FF_2^r}}\sum_{\substack{2^{K}\leq |n^{(q)}|<X\\n\in \ZZ^S}}\sum_{k,f\in \ZZ_{(S)}^{>0}}\sum_{\xi\in \ZZ^S}\frac{1}{k^2f^3}\Kl_{k,f}^S(\xi,cn^2)I_{k,f}(\xi,cn^2).
\]

We will approximate the following two sums by using 
\[
S_G(X)=\sum_{\substack{n=1\\\gcd(n,S)=1}}^{+\infty}G\legendresymbol{n}{X}\Sigma^{n^2}(\xi)= \frac12 \sum_{c\in \{\pm 1\}\times q^{\FF_2^r}}\sum_{n\in \ZZ^S}G\legendresymbol{|n^{(q)}|}{X}\sum_{k,f\in \ZZ_{(S)}^{>0}}\sum_{\xi\in \ZZ^S}\frac{1}{k^2f^3}\Kl_{k,f}^S(\xi,cn^2)I_{k,f}(\xi,cn^2)
\]
that is roughly $\triv_{[1/2,1]}$ in the first case and is roughly $\triv_{[2^K/X,1]}$ in the second case. 

The main task of the following sections is to estimate $S_G(X)$. (See \autoref{thm:contributionxig} for the final result.) We assume that $G\in C_c^\infty(\lopen 1/4,5/4\ropen)$ in the following.

\subsection{A second Poisson summation}\label{subsec:secondpoisson}
By the definition of the Kloosterman sum, we have
\[
\Kl_{k,f}^S(\xi,cn^2)=
  \sum_{\substack{a \bmod kf^2\\ a^2-4cn^2\equiv 0\,(f^2)}}\legendresymbol{(a^2-4cn^2)/f^2}{k}\rme\legendresymbol{a\xi}{kf^2}\rme_{q}\legendresymbol{a\xi}{kf^2}.
\]
Hence $\Kl_{k,f}^S(\xi,cn^2)$ is $kf^2$-periodic with respect to the variable $n$. Also, by the definition of $I_{k,f}(\xi,n)$, we have
\begin{align*}
&I_{k,f}(\xi,cn^2)=2\int_{x\in\RR}\int_{y\in\QQ_{S_\fin}}\theta_\infty(x,cn^2)\theta_{q}(y, cn^2)\left[F\legendresymbol{kf^2}{|x^2-4cn^2|_\infty^{1/2}|y^2-4cn^2|_q'^{1/2}}\right.\\
+&\left.\frac{kf^2}{\sqrt{|x^2-4cn^2|_\infty|y^2- 4cn^2|_q'}}V\legendresymbol{kf^2}{|x^2-4cn^2|_\infty^{1/2}|y^2- 4cn^2|_q'^{1/2}}\right]\rme\legendresymbol{-x\xi}{kf^2}\rme_{q}\legendresymbol{-y\xi}{kf^2}\rmd x\rmd y.
\end{align*}

For any $c\in \{\pm 1\}\times q^{\FF_2^r}$, we consider the following function defined on $(a,b)=(a,b_1,\dots,b_r)\in \QQ_S=\RR\times \QQ_{q_1}\times\dots\times \QQ_{q_r}$:
\begin{align*}
&{}_cJ_{k,f}(X,\xi,\eta,a,b)=2G\legendresymbol{|a|_\infty|b|_q}{X}\int_{x\in\RR}\int_{y\in\QQ_{S_\fin}} \theta_\infty(x,ca^2)\theta_{q}(y,cb^2)\left[F\legendresymbol{kf^2}{|x^2-4ca^2|_\infty^{1/2}|y^2- 4cb^2|_q'^{1/2}}\right.\\
+&\left.\frac{kf^2}{\sqrt{|x^2-4ca^2|_\infty|y^2- 4cb^2|_q'}}V\legendresymbol{kf^2}{|x^2-4ca^2|_\infty^{1/2}|y^2- 4cb^2|_q'^{1/2}}\right]\rme\legendresymbol{-x\xi}{kf^2}\rme_{q}\legendresymbol{-y\eta}{kf^2}\rmd x\rmd y.
\end{align*}
\begin{lemma}\label{lem:secondsmooth}
${}_cJ_{k,f}(X,\xi,\eta,a,b)$ is a smooth and compactly supported function for $(a,b)\in \QQ_S$.
\end{lemma}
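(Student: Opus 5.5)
The function ${}_cJ_{k,f}(X,\xi,\eta,a,b)$ factors as a product of the cutoff $G(|a|_\infty|b|_q/X)$ and a double integral. The plan is to handle compact support through the cutoff and the supports of the $\theta$'s, and smoothness through local constancy at the finite places and differentiation under the integral at the real place.

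\textbf{Compact support.} Since $G\in C_c^\infty(\lopen 1/4,5/4\ropen)$, the factor $G(|a|_\infty|b|_q/X)$ vanishes unless $X/4<|a|_\infty|b|_q<5X/4$. This alone does not bound each coordinate, so we also use the supports of the orbital integrals. For $\theta_q(y,cb^2)=\prod_i\theta_{q_i}(y_i,c b_i^2)$ to be nonzero, $f_{q_i}$ must meet the conjugacy class with determinant $cb_i^2$. Because $f_{q_i}\in C_c^\infty(\G(\QQ_{q_i}))$, $\det$ on its support is bounded above and below in $|\cdot|_{q_i}$. Hence $|b_i|_{q_i}$ lies in a fixed compact subset of $\lopen 0,\infty\ropen$, so $b$ ranges over a compact set avoiding $0$ in each coordinate. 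Then $|a|_\infty\asymp X$, so $a$ also lies in a compact set avoiding $0$. (By contrast, $\theta_\infty$ is $Z_+$-invariant and gives no control on $a$; the cutoff $G$ supplies it.)

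\textbf{Smoothness in $b$.} At the finite places we need local constancy in $b$. Write $b_i=b_i^0 c_i$ with $c_i\in 1+q_i^{L}\ZZ_{q_i}$, where $L$ is as in \autoref{lem:orbitalintegralsmooth}, and substitute $y_i\mapsto c_iy_i$ in the integral. This is measure-preserving on $\QQ_{q_i}$. It leaves $\theta_{q_i}$ unchanged by \autoref{lem:orbitalintegralsmooth}, and leaves $|y_i^2-4cb_i^2|'_{q_i}$ unchanged, since the modified norm depends only on the valuation and on the unit class mod $4$, which is preserved when $L\ge 2$. It changes the character to $\rme_{q_i}(-c_iy_i\eta/(kf^2))$. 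Since $y$ ranges over a compact set (the support of $\theta_q$ is bounded), $(c_i-1)y_i\eta/(kf^2)\in\ZZ_{q_i}$ once $c_i$ is close enough to $1$, depending on $k,f,\eta$. The $q_i$-norms of $b$ and $G$ are likewise unchanged, so $J$ is locally constant in $b$.

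\textbf{Smoothness in $a$.} At the archimedean place, use $Z_+$-invariance: substitute $x=|a|x'$, giving $\theta_\infty(x,ca^2)=\theta_\infty(x',\pm c)$ (sign fixed on each half line) and $|x^2-4ca^2|^{1/2}=|a|\,|x'^2-4c|^{1/2}$. All $a$-dependence then sits in $G(\cdot)$, the Jacobian $|a|$, the phase $\rme(-|a|x'\xi/(kf^2))$, and the arguments of $F$ and $V$ together with the prefactor $kf^2/(|a|\cdot\ldots)$. Since $F,V\in\cS$ (\autoref{prop:propertyf}, \autoref{prop:propertyh}) are smooth on $\lopen 0,\infty\ropen$, and $a$ stays away from $0$, each of these is smooth in $a$. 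We may differentiate under the integral sign provided the derivatives are dominated by integrable functions in $(x',y)$.

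\textbf{Main obstacle.} The key step is this domination near the singular locus $x'^2=4c$ (and $y_i^2=4cb_i^2$), where the argument $t=kf^2/(|a|\cdots)$ of $F$ and $V$ blows up. Derivatives in $a$ produce factors like $t\,F'(t)$ or $t^mV^{(m)}(t)$ together with the explicit prefactor. Rapid decay of $F^{(m)},V^{(m)}$ at $+\infty$ makes these bounded, and in fact small, as $|x'^2-4c|\to 0$. On the other side, $\theta_{\infty,\sigma}$ has only the mild singularity $|x'^2-4c|^{\sigma/2}\varphi_\sigma$ with $\varphi_\sigma$ bounded and of bounded support. The integrand and all its $a$-derivatives are therefore bounded by a compactly supported integrable majorant, uniformly for $a$ in compacta. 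This justifies differentiation and yields smoothness.
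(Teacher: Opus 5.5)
Your proof is correct. The paper gives no argument of its own here: it only says the proof is similar to \cite[Lemma 3.1]{cheng2025c} and leaves it to the reader. Your proposal supplies exactly the expected ingredients: compact support from the cutoff $G$ together with the compact determinant range on the support of $f_{q_i}$, local constancy in $b$ from the $(1+q_i^L\ZZ_{q_i})$-invariance of $\theta_{q_i}$, and smoothness in $a$ from $Z_+$-invariance plus the rapid decay of $F$ and $V$.

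Two small points. After the substitution $x=|a|x'$ you get exactly $\theta_\infty(x',c)$, with no sign ambiguity, since $ca^2$ has the sign of $c$. Also, your majorant is bounded because $x'$, $y$ and $|a|$ all range over bounded sets, so the argument $t$ of $F$ and $V$ is bounded below, while rapid decay controls $t\to+\infty$; this lower bound is worth stating explicitly.
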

\begin{proof}
The proof is similar to that of \cite[Lemma 3.1]{cheng2025c}. We leave it to the reader.
\end{proof}
Clearly $|n^{(q)}|=|n|_\infty|n|_q$. Hence
\[
S_G(X)=\frac12\sum_{c\in \{\pm 1\}\times q^{\FF_2^r}}\sum_{k,f\in \ZZ_{(S)}^{>0}}\frac{1}{k^2f^3}\sum_{\xi,n\in \ZZ^S}J_{k,f}(X,\xi,n)\Kl_{k,f}^S(\xi,cn^2),
\]
where $J_{k,f}(X,\xi,n)=J_{k,f}(X,\xi,\xi,n,n)$.

By \autoref{lem:secondsmooth} we can apply the Poisson summation formula to the above sum. Since $\Kl_{k,f}^S(\xi,cn^2)$ is $kf^2$-periodic for $n$, by Corollary B.4 of \cite{cheng2025} we obtain
\[
S_G(X)=\frac12\sum_{c\in \{\pm 1\}\times q^{\FF_2^r}}\sum_{k,f\in \ZZ_{(S)}^{>0}}\frac{1}{k^3f^5}\sum_{\xi,\alpha\in \ZZ^S}{}_c\widehat{J}_{k,f}(X,\xi,\alpha){}^2_c\widehat{\Kl}_{k,f}^S(\xi,\alpha)
\]
and the sum converges absolutely, where
\begin{align*}
&\prescript{}{c}{\widehat{J}}_{k,f}(X,\xi,\eta,\alpha,\beta)=\int_{(x,a)\in\RR^2}\int_{(y,b)\in\QQ_{S_\fin}^2}G\legendresymbol{|a|_\infty|b|_q}{X} \theta_\infty(x,ca^2)\theta_{q}(y,cb^2)\left[F\legendresymbol{kf^2}{|x^2-4ca^2|_\infty^{1/2}|y^2- 4cb^2|_q'^{1/2}}\right.\\
&+\left.\frac{kf^2}{\sqrt{|x^2-4ca^2|_\infty|y^2- 4cb^2|_q'}}V\legendresymbol{kf^2}{|x^2-4ca^2|_\infty^{1/2}|y^2- 4cb^2|_q'^{1/2}}\right]\rme\legendresymbol{-x\xi-a\alpha}{kf^2}\rme_{q}\legendresymbol{-y\eta-b\beta}{kf^2}\rmd x\rmd a\rmd y\rmd b,
\end{align*}
$_c\widehat{J}_{k,f}(X,\xi,\alpha)=\prescript{}{c}{\widehat{J}}_{k,f}(X,\xi,\xi,\alpha,\alpha)$, and the \emph{transformed Kloosterman sum} is defined as
\begin{equation}\label{eq:transformedkloosterman}
\prescript{2}{c}{\Kl}_{k,f}^S(\xi,\alpha)=\sum_{m\bmod kf^2}\Kl_{k,f}^S(\xi,m)\rme\legendresymbol{m\alpha}{kf^2}\rme_q\legendresymbol{m\alpha}{kf^2}.
\end{equation}

\subsection{Simplification of the integral $\prescript{}{c}{\widehat{J}}_{k,f}(X,\xi,\eta,\alpha,\beta)$}
As in the standard representation case \cite{cheng2025c}, we need to a transformation on the $\QQ_S$-point of the Hitchin-Steinberg base $\mf{g}\sslash \G$. In our cases, the transformation is simply multiplying $kf^2$. More precisely, we make the transformation
\[
x\mapsto kf^2 x,\qquad y\mapsto kf^2y,\qquad a\mapsto kf^2 a,\qquad b\mapsto kf^2b. 
\]
The Jacobian of this transformation is $|kf^2|_\infty^2|kf^2|_q^2=k^2f^4$. Hence we obtain

\begin{align*} 
&\prescript{}{c}{\widehat{J}}_{k,f}(X,\xi,\eta,\alpha,\beta)=\int_{(x,a)\in\RR^2}\int_{(y,b)\in\QQ_{S_\fin}^2} k^2f^4 G\legendresymbol{kf^2|a|_\infty|b|_q}{X} \theta_\infty(kf^2x,k^2f^4ca^2)\\
\times&\theta_{q}(kf^2y,k^2f^4cb^2)\left[F\legendresymbol {1}{|x^2-4ca^2|_\infty^{1/2}|y^2- 4cb^2|_q'^{1/2}}+\frac{1}{|x^2-4ca^2|_\infty^{1/2}|y^2- 4cb^2|_q'^{1/2}}\right.\\
\times&\left.V\legendresymbol{1}{|x^2-4ca^2|_\infty^{1/2}|y^2- 4cb^2|_q'^{1/2}}\right]\rme(-x\xi-a\alpha)\rme_{q}(-y\eta-b\beta)\rmd x\rmd a\rmd y\rmd b.
\end{align*}

Since $f_\infty$ is $Z_+$-invariant, we have $\theta_\infty(zT,z^2N)=\theta_\infty(T,N)$ for $z\in \RR_{>0}$. Hence $\theta_\infty(kf^2x,k^2f^4ca^2)=\theta_\infty(x,ca^2)$, which is independent of $k$ and $f$. 

The nonarchimedean case is a bit different from the archimedean one. 
By Lemma 2.2 of \cite{cheng2025c}, for each $i\in \{1,\dots,n\}$ there exists $L_i\in \ZZ_{>0}$ such that for any $z\in 1+q_i^{L_i}\ZZ_{q_i}$ we have
$\theta_{q_i}(zT,z^2N)=\theta_{q_i}(T,N)$.
Hence
\[
\theta_{q}(zT,z^2N)=\theta_{q}(T,N)
\]
for any $z\in U=\prod_{i=1}^{r} (1+q_i^{L_i}\ZZ_{q_i})$, where the multiplication on $\QQ_{S_\fin}$ is defined componentwise.  By Fourier inversion formula,
\[
\theta_q(zy,z^2cb^2)=\prod_{i=1}^{r}(1-q_i^{-1})^{-1}\sum_{\chi} \widehat{\theta}_q(\chi,y,cb^2)\chi(z),
\]
where the Fourier transform is defined as
\[
\widehat{\theta}_q(\chi,T,N)=\int_{\ZZ_{S_\fin}^\times}\theta_q(zT,z^2N)\overline{\chi(z)}\rmd z.
\]
Moreover,  $\chi$ runs over all characters on $\ZZ_{S_\fin}^\times$ that is trivial on $U$. Since $\ZZ_{S_\fin}^\times/U$ is finite, the sum over $\chi$ is finite (which is independent of $T$ and $N$). We also consider them as Dirichlet characters such that $\chi(m)=0$ if $\gcd(m,S)\neq 1$. Therefore we obtain
\begin{equation}\label{eq:transformedj}
\begin{split} 
&\prescript{}{c}{\widehat{J}}_{k,f}(X,\xi,\eta,\alpha,\beta)=\prod_{i=1}^{r}(1-q_i^{-1})^{-1} \int_{(x,a)\in\RR^2}\int_{(y,b)\in\QQ_{S_\fin}^2} k^2f^4 G\legendresymbol{kf^2|a|_\infty|b|_q}{X} \theta_\infty(x,ca^2)\\
\times&\sum_{\chi}\widehat{\theta}_{q}(\chi,y,cb^2)\chi(kf^2)\left[F\legendresymbol {1}{|x^2-4ca^2|_\infty^{1/2}|y^2- 4cb^2|_q'^{1/2}}+\frac{1}{|x^2-4ca^2|_\infty^{1/2}|y^2- 4cb^2|_q'^{1/2}}\right.\\
\times&\left.V\legendresymbol{1}{|x^2-4ca^2|_\infty^{1/2}|y^2- 4cb^2|_q'^{1/2}}\right]\rme(-x\xi-a\alpha)\rme_{q}(-y\eta-b\beta)\rmd x\rmd a\rmd y\rmd b.
\end{split}
\end{equation}

\section{Contribution of the $\xi=0$ term}
Our main goal in this section is to give an asymptotic formula for
\[
S_G^{\xi=0}(X)=\frac12\sum_{c\in \{\pm 1\}\times q^{\FF_2^r}}\sum_{k,f\in \ZZ_{(S)}^{>0}}\frac{1}{k^3f^5}\sum_{\alpha\in \ZZ^S}\prescript{}{c}{J}_{k,f}(X,0,\alpha)\prescript{2}{c}{\Kl}_{k,f}^S(0,\alpha).
\]
For the $\xi=0$ term, we actually do not need the second Poisson summation. We write $S_G^{\xi=0}(X)$ as the original definition.
\[
S_G^{\xi=0}(X)=\sum_{\substack{n=1\\\gcd(n,S)=1}}^{+\infty}G\legendresymbol{n}{X}\Sigma^{n^2}(\xi=0).
\]

For simplicity we only consider the case that $v_1=\infty$ and $v_2=q_1$ in \autoref{ass:nonarchimedean}. The other cases are similar but with different results. We leave the computation to the reader.
\begin{theorem}\label{thm:contributionxi0}
Under \autoref{ass:nonarchimedean}, we have
\[
S_G^{\xi=0}(X)=\ms{A}\widetilde{G}(1) X+O(\|G\|_{3,1}X^{\frac12+\varepsilon})
\]
for any $\varepsilon>0$,
where
\begin{align*}
\ms{A}&=-\sum_{\chi^2=\triv}\sum_{\pm}\sum_{\nu\in \ZZ^r}\frac{2^{r+1}\uppi}{4\zeta^S(2)}\frac{\log q_1}{1-q_1^{-1}}\prod_{i=1}^{r}(1-q_i^{-1})\int_{X_1}\frac{\theta_\infty^\pm(x)}{|x^2\mp 1|_\infty^{1/2}}\rmd x \int_{\ZZ_{q}^\times}\int_{Y_{\epsilon^0}} \frac{\theta_{q}(y,\pm zq^\nu)}{|y^2\mp 4zq^\nu|_q'^{1/2}}\overline{\chi(z)}\rmd z\rmd y \\
&-\sum_{\chi^2=\triv}\sum_{\pm}\sum_{\nu\in \ZZ^r}\frac{2^{r}\uppi}{4\zeta^S(2)}\frac{(1-q_1^{-2})\log q_1}{1-q_1^{-1}}\prod_{i=1}^{r}(1-q_i^{-1})\int_{X_1}\frac{\theta_\infty^\pm(x)}{|x^2\mp 1|_\infty^{1/2}}\rmd x \int_{\ZZ_{q}^\times}\int_{Y_{\epsilon^{-1}}} \frac{\theta_{q}(y,\pm zq^\nu)}{|y^2\mp 4zq^\nu|_q'^{1/2}}\overline{\chi(z)}\rmd z\rmd y,
\end{align*}
where $\epsilon^0, \epsilon^{-1}\in \{0,\pm 1\}^r$ satisfy $\epsilon^{0}_1=0$, $\epsilon_1^{-1}=-1$, and $\epsilon_i^0=\epsilon_i^{-1}=1$ for $i=2,3,\dots,n$. The sum over $\chi$ is over all the quadratic characters such that it is unramified at places outside $S$. The implied constant depends only on $f_\infty$, $f_{q_i}$ and $\varepsilon$.
\end{theorem}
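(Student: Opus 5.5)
We work with the original form of $S_G^{\xi=0}(X)$, before the second Poisson summation:
\[
S_G^{\xi=0}(X)=\frac12\sum_{c\in \{\pm 1\}\times q^{\FF_2^r}}\sum_{n\in\ZZ^S}G\legendresymbol{|n^{(q)}|}{X}\sum_{k,f\in \ZZ_{(S)}^{>0}}\frac{1}{k^2f^3}\Kl_{k,f}^S(0,cn^2)I_{k,f}(0,cn^2).
\]
The idea is to Mellin-invert $G$ and pass to a Dirichlet series in $n$, then shift contours.

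\textbf{Step 1: separate the dependence on $n$.} Write $n^{(q)}$ for the prime-to-$S$ part of $n$, and absorb the $S$-part $q^\nu$ together with $c$ into the determinant $\pm n^2q^\nu$. Then:
\begin{itemize}
\item $Z_+$-invariance gives $\theta_\infty(x,\pm n^2q^\nu)=\theta_\infty^{\pm}(x/(2nq^{\nu/2}))$ after rescaling $x$.
\item \autoref{lem:orbitalintegralsmooth}, together with the character expansion of $\theta_q$ over $\ZZ_{S_\fin}^\times/U$, turns the dependence of $\theta_q(y,\pm n^2q^\nu)$ on the unit $n^{(q)}\in\ZZ_{S_\fin}^\times$ into a finite sum $\sum_\chi\widehat\theta_q(\chi,\cdot)\chi(n)$. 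Only $\chi(n^2)=\chi^2(n)$ occurs, which is why the final formula involves $\chi^2=\triv$ after averaging.
\item In the bracket $F(\cdot)+\frac{kf^2}{\sqrt{\cdot}}V(\cdot)$, the argument scales like $kf^2/n$. Writing $F$ and $V$ through their Mellin transforms (\autoref{prop:propertyf}, \autoref{prop:propertyh}) separates the variables.
\end{itemize}
The $n$-sum then becomes
\[
\frac{1}{\dpii}\int_{(\sigma)}\widetilde G(u)X^u\sum_{\chi}\sum_{\pm,\nu}\int_{(\sigma')}(\cdots)\, \mathcal{D}^{\pm}_\chi(u,s)\,\rmd s\,\rmd u ,
\]
where
\[
\mathcal D^{\pm}_\chi(u,s)=\sum_{n\in \ZZ_{(S)}^{>0}}\frac{\chi(n)}{n^{u-s}}\sum_{k,f}\frac{\Kl_{k,f}^S(0,\pm n^2q^\nu)}{k^{1+s}f^{1+2s}} .
\]
Here the factor $n$ coming from the rescaled $x$-integral is absorbed into the exponent.

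\textbf{Step 2: analytic continuation (\autoref{lem:kloostermandirichlet}).} Using Corollary 5.6 of \cite{cheng2025}, the sum over $k,f$ of $\Kl^S_{k,f}(0,m)$ is an Euler product. For $m=\pm n^2q^\nu$ this product factors as a ratio of $\zeta^S$ and $L^S(\cdot,\chi)$ values in $s$, times local factors at primes dividing $n$. Summing over $n$ then gives, up to an absolutely convergent Euler product on $\Re(u-s)>1/2$, an expression of the shape $\zeta^S(u-s)\zeta^S(2s)/\zeta^S(2)$-type factors.

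\textbf{Step 3: contour shifting in $u$.} Shift $u$ from $\sigma>1$ to $\Re u=1/2+\varepsilon$. The only pole crossed is at $u=1$ (with $s$ near $0$), coming from the $\zeta^S$ factor paired with $\chi^2=\triv$. Its residue gives $\widetilde G(1)X$ times the constants $\frac{1}{\zeta^S(2)}$, $2^{r}$ (from counting $c$ and $\nu$), and the orbital-integral averages $\int_{X_1}$ and $\int_{Y_\epsilon}$.

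The shifted integral is $\ll X^{1/2+\varepsilon}\|G\|_{M^{K}_{1/2+\varepsilon}}$. By \autoref{cor:mellinnorm} this is $\ll\|G\|_{3,1}X^{1/2+\varepsilon}$, provided the $s$-integrand and the $n$-Dirichlet series grow at most polynomially in $|\Im u|$. Convexity bounds for $\zeta^S$ and $L^S$ give this polynomial growth.

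\textbf{Step 4: identify $\ms A$ (the main obstacle).} \autoref{ass:nonarchimedean} enters here. Supercuspidality of $f_\infty$ kills the hyperbolic ($X_0$) part, leaving only $\int_{X_1}$. The condition at $q_1$ means $\theta_{q_1}$ has no $\lambda_1$-germ at the center in \eqref{eq:shalikalocal}, so the would-be double pole at $s=0$ degenerates. What survives is a derivative term, and that derivative term produces the factor $\log q_1$, namely the derivative at $s=0$ of the local factor $(1-\epsilon_1q_1^{s-1})/(1-\epsilon_1q_1^{-s})$ in $V_{\iota,\epsilon}$.

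The delicate point is the bookkeeping of this residue:
\begin{itemize}
\item Only $\epsilon_1\in\{0,-1\}$ contribute a nonzero derivative. These give the two lines of $\ms A$, with the relative factor $(1-q_1^{-2})/2$ for $\epsilon_1=-1$.
\item For $i\geq2$ the factors at $s=0$ equal $1$ exactly when $\epsilon_i=1$, which explains $\epsilon^0_i=\epsilon^{-1}_i=1$.
\item The $F$-part and the $V$-part residues must combine, via oddness of $\widetilde F$, into the stated constant $-\frac{\uppi}{4}$.
\end{itemize}
I expect this step to be the main difficulty, and would carry it out by expanding every factor to first order at $s=0$.
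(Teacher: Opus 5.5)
Your framework is the paper's: Mellin inversion in $G$, $F$ and $V$, the character expansion, \autoref{lem:kloostermandirichlet}, contour shifts, and supercuspidality at $\infty$ killing the $X_0$ part. But Step 3 has a genuine gap. The pole at $u=1$ is not the only one, and the poles you omit give terms larger than $\ms{A}X$.

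The singularity that drives the computation is the pole of $L^S(u-s,\chi^2)$ along $s=u-1$ in the $V$-part. In the $F$-part it lies along $s=u-2$, since the lemma enters there as $\frac{\zeta^S(2s+2)}{\zeta^S(s+2)}L^S(u-s-1,\chi^2)\cdots$. The paper moves the $s$-contour to the right across this pole. The residue is then a single $u$-integral carrying $\widetilde F(u-2)\frac{\zeta^S(2u-2)\zeta^S(2u-1)}{\zeta^S(2u)}$ in the $F$-part, and $\widetilde F(u-1)\frac{\zeta^S(2u-2)\zeta^S(2u-1)}{\zeta^S(2u)}$ times the $\Gamma$-ratio and local factors of $V_{\iota,\epsilon}$ in the $V$-part. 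Shifting this integral to $\Re u=\frac12+\varepsilon$ crosses $u=2$ and $u=\frac32$ as well as $u=1$.

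\begin{itemize}
\item The $X^2$ term, from the pole of $\widetilde F(u-2)$, vanishes by \autoref{lem:supercuspidaltrivialvanish}.
\item The two $X^{3/2}$ terms, from the pole of $\zeta^S(2u-2)$, cancel each other. At $s=u-1=\frac12$ the factors $\uppi^{\frac12-s}$, $\Gamma(\frac{\iota+s}{2})/\Gamma(\frac{\iota+1-s}{2})$ and $(1-\epsilon_iq_i^{s-1})/(1-\epsilon_iq_i^{-s})$ all equal $1$. So summing over $\iota,\epsilon$ rebuilds the $F$-part term with $\widetilde F(\frac12)=-\widetilde F(-\frac12)$ in place of $\widetilde F(-\frac12)$.
\item This is where oddness of $\widetilde F$ is used, not at $u=1$. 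At $u=1$ the $F$-part contributes nothing, because its residue contains $\zeta^S(0)=0$ (as $r\geq 1$). So $\ms{A}$ comes from the $V$-part alone, and its $\uppi$ is $\uppi^{\frac12}\Gamma(\frac12)$.
\end{itemize}

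Separately, for $\chi^2=\triv$ the factor $L^S(u,\chi^2)$ has its own pole at $u=1$. It is crossed when the leftover double integral is pushed to $\Re u=\frac12$. The paper asserts the integrand is holomorphic there, so in any version this residue must be computed or shown to cancel.

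Step 4 also misplaces the mechanism, and as written it would give the wrong constants. At $u=1$ the $V$-part has:
\begin{itemize}
\item simple poles from $\widetilde F(u-1)$ and from $\zeta^S(2u-1)$;
\item a third pole from $\Gamma(\frac{\iota}{2}+\frac{u-1}{2})$ when $\iota=0$;
\item for each $i$, the factor $\frac{(1-\epsilon_iq_i^{u-2})(1-q_i^{-2u+2})}{1-\epsilon_iq_i^{-u+1}}$. Here $1-q_i^{-2u+2}$ is the Euler factor removed from $\zeta^S(2s)$ in \autoref{lem:kloostermandirichlet}. This factor has a simple zero at $u=1$ if $\epsilon_i\neq 1$, and tends to $2(1-q_i^{-1})$ if $\epsilon_i=1$.
\end{itemize}

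The assumption enters only through \autoref{lem:hyperbolicvanish}. The vanishing $\theta_\infty^\pm=0$ on $X_0$ removes $\iota=0$, and $\theta_{q_1}=0$ on $Y_1$ removes $\epsilon_1=1$. No germ is involved: the local integrals are holomorphic for $\Re s>-1$ by \autoref{lem:integralsmooth}. The $q_1$-factor then supplies the one zero that makes the pole simple. The factor $\log q_1$ is the derivative of that factor at $u=1$, namely $2\log q_1$ for $\epsilon_1=0$ and $(1+q_1^{-1})\log q_1$ for $\epsilon_1=-1$. Any $\epsilon_i\neq 1$ with $i\geq 2$ adds another zero, which is why $\epsilon_i^0=\epsilon_i^{-1}=1$.

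Your version runs the other way. The ratio $(1-\epsilon_1q_1^{s-1})/(1-\epsilon_1q_1^{-s})$ is identically $1$ for $\epsilon_1=0$, so its derivative cannot produce the $\epsilon^0$ line. For $\epsilon_i=1$ that ratio has a pole at $s=0$, not the value $1$.

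Finally, \autoref{lem:kloostermandirichlet} is an exact identity, with no absolutely convergent remainder near $\Re u=\frac12$. Bounding the shifted integral needs the functional equation for $L^S(u-s-1,\chi^2)$ to the left of $0$, and a bound for $1/L^S(2u,\chi^4)$ on $\Re(2u)=1$; convexity alone does not give these.
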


\subsection{Preparations}

\begin{lemma}\label{lem:kloostermandirichlet}
Let $\chi$ be a Dirichlet character which is unramified at the primes that do not belong to $S$, then for any complex numbers $s,u$,
\[
\sum_{n\in \ZZ_{(S)}^{>0}}\frac{\chi(n)}{n^{u-s}}\sum_{k,f\in \ZZ_{(S)}^{>0}}\frac{\Kl_{k,f}^S(0,\pm n^2q^\nu)}{k^{1+s}f^{1+2s}}=\frac{\zeta^S(2s)}{\zeta^S(s+1)} \frac{L^S(u-s,\chi)L^S(u+s,\chi)L^S(u,\chi)}{L^S(2u,\chi^2)}
\]
whenever the series converges absolutely. 
\end{lemma}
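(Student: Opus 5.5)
The plan is to separate the $(k,f)$-sum from the $n$-sum, using the evaluation of the $\xi=0$ Kloosterman--Dirichlet series from \cite{cheng2025}, and then evaluate the remaining $n$-sum as an Euler product. The statement is an identity of absolutely convergent series, so every step is a formal rearrangement justified by absolute convergence. Both sides then extend by meromorphic continuation.

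\emph{Step 1 (inner series).} Fix $m=\pm n^2q^\nu$ with $n\in\ZZ_{(S)}^{>0}$. I would invoke Corollary 5.6 of \cite{cheng2025}, which rests on the computation in \cite[Section 5]{altug2015}. I expect it to take the form
\[
\sum_{k,f\in \ZZ_{(S)}^{>0}}\frac{\Kl_{k,f}^S(0,m)}{k^{1+s}f^{1+2s}}=\frac{\zeta^S(2s)}{\zeta^S(s+1)}\,{\bm\sigma}_{-s}\bigl(m^{(q)}\bigr),
\qquad {\bm\sigma}_{-s}(N)=\sum_{d\mid N}d^{-s}.
\]
If the corollary is stated in a slightly different normalization, I would rederive this from multiplicativity. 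By the Chinese remainder theorem, $\Kl^S_{k,f}(0,m)$ factors over primes $p\notin S$ into the local sums \eqref{eq:localgeneralizedkloosterman} with $\xi=0$. These local sums are counts of solutions of $a^2\equiv 4m \pmod{p^{2v}}$ weighted by a Legendre symbol, and they are computed explicitly in \cite[Section 5]{altug2015}. Summing the local factors over $u,v\ge 0$ yields $\frac{1-p^{-s-1}}{1-p^{-2s}}\sum_{j=0}^{v_p(m)}p^{-js}$. The units $\pm q^\nu$ do not affect any factor at $p\notin S$, so only $m^{(q)}=n^2$ enters.

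\emph{Step 2 (outer series).} Substituting $m^{(q)}=n^2$, the left-hand side becomes
\[
\frac{\zeta^S(2s)}{\zeta^S(s+1)}\sum_{n\in\ZZ_{(S)}^{>0}}\frac{\chi(n)\,{\bm\sigma}_{-s}(n^2)}{n^{u-s}}.
\]
Both $\chi$ and $n\mapsto{\bm\sigma}_{-s}(n^2)$ are multiplicative, so this sum is an Euler product over $p\notin S$. Write $x=\chi(p)p^{-(u-s)}$ and $y=p^{-s}$. The local factor is
\[
\sum_{j\ge0}x^j\,\frac{1-y^{2j+1}}{1-y}.
\]
This is the classical computation behind $\sum_N{\bm\sigma}_a(N^2)N^{-w}=\zeta(w)\zeta(w-a)\zeta(w-2a)/\zeta(2w-2a)$. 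Summing the two geometric series gives
\[
\frac{1+xy}{(1-x)(1-xy^2)}=\frac{1-x^2y^2}{(1-x)(1-xy)(1-xy^2)}.
\]
With $xy=\chi(p)p^{-u}$ and $xy^2=\chi(p)p^{-u-s}$, this is exactly the $p$-factor of $L^S(u-s,\chi)L^S(u,\chi)L^S(u+s,\chi)/L^S(2u,\chi^2)$. The hypothesis that $\chi$ is unramified outside $S$ ensures that $\chi(p)$ is a genuine unit for every $p\notin S$, so these are honest partial $L$-functions.

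\emph{Main obstacle.} The step needing care is Step 1: matching the normalization of Corollary 5.6 of \cite{cheng2025}, and confirming that for $m=\pm n^2q^\nu$ the local factor depends only on $v_p(n^2)$ and contains no extra Legendre-symbol term. At $\xi=0$ the quadratic character $\legendresymbol{m}{p}$ could a priori appear in the local Euler factors. For $m$ a square times an $S$-unit it equals $1$ at every $p\nmid n$. At $p\mid n$ I would check the local counts directly for the cases $v_p(m)$ even versus $u,v$ large. Step 2 is then routine.
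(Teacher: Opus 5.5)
Your proposal is correct and is essentially the paper's proof. The paper also quotes Corollary 5.6 of \cite{cheng2025} to get $\frac{\zeta^S(2s)}{\zeta^S(s+1)}\prod_{p\mid n,\,p\notin S}\frac{1-p^{-s(v_p(n^2)+1)}}{1-p^{-s}}$, which is your $\frac{\zeta^S(2s)}{\zeta^S(s+1)}\sum_{d\mid n^2}d^{-s}$, and then sums the same two geometric series in each Euler factor.

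One correction to your side remark: $\legendresymbol{\pm n^2q^\nu}{p}=\legendresymbol{\pm q^\nu}{p}$ need not equal $1$ for $p\nmid n$ (take $m=-n^2$ and $p\equiv 3 \pmod 4$). This is harmless, because a direct computation shows the local factor at an odd prime $p\nmid m$ is $\frac{1-p^{-1-s}}{1-p^{-2s}}$ for either value of the Legendre symbol.
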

\begin{proof}
By Corollary 5.6 of \cite{cheng2025}, the left hand side equals
\[
\sum_{n\in \ZZ_{(S)}^{>0}}\frac{\chi(n)}{n^{u- s}}\frac{\zeta^S(2s)}{\zeta^S(s+1)}\prod_{\substack{p\mid n\\p\notin S}}\frac{1-p^{-s(v_p(n^2)+1)}}{1-p^{-s}}=\frac{\zeta^S(2s)}{\zeta^S(s+1)}\prod_{p\notin S}\sum_{m=0}^{+\infty}\frac{\chi(p^m)}{p^{m(u-s)}}\frac{1-p^{-s(2m+1)}}{1-p^{-s}}.
\]
Since
\begin{align*}
\sum_{m=0}^{+\infty}\frac{\chi(p^m)}{p^{m(u-s)}}\frac{1-p^{-s(2m+1)}}{1-p^{-s}}&= \frac{1}{1-p^{-s}}\left[\sum_{m=0}^{+\infty}\legendresymbol{\chi(p)}{p^{u-s}}^m-p^{-s}\sum_{m=0}^{+\infty} \legendresymbol{\chi(p)}{p^{u+s}}^m\right]\\
&=\frac{1}{1-p^{-s}}\left[\frac{1}{1-\chi(p)p^{s-u}}-p^{-s}\frac{1}{1-\chi(p)p^{-s-u}}\right]\\
&=\frac{1+\chi(p)p^{-u}}{(1-\chi(p)p^{s-u})(1-\chi(p)p^{-s-u})}\\\
&= \frac{1-\chi(p)^2p^{-2u}}{(1-\chi(p)p^{s-u})(1-\chi(p)p^{-s-u})(1-\chi(p)p^{-u})},
\end{align*}
we obtain
\begin{align*}
\sum_{n\in \ZZ_{(S)}^{>0}}\frac{\chi(n)}{n^{u-s}}\sum_{k,f\in \ZZ_{(S)}^{>0}}\frac{\Kl_{k,f}^S(0,\pm n^2q^\nu)}{k^{1+s}f^{1+2s}}&=\frac{\zeta^S(2s)}{\zeta^S(s+1)} \prod_{p\notin S} \frac{1-\chi(p)^2p^{-2u}}{(1-\chi(p)p^{s-u})(1-\chi(p)p^{-s-u})(1-\chi(p)p^{-u})}\\
&=\frac{\zeta^S(2s)}{\zeta^S(s+1)} \frac{L^S(u-s,\chi)L^S(u+s,\chi)L^S(u,\chi)}{L^S(2u,\chi^2)}.\qedhere
\end{align*}
\end{proof}

\begin{lemma}\label{lem:supercuspidaltrivialvanish}
Suppose that $v\in S$ such that $f_v$ is supercuspidal, then
\begin{enumerate}[itemsep=0pt,parsep=0pt,topsep=0pt, leftmargin=0pt,labelsep=2.5pt,itemindent=15pt,label=\upshape{(\arabic*)}]
  \item If $v=\infty$ is archimedean, then 
  \[
  \sum_{\pm}\int_{\RR}\theta_\infty^\pm (x)\rmd x=0.
  \]
\item If $v=q_i$ is nonarhcimedean, then
\[
\sum_{\nu\in \ZZ}q_i^{\nu}\int_{z\in\ZZ_{q_i}^\times}\int_{y\in\QQ_{q_i}}\theta_{q}(y,z q_i^\nu)\rmd y\rmd z=0.
\]
\end{enumerate}
\end{lemma}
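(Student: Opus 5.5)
The plan is to recognize each integral as the integral of $f_v$ against a ``regular-orbit'' measure that can be rewritten, via the Weyl integration formula, as an integral of $f_v$ over a set of the form $\Z\N$. Supercuspidality then kills it.

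\emph{Archimedean case.} Write $\theta_\infty^\pm(x)=\theta_\infty(x,\pm 1/4)$ with
\[
\theta_\infty(\gamma)=\frac{|\gamma_1-\gamma_2|}{|\gamma_1\gamma_2|^{1/2}}\orb(f_\infty;\gamma).
\]
The factor $|\gamma_1-\gamma_2|/|\gamma_1\gamma_2|^{1/2}$ is exactly $|D(\gamma)|^{1/2}$, the Weyl discriminant. So $\theta_\infty$ is the normalized orbital integral, and by the $Z_+$-invariance it suffices to fix the determinant $\pm 1/4$.

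I would use the Weyl integration formula on $Z_+\bs\G(\RR)$, or equivalently the Harish-Chandra description of the trace/determinant fibration. This expresses $\int_{\RR}\theta_\infty^\pm(x)\,\rmd x$, up to a nonzero constant Jacobian, as
\[
\int_{\{g:\ \det g=\pm1/4\}} f_\infty(g)\,\rmd g.
\]
The elliptic and split contributions combine because the $|D|^{1/2}$ normalization converts the measure $\rmd T\,\rmd g_{\text{orbit}}$ into Haar measure on the determinant fibre.

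Next, summing over $\pm$ and integrating over $Z_+$ (trivially, by invariance) gives the integral of $f_\infty$ over all of $Z_+\bs\G(\RR)$. Using the Iwasawa decomposition $\G=\N\A\cK$, this equals
\[
\int_{\cK}\int_{\A}\int_{\N} f_\infty(nak)\,\rmd n\,\rmd a\,\rmd k ,
\]
and the innermost integral vanishes by supercuspidality (take $g=1$, $h=ak$).

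The main obstacle is matching constants and confirming that the $\theta$-normalization really gives Haar measure on each fibre. The safer route is to compare with the known local expansion from \autoref{thm:archimedeanintegral}. Alternatively one can observe that $\int\theta_\infty^\pm\,\rmd x$ is the Fourier transform at $0$ of the normalized orbital integral in the trace variable. This is the same quantity that appears as the $\xi=0$ term in \cite{cheng2025}, so it equals the identity-plus-unipotent contribution, i.e.\ an integral of $f_\infty$ over $g$ of fixed determinant.

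\emph{Nonarchimedean case.} The argument is identical at $q_i$. First, $\theta_{q_i}$ includes the factor $|\det\gamma|_p^{-1/2}p^{-k_\gamma}(1-\chi(p)/p)^{-1}$, which is the $p$-adic $|D(\gamma)|^{1/2}$ times the ratio of Tamagawa-type torus measures. Second, the weight $q_i^{\nu}$ together with $\rmd z$ over $\ZZ_{q_i}^\times$ reconstructs Haar measure in the determinant variable $N=zq_i^\nu$, since $\rmd N=|N|\,\rmd^\times N$ up to normalization. Hence the double sum and integral equal a constant times $\int_{\G(\QQ_{q_i})} f_{q_i}(g)\,\rmd g$, where the constant comes from $\rmd T\,\rmd N$ being the pushforward of Haar measure weighted by $|D|^{-1/2}$ via the Weyl formula.

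Finally, apply $\G=\N\A\cK$ again. The inner $\N$-integral vanishes by supercuspidality, so the whole expression is $0$. The delicate step is once more checking that the $(1-\chi(p)/p)^{-1}p^{-k_\gamma}$ normalization exactly cancels the torus volume factors in the Weyl integration formula. I would verify this by using the explicit orbital integral of the characteristic function of $\cK_p$, which is computed in \cite{cheng2025}, as a test case.
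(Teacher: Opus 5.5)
Your proposal follows the paper's proof: the paper quotes Propositions 7.1 and 7.3 of \cite{cheng2025} for the Weyl-integration identity you sketch (the $\theta$-integral is a constant multiple of $\int f_v$ over $Z_+\bs\G(\RR)$, resp.\ $\G(\QQ_{q_i})$), and then shows it vanishes via the Iwasawa decomposition and supercuspidality, exactly as you do. One harmless slip: on $|N|_{q_i}=q_i^{-\nu}$ one has $q_i^{\nu}\,\rmd z=|N|_{q_i}^{-2}\,\rmd N$, not $\rmd N$, but multiplying $f_v$ by any function of $\det g$ preserves supercuspidality, so the exact power of $|N|$ does not affect the vanishing; what does matter is that the normalization of $\theta_{q_i}$ be uniform over split, inert and ramified tori, which a single test function such as $\triv_{\cK_{q_i}}$ cannot confirm but which is exactly what the cited propositions supply.
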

\begin{proof}
(1) By Proposition 7.1 of \cite{cheng2025}, we have
\[
\sum_{\pm}\int_{\RR}\theta_\infty^\pm (x)\rmd x=\frac12\int_{Z_+\bs \G(\RR)}f_\infty(g)\rmd g.
\]
By Iwasawa decomposition, we have
\[
\int_{Z_+\bs \G(\RR)}f_\infty(g)\rmd g=\int_{Z_+\bs \A(\RR)}\int_{K}\int_{\N(\RR)}f(tnk)\rmd n\rmd k\rmd t,
\]
which vanishes by supercuspidality.

(2) By Proposition 7.3 of \cite{cheng2025}, we have
\[
\sum_{\nu\in \ZZ}q_i^{\nu}\int_{z\in\ZZ_{q_i}^\times}\int_{y\in\QQ_{q_i}}\theta_{q}(y,z q_i^\nu)\rmd y\rmd z=\sum_{\nu\in \ZZ}q_i^{2\nu}\int_{y\in \QQ_{q_i}}\int_{|z|_{q_i}= q_i^{-\nu}}\theta_{q_i}(y,z)\rmd y\rmd z=\left(1-\frac{1}{q_i}\right)\int_{\G(\QQ_{q_i})}f_{q_i}(g)\rmd g,
\]
which vanishes by a same argument.
\end{proof}

\begin{lemma}\label{lem:hyperbolicvanish}
Suppose that $v\in S$ such that all the hyperbolic orbital integral of $f_v$ vanishes (in particular, if $f_v$ is supercuspidal). Then
\begin{enumerate}[itemsep=0pt,parsep=0pt,topsep=0pt, leftmargin=0pt,labelsep=2.5pt,itemindent=15pt,label=\upshape{(\arabic*)}]
  \item If $v=\infty$ is archimedean, then for any sign $\pm$, $\theta_\infty^{\pm}(x)$ vanishes on $X_0$.
\item If $v=q_i$ is nonarhcimedean, then for any $N\in \QQ_{q_i}$, $\theta_{q_i}(y,N)$ vanishes on $Y_1$.
\end{enumerate}
\end{lemma}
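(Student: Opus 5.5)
The plan is to show that the hyperbolic (split) regular orbital integrals are exactly the ones measured by $\theta_v$ on $X_0$, resp.\ $Y_1$. After that, vanishing is immediate from the hypothesis, and the region near the center is handled by continuity.

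First, the dictionary between discriminants and tori.

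\emph{Archimedean case.} A regular $\gamma\in\G(\RR)$ with $T=\Tr\gamma$ and $N=\det\gamma$ is split (hyperbolic) if and only if $T^2-4N>0$, i.e.\ $\omega_\infty(T^2-4N)=0$. With $N=\pm1/4$ this means exactly $x^2\mp1>0$, i.e.\ $x\in X_0$ for the discriminant $x^2\mp 1$. (The paper's convention writes $\theta_\infty^\pm(x)$ with discriminant $x^2\mp1$.)

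\emph{Nonarchimedean case.} $\gamma$ is split over $\QQ_{q_i}$ if and only if $T^2-4N$ is a nonzero square in $\QQ_{q_i}$. For $p$ odd, this is equivalent to $v_p(y)$ even with the unit part a square, i.e.\ $\omega_p(y)=\legendresymbol{y|y|_p'}{p}=1$. For $p=2$ we check from the definition of $|\cdot|_2'$ (with the Kronecker symbol at $2$) that $\omega_2(y)=1$ exactly when $y\in(\QQ_2^\times)^2$, i.e.\ $y_0\equiv1\pmod 8$ with even valuation. This is the content of the normalization of $|\cdot|_p'$ and $k_\gamma$ inherited from \cite{cheng2025}, Proposition 2.6. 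So $Y_1$ is precisely the set of discriminants of regular split elements.

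Next we apply the definitions. For such $\gamma$, $\theta_\infty(\gamma)=\frac{|\gamma_1-\gamma_2|}{|\gamma_1\gamma_2|^{1/2}}\orb(f_\infty;\gamma)$ and $\theta_{q_i}(\gamma)$ is a nonzero scalar multiple of $\orb(f_{q_i};\gamma)$. Since $\gamma$ is conjugate to an element of the diagonal torus $\A$, its orbital integral is a hyperbolic orbital integral, which vanishes by hypothesis. Hence $\theta_v(T,N)=0$ on the open set $\{\omega_v(T^2-4N)=\text{split}\}$, which gives $\theta_\infty^\pm=0$ on $X_0$ and $\theta_{q_i}(\cdot,N)=0$ on $Y_1$.

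If $X_0$, $Y_1$ are meant to include boundary or limit points, continuity of $\theta_v$ up to the boundary, given by \autoref{thm:archimedeanintegral} and \eqref{eq:shalikalocal}, extends the vanishing there.

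For the parenthetical claim that supercuspidal $f_v$ satisfies the hypothesis, use the standard Iwasawa-type computation. For $\gamma=\mathrm{diag}(t_1,t_2)$ regular,
\[
\orb(f_v;\gamma)=|D(\gamma)|_v^{-1/2}\int_K\int_{\N(\QQ_v)}f_v(k^{-1}\gamma nk)\,\rmd n\,\rmd k,
\]
obtained via the change of variables $n^{-1}\gamma n=\gamma n'$. The inner integral vanishes by supercuspidality, with $g=k^{-1}\gamma$ and $h=k$.

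The only delicate step is the $p=2$ identification of $Y_1$ with the squares. I would verify it by checking the residue of $y_0$ mod $8$ against the modified norm case by case.
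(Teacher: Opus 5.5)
Your proposal is correct and follows the paper's argument. Both proofs identify $X_0$ (resp.\ $Y_1$) with the discriminants of regular split elements, note that $\theta_v$ there is a nonzero multiple of a hyperbolic orbital integral, and conclude that it vanishes. Your extra checks (the $p=2$ identification of $Y_1$ with nonzero squares, and the Iwasawa computation showing supercuspidal $f_v$ has vanishing hyperbolic orbital integrals) supply details the paper leaves implicit.
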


\begin{proof}
(1) Recall that
\[
\orb(f_\infty;\gamma)\left|\frac{T_\gamma^2-4N_\gamma}{N_\gamma}\right|^{1/2} 
     = \theta_\infty^{\sgn N_\gamma}\left(\frac{T_\gamma}{2\sqrt{|N_\gamma|}}\right).
\]
Also, it is easily checked that $\gamma$ is hyperbolic if and only if 
\[
\omega_\infty\left(\frac{T_\gamma}{2\sqrt{|N_\gamma|}}\right)=0.
\]
Hence the conclusion follows.

(2) For $y\in Y_1$, let $\gamma\in \G(\QQ_{q_i})$ with $\Tr \gamma=y$ and $\det\gamma=N$. Since
\[
\omega_i(y)=\legendresymbol{(y^2-4N)|y^2-4N|_{q_i}'}{q_i}=1
\]
is equivalent to $\gamma$ being hyperbolic, we obtain the desired result.
\end{proof}

\begin{lemma}\label{lem:integralsmooth}
Let $p\in S$ be a prime, $c\in \QQ_p$. Consider the integral
\[
\Phi(z,s)=\int_{\QQ_p}\theta_p(y,c z)|y^2-4c z|_{p}'^{s}\rmd y
\]
for $z\in\ZZ_{p}^\times$. Then $\Phi(z,s)$ converges absolutely and uniformly in $z$, uniformly on each compact subset of $\Re s>-1$. $\Phi(z,s)$ is holomorphic and bounded on each vertical strips on $\Re s>-1$. Moreover, there exists $M>0$ such that for any $a\in 1+p^M\ZZ_p$, $\Phi(z,s)=\Phi(az,s)$ for any $z\in\ZZ_{p}^\times$ and any $\Re s>-1$.

The same formula holds if we replace the domain of integration by $\{y\in \QQ_p\ |\ \omega(y)=\varepsilon\}$, where
\[
\omega(y)=\legendresymbol{(y^2-4c z)|y^2-4c z|_p'}{p},
\]
and $\epsilon\in \{0,\pm 1\}$.
\end{lemma}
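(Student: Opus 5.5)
The proof rests on the local decomposition of $\theta_p$ near the singular locus, \eqref{eq:shalikalocal}, combined with the invariance in \autoref{lem:orbitalintegralsmooth}. Since $z$ ranges over the compact set $\ZZ_p^\times$, we first reduce to bounded support in $y$. Because $f_p\in C_c^\infty(\G(\QQ_p))$, the orbital integral $\orb(f_p;\gamma)$ vanishes unless $\Tr\gamma$ and $\det\gamma$ lie in a compact set. With $N=cz$ and $|z|_p=1$, the function $y\mapsto\theta_p(y,cz)$ is therefore supported in a fixed compact set $C\subset\QQ_p$, independent of $z$. The integral thus runs over $C$, and all difficulties sit near the zeros of $y^2-4cz$.

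Next we bound the integrand. Write $\theta_p=\theta_{p,0}+\theta_{p,1}$ as in \autoref{subsec:singularities}, so that
\[
|\theta_p(y,cz)|\ll 1+|y^2-4cz|_p'^{1/2}\ll 1
\]
uniformly on $C\times\ZZ_p^\times$. In fact, from \eqref{eq:shalikalocal} the only genuine singularity is the bounded factor $p^{-k_\gamma}$, since $\psi_\tau$ is bounded with bounded support. It then suffices to show that $\int_C|y^2-4cz|_p'^{\sigma}\rmd y$ is bounded uniformly in $z$ for $\sigma>-1$.

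We reduce this to a volume count. Since $|\cdot|_p'\asymp|\cdot|_p$ (they differ by a factor of at most $p^3$), we may use $|\cdot|_p$. Factor $y^2-4cz=(y-r_1)(y-r_2)$ over a quadratic extension. The level set $|y^2-4cz|_p=p^{-m}$ then has measure $\ll p^{-m/2}$ uniformly in $z$: near a double root the measure is $\ll p^{-m/2}$, and near separated roots it is $\ll p^{-m}$ times a constant. The constants depend only on $|c|_p$, because $|4cz|_p=|4c|_p$ is fixed. Summing gives
\[
\int_C|y^2-4cz|_p^{\sigma}\rmd y\ll\sum_{m\geq m_0}p^{-m\sigma}p^{-m/2},
\]
which converges only for $\sigma>-1/2$. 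This falls short of $\Re s>-1$.

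This gap is the main obstacle, and I expect it to be resolved by the behaviour of the discriminant. When $cz$ is a unit times a fixed power of $p$, the two roots $\pm2\sqrt{cz}$ are separated by $|4\sqrt{cz}|_p$, which is bounded below uniformly in $z$ (note $c\neq0$ is implicit). A double root occurs only if $cz=0$, which cannot happen. Near each simple root, $|y^2-4cz|_p\asymp|y-r_i|_p$, so the level sets have measure $\asymp p^{-m}$. The sum $\sum_m p^{-m(1+\sigma)}$ then converges for $\sigma>-1$, with bounds uniform in $z$ and locally uniform in $s$.

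With this uniform domination in hand, Morera's theorem and dominated convergence give holomorphy on $\Re s>-1$. The same dominating function gives boundedness on vertical strips, since $|\,|\cdot|^{s}|=|\cdot|^{\sigma}$.

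Finally, for the invariance statement, take $M\geq L$ with $M$ large enough that every $a\in1+p^M\ZZ_p$ is a square $b^2$ with $b\in1+p^L\ZZ_p$. This is possible by Hensel's lemma, with $M=L+3$ when $p=2$. Substitute $y=by'$. The measure is preserved since $|b|_p=1$. We have $|b^2(y'^2-4cz)|_p'=|y'^2-4cz|_p'$, because $b^2$ is a square unit congruent to $1$ mod $8$, so neither the parity of the valuation nor the class mod $4$ changes. By \autoref{lem:orbitalintegralsmooth}, $\theta_p(by',b^2cz)=\theta_p(y',cz)$. Hence $\Phi(az,s)=\Phi(z,s)$.

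The same substitution preserves $\omega(y)$, since the square class is unchanged. The variant with the domain $\{\omega(y)=\epsilon\}$ therefore follows verbatim, by restricting the dominated integrand to that set.
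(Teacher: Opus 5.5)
Your argument is correct, but it gets uniformity in $z$ by a different route from the paper.

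The paper does three things:
\begin{enumerate}
\item It checks convergence, holomorphy and strip bounds only for each fixed $z$, using that $y^2-4cz$ has only simple zeros.
\item It proves $\Phi(az,s)=\Phi(z,s)$ for $a\in 1+p^M\ZZ_p$, using the same substitution $y\mapsto b^{-1}y$ with $a=b^2$ that you use.
\item It deduces all uniformity in $z$ from the finiteness of $\ZZ_p^\times/(1+p^M\ZZ_p)$. Only finitely many distinct functions $\Phi(z,\cdot)$ occur, and the same holds for the absolute integrals, so every fixed-$z$ bound is automatically uniform.
\end{enumerate}

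You instead build a $z$-independent dominating function directly, from three ingredients:
\begin{itemize}
\item the support of $y\mapsto\theta_p(y,cz)$ lies in a compact set independent of $z$;
\item $\theta_p$ is bounded on that set;
\item a level-set count using that the roots $\pm2\sqrt{cz}$ stay a fixed distance $|4|_p|c|_p^{1/2}$ apart, including when they lie in a quadratic extension.
\end{itemize}

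What each buys: your route is more quantitative and makes the analytic statements independent of the invariance, at the cost of the uniform volume estimate. The paper's route is shorter, since the invariance it needs anyway supplies the uniformity for free.

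Your detour through the bound valid only for $\sigma>-1/2$ is unnecessary. Since $cz\neq 0$, a double root never occurs, so you can go straight to the simple-root count, which gives convergence for $\sigma>-1$.
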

\begin{proof}
Since $y$ only has singularities of order $1$ at the square roots of $4cz$, the integral converges on each compact subset of $\Re s>-1$, holomorphic and bounded on each vertical strips there for \emph{fixed} $z$.

Since $f_p$ is smooth and compactly supported, there exists $L\geq 2$ such that $f_p(bg)=f_p(g)$ for any $b\in 1+p^{L}\ZZ_p$ and $g\in \G(\QQ_p)$. Hence $\orb(f_p;b\gamma)=\orb(f_p;\gamma)$ for any $b\in 1+p^{L}\ZZ_p$ and regular elements $\gamma$. Clearly $b\gamma$ is split (resp. inert, ramified) if and only if $\gamma$ is. Hence $\theta_p(b\gamma)=\theta_p(\gamma)$ for any $b\in 1+p^{L}\ZZ_p$ and regular elements $\gamma$. Thus for any $T\in \QQ_p$ and $N\in \QQ_p^\times$ with $T^2-4N\neq 0$, we have
\[
\theta_p(bT,b^2N)=\theta_p(T,N).
\]
By continuity, the above equation also holds for $T^2-4N=0$.

Since $b^2\in 1+p^{L}\ZZ_p$ we obtain $|y^2- 4c z|_p'=|b^2y^2- 4c b^2z|_p'$. Hence 
\[
\Phi(z,s)=\int_{\QQ_p}\theta_p(by,c b^2z)|b^2y^2-4c b^2z|_{p}'^{s}\rmd y= \int_{\QQ_p}\theta_p(y,c b^2z)|y^2-4c b^2z|_{p}'^{s}\rmd y=\Phi(b^2z,s),
\]
where in the second equality we made change of variable $y\mapsto b^{-1}y$.

Now for $M$ sufficiently large, we can always write $a=b^2$ with $b\in 1+p^{L}\ZZ_p$ for $a\in 1+p^{M}\ZZ_p$. Hence $\Phi(z,s)=\Phi(az,s)$ for any $z\in\ZZ_{p}^\times$. Finally, the uniformity in $z$ follows from the fact that $\ZZ_p^\times /(1+p^M\ZZ_p)$ is finite.

The same proof also holds if we replace the domain of integration by $\{y\in \QQ_p\ |\ \omega(y)=\varepsilon\}$ since the set is invariant under $y\mapsto b^{-1}y$ for $b\in 1+p^{L}\ZZ_p$ with $L\geq 2$.
\end{proof}

\subsection{Proof of \autoref{thm:contributionxi0}}

Now we prove \autoref{thm:contributionxi0}.
The sum $S_G^{\xi=0}(X)$ can be split into the following two terms
\begin{equation}\label{eq:sigmaxi01}
\begin{split}
&\sum_{n\in \ZZ_{(S)}^{>0}}G\legendresymbol{n}{X}4n\sum_{\pm}\sum_{\nu\in \ZZ^r}q^{\nu/2}\sum_{f\in \ZZ_{(S)}^{>0}}\sum_{k\in \ZZ_{(S)}^{>0}}\frac{1}{k^2f^3}\Kl_{k,f}^S(0,\pm n^2q^\nu)\int_{x\in\RR}\int_{y\in\QQ_{S_\fin}}\theta_\infty^\pm(x)\theta_{q}(y,\pm n^2q^\nu) \\
  &\times F\legendresymbol{kf^2(4n^2q^\nu)^{-1/2}}{|x^2\mp 1|_\infty^{1/2}|y^2\mp 4n^2q^\nu|_q'^{1/2}}\rmd x\rmd y
\end{split}
\end{equation}
and
\begin{equation}\label{eq:sigmaxi02}
\begin{split}
&\sum_{n\in \ZZ_{(S)}^{>0}}G\legendresymbol{n}{X}4n\sum_{\pm}\sum_{\nu\in \ZZ^r}q^{\nu/2}\sum_{f\in \ZZ_{(S)}^{>0}}\sum_{k\in \ZZ_{(S)}^{>0}}\frac{1}{k^2f^3}\Kl_{k,f}^S(0,\pm n^2q^\nu)\int_{x\in\RR}\int_{y\in\QQ_{S_\fin}}\theta_\infty^\pm(x)\theta_{q}(y,\pm n^2q^\nu) \\
  &\times \frac{kf^2n^{-1}q^{-\nu/2}}{2\sqrt{|x^2\mp 1|_\infty|y^2\mp 4n^2q^\nu|_q'}}V\legendresymbol{kf^2(4n^2q^\nu)^{-1/2}}{|x^2\mp 1|_\infty^{1/2}|y^2\mp 4n^2q^\nu|_q'^{1/2}}\rmd x\rmd y.
\end{split}
\end{equation}

\begin{proposition}\label{prop:sigmaxi01}
Assume that \autoref{ass:nonarchimedean} holds. Then for any $\varepsilon>0$, we have 
\begin{align*}
\eqref{eq:sigmaxi01}=&2\widetilde{G}\legendresymbol{3}{2}\sum_{\chi^2=\triv}\sum_{\pm}\sum_{\nu\in \ZZ^r}\widetilde{F}\!\left(-\frac12\right)\sqrt{2}q^{\nu/4} \int_{\RR}\frac{\theta_\infty^\pm(x)}{|x^2\mp 1|_\infty^{1/4}}\rmd x\\
\times&\prod_{i=1}^{r}(1-q_i^{-1})\int_{\ZZ_{q_i}^\times}\int_{\QQ_{q_i}}\frac{\theta_{q_i}(y_i,\pm z_i q^\nu)}{|y_i^2\mp 4z_iq^\nu|_{q_i}'^{1/4}}\overline{\chi_i(z_i)}\rmd z_i\rmd y_i \frac{\zeta^S(2)}{\zeta^S(3)}X^{\frac32}+O(X^\varepsilon).
\end{align*}
The implied constant depends only on $m,f_{q_i}$ and $\varepsilon$.
\end{proposition}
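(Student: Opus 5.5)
The plan is to write \eqref{eq:sigmaxi01} as a Mellin integral and shift contours. Since $F$ is defined through its Mellin transform \eqref{eq:mellinf}, for $\sigma_0>0$ I will write
\[
F(t)=\frac{1}{\dpii}\int_{(\sigma_0)}\widetilde{F}(s)t^{-s}\rmd s .
\]
The same is done for $G(n/X)$ through $\widetilde{G}(u)$ on a line $\Re u=\sigma_1$ large. Substituting into \eqref{eq:sigmaxi01}, the argument of $F$ contributes $k^{-s}f^{-2s}(4n^2q^\nu)^{s/2}|x^2\mp1|_\infty^{s/2}|y^2\mp4n^2q^\nu|_q'^{s/2}$.

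The $n$-dependence in the $q$-adic factor is removed as follows. Since $n\in\ZZ_{(S)}$, $n$ is a unit at each $q_i$, so I substitute $y\mapsto ny$. This gives $\theta_q(ny,\pm n^2q^\nu)$ and $|y^2\mp4q^\nu|_q'$, which is unchanged because $|n|_q=1$. I then expand $\theta_q(ny,n^2\cdot)$ via the Fourier expansion in characters $\chi$ of $\ZZ_{S_\fin}^\times/U$, exactly as in \eqref{eq:transformedj}. This produces a factor $\chi(n)$ and the integrals
\[
\int_{\ZZ_{q}^\times}\int\theta_q(y,\pm zq^\nu)|y^2\mp4zq^\nu|_q'^{s/2}\overline{\chi(z)}\rmd z\rmd y .
\]
By \autoref{lem:integralsmooth} these are holomorphic and bounded in vertical strips for $\Re s>-2$. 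The $n,k,f$ sums then collapse, by \autoref{lem:kloostermandirichlet} with $u$ replaced by $u-1$ and the factor $n\cdot n^{s}\cdot n^{-u}$ accounted for, into
\[
\frac{\zeta^S(2s')}{\zeta^S(s'+1)}\frac{L^S(\cdot,\chi)^3}{L^S(\cdot,\chi^2)}
\]
at the appropriately shifted arguments. Here $s'=1+s$ accounts for the weights $k^{-2}f^{-3}\cdot k^{-s}f^{-2s}$. This yields a double integral
\[
\int_{(\sigma_1)}\int_{(\sigma_0)}\widetilde{G}(u)X^u\widetilde{F}(s)\,\mathcal{D}(s,u,\chi)\,\mathcal{I}_\infty(s)\,\mathcal{I}_q(s,\chi)\,\rmd s\,\rmd u ,
\]
where $\mathcal{I}_\infty(s)=\int\theta_\infty^\pm(x)|x^2\mp1|^{s/2}\rmd x$, and the sum over $\nu$ carries the weight $q^{\nu(1+s)/2}$.

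The contour shifts go in two steps. First I shift $s$ to the left, past $s=-1/2$. There $\zeta^S(2(1+s))=\zeta^S(1+2s)$ has its pole, giving residue $\tfrac12\widetilde{F}(-\tfrac12)\zeta^S(\tfrac12+\ldots)$, which after bookkeeping becomes the factor $\zeta^S(2)/\zeta^S(3)$ in the statement. The pole at $s=0$ of $\widetilde{F}$ must be handled as well. Its residue involves $\sum_\pm\int\theta_\infty^\pm$ and $\sum_\nu q^\nu\int\int\theta_q$, and it vanishes by \autoref{lem:supercuspidaltrivialvanish} under \autoref{ass:nonarchimedean}. Similarly, \autoref{lem:hyperbolicvanish} kills the contributions with hyperbolic support, which is what keeps the $\nu$-sum finite and the $\chi$-sum restricted.

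Second, in the $u$-variable I shift left. The $L^S(\cdot,\chi)$ factors have poles only when $\chi$ is trivial; after the Fourier expansion, only $\chi^2=\triv$ survives the relevant residue, via the $L^S(2u,\chi^2)$ and $L^S(u,\chi)$ structure. The pole in $u$ at $u=3/2$, arising from $L^S(u-1/2-\ldots)$, gives $\widetilde{G}(3/2)X^{3/2}$. The constants $\sqrt2\,q^{\nu/4}$ and $|\cdot|^{-1/4}$ come from evaluating at $s=-1/2$, since $(4q^\nu)^{s/2}$ and $|\cdot|^{s/2}$ become $2^{-1/2}q^{-\nu/4}$ and so on.

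The remaining integrals are moved to $\Re u=\varepsilon$. There $\widetilde{G}$ decays rapidly by \autoref{lem:grapiddeacy}, $\widetilde{F}$ decays exponentially by \eqref{eq:frapiddecay}, and the $L$-functions have polynomial growth by convexity. These shifted integrals give $O(X^\varepsilon)$.

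I expect the main obstacle to be the precise bookkeeping of which poles are actually crossed, and showing that the secondary poles produce nothing or are absorbed. These include the poles of $\zeta^S(s'+1)^{-1}$, the poles at $u-s$ versus $u+s$, and the convergence of the $\nu$-sum. The last requires $\Re s$ to be in a range where $q^{\nu(1+s)/2}$ combined with the compact support of $\theta_q$ in $\nu$ gives a finite sum; the compact support in $N$ does this, since only finitely many $\nu$ contribute. I would first check carefully that at $s=-1/2$ both the $x$- and $y$-integrals with exponent $-1/4$ converge, which they do by \autoref{lem:integralsmooth}.
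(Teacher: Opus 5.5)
Your identification of the main term is right. In either order of integration, the $X^{3/2}$ term is the residue at the point $(s,u)=(-\tfrac12,\tfrac32)$, where the polar divisor $s=-\tfrac12$ of $\zeta^S(2+2s)$ (not $\zeta^S(1+2s)$) meets the divisor $s=u-2$ of $L^S(u-1-s,\cdot)$. Your bookkeeping $\zeta^S(2)\zeta^S(\tfrac32)/(\zeta^S(3)\zeta^S(\tfrac32))$ does produce $\zeta^S(2)/\zeta^S(3)$.

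The gap is the error term. After moving $s$ left you cannot go beyond $\Re s=-1$, because $1/\zeta^S(2+s)$ then acquires poles at the zeros of $\zeta$. So in the leftover double integral the divisor $u=s+2$ of $L^S(u-1-s,\chi)$ sits at $\Re u=2+\Re s\geq 1$. Moving $u$ left crosses it and produces a term of order $X^{2+\Re s}\geq X$, which your plan never extracts or estimates. Moreover, $\Re u=\varepsilon$ cannot be reached. $1/L^S(2u,\cdot)$ has poles at the nontrivial zeros of $L^S(2u,\cdot)$ in $0<\Re u<\tfrac12$, and convexity bounds do not control $1/L$. Your residue term also has the pole of $L^S(u+\tfrac12,\chi)$ at $u=\tfrac12$. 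So no $O(X^\varepsilon)$ comes out of this. The paper stops at $\Re u=\tfrac12$ (resp.\ $\tfrac12+\varepsilon$), and what its argument actually proves, and what \autoref{thm:contributionxi0} uses, is $O(\|G\|_{3,1}X^{1/2+\varepsilon})$.

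The idea you are missing is the order of the shifts. The paper moves $s$ to the \emph{right}, from $(2)$ to $(6)$ with $\Re u=6$, so the only pole crossed is $s=u-2$. On that divisor $\zeta^S(s+2)=\zeta^S(u)$ cancels $L^S(u,\chi^2)$, and the residue is the single integral \eqref{eq:secondterm1} with $\widetilde F(u-2)\zeta^S(2u-2)\zeta^S(2u-1)/\zeta^S(2u)$. Moving that to $\Re u=\tfrac12+\varepsilon$:
the pole $u=2$ dies because $\int\theta_\infty^\pm=0$ for \emph{each} sign (\autoref{lem:hyperbolicvanish} gives $\theta_\infty^-\equiv0$, then \autoref{lem:supercuspidaltrivialvanish} applies);
the pole $u=\tfrac32$ is the main term;
and $u=1$ dies since $\zeta^S(0)=0$.
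This per-sign vanishing, not part (2) of that lemma, is also what kills your $s=0$ residue, since the $q$-adic factor depends on the sign and carries $q^{\nu/2}$. In your order you would have to take the residue at $u=s+2$ of the leftover integral and push the resulting $s$-integral to $\Re s=-\tfrac32+\varepsilon$. Crossing $s=-1$ is harmless because $\zeta^S(2+2s)$ vanishes there, and the result is the same computation.

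One pole remains unaddressed. For trivial $\chi$, $L^S(u,\chi)$ has a pole at $u=1$, crossed both by your residue term and by the leftover integral. Together they contribute $\widetilde G(1)X$ times a multiple of $\int_{(\sigma_0)}\widetilde F(s)(\cdots)\zeta^S(2+2s)\zeta^S(-s)\,\rmd s$, which has no evident reason to vanish. The paper's proof passes over the same pole: it declares the $\Re s=6$ integrand holomorphic on $0<\Re u\leq 6$. So this is a weakness of the statement rather than of your route, but your $O(X^\varepsilon)$ claim is unsupported.

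Two smaller points. The $\nu$-sum is finite because $f_{q_i}$ is compactly supported (as you note at the end), not because of \autoref{lem:hyperbolicvanish}. With $y\mapsto ny$ you get $\chi(n)$, a pole only for $\chi=\triv$, and a coefficient that is an average of $\Phi(z^2,\cdot)$. To reach the stated $\sum_{\chi^2=\triv}$ form, expand $\Phi(z,\cdot)$ in $z$ as the paper does, which yields $\chi(n^2)$.
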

\begin{proof}
By Mellin inversion formula on $G$ we have
\begin{align*}
\eqref{eq:sigmaxi01}=&\frac{4}{\dpii} \int_{(6)}\widetilde{G}(u)\sum_{n\in \ZZ_{(S)}^{>0}}\frac{1}{n^{u-1}}\sum_{\pm}\sum_{\nu\in \ZZ^r}q^{\nu/2}\sum_{k,f\in \ZZ_{(S)}^{>0}}\frac{1}{k^2f^3}\Kl_{k,f}^S(0,\pm n^2q^\nu)\\
  \times&\int_{x\in\RR}\int_{y\in\QQ_{S_\fin}}\theta_\infty^\pm(x)\theta_{q}(y,\pm n^2q^\nu)  F\legendresymbol{kf^2(4n^2q^\nu)^{-1/2}}{|x^2\mp 1|_\infty^{1/2}|y^2\mp 4n^2q^\nu|_q'^{1/2}}\rmd x\rmd y X^u\rmd u.
\end{align*}
Since $\Kl_{k,f}^S(0,\pm n^2q^\nu)\ll n$ (by Proposition B.2 in \cite{cheng2025b}) and $\theta_\infty^\pm(x)$, $\theta_{q_i}(y,\pm n^2q^\nu)$ are compactly supported (independent of $n\in \ZZ_{(S)}^{>0}$), the integrand over $u$ converges absolutely for $\Re u>3$ and thus the right hand side of the above equation is well-defined. By Mellin inversion formula on $F$, the above equation equals
\begin{align*}
\eqref{eq:sigmaxi01}=&\frac{4}{\dpii}\frac{1}{\dpii} \int_{(6)}\widetilde{G}(u)\sum_{\pm}\sum_{\nu\in \ZZ^r}q^{\nu/2}\int_{(2)}\widetilde{F}(s)(4q^\nu)^{s/2}\int_{x\in\RR}\theta_\infty^\pm(x)|x^2\mp 1|_\infty^{s/2}\rmd x\\
  \times& \sum_{n\in \ZZ_{(S)}^{>0}}\int_{y\in\QQ_{S_\fin}}\theta_{q}(y,\pm n^2q^\nu)|y^2\mp 4n^2q^\nu|_q'^{s/2}\rmd y \frac{1}{n^{u-1-s}}\sum_{k,f\in \ZZ_{(S)}^{>0}}\frac{\Kl_{k,f}^S(0,\pm n^2q^\nu)}{k^{2+s}f^{3+2s}}\rmd s X^u\rmd u.
\end{align*}
Note that the above sum for $f$ and $k$ converges absolutely for $\Re s>-1$ and the sum for $n$ converges absolutely for $\Re (u-1-s)>2$ since $\Kl_{k,f}^S(0,\pm n^2q^\nu)\ll n$. Since $\widetilde{F}(s)$ has rapid decay vertically and $\theta_\infty^\pm(x)$, $\theta_{q_i}(y,\pm n^2q^\nu)$ are compactly supported (independent of $n\in \ZZ_{(S)}^{>0}$), all the sums and integrals converge absolutely.
Hence we can change the order of the integrals and the sums. 

Now we define 
\[
\Phi_i(z,s)=\int_{\QQ_{q_i}}\theta_{q_i}(y,\pm z q^\nu)|y^2\mp 4zq^\nu|_{q_i}'^{s}\rmd y.
\]
For any $\chi_i\in \widehat{\ZZ_{q_i}^\times}$, we define the Fourier transform
\[
\widehat{\Phi_i}(\chi_i,s)=\int_{\ZZ_{q_i}^\times}\Phi_i(z,s)\overline{\chi_i(z)}\rmd z.
\]
By \autoref{lem:integralsmooth}, there exists $L_i>0$ such that $\Phi_i(az,s)=\Phi_i(z,s)$ for any $a\in 1+q_i^{L_i}\ZZ_{q_i}$. If $\chi_i$ is not trivial on $1+q_i^{L_i}\ZZ_{q_i}$, then there exists $a\in 1+q_i^{L_i}\ZZ_{q_i}$ such that $\chi_i(a)\neq 1$. Hence
\[
\widehat{\Phi_i}(\chi_i,s)=\int_{\ZZ_{q_i}^\times}\Phi_i(az,s)\overline{\chi_i(az)}\rmd z=\overline{\chi_i(a)}\int_{\ZZ_{q_i}^\times}\Phi_i(z,s)\overline{\chi_i(z)}\rmd z=\overline{\chi_i(a)}\widehat{\Phi_i}(\chi_i,s).
\]
Hence $\widehat{\Phi_i}(\chi_i,s)\neq 0$ only if $\chi_i\in (\ZZ_{q_i}/1+q_i^{L_i}\ZZ_{q_i})\sphat\,$. Thus such $\chi_i$ is finite. Hence by Fourier inversion formula
\[
\Phi_i(z,s)=(1-q_i^{-1})^{-1}\sum_{\chi_i}\widehat{\Phi_i}(\chi_i,s)\chi_i(z).
\] 
Thus for any $z\in \ZZ_{S_\fin}^\times$ we have
\begin{align*}
\int_{y\in\QQ_{S_\fin}}\theta_{q}(y,\pm zq^\nu)|y^2\mp 4zq^\nu|_q'^{s/2}\rmd y&=\prod_{i=1}^{r}\Phi_i(z_i, s/2)=\prod_{i=1}^{r}(1-q_i^{-1})^{-1}\sum_{\chi_i}\widehat{\Phi_i}(\chi_i,s/2) \chi_i(z_i)\\
&=\sum_{\chi}\chi(z)\prod_{i=1}^{r}(1-q_i^{-1})^{-1}\widehat{\Phi_i} (\chi_i,s/2),
\end{align*}
where $\chi$ runs over all characters on $\ZZ_{S_\fin}^\times$ and the sum over $\chi$ is actually \emph{finite}. We also consider $\chi$ as Dirichlet characters so that $\chi(m)=0$ if $\gcd(m,S)\neq 1$.

Hence we obtain
\begin{align*}
\eqref{eq:sigmaxi01}=&\prod_{i=1}^{r}(1-q_i^{-1})^{-1}\frac{4}{\dpii}\frac{1}{\dpii} \int_{(6)}\widetilde{G}(u)\sum_{\pm}\sum_{\nu\in \ZZ^r}q^{\nu/2}\int_{(2)}\widetilde{F}(s)(4q^\nu)^{s/2}\int_{x\in\RR}\theta_\infty^\pm(x)|x^2\mp 1|_\infty^{s/2}\rmd x\\
  \times& \sum_{n\in \ZZ_{(S)}^{>0}}\sum_{\chi}\chi(n^2)\prod_{i=1}^{n}\widehat{\Phi_i}(\chi_i,s/2)\frac{1} {n^{u-1-s}}\sum_{k,f\in \ZZ_{(S)}^{>0}}\frac{\Kl_{k,f}^S(0,\pm n^2q^\nu)}{k^{2+s}f^{3+2s}} X^u\rmd u\rmd s.
\end{align*}
By \autoref{lem:kloostermandirichlet}, we obtain
\begin{align*}
\eqref{eq:sigmaxi01}=&\prod_{i=1}^{r}(1-q_i^{-1})^{-1}\frac{4}{\dpii}\frac{1}{\dpii} \int_{(6)}\widetilde{G}(u)\sum_{\pm}\sum_{\nu\in \ZZ^r}q^{\nu/2}\int_{(2)}\widetilde{F}(s)(4q^\nu)^{ s/2}\int_{x\in\RR}\theta_\infty^\pm(x)|x^2\mp 1|_\infty^{s/2}\rmd x\\
  \times&\sum_{\chi}\prod_{i=1}^{n}\widehat{\Phi_i}(\chi_i,s/2) \frac{\zeta^S(2s+2)}{\zeta^S(s+2)} \frac{L^S(u-s-1,\chi^2)L^S(u+s+1,\chi^2)L^S(u,\chi^2)}{L^S(2u,\chi^4)}X^u\rmd u\rmd s.
\end{align*}
Since
\[
\widehat{\Phi_i}(\chi_i,s)=\int_{\ZZ_{q_i}^\times}\Phi_i(z,s)\overline{\chi_i(z)}\rmd z
\]
and $\Phi_i(z,s)$ converges uniformly in $z$ and any compact subset of $\Re s>-1$ (\autoref{lem:integralsmooth}), $\widehat{\Phi_i}(\chi_i,s)$ is holomorphic on $\Re s>-1$.

Now we move the $s$-contour from $(2)$ to $(6)$. The integrand is holomorphic on $\Re s>2$ and $\Re u>5$ except for the function $L^S(u-1-s,\chi^2)$ when $\chi^2=\triv$, which has a pole at $s=u-2$ with residue
\[
\res_{s=u-2}L^S\left(u-1-s,\triv\right)=\res_{s=u-2}\prod_{i=1}^{r}\left(1-\frac{1}{q_i^{ u-1-s}}\right)\zeta(u-1-s)=-\prod_{i=1}^{r}(1-q_i^{-1}).
\]
Hence by residue formula, we know that \eqref{eq:sigmaxi01} equals
\begin{align*}
&\prod_{i=1}^{r}(1-q_i^{-1})^{-1}\frac{2}{\dpii}\frac{1}{\dpii} \int_{(6)}\widetilde{G}(u)\sum_{\pm}\sum_{\nu\in \ZZ^r}q^{\nu/2}\int_{(6)}\widetilde{F}(s)(4q^\nu)^{s/2}\int_{x\in\RR}\theta_\infty^\pm(x)|x^2\mp 1|_\infty^{s/2}\rmd x\\
  \times&\sum_{\chi}\prod_{i=1}^{n}\widehat{\Phi_i}(\chi_i, s/2)\frac{\zeta^S(2s+2)}{\zeta^S(s+2)} \frac{L^S(u-s-1,\chi^2)L^S(u+s+1,\chi^2)L^S(u,\chi^2)}{L^S(2u,\chi^4)}\frac{X^u}{u}\rmd u\rmd s\\
  +&\frac{2}{\dpii}\sum_{\chi^2=\triv}\int_{(6)}\widetilde{G}(u)\sum_{\pm}\sum_{\nu\in \ZZ^r}q^{\nu/2}\widetilde{F}(u-2)(4q^\nu)^{u/2-1} \int_{x\in\RR}\theta_\infty^\pm(x)|x^2\mp 1|_\infty^{u/2-1}\rmd x\\
  \times&\prod_{i=1}^{n}\widehat{\Phi_i}(\chi_i,u/2-1)\frac{\zeta^S(2u-2)}{\zeta^S(u)} \frac{L^S(2u-1,\triv)L^S(u,\triv)}{L^S(2u,\triv)} X^u\rmd u.
\end{align*}
Note that since we move the contour from the left to the right, the residue formula has a minus sign.

First we consider the first term above. Since $\widetilde{G}(u)$ and $\widetilde{F}(s)$ have rapid decay vertically, the sums and integrals converge absolutely. Also, the integrand is holomorphic on $0<\Re u\leq 6$ when $\Re s=6$ is fixed. Hence we can change the order of the integration over $s$ and $u$ and then move the $u$-contour to $(1/2)$ and then move the $s$-contour to $(\epsilon)$. Thus the first term is bounded by
\begin{align*}
&\sum_{\pm}\sum_{\nu\in \ZZ^r}q^{\nu/2}\int_{(\varepsilon)}|\widetilde{F}(s)|(4q^\nu)^{\Re s/2}\int_{x\in\RR}|\theta_\infty^\pm(x)||x^2\mp 1|_\infty^{\Re s/2}\rmd x\sum_{\chi}\prod_{i=1}^{n}|\widehat{\Phi_i}(\chi_i,s/2)|\\
  \times&\int_{(\frac12)}|\widetilde{G}(u)|\left|\frac{\zeta^S(2s+2)}{\zeta^S(s+2)} \frac{L^S(u-s-1,\chi^2)L^S(u+s+1,\chi^2)L^S(u,\chi^2)}{L^S(2u,\chi^4)}\right|\rmd |s||X^u|\rmd |u|.
\end{align*}
Using the functional equation of Dirichlet $L$-functions and the Stirling formula, we have
\begin{equation}\label{eq:boundlvertical}
L^S(\sigma+\rmi t,\chi)\ll_\sigma (1+|t|)^{\frac12-\sigma}
\end{equation}
if $\sigma<0$.

For $\Re u=1/2$ and $\Re s=\varepsilon$, we have
\begin{enumerate}[itemsep=0pt,parsep=0pt,topsep=0pt,leftmargin=0pt,labelsep=3pt,itemindent=9pt,label=\textbullet]
  \item $L^S(u,\chi^2)\ll (1+|u|)^{1/6+\varepsilon}$ by the Weyl bound of the Dirichlet $L$-functions \cite{petrow2023}.
  \item Since $\Re(u+s+1)>1$, we have $L^S(u+s+1,\chi^2)\ll 1$.
  \item Since $\Re(u-s-1)=-1/2-\varepsilon$, we have $L^S(u+s+1,\chi^2)\ll (1+|u|)^{1+\varepsilon}$.
  \item The function $1/L^S(2u,\chi^4)$ can be bounded by $(1+|u|)^\varepsilon$ for $\Re(2u)=1$ \cite[Chapter II of \SSec 8.3]{tenenbaum2015analytic}. 
\end{enumerate} 
Hence the first term is bounded by
\[
\int_{(\frac12)}|\widetilde{G}(u)|(1+|u|)^{7/6+\varepsilon} \rmd |u|X^{1/2}\ll \|G\|_{3,1}X^{1/2}
\]
by \autoref{cor:mellinnorm}.

The second term equals
\begin{equation}\label{eq:secondterm1}
\begin{split}
&\frac{4}{\dpii}\sum_{\chi^2=\triv}\int_{(6)}\widetilde{G}(u)\sum_{\pm}\sum_{\nu\in \ZZ^r}q^{\nu/2}\widetilde{F}(u-2)(4q^\nu)^{u/2-1}\\
\times&\int_{\RR}\theta_\infty^\pm(x)|x^2\mp 1|_\infty^{u/2-1}\rmd x\widehat{\Phi_i}(\chi_i,u/2-1) \frac{\zeta^S(2u-2)\zeta^S(2u-1)}{\zeta^S(2u)}X^u\rmd u.
\end{split}
\end{equation}

$\widehat{\Phi_i}(\triv,u/2-1)$ is holomorphic on $\Re u>0$ and bounded on vertical strips by \autoref{lem:integralsmooth}. Moreover, the archimedean integral
\[
\int_{x\in\RR}\theta_\infty^\pm(x)|x^2\mp 1|_\infty^{u-3/2}\rmd x
\]
is holomorphic on $\Re u>1/2$ and bounded on vertical strips. For any $\varepsilon>0$, we move the contour from $(6)$ to $(\varepsilon)$. The integrand is holomorphic on $0<\Re u\leq 6$ except for the poles of $\widetilde{F}(u-2)$ at $u=2$, $\zeta^S(2u-2)$ at $u=3/2$ and $\zeta^{S}(2u-1)$ at $u=1$ with residues
\[
\res_{u=2}\widetilde{F}(u-2)=\lim_{u\to 2}(u-2)\widetilde{F}(u-2)=1,
\]
\[
\res_{u=3/2}\zeta^S(2u-2)=\lim_{u\to 3/2}(u-3/2)\prod_{i=1}^{r}\left(1-\frac{1}{q_i^{2u-2}}\right) \zeta(2u-2)=\frac{1}{2}\prod_{i=1}^{r}\left(1-\frac{1}{q_i}\right),
\]
and
\[
\res_{u=1}\zeta^S(2u-1)=\frac{1}{2}\prod_{i=1}^{r}\left(1-\frac{1}{q_i}\right).
\]
By moving the contour from $(6)$ to $(1/2+\varepsilon)$, \eqref{eq:secondterm1} equals
\begin{align*}
&\frac{4}{\dpii}\int_{(\frac12+\varepsilon)}\sum_{\chi^2=\triv}\widehat{G}(u)\sum_{\pm}\sum_{\nu\in \ZZ^r}q^{\nu/2}\widetilde{F}(u-2)(4q^\nu)^{\frac u2-1}\int_{\RR}\theta_\infty^\pm(x)|x^2\mp 1|_\infty^{\frac u2-1}\rmd x\\
\times&\prod_{i=1}^{r}\widehat{\Phi_i}(\chi_i,u/2-1) \frac{\zeta^S(2u-2)\zeta^S(2u-1)}{\zeta^S(2u)}\frac{X^u}{u}\rmd u\\
+&4\res_{u=2}\cdots+4\res_{u=\frac{3}{2}}\cdots+4\res_{u=1}\cdots\\
=&\frac{4}{\dpii}\int_{(\frac12+\varepsilon)}\sum_{\chi^2=\triv}\widehat{G}(u)\sum_{\pm}\sum_{\nu\in \ZZ^r}q^{\nu/2}\widetilde{F}(u-2)(4q^\nu)^{\frac u2-1}\int_{\RR}\theta_\infty^\pm(x)|x^2\mp 1|_\infty^{\frac u2-1}\rmd x\\
\times&\prod_{i=1}^{r}\widehat{\Phi_i}(\chi_i,u/2-1) \frac{\zeta^S(2u-2)\zeta^S(2u-1)}{\zeta^S(2u)}\frac{X^u}{u}\rmd u\\
+&4\widetilde{G}(2)\sum_{\chi^2=\triv}\sum_{\pm}\sum_{\nu\in \ZZ^r}q^{\nu/2}\int_{\RR}\theta_\infty^\pm(x)\rmd x\prod_{i=1}^{r}\left(1-\frac{1}{q_i}\right)\prod_{i=1}^{r}\widehat{\Phi_i}(\chi_i,0) \frac{\zeta^S(2)\zeta^S(3)}{\zeta^S(4)}X^2\\
+&4\widetilde{G}\legendresymbol{3}{2}\sum_{\chi^2=\triv}\sum_{\pm}\sum_{\nu\in \ZZ^r}q^{\nu/2}\widetilde{F}\!\left(-\frac12\right)(4q^\nu)^{-\frac14} \int_{\RR}\frac{\theta_\infty^\pm(x)}{|x^2\mp 1|_\infty^{1/4}}\rmd x\prod_{i=1}^{r}\widehat{\Phi_i}(\chi_i,-1/4)(1-q_i^{-1}) \frac{\zeta^S(2)}{\zeta^S(3)}X^{\frac32}\\
+&4\widetilde{G}(1)\sum_{\chi^2=\triv}\sum_{\pm}\sum_{\nu\in \ZZ^r}q^{\nu/2}\widetilde{F}(-1)(4q^\nu)^{-\frac12}\int_{\RR}\frac{\theta_\infty^\pm(x)}{|x^2\mp 1|_\infty^{1/2}}\rmd x\prod_{i=1}^{r}\widehat{\Phi_i}(\chi_i,-1/2)(1-q_i^{-1}) \frac{\zeta^S(0)}{\zeta^S(2)}X,
\end{align*}
where $\cdots$ denotes the integrand of the first term.

The first term is bounded by
\begin{align*}
&\int_{(\frac12+\varepsilon)}|\widetilde{G}(u)|\sum_{\chi^2=\triv}\sum_{\pm}\sum_{\nu\in \ZZ^r}q^{\nu/2}|\widetilde{F}(u-2)|(4q^\nu)^{\frac {\varepsilon}{2}-1}\int_{\RR}|\theta_\infty^\pm(x)||x^2\mp 1|_\infty^{\frac{\varepsilon}{2}-1}\rmd x\\
\times&\prod_{i=1}^{r}\widehat{\Phi_i}(\chi_i,\varepsilon/2-1) \left|\frac{\zeta^S(2u-2)\zeta^S(2u-1)}{\zeta^S(2u)}\right|\frac{X^{\frac12+\varepsilon}}{|u|}\rmd u. 
\end{align*}
which is 
\[
\ll \int_{(\frac12+\varepsilon)}|\widetilde{G}(u)|(1+|u|)^{-2}\rmd u X^{\frac12+\varepsilon}\ll \|G\|_{M_{1/2+\varepsilon}^{-2}}X^{\frac12+\varepsilon}\ll \|G\|_1X^{\frac12+\varepsilon}
\]
by \autoref{cor:mellinnorm} and that $\widetilde{F}$ is of rapid decay.

The second term vanishes by \autoref{lem:supercuspidaltrivialvanish}. By definition we have
\[
\widehat{\Phi_i}(\chi_i,-1/4)=\int_{\ZZ_{q_i}^\times}\int_{\QQ_{q_i}}\frac{\theta_{q_i}(y_i,\pm z_i q^\nu)}{|y_i^2\mp 4z_iq^\nu|_{q_i}'^{1/4}}\overline{\chi_i(z_i)}\rmd z_i\rmd y_i.
\]
Hence the third term above is precisely the main term in the theorem.
The last term vanishes since $r\geq 1$ and
\[
\zeta^S(0)=\prod_{i=1}^{r}\left(1-\frac{1}{q_i^0}\right)\zeta(0)=0.
\]
Hence we obtain the result.
\end{proof}

\begin{proposition}\label{prop:sigmaxi02}
Assume that \autoref{ass:nonarchimedean} holds. Then for any $\varepsilon>0$, we have 
\begin{align*}
\eqref{eq:sigmaxi02}=&2\widetilde{G}\legendresymbol{3}{2}\sum_{\chi^2=\triv}\sum_{\pm}\sum_{\nu\in \ZZ^r}q^{\nu/2}\widetilde{F}\legendresymbol{1}{2}\sqrt{2}q^{\nu/4} \int_{\RR}\frac{\theta_\infty^\pm(x)}{|x^2\mp 1|_\infty^{1/4}}\rmd x\\
\times&\prod_{i=1}^{r}(1-q_i^{-1})\int_{\ZZ_{q_i}^\times}\int_{\QQ_{q_i}}\frac{\theta_{q_i}(y_i,\pm z_i q^\nu)}{|y_i^2\mp 4z_iq^\nu|_{q_i}'^{1/4}}\overline{\chi_i(z_i)}\rmd z_i\rmd y_i \frac{\zeta^S(2)}{\zeta^S(3)}X^{\frac32}+\ms{A}X+O(X^\varepsilon).
\end{align*}
The implied constant depends only on $m,f_{q_i}$ and $\varepsilon$.
\end{proposition}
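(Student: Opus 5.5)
The plan is to run the argument of \autoref{prop:sigmaxi01} again, now with $V$ in place of $F$. The new features are the extra factor $kf^2n^{-1}q^{-\nu/2}/(2|x^2\mp1|_\infty^{1/2}|y^2\mp 4n^2q^\nu|_q'^{1/2})$ and the fact that $\widetilde{V}_{\iota,\epsilon}(s)$ is holomorphic only on $\Re s>0$ (\autoref{prop:propertyh}). Because of the second feature, the analytic continuation of $\widetilde V$ to the left will be the source of the $X$ term. I will use Mellin inversion in $G$, with $u$ on $(6)$, and in $V$, with $s$ on $(2)$. Since $V=V_{\iota,\epsilon}$ depends on $\iota=\omega_\infty(x^2\mp1)$ and $\epsilon=(\omega_i(y_i))_i$, I first split the $x$- and $y$-integrals according to $X_\iota$ and $Y_{\epsilon}$.

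The exponent of $kf^2$ becomes $1-s$, so the inner sum is $\sum_{k,f}\Kl^S_{k,f}(0,\pm n^2q^\nu)k^{-1-s}f^{-1-2s}$. As before, I expand the $q$-adic integral in characters $\chi$ of $\ZZ_{S_\fin}^\times$ using \autoref{lem:integralsmooth}, applied on the sets $Y_\epsilon$. \autoref{lem:kloostermandirichlet}, with $s\mapsto s$ and $u\mapsto u$ (the power of $n$ is now $n^{u-s}$, since the extra $n^{-1}$ cancels the prefactor $n$), then turns the integrand into
\[
\widetilde V_{\iota,\epsilon}(s)\,(4q^\nu)^{s/2}\,\frac{\zeta^S(2s)}{\zeta^S(s+1)}\,\frac{L^S(u-s,\chi^2)L^S(u+s,\chi^2)L^S(u,\chi^2)}{L^S(2u,\chi^4)}\,X^u,
\]
multiplied by archimedean and $q$-adic integrals with the weights $|x^2\mp1|^{(s-1)/2}$ and $|y^2\mp 4zq^\nu|'^{(s-1)/2}$.

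Next I shift $s$ to the right, past the pole of $L^S(u-s,\triv)$ at $s=u-1$, which occurs only for $\chi^2=\triv$. The remaining double integral is handled as in the previous proof: move $u$ to $(1/2)$ and $s$ to $(\varepsilon)$, and bound it by $O(\|G\|_{3,1}X^{1/2})$ using the Weyl bound, \eqref{eq:boundlvertical}, and the zero-free estimate for $1/L^S(2u,\chi^4)$. The residue term is a single $u$-integral of
\[
\widetilde V(u-1)\,\frac{\zeta^S(2u-2)\,\zeta^S(2u)\,\zeta^S(u)}{\zeta^S(u)\,\zeta^S(2u)}=\widetilde V(u-1)\,\zeta^S(2u-2)\cdots,
\]
where the cancellation has to be rechecked carefully; the expected surviving factor is $\zeta^S(2u-2)\zeta^S(2u-1)/\zeta^S(2u)$ after the shift, as before. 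I then move $u$ down to $(1/2+\varepsilon)$. The pole at $u=3/2$ from $\zeta^S(2u-2)$ gives the $X^{3/2}$ term, with $\widetilde V$ evaluated at $1/2$; by \eqref{eq:defv} this equals $\widetilde F(1/2)$ times gamma and $q$-local factors that are trivial at this point or cancel. The pole at $u=2$ is absent this time, because $\widetilde V$ is regular at $0$ and the corresponding residue is proportional to $\int\theta_\infty$, which vanishes anyway by \autoref{lem:supercuspidaltrivialvanish}.

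The main obstacle is the residue at $u=1$. There $\widetilde V_{\iota,\epsilon}(u-1)$ is evaluated at $0$, on the boundary of its region of holomorphy. I will use \eqref{eq:defv} explicitly: $\widetilde V_{\iota,\epsilon}(s)=\sqrt\uppi\,\widetilde F(s)\prod_i\frac{1-\epsilon_iq_i^{s-1}}{1-\epsilon_iq_i^{-s}}\frac{\Gamma((\iota+s)/2)}{\Gamma((\iota+1-s)/2)}\uppi^{-s}$. \autoref{lem:hyperbolicvanish} then lets me discard $\iota=0$ and $\epsilon_1=1$, since $f_\infty$ and $f_{q_1}$ are supercuspidal or elliptic. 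For $\iota=1$, the product of the pole of $\widetilde F$ at $0$, the factor $1/(1-\epsilon_1q_1^{-s})$ (which has a pole exactly when $\epsilon_1=1$, already excluded), and the simple pole of $\zeta^S(2u-1)$ needs a Laurent expansion. The surviving contributions, with $\epsilon_1\in\{0,-1\}$ and the other $\epsilon_i$ forced to be $1$ by the vanishing of $\zeta^S(0)$-type factors, should produce exactly the $\log q_1$ terms of $\ms A$: the $\log q_1$ comes from differentiating $1-q_1^{-s}$ inside $\zeta^S$, and the factor $(1-q_1^{-2})$ comes from $\epsilon_1=-1$. Making this bookkeeping match the constant $\ms A$ of \autoref{thm:contributionxi0} is the delicate step. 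The remaining shifted integral is bounded by $O(X^{1/2+\varepsilon})$, which is absorbed into the stated error as in the previous proof.
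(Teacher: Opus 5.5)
Your outline follows the paper's argument essentially step for step: the split by $(\iota,\epsilon)$, Mellin inversion in $G$ and in $V_{\iota,\epsilon}$ via \eqref{eq:defv}, the character expansion, \autoref{lem:kloostermandirichlet}, and the residue at $s=u-1$. After that come the poles at $u=3/2$, where the gamma and $q_i$-factors are trivial and leave $\widetilde F(1/2)$, and at $u=1$, where \autoref{lem:hyperbolicvanish} removes $\iota=0$ and $\epsilon_1=1$ and $\log q_1$ comes from differentiating $1-q_1^{2-2u}$ inside $\zeta^S(2u-2)$.

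The step that does not work is the one you dispose of ``as in the previous proof'', namely the leftover double integral. With $s$ on the far-right line, its $u$-integrand still contains $L^S(u,\chi^2)$. For $\chi^2=\triv$ this is $\zeta^S(u)$, which has a simple pole at $u=1$. Moving $u$ from $(6)$ to $(1/2)$ must cross this pole, and nothing cancels it: $L^S(u\mp s,\chi^2)$ are evaluated at $1\mp s$, far from $1$, and $1/\zeta^S(s+1)$ only cancels $L^S(1+s,\chi^2)$. Up to the constants already present, the residue is
\[
\widetilde G(1)\,X\,\frac{\prod_{i=1}^{r}(1-q_i^{-1})}{\zeta^S(2)}\cdot\frac{1}{\dpii}\int\widetilde V_{\iota,\epsilon}(s)\,(4q^\nu)^{s/2}\,\Theta^{\pm}_{\iota,\epsilon}(s,\chi)\,\zeta^S(2s)\,\zeta^S(1-s)\,\rmd s.
\]
Here $\Theta^{\pm}_{\iota,\epsilon}(s,\chi)$ is the product of $\int_{X_\iota}\theta_\infty^\pm(x)|x^2\mp 1|_\infty^{(s-1)/2}\rmd x$ with the $q$-adic integrals $\widehat{\Psi}_i(\chi_i,(s-1)/2)$. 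This contribution is proportional to $X$, not $O(X^{1/2+\varepsilon})$. It appears neither among the terms you collect nor in $\ms{A}$.

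To close the gap you must either evaluate this ``diagonal'' residue and add it to the coefficient of $X$, or show that it vanishes. Neither is automatic. \autoref{lem:hyperbolicvanish} kills only its $\iota=0$ and $\epsilon_1=1$ pieces; for $\iota=1$, $\epsilon_1\in\{0,-1\}$ what remains is a genuine $s$-integral. There is also no evident cancellation with the analogous diagonal term from \autoref{prop:sigmaxi01}, whose weight after $s\mapsto s-1$ is $\widetilde F(s-1)$ rather than $\widetilde V_{\iota,\epsilon}(s)$. The paper's written proof makes the same move: it asserts that the first-term integrand is holomorphic on $0<\Re u\le 4$ when $\Re s=8$. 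So citing it does not fill the gap.

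Three smaller points.
\begin{itemize}
\item There is indeed no pole at $u=2$, but not because $\widetilde V$ is regular at $0$; it is not, since it carries the pole of $\widetilde F$. The real reason is that after the residue at $s=u-1$ the Mellin factor is $\widetilde V_{\iota,\epsilon}(u-1)$, which in $\Re u>1/2$ can be singular only at $u=1$.
\item Returning $s$ to $(\varepsilon)$ with $u$ near the line $(1/2)$ crosses $s=1/2$ and $s=1-u$. These residues are harmless, but they must be accounted for.
\item Your remainder $O(\|G\|_{3,1}X^{1/2+\varepsilon})$ is not absorbed into the $O(X^{\varepsilon})$ of the statement. $X^{1/2+\varepsilon}$ is what this method gives, and it is what \autoref{thm:contributionxi0} records.
\end{itemize}
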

\begin{proof}
For any $\iota\in \{0,1\}$ and $\epsilon\in \{0,\pm 1\}^r$, we consider 
\begin{equation}\label{eq:sigmaxi02single}
\begin{split}
&G\legendresymbol{n}{X} 4n\sum_{\pm}\sum_{\nu\in \ZZ^r}q^{\nu/2}\sum_{k,f\in \ZZ_{(S)}^{>0}}\frac{1}{k^2f^3}\Kl_{k,f}^S(0,\pm n^2q^\nu)\int_{X_\iota}\int_{Y_\epsilon}\theta_\infty^\pm(x)\theta_{q}(y,\pm 4n^2q^\nu) \\
  \times&\frac{kf^2n^{-1}q^{-\nu/2}}{\sqrt{|x^2\mp 1|_\infty|y^2\mp 4n^2q^\nu|_q'}}V\legendresymbol{kf^2(4n^2q^\nu)^{-1/2}}{|x^2\mp 1|_\infty^{1/2}|y^2\mp 4n^2q^\nu|_q'^{1/2}}\rmd x\rmd y.
\end{split}
\end{equation}

By Mellin inversion formula on $G$ we have
\begin{align*}
\eqref{eq:sigmaxi02single}=&\frac{2}{\dpii} \int_{(4)}\widetilde{G}(u)\sum_{n\in \ZZ_{(S)}^{>0}}\frac{1}{n^u}\sum_{\pm}\sum_{\nu\in \ZZ^r}\sum_{k,f\in \ZZ_{(S)}^{>0}}\frac{1}{kf}\Kl_{k,f}^S(0,\pm n^2q^\nu)\\
  \times&\int_{X_\iota}\int_{Y_\epsilon}\frac{\theta_\infty^\pm(x)\theta_{q}(y,\pm n^2q^\nu)}{\sqrt{|x^2\mp 1|_\infty|y^2\mp 4n^2q^\nu|_q'}} V_{\iota,\epsilon}\legendresymbol{kf^2(4n^2q^\nu)^{-1/2}}{|x^2\mp 1|_\infty^{1/2}|y^2\mp 4nq^\nu|_q'^{1/2}}\rmd x\rmd yX^u\rmd u.
\end{align*}
Since $\Kl_{k,f}^S(0,\pm nq^\nu)\ll \sqrt{n}$, $\theta_\infty^\pm(x)$, $\theta_{q_i}(y,\pm nq^\nu)$ are compactly supported (independent of $n\in \ZZ_{(S)}^{>0}$), the integrand over $u$ converges absolutely for $\Re u>2$ and thus the right hand side of the above equation is well-defined. By definition \eqref{eq:defv} of $V_{\iota,\epsilon}$, the above equation equals
\begin{align*}
&\frac{2}{\dpii} \int_{(4)}\widetilde{G}(u)\sum_{\pm}\sum_{\nu\in \ZZ^r}\frac{1}{\dpii}\int_{(1)}\widetilde{F}(s)\uppi^{-s+\frac12}(4q^\nu)^{s/2} \frac{\Gamma(\frac{\iota+s}{2})}{\Gamma(\frac{\iota+1-s}{2})}\prod_{i=1}^{r}\frac{1-\epsilon_i q_i^{s-1}}{1-\epsilon_iq_i^{-s}} \int_{X_\iota}\theta_\infty^\pm(x)|x^2\mp 1|_\infty^{s/2-1/2}\rmd x\\
  \times& \sum_{n\in \ZZ_{(S)}^{>0}}\int_{Y_\epsilon}\theta_{q}(y,\pm n^2q^\nu)|y^2\mp 4n^2q^\nu|_q'^{s/2-1/2}\rmd y\frac{1}{n^{u-s}}\sum_{k,f\in \ZZ_{(S)}^{>0}}
  \frac{\Kl_{k,f}^S(0,\pm n^2q^\nu)}{k^{1+s}f^{1+2s}}\rmd s X^u\rmd u.
\end{align*}
The above sum for $f$ and $k$ converges absolutely for $\Re s>0$ and the sum for $n$ converges absolutely for $\Re (u-s)>2$ since $\Kl_{k,f}^S(0,\pm n^2q^\nu)\ll n$. Since $\widetilde{F}(s)$ has rapid decay vertically and the functions $\theta_\infty^\pm(x)$, $\theta_{q_i}(y,\pm n^2q^\nu)$ are compactly supported (independent of $n\in \ZZ_{(S)}^{>0}$), all the sums and integrals converge absolutely.
Hence we can change the order of the integrals and the sums. Now we define
\[
\Psi_i(z,s)=\int_{Y_\epsilon}\theta_{q}(y,\pm zq^\nu)|y^2\mp 4zq^\nu|_q'^{s}.
\]
Similarly, by \autoref{lem:integralsmooth}, we have the Fourier inversion formula
\[
\Psi_i(z,s)=(1-q_i^{-1})^{-1}\sum_{\chi_i}\widehat{\Psi_i}(\chi_i,s)\chi_i(z),
\] 
where 
\[
\widehat{\Psi_i}(\chi_i,s)=\int_{\ZZ_{q_i}^\times}\Psi_i(z,s)\overline{\chi_i(z)}\rmd z.
\]
$\chi_i$ runs over $\widehat{\ZZ_{q_i}^\times}$ and it is actually a finite sum. Hence for any $z\in \ZZ_{S_\fin}^\times$ we have
\[
\int_{Y_\epsilon}\theta_{q}(y,\pm zq^\nu)|y^2\mp 4zq^\nu|_q'^{s/2-1/2}\rmd y=\sum_{\chi}\chi(z)\prod_{i=1}^{r}(1-q_i^{-1})^{-1}\widehat{\Psi_i} (\chi_i,s/2-1/2),
\]
where $\chi$ runs over all characters on $\ZZ_{S_\fin}^\times$ and the sum over $\chi$ is actually finite. We also consider $\chi$ as Dirichlet characters.

Hence we have
\begin{align*}
\eqref{eq:sigmaxi02single}=&2\prod_{i=1}^{r}(1-q_i^{-1})^{-1}\frac{1}{\dpii} \int_{(4)}\widetilde{G}(u)\sum_{\pm}\sum_{\nu\in \ZZ^r}\frac{1}{\dpii}\int_{(1)}\widetilde{F}(s)\uppi^{-s+\frac12}(4q^\nu)^{\frac s2} \frac{\Gamma(\frac{\iota+s}{2})}{\Gamma(\frac{\iota+1-s}{2})}\prod_{i=1}^{r}\frac{1-\epsilon_i q_i^{s-1}}{1-\epsilon_iq_i^{-s}}\\
  \times& \int_{X_\iota}\theta_\infty^\pm(x)|x^2\mp 1|_\infty^{\frac s2-\frac12}\rmd x \sum_{\chi}\prod_{i=1}^{r}\widehat{\Psi_i}(\chi_i,s/2-1/2)\sum_{n\in \ZZ_{(S)}^{>0}}\frac{\chi(n)}{n^{u+s}}\sum_{k,f\in \ZZ_{(S)}^{>0}}\frac{\Kl_{k,f}^S(0,\pm 4n^2q^\nu)}{f^{1+2s}k^{1+s}}\rmd s X^u\rmd u.
\end{align*}
By \autoref{lem:kloostermandirichlet}, we obtain
\begin{align*}
\eqref{eq:sigmaxi02single}=&2\prod_{i=1}^{r}(1-q_i^{-1})^{-1}\frac{1}{\dpii}\frac{1}{\dpii}\int_{(4)} \widetilde{G}(u)\sum_{\pm}\sum_{\nu\in \ZZ^r}\int_{(1)}\widetilde{F}(s)\uppi^{-s+\frac12}(4q^\nu)^{\frac s2} \frac{\Gamma(\frac{\iota+s}{2})}{\Gamma(\frac{\iota+1-s}{2})}\\
  \times&\prod_{i=1}^{r}\frac{1-\epsilon_i q_i^{s-1}}{1-\epsilon_iq_i^{-s}} \int_{X_\iota}\theta_\infty^\pm(x)|x^2\mp 1|_\infty^{\frac s2-\frac12}\rmd x\sum_{\chi}\prod_{i=1}^{n}\widehat{\Psi_i}(\chi_i,s/2-1/2)\\
  \times&\frac{\zeta^S(2s)}{\zeta^S(s+1)} \frac{L^S(u-s,\chi^2)L^S(u+s,\chi^2)L^S(u,\chi^2)}{L^S(2u,\chi^4)} X^u\rmd u\rmd s.
\end{align*}

Now we move the $s$-contour from $(1)$ to $(8)$. The integrand is holomorphic on $\Re s>2$ and $\Re u>4$ except for the function $L^S(u-s,\chi^2)$ when $\chi^2=\triv$, which has a pole at $s=u-1$ with residue
\[
\res_{s=u-1}L^S\left(u-s,\triv\right)=\res_{s=u-1}\prod_{i=1}^{r}\left(1-\frac{1}{q_i^{u-s}}\right) \zeta(u-s)=-\prod_{i=1}^{r}\left(1-\frac{1}{q_i}\right).
\]
By residue formula, \eqref{eq:sigmaxi02single} becomes
\begin{align*}
&2\prod_{i=1}^{r}(1-q_i^{-1})^{-1}\frac{1}{\dpii}\frac{1}{\dpii}\int_{(4)}\widehat{G}(u)\sum_{\pm}\sum_{\nu\in \ZZ^r}\int_{(8)}\widetilde{F}(s)\uppi^{-s+\frac12}(4q^\nu)^{s/2} \frac{\Gamma(\frac{\iota+s}{2})}{\Gamma(\frac{\iota+1-s}{2})}\prod_{i=1}^{r}\frac{1-\epsilon_i q_i^{s-1}}{1-\epsilon_iq_i^{-s}} \\
  \times&\int_{X_\iota}\theta_\infty^\pm(x)|x^2\mp 1|_\infty^{\frac s2-\frac12}\rmd x\sum_{\chi}\prod_{i=1}^{n}\widehat{\Psi_i}(\chi_i, s/2-1/2)\frac{\zeta^S(2s)}{\zeta^S(s+1)} \frac{L^S(u-s,\chi^2)L^S(u+s,\chi^2)L^S(u,\chi^2)}{L^S(2u,\chi^4)}X^u\rmd s\rmd u\\ 
   +&\frac{2}{\dpii}\int_{(4)}\widehat{G}(u)\sum_{\pm}\sum_{\nu\in \ZZ^r}\sum_{\chi^2=\triv}\uppi^{  -u+\frac32}\widetilde{F}(u-1)(4q^\nu)^{\frac{u-1}{2}} \frac{\Gamma(\frac{\iota}{2}+\frac{u-1}{2})}{\Gamma(\frac{\iota+1}{2}-\frac{u-1}{2})} \prod_{i=1}^{r}\frac{1-\epsilon_i q_i^{u-2}}{1-\epsilon_iq_i^{-u+1}}\\
\times&\int_{X_\iota}\theta_\infty^\pm(x)|x^2\mp 1|_\infty^{\frac u2-1}\rmd x\prod_{i=1}^{n}\widehat{\Psi_i}(\triv,u/2-1)\frac{\zeta^S(2u-2)L^S(2u-1,\triv)L^S(u,\triv)}
{\zeta^S(u)L^S(2u,\triv)}X^u\rmd u.
\end{align*}
Note that since we move the contour from the left to the right, the residue formula has a minus sign.

We begin with considering the first term. Since $\widetilde{F}(s)$ has rapid decay vertically and the functions
\[
\frac{\Gamma(\frac{\iota+s}{2})}{\Gamma(\frac{\iota+1-s}{2})},\qquad\frac{1-\epsilon_i q_i^{s-1}}{1-\epsilon_iq_i^{-s}} 
\] 
are bounded when $\Re s=8$ is fixed (for the first one, use the Stirling formula), the sums and integrals converge absolutely. Also, the integrand is holomorphic on $0<\Re u\leq 4$ when $\Re s=8$ is fixed. Hence we can change the order of the integration over $s$ and $u$ and then move the $u$-contour to $(\frac12+\varepsilon)$ and then the $s$-contour to $(\varepsilon)$. Thus the first term is bounded by
\begin{align*}
&\int_{(\frac12+\varepsilon)}|\widetilde{G}(u)|\sum_{\pm}\sum_{\nu\in \ZZ^r}\int_{(\varepsilon)}|\widetilde{F}(s)|\uppi^{-\Re s+\frac12}(4q^\nu)^{\frac{\Re s}{2}} \left|\frac{\Gamma(\frac{\iota+s}{2})}{\Gamma(\frac{\iota+1-s}{2})}\right|\prod_{i=1}^{r} \left|\frac{1-\epsilon_i q_i^{s-1}}{1-\epsilon_iq_i^{-s}}\right|\int_{X_\iota}|\theta_\infty^\pm(x)||x^2\mp 1|_\infty^{\frac{\Re s}{2}-\frac12}\rmd x \\
  &\times\sum_{\chi}\prod_{i=1}^{n}|\widehat{\Psi_i}(\chi_i, s/2-1/2)|\left|\frac{\zeta^S(2s)}{\zeta^S(s+1)} \frac{L^S(u-s,\chi^2)L^S(u+s,\chi^2)L^S(u,\chi^2)}{L^S(2u,\chi^4)}\right|\rmd |s|\frac{|X^u|}{|u|}\rmd u,
\end{align*}
which is $\ll \|G\|_{3,1}X^{\frac12+\varepsilon}$ as in the previous case.

The second term equals
\begin{equation}\label{eq:secondterm2}
\begin{split}
&\frac{2}{\dpii}\int_{(4)}\widetilde{G}(u)\sum_{\chi^2=\triv}\sum_{\pm}\sum_{\nu\in \ZZ^r}\uppi^{  -u+\frac32}\widetilde{F}(u-1)(4q^\nu)^{\frac{u-1}{2}} \frac{\Gamma(\frac{\iota}{2}+\frac{u-1}{2})}{\Gamma(\frac{\iota+1}{2}-\frac{u-1}{2})} \prod_{i=1}^{r}\frac{(1-\epsilon_i q_i^{u-2})(1-q_i^{-2u+2})}{1-\epsilon_iq_i^{-u+1}}\\
\times&\int_{X_\iota}\theta_\infty^\pm(x)|x^2\mp 1|_\infty^{\frac u2-1}\rmd x\prod_{i=1}^{n}\widehat{\Psi_i}(\chi_i,u/2-1)\frac{\zeta(2u-2)\zeta^S(2u-1)}
{\zeta^S(2u)}X^u\rmd u.
\end{split}
\end{equation}

Now we move the $u$-contour from $(4)$ to $(\frac12+\varepsilon)$. The only poles in the region $0<\Re u\leq 4$ are $u=1$ and $u=3/2$. Hence \eqref{eq:secondterm2} equals
\begin{align*}
&\frac{2}{\dpii}\int_{(\frac12+\varepsilon)}\widetilde{G}(u)\sum_{\chi^2=\triv}\sum_{\pm}\sum_{\nu\in \ZZ^r}\uppi^{  -u+\frac32}\widetilde{F}(u-1)(4q^\nu)^{\frac{u-1}{2}} \frac{\Gamma(\frac{\iota}{2}+\frac{u-1}{2})}{\Gamma(\frac{\iota+1}{2}-\frac{u-1}{2})} \prod_{i=1}^{r}\frac{(1-\epsilon_i q_i^{u-2})(1-q_i^{-2u+2})}{1-\epsilon_iq_i^{-u+1}}\\
\times&\int_{X_\iota}\theta_\infty^\pm(x)|x^2\mp 1|_\infty^{\frac u2-1}\rmd x\prod_{i=1}^{n}\widehat{\Psi_i}(\chi_i,u/2-1)\frac{\zeta(2u-2)\zeta^S(2u-1)}
{\zeta^S(2u)}X^u\rmd u\\
+&\res_{u=3/2}\cdots
+\res_{u=1}\cdots,
\end{align*}
where $\cdots$ denotes the integrand of the first term.

The first term above is bounded by
\begin{align*}
&\int_{(\frac12+\varepsilon)}|\widetilde{G}(u)|\sum_{\chi^2=\triv}\sum_{\pm}\sum_{\nu\in \ZZ^r}\uppi^{-\varepsilon+\frac32}|\widetilde{F}(u-1)|(4q^\nu)^{\frac{-\varepsilon+1}{2}} \left|\frac{\Gamma(\frac{\iota}{2}+\frac{u-1}{2})}{\Gamma(\frac{\iota+1}{2}-\frac{u-1}{2})}\right| \prod_{i=1}^{r}\left|\frac{(1-\epsilon_i q_i^{u-2})(1-q_i^{-2u+2})}{1-\epsilon_iq_i^{-u+1}}\right|\\
\times&\left|\int_{X_\iota}\theta_\infty^\pm(x)|x^2\mp 1|_\infty^{\frac u2-1}\rmd x\right| \prod_{i=1}^{n}|\widehat{\Psi_i}(\chi_i,u/2-1)|\left|\frac{\zeta(2u-2)\zeta^S(2u-1)}
{\zeta^S(2u)}\right|\frac{|X^u|}{|u|}\rmd |u|\ll \|G\|_1X^{\frac12+\varepsilon}
\end{align*}
as in the previous case.

The second term becomes
\[
2\widetilde{G}\legendresymbol{3}{2}\sum_{\chi^2=\triv}\sum_{\pm}\sum_{\nu\in \ZZ^r}\widetilde{F}\legendresymbol{1}{2}(4q^\nu)^{1/4} \prod_{i=1}^{r}(1-q_i^{-1})\int_{X_\iota}\frac{\theta_\infty^\pm(x)}{|x^2\mp 1|_\infty^{1/4}}\rmd x\prod_{i=1}^{n}\widehat{\Psi_i}(\chi_i,-1/4)\frac{\zeta^S(2)}
{\zeta^S(3)}X^{\frac32}.
\]
By definition we have
\[
\widehat{\Psi_i}(\chi_i,-1/4)=\int_{\ZZ_{q_i}^\times}\int_{Y_\epsilon}\frac{\theta_{q}(y,\pm zq^\nu)}{|y^2\mp 4zq^\nu|_q'^{1/4}}\overline{\chi_i(z)}\rmd z.
\]
By summing over $\iota\in \{0,1\}$ and $\epsilon\in \{0,\pm 1\}^r$, we obtain the contribution
\begin{align*}
&2\widetilde{G}\legendresymbol{3}{2}\sum_{\iota\in \{0,1\}}\sum_{\epsilon\in \{0,\pm 1\}^r}\sum_{\chi^2=\triv}\sum_{\pm}\sum_{\nu\in \ZZ^r}\widetilde{F}\legendresymbol{1}{2}(4q^\nu)^{1/4} \prod_{i=1}^{r}(1-q_i^{-1})\\
\times&\int_{X_\iota}\frac{\theta_\infty^\pm(x)}{|x^2\mp 1|_\infty^{1/4}}\rmd x\prod_{i=1}^{n}\int_{\ZZ_{q_i}^\times}\int_{Y_\epsilon}\frac{\theta_{q}(y,\pm zq^\nu)}{|y^2\mp 4zq^\nu|_q'^{1/4}}\overline{\chi_i(z)}\rmd z\frac{\zeta^S(2)}
{\zeta^S(3)}X^{\frac32}\\
=&2\widetilde{G}\legendresymbol{3}{2}\sum_{\chi^2=\triv}\sum_{\pm}\sum_{\nu\in \ZZ^r}\widetilde{F}\legendresymbol{1}{2}\sqrt{2} q^{\nu/4} \int_{\RR}\frac{\theta_\infty^\pm(x)}{|x^2\mp 1|_\infty^{1/4}}\rmd x\\
\times&\prod_{i=1}^{r}(1-q_i^{-1})\int_{\ZZ_{q_i}^\times}\int_{\QQ_{q_i}}\frac{\theta_{q_i}(y_i,\pm z_i q^\nu)}{|y_i^2\mp 4z_iq^\nu|_{q_i}'^{1/4}}\overline{\chi_i(z_i)}\rmd z_i\rmd y_i \frac{\zeta^S(2)}{\zeta^S(3)}X^{\frac32}.
\end{align*}

Finally we deal with the last term, namely
\begin{equation}\label{eq:lasttermres1}
\begin{split}
&2\res_{u=1}\widetilde{G}(u)\sum_{\chi^2=\triv}\sum_{\pm}\sum_{\nu\in \ZZ^r}\uppi^{  -u+\frac32}\widetilde{F}(u-1)(4q^\nu)^{\frac{u-1}{2}} \frac{\Gamma(\frac{\iota}{2}+\frac{u-1}{2})}{\Gamma(\frac{\iota+1}{2}-\frac{u-1}{2})} \prod_{i=1}^{r}\frac{(1-\epsilon_i q_i^{u-2})(1-q_i^{-2u+2})}{1-\epsilon_iq_i^{-u+1}}\\
\times&\int_{X_\iota}\theta_\infty^\pm(x)|x^2\mp 1|_\infty^{\frac u2-1}\rmd x\prod_{i=1}^{n}\widehat{\Psi_i}(\chi_i,u/2-1)\frac{\zeta(2u-2)\zeta^S(2u-1)}
{\zeta^S(2u)}X^u.
\end{split}
\end{equation}
It is complicated in general since the function may have a \emph{triple} pole at $u=1$. However, under the additional hypothesis \autoref{ass:nonarchimedean}, it is much more simple.

We claim that if $\iota=0$ or $\epsilon_1=1$, then \eqref{eq:lasttermres1} vanishes. If $\iota=0$, then by \autoref{lem:hyperbolicvanish} (1), $\theta_\infty^\pm(x)$ is identically $0$ on $X_\iota$.
Hence the integral must be $0$. If $\epsilon_i=1$, then by \autoref{ass:nonarchimedean} for $i=1$ and \autoref{lem:hyperbolicvanish} (2) we have
\[
\int_{Y_1} \frac{\theta_{q_1}(y_1,\pm z_1q^\nu)}{|y_1^2\mp 4z_1q^\nu|_q'^{1/4}}\overline{\chi_1(z_1)}\rmd z_1\rmd y_1=0.
\]
Hence
\[
\widehat{\Psi_i}(\chi_i,-1/4)=\prod_{i=1}^{n}\int_{\ZZ_{q_i}^\times}\int_{Y_{\epsilon_i}} \frac{\theta_{q_i}(y_i,\pm z_iq^\nu)}{|y_i^2\mp 4z_iq^\nu|_q'^{1/4}}\overline{\chi_i(z_i)}\rmd z_i\rmd y_i=0.
\]
Thus $\eqref{eq:lasttermres1}=0$ in both cases.

Now we assume that $\iota\neq 0$ and $\epsilon_1\neq 1$. Then
\begin{enumerate}[itemsep=0pt,parsep=0pt,topsep=0pt,leftmargin=0pt,labelsep=2.5pt,itemindent=12pt,label=\textbullet]
  \item $\widetilde{F}(u-1)$ has a simple pole at $u=1$ with residue $1$ and is holomorphic otherwise.
  \item Since $\iota\neq 0$, the function 
  \[
  \frac{\Gamma(\frac{\iota}{2}+\frac{u-1}{2})}{\Gamma(\frac{\iota+1}{2}-\frac{u-1}{2})}
  \]
  is regular at $u=1$. 
  \item Since $\epsilon_1\neq 1$, the function 
  \[
  \frac{(1-\epsilon_i q_i^{u-2})(1-q_i^{-2u+2})}{1-\epsilon_iq_i^{-u+1}}
  \]
  is zero for $i=1$. For $i=2,\dots,n$ it is regular at $u=1$ and is zero unless $\epsilon_i\neq 1$.
  \item The function
  \[
  \frac{\zeta(2u-2)\zeta^S(2u-1)}{\zeta^S(2u)}
  \]
  has a simple pole at $u=1$ with residue
  \[
  \frac{1}{2}\frac{\zeta(0)}{\zeta^S(2)}=-\frac{1}{4\zeta^S(2)}.
  \]
\end{enumerate}
By the above analysis we know that  \eqref{eq:lasttermres1} is regular at $u=1$ unless $\epsilon_i=1$ for all $i=2,3,\dots,n$. In the exceptional case, the function has a simple pole at $u=1$ with residue
\begin{align*}
&2\sum_{\chi^2=\triv}\sum_{\pm}\sum_{\nu\in \ZZ^r}\res_{u=1}\widetilde{G}(u)\widetilde{F}(u-1)\res_{u=1}\frac{\zeta(2u-2)\zeta^S(2u-1)}
{\zeta^S(2u)} \left.\frac{\rmd}{\rmd u}\right|_{u=1}\frac{(1-\epsilon_1 q_1^{u-2})(1-q_1^{-2u+2})}{1-\epsilon_1q_1^{-u+1}}  \\
\times&\left[(4q^\nu)^{\frac{u-1}{2}} \frac{\Gamma(\frac{1}{2}+\frac{u-1}{2})}{\Gamma(1-\frac{u-1}{2})} \prod_{i=2}^{r}[(1-q_i^{u-2})(1+q_i^{-u+1})]\uppi^{-u+\frac32}\int_{X_1}\theta_\infty^\pm(x)|x^2\mp 1|_\infty^{\frac u2-1}\rmd x\prod_{i=1}^{n}\widehat{\Psi_i}(\chi_i,u/2-1)\frac{X^u}{u}\right]_{u=1}\\
=&-2\widetilde{G}(1)\sum_{\chi^2=\triv}\sum_{\pm}\sum_{\nu\in \ZZ^r}\frac{1}{4\zeta^S(2)}\left.\frac{\rmd}{\rmd u}\right|_{u=1}\frac{(1-\epsilon_1 q_1^{u-2})(1-q_1^{-2u+2})}{1-\epsilon_1q_1^{-u+1}} \frac{\Gamma(1/2)}{\Gamma(1)}2^{r-1}\prod_{i=2}^{r}(1-q_i^{-1})\\
\times& \uppi^{\frac12}\int_{X_1}\frac{\theta_\infty^\pm(x)}{|x^2\mp 1|_\infty^{1/2}}\rmd x \prod_{i=1}^{n}\int_{\ZZ_{q_i}^\times}\int_{Y_{\epsilon_i}} \frac{\theta_{q_i}(y_i,\pm z_iq^\nu)}{|y_i^2\mp 4z_iq^\nu|_q'^{1/2}}\overline{\chi_i(z_i)}\rmd z_i\rmd y_i X.
\end{align*}

We have
\[
\left.\frac{\rmd}{\rmd u}\right|_{u=1}\frac{(1-\epsilon_1 q_1^{u-2})(1-q_1^{-2u+2})}{1-\epsilon_1q_1^{-u+1}}=2\log q_1
\]
if $\epsilon_1=0$ and
\[
\left.\frac{\rmd}{\rmd u}\right|_{u=1}\frac{(1-\epsilon_1 q_1^{u-2})(1-q_1^{-2u+2})}{1-\epsilon_1q_1^{-u+1}}=(1+q_1^{-1})\log q_1
\]
if $\epsilon_1=-1$. Hence the residue becomes
\begin{align*}
&-2\widetilde{G}(1)\sum_{\chi^2=\triv}\sum_{\pm}\sum_{\nu\in \ZZ^r}\frac{2^{r}\uppi}{4\zeta^S(2)}\frac{\log q_1}{1-q_1^{-1}}\prod_{i=1}^{r}(1-q_i^{-1})\int_{X_1}\frac{\theta_\infty^\pm(x)}{|x^2\mp 1|_\infty^{1/2}}\rmd x \int_{\ZZ_{q}^\times}\int_{Y_{\epsilon}} \frac{\theta_{q}(y,\pm zq^\nu)}{|y^2\mp 4zq^\nu|_q'^{1/2}}\overline{\chi(z)}\rmd z\rmd y X,
\end{align*}
if $\epsilon_1=0$ and
\begin{align*}
&-2\widetilde{G}(1)\sum_{\chi^2=\triv}\sum_{\pm}\sum_{\nu\in \ZZ^r}\frac{2^{r-1}\uppi}{4\zeta^S(2)}\frac{(1-q_1^{-2})\log q_1}{1-q_1^{-1}}\prod_{i=1}^{r}(1-q_i^{-1})\\
\times&\int_{X_1}\frac{\theta_\infty^\pm(x)}{|x^2\mp 1|_\infty^{1/2}}\rmd x \int_{\ZZ_{q}^\times}\int_{Y_{\epsilon}} \frac{\theta_{q}(y,\pm zq^\nu)}{|y^2\mp 4zq^\nu|_q'^{1/2}}\overline{\chi(z)}\rmd z\rmd y X
\end{align*}
if $\epsilon_1=-1$.

Finally, we sum over $\iota\in \{1\}$, $\epsilon_1\in \{0,-1\}$ and $\epsilon_i\in \{1\}$ for $i\geq 2$, we obtain the total formula in this proposition.
\end{proof}

Combining \autoref{prop:sigmaxi01} and \autoref{prop:sigmaxi02}, and observing that the main term of \autoref{prop:sigmaxi01} cancels with the first term of \autoref{prop:sigmaxi02}, we obtain \autoref{thm:contributionxi0}.

\section{Analysis of the transformed Kloosterman sum}
In the following few sections we will treat the $\xi\neq 0$ case. We don't need \autoref{ass:nonarchimedean} for dealing with the $\xi\neq 0$ term.

In this section, we analyze the transformed Kloosterman sum $\prescript{2}{c}{\widehat{\Kl}}_{k,f}^S(\xi,\alpha)$ for $\xi$, $\alpha$, $c\in \ZZ^S$. Recall that
\begin{align*}
\prescript{2}{c}{\widehat{\Kl}}_{k,f}^S(\xi,\alpha)&=\sum_{m \bmod kf^2}\Kl_{k,f}^S(\xi,cm^2)\rme\legendresymbol{m\alpha}{kf^2} \rme_q\legendresymbol{m\alpha}{kf^2}\\
&=\sum_{\substack{a,b \bmod kf^2\\f^2\mid a^2-4cb^2}}\legendresymbol{(a^2-4cb^2)/f^2}{k}\rme\legendresymbol{a\xi}{kf^2} \rme_q\legendresymbol{a\xi}{kf^2}\rme\legendresymbol{b\alpha}{kf^2} \rme_q\legendresymbol{b\alpha}{kf^2}.
\end{align*}

For any prime $p\notin S$ and $c$, $\xi$, $\alpha\in \ZZ_p$, we define the \emph{local transformed Kloosterman sum} to be
\begin{equation}\label{eq:deflocalkloostermancharacter}
\prescript{2}{c}{\widehat{\Kl}}_{p^u,p^v}^{(p)}(\xi,\alpha)=\sum_{m\bmod p^{u+2v}}\Kl_{p^u,p^v}^{(p)}(\xi,cm^2)\rme_p\legendresymbol{-m\alpha}{p^{u+2v}}.
\end{equation}

As in \cite[Section 4]{cheng2025c}, for $\alpha\in \ZZ^S$ we have
\begin{equation}\label{eq:localgeneralizedkloostermaneq2}
\prescript{2}{c}{\widehat{\Kl}}_{p^u,p^v}^{(p)}(\xi,\alpha)=
\sum_{m \bmod p^{u+2v}}\Kl_{p^u,p^v}^{(p)}(\xi,cm^2) \rme\legendresymbol{m\alpha}{p^{u+2v}}\rme_{q}\legendresymbol{m\alpha}{p^{u+2v}}.
\end{equation}

To analyze the transformed Kloosterman sum, we first factor it into local data.
\begin{proposition}\label{prop:prodkloostermanlocal}
We have
\[
\prescript{2}{c}{\widehat{\Kl}}_{k,f}^S(\xi,\alpha)=\prod_{p\notin S}\prescript{2}{c}{\widehat{\Kl}}_{k_{(p)},f_{(p)}}^{(p)}(\xi,\alpha).
\]
\end{proposition}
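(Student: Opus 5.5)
This is a Chinese Remainder Theorem factorization, carried out as in the proof of multiplicativity of $\Kl_{k,f}^S$ in \cite{cheng2025b} and of the standard-representation transformed sum in \cite[Section 4]{cheng2025c}. Write $k=\prod_{p\notin S}p^{u_p}$ and $f=\prod_{p\notin S}p^{v_p}$ with $u_p=v_p(k)$, $v_p=v_p(f)$, and set $M=kf^2=\prod_p p^{u_p+2v_p}$. I will work with the double-sum expression
\[
\prescript{2}{c}{\widehat{\Kl}}_{k,f}^S(\xi,\alpha)=\sum_{\substack{a,b \bmod M\\f^2\mid a^2-4cb^2}}\legendresymbol{(a^2-4cb^2)/f^2}{k}\rme\legendresymbol{a\xi+b\alpha}{M}\rme_q\legendresymbol{a\xi+b\alpha}{M}.
\]
By the CRT, pairs $(a,b)\bmod M$ correspond bijectively to tuples $(a_p,b_p)\bmod p^{u_p+2v_p}$, $p\mid M$.

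The first step is to check that each factor of the summand splits over these local coordinates. The divisibility condition $f^2\mid a^2-4cb^2$ is equivalent to $p^{2v_p}\mid a_p^2-4cb_p^2$ for each $p$. For the Jacobi symbol, I use multiplicativity in the denominator, $\legendresymbol{D/f^2}{k}=\prod_p\legendresymbol{D/f^2}{p^{u_p}}$. Since $D/f^2=(D/p^{2v_p})\cdot(f^{(p)})^{-2}$ with $f^{(p)}$ a unit at $p$, and a square unit does not change the symbol, each factor equals $\legendresymbol{(a_p^2-4cb_p^2)/p^{2v_p}}{p^{u_p}}$. This depends only on $(a_p,b_p)\bmod p^{u_p+2v_p}$.

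The second step is the additive characters, which are the only delicate point. The map $x\mapsto \rme(x/M)\rme_q(x/M)$ for $x\in\ZZ^S$ is well defined modulo $M$: it is trivial on $M\ZZ^S$ by the product formula, since $\rme(y)\prod_{i}\rme_{q_i}(y)=\prod_{\ell\notin S}\rme_\ell(y)^{-1}=1$ for $y\in\ZZ^S$. I decompose $1/M=\sum_p c_p/p^{u_p+2v_p}$ by partial fractions, with $c_p\in\ZZ$. Then
\[
\rme\legendresymbol{x}{M}\rme_q\legendresymbol{x}{M}=\prod_p\rme\legendresymbol{c_px}{p^{u_p+2v_p}}\rme_q\legendresymbol{c_px}{p^{u_p+2v_p}}.
\]
I will also use the product formula to identify $\rme(y)\rme_q(y)$ with $\rme_p(-y)$ for $y\in p^{-n}\ZZ^S$, which matches the normalization in \eqref{eq:localgeneralizedkloostermaneq} and \eqref{eq:localgeneralizedkloostermaneq2}.

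The third step substitutes $a_p\mapsto c_p^{-1}a_p$ and $b_p\mapsto c_p^{-1}b_p$. These are bijections modulo $p^{u_p+2v_p}$ because $c_p$ is prime to $p$. The condition $p^{2v_p}\mid a_p^2-4cb_p^2$ is preserved, and the Jacobi symbol changes by $\legendresymbol{c_p^{-2}}{p^{u_p}}=1$. Hence the global sum factors as a product over $p$ of
\[
\sum_{a_p,b_p}\legendresymbol{(a_p^2-4cb_p^2)/p^{2v_p}}{p^{u_p}}\rme\legendresymbol{a_p\xi}{p^{u_p+2v_p}}\rme_q\legendresymbol{a_p\xi}{p^{u_p+2v_p}}\rme\legendresymbol{b_p\alpha}{p^{u_p+2v_p}}\rme_q\legendresymbol{b_p\alpha}{p^{u_p+2v_p}}.
\]
Summing first over $a_p$ gives $\Kl_{p^{u_p},p^{v_p}}^{(p)}(\xi,cb_p^2)$ by \eqref{eq:localgeneralizedkloostermaneq}. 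The remaining sum over $m=b_p$ is then exactly \eqref{eq:localgeneralizedkloostermaneq2}. Primes $p\nmid kf$ contribute the trivial factor $1$.

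The main obstacle is the bookkeeping around $c_p$ in the Legendre symbol and the sign convention of $\rme_p$ versus $\rme\cdot\rme_q$. Both are handled by the square-invariance and product-formula remarks above.
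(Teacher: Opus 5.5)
Your proof is correct and follows essentially the same route as the paper: CRT, the cofactor inverse $c_p\equiv((kf^2)^{(p)})^{-1}$ showing up in the local characters, and removal of that twist by rescaling $(a_p,b_p)$ by a unit, which leaves the symbol unchanged because $c_p^{-2}$ is a square. The only difference is that you factor the Jacobi-symbol part by hand on the joint $(a,b)$ sum. The paper instead quotes Proposition B.1 of \cite{cheng2025b} for the multiplicativity of $\Kl_{k,f}^S$, applies CRT only in $m$, and proves the invariance under $(\xi,\alpha)\mapsto(\kappa\xi,\kappa\alpha)$ as a separate step.
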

\begin{proof}
Let
\[
kf^2=\prod_{j=1}^{t}(kf^2)_{(p_j)}
\]
be the prime factorization. By Chinese remainder theorem, we have an isomorphism
\[
\begin{split}
   \varphi: \prod_{j=1}^{t}\ZZ/(kf^2)_{(p_j)}&\to \ZZ/kf^2 \\
     (a_1,\dots,a_t) &\mapsto \sum_{j=1}^{t} a_j(kf^2)^{(p_j)}((kf^2)^{(p_j)})^{-1},
\end{split}
\]
where $(a^{(p)})^{-1}$ denotes the inverse of $a^{(p)}$ modulo $a_{(p)}$. Note that $\varphi(a_1,\dots,a_t)\equiv a_j\pmod{(kf^2)_{(p_j)}}$.

Since
\[
\frac{\varphi(a_1,\dots,a_t)\alpha}{kf^2}=\frac{\alpha}{kf^2}\sum_{j=1}^{t} a_j(kf^2)^{(p_j)}((kf^2)^{(p_j)})^{-1}=\sum_{j=1}^{t} \frac{a_j((kf^2)^{(p_j)})^{-1}\alpha}{(kf^2)_{(p_j)}},
\]
we obtain
\[
\rme\legendresymbol{\varphi(a_1,\dots,a_t)\alpha}{kf^2}=\prod_{j=1}^{t} \rme\legendresymbol{a_j((kf^2)^{(p_j)})^{-1}}{(kf^2)_{(p_j)}}.
\]
By Proposition B.1 in \cite{cheng2025b} we have
\begin{align*}
\Kl_{k,f}^S(\xi,c\varphi(a_1,\dots,a_t)^2 )&=\prod_{j=1}^{t}\Kl_{k_{(p_j)},f_{(p_j)}}^{(p^j)}(((kf^2)^{(p_j)})^{-1}\xi,c \varphi(a_1,\dots,a_t)^2)\\
&=\prod_{j=1}^{t}\Kl_{k_{(p_j)},f_{(p_j)}}^{(p^j)} (((kf^2)^{(p_j)})^{-1}\xi, ca_j^2).
\end{align*}
Hence by \eqref{eq:localgeneralizedkloostermaneq2},
\begin{align*}
  \prescript{2}{c}{\widehat{\Kl}}_{k,f}^S(\xi,\alpha)&=\sum_{\substack{a_j \bmod (kf^2)_{(p_j)}\\ 1\leq j\leq t}}\Kl_{k,f}^S(\xi,c\varphi(a_1,\dots,a_t)^2)\rme\legendresymbol{\varphi(a_1,\dots,a_t) \alpha}{kf^2} \rme_q\legendresymbol{\varphi(a_1,\dots,a_t) \alpha}{kf^2}\\
   & =\sum_{\substack{a_j \bmod (kf^2)_{(p_j)}\\ 1\leq j\leq t}} \prod_{j=1}^{t} \Kl_{k_{(p_j)},f_{(p_j)}}^{(p^j)}(((kf^2)^{(p_j)})^{-1}\xi, ca_j^2) \rme\legendresymbol{a_j((kf^2)^{(p_j)})^{-1}\alpha}{(kf^2)_{(p_j)}} \rme_q\legendresymbol{a_j((kf^2)^{(p_j)})^{-1}\alpha}{(kf^2)_{(p_j)}}\\
   & = \prod_{j=1}^{t}\prescript{2}{c}{\widehat{\Kl}}_{k_{(p_j)},f_{(p_j)}}^{(p)}\left(((kf^2)^{(p_j)})^{-1}\xi, ((kf^2)^{(p_j)})^{-1}\alpha\right).
\end{align*}

Now it suffices to prove that for any $\kappa\in \ZZ_p^\times$ we have $\prescript{2}{c}{\widehat{\Kl}}_{p^u,p^v}^{(p)}(\kappa\xi,\kappa\alpha)= \prescript{2}{c}{\widehat{\Kl}}_{p^u,p^v}^{(p)}(\xi,\alpha)$.
 By definition of the local generalized Kloosterman sum we have
\begin{equation}\label{eq:deflocalkloostermandoublecharacter}
\prescript{2}{c}{\widehat{\Kl}}_{p^u,p^v}^{(p)}(\xi,\alpha)=\sum_{a\bmod p^{u+2v}}\sum_{\substack{b\bmod p^{u+2v}\\ p^{2v}\mid a^2-4cb^2}}\legendresymbol{(a^2-4cb^2)/p^{2v}}{p^u}\rme_p\legendresymbol{-a\xi}{p^{u+2v}} \rme_p\legendresymbol{-b\alpha}{p^{u+2v}}.
\end{equation}
Hence by making change of variable $a\mapsto \kappa^{-1}a$ and $b\mapsto \kappa^{-1}b$ we obtain
\[
  \prescript{2}{c}{\widehat{\Kl}}_{p^u,p^v}^{(p)}(\kappa\xi,\kappa\alpha)=\sum_{\substack{a,b\bmod p^{u+2v}\\ p^{2v}\mid \kappa^{-2}(a^2-4cb^2)}}\legendresymbol{\kappa^{-2}(a^2-4cb^2)/p^{2v}}{p^u}\rme_p\legendresymbol{-a\xi}{p^{u+2v}} \rme_p\legendresymbol{-b\alpha}{p^{u+2v}}= \prescript{2}{c}{\widehat{\Kl}}_{p^u,p^v}^{(p)}(\xi,\alpha)
\]
and thus the conclusion follows.
\end{proof}

We will mainly analyze the local transformed Kloosterman sum \eqref{eq:deflocalkloostermancharacter} in the remaining part of this section.
We will consider $\legendresymbol{c}{p}=1$ and $\legendresymbol{c}{p}=-1$ separately. The stories are quite different.

\subsection{Case $\legendresymbol{c}{p}=-1$}
In this subsection we assume that $\legendresymbol{c}{p}=-1$. First we present a well-known lemma which will be used in the computation below.
\begin{lemma}
Suppose that $p$ is an odd prime and $a\in \FF_p^\times$. Then we have
\[
\sum_{x\in \FF_p}\legendresymbol{x^2-a}{p}=-1.
\]
\end{lemma}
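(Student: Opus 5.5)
The statement to prove is that for an odd prime $p$ and $a\in\FF_p^\times$ we have $\sum_{x\in\FF_p}\legendresymbol{x^2-a}{p}=-1$. I would prove it by counting the points of the affine conic $y^2=x^2-a$ over $\FF_p$.

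\textbf{Step 1: rewrite the sum as a point count.} For each $t\in\FF_p$, the number of $y\in\FF_p$ with $y^2=t$ equals $1+\legendresymbol{t}{p}$. This uses the convention $\legendresymbol{0}{p}=0$, and it is valid because $p$ is odd. Summing over $x$ gives
\[
\#\{(x,y)\in\FF_p^2 : y^2=x^2-a\}=p+\sum_{x\in\FF_p}\legendresymbol{x^2-a}{p}.
\]
So it suffices to show that the curve $x^2-y^2=a$ has exactly $p-1$ points over $\FF_p$.

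\textbf{Step 2: count the points of $x^2-y^2=a$.} Since $p$ is odd, the linear substitution $u=x-y$, $v=x+y$ is invertible over $\FF_p$. It transforms the equation into $uv=a$. Because $a\neq 0$, the variable $u$ can be any element of $\FF_p^\times$, and then $v=a/u$ is uniquely determined. This gives exactly $p-1$ solutions.

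\textbf{Step 3: conclude.} Comparing the two counts, $p+\sum_{x}\legendresymbol{x^2-a}{p}=p-1$, hence the sum equals $-1$.

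\textbf{Where care is needed.} There is no genuine obstacle here; the only points to check are that $p$ is odd, which is needed both for the count $1+\legendresymbol{t}{p}$ and for the invertibility of the change of variables, and that $a\neq0$, which ensures the hyperbola is nondegenerate.

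An alternative route is the general Jacobsthal-type identity $\sum_x\legendresymbol{x^2+bx+c}{p}=-1$ whenever the discriminant is nonzero. It can be derived from the multiplicativity of the Legendre symbol: write $\legendresymbol{x^2-a}{p}=\legendresymbol{x^2}{p}\legendresymbol{1-a x^{-2}}{p}$ for $x\neq0$, substitute $w=x^{-2}$, and use $\sum_{t}\legendresymbol{t}{p}=0$. The conic count above is shorter, so I would use it.
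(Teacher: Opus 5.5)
Your proof is correct and complete. The identity $\#\{y: y^2=t\}=1+\legendresymbol{t}{p}$ turns the sum into a point count. The substitution $u=x-y$, $v=x+y$ is invertible because $2\in\FF_p^\times$, and it identifies the conic with the hyperbola $uv=a$, which has exactly $p-1$ points since $a\neq0$.

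The paper gives no argument of its own for this lemma. It only cites Lemma~2 of Appendix~A of Langlands' \emph{Beyond Endoscopy}, so your point count is a self-contained proof of something the paper outsources. The citation saves space. Your argument shows exactly where the hypotheses $p$ odd and $a\neq 0$ are used. After completing the square, it also gives $\sum_x\legendresymbol{x^2+bx+c}{p}=-1$ whenever $b^2-4c\neq0$ at no extra cost.

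One small caveat about your alternative route. The map $x\mapsto x^{-2}$ is two-to-one onto the nonzero squares, so substituting $w=x^{-2}$ does not reduce directly to $\sum_t\legendresymbol{t}{p}=0$. A clean version of that idea groups $x$ by $t=x^2$ and uses $\sum_t\legendresymbol{t-a}{p}=0$:
\[
\sum_{x}\legendresymbol{x^2-a}{p}=\sum_{t}\Bigl(1+\legendresymbol{t}{p}\Bigr)\legendresymbol{t-a}{p}=\sum_{t}\legendresymbol{t(t-a)}{p}=\sum_{t\neq0}\legendresymbol{1-at^{-1}}{p}=-1.
\]
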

\begin{proof}
See, for example, \cite[Lemma 2 of Appendix A]{langlands2004}.
\end{proof}

We first consider the case $u=1$ and $v=0$.
\begin{proposition}\label{prop:fouriertransformfinitefield}
Let $p$ be an odd prime. Suppose that $d\in \FF_p$ such that $\legendresymbol{d}{p}=-1$ and $\xi,\eta\in \ZZ_p$. Then
\begin{equation}\label{eq:quadraticextensionfinitefield}
\sum_{x,y\in \FF_p}\legendresymbol{x^2-dy^2}{p}\rme\legendresymbol{x\xi+y\eta}{p} =p\legendresymbol{\eta^2-d\xi^2}{p}.
\end{equation}
\end{proposition}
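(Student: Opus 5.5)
The plan is to evaluate the left-hand side of \eqref{eq:quadraticextensionfinitefield} directly with quadratic Gauss sums. Throughout, $\xi,\eta$ are read modulo $p$, and $g=\sum_{t\in\FF_p}\legendresymbol{t}{p}\rme\legendresymbol{t}{p}$ is the quadratic Gauss sum, which satisfies $g^2=\legendresymbol{-1}{p}p$.

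First I expand the Legendre symbol. For every $a\in\FF_p$, including $a=0$, we have
\[
\legendresymbol{a}{p}=\frac1g\sum_{s\in\FF_p^\times}\legendresymbol{s}{p}\rme\legendresymbol{sa}{p}.
\]
Substituting $a=x^2-dy^2$ and interchanging the finite sums, the left-hand side becomes
\[
\frac1g\sum_{s\ne0}\legendresymbol{s}{p}\Bigl(\sum_{x}\rme\legendresymbol{sx^2+\xi x}{p}\Bigr)\Bigl(\sum_y\rme\legendresymbol{-sdy^2+\eta y}{p}\Bigr).
\]

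Next I complete the square, which is legitimate since $p$ is odd. This gives
\[
\sum_x\rme\legendresymbol{sx^2+\xi x}{p}=\legendresymbol{s}{p}g\,\rme\legendresymbol{-\overline{4s}\xi^2}{p},
\qquad
\sum_y\rme\legendresymbol{-sdy^2+\eta y}{p}=\legendresymbol{-sd}{p}g\,\rme\legendresymbol{\overline{4sd}\eta^2}{p}.
\]
Multiplying out the characters, $\legendresymbol{s}{p}^3\legendresymbol{-1}{p}\legendresymbol{d}{p}=-\legendresymbol{-1}{p}\legendresymbol{s}{p}$, using $\legendresymbol{d}{p}=-1$. Setting $A=(\eta^2-d\xi^2)\bar d$, the sum therefore reduces to
\[
-\legendresymbol{-1}{p}\,g\sum_{s\ne0}\legendresymbol{s}{p}\rme\legendresymbol{\overline{4s}A}{p}.
\]

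Now I substitute $t=\overline{4s}$. Since $4$ is a square, $\legendresymbol{t}{p}=\legendresymbol{s}{p}$, and the inner sum becomes $\sum_t\legendresymbol{t}{p}\rme(tA/p)=\legendresymbol{A}{p}g$. This also holds when $A=0$, because both sides are then $0$. Hence the total equals
\[
-\legendresymbol{-1}{p}g^2\legendresymbol{A}{p}=-p\legendresymbol{A}{p}.
\]
Finally, $\legendresymbol{A}{p}=\legendresymbol{d}{p}\legendresymbol{\eta^2-d\xi^2}{p}=-\legendresymbol{\eta^2-d\xi^2}{p}$, which gives the claimed identity.

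The only delicate point is the sign bookkeeping, namely the factors $\legendresymbol{-1}{p}$, $\legendresymbol{d}{p}=-1$ and $g^2=\legendresymbol{-1}{p}p$. I will check the result against the degenerate case $\xi=\eta=0$. There the answer must be $0$, since each nonzero value of the norm form $x^2-dy^2$ is attained exactly $p+1$ times. The formula agrees, because $\eta^2-d\xi^2=0$ in that case. Conversely, $\eta^2-d\xi^2\equiv0$ forces $\xi\equiv\eta\equiv0$, since $d$ is a non-residue. So no other degenerate cases arise.
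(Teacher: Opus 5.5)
Your argument is correct, and it takes a genuinely different route from the paper. The paper works in $\FF_{p^2}=\FF_p(\sqrt d)$. It rewrites $x^2-dy^2$ as $\norm(\alpha)$ and the phase as $\Tr(\alpha\beta)/2$ with $\beta=\xi+\eta/\sqrt d$. The multiplicative substitution $\alpha\mapsto\alpha\beta^{-1}$ then pulls out $\legendresymbol{\norm(\beta)}{p}$. What remains is the single sum $\sum_{x,y}\legendresymbol{x^2-dy^2}{p}\rme(x/p)=p\legendresymbol{-d}{p}$, which the paper evaluates by hand using the lemma $\sum_{y}\legendresymbol{y^2-a}{p}=-1$. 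The case $\beta=0$ is disposed of by simply asserting that both sides vanish.

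You instead expand the Legendre symbol in additive characters, separate the variables, and reduce everything to one-variable quadratic Gauss sums by completing the square. So you need neither the field structure nor the auxiliary lemma, and the degenerate case $\xi=\eta=0$ is absorbed into your $A=0$ case.

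What each buys:
\begin{itemize}
\item The paper's proof is more conceptual. It exhibits the answer as the norm character evaluated at the dual point $\beta$, i.e.\ the Fourier transform of $\legendresymbol{\norm(\cdot)}{p}$ on $\FF_{p^2}$ is a multiple of itself.
\item Your proof is more mechanical but more general. The symbol $\legendresymbol{d}{p}$ enters your computation exactly twice, once in $\legendresymbol{-sd}{p}$ and once in $\legendresymbol{A}{p}$, and the two factors cancel. So the same computation proves the identity for every $d\in\FF_p^\times$. The paper's substitution $\alpha\mapsto\alpha\beta^{-1}$, by contrast, needs every nonzero $\beta$ to be invertible, i.e.\ that $d$ be a non-residue.
\end{itemize}
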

\begin{proof}
Since $\legendresymbol{d}{p}=-1$ we have $\FF_{p^2}=\FF_p(\sqrt d)$. Moreover, $1$ and $\sqrt{d}$ form a basis of $\FF_{p^2}$ over $\FF_p$. Suppose that $\alpha=a+b\sqrt{d}\in\FF_{p^2}$. Then we have
\[
\norm(\alpha)=a^2-db^2\qquad\text{and}\qquad \Tr(\alpha)=2a,
\]
where the trace and the norm are taken in  $\FF_{p^2}$ over $\FF_p$. Therefore
\[
\Tr((x+\sqrt{d}y)(\xi+\eta/\sqrt{d}))=2(x\xi+y\eta)
\]
and hence
\[
\sum_{x,y\in \FF_p}\legendresymbol{x^2-dy^2}{p}\rme\legendresymbol{x\xi+dy\eta}{p} =\sum_{\alpha\in \FF_{p^2}}\legendresymbol{\norm(\alpha)}{p}\rme\legendresymbol{\Tr(\alpha\beta)/2}{p},
\]
where $\beta=\xi+\eta/\sqrt{d}$. 

If $\beta=0$, i.e., $\xi=\eta=0$, then both sides in \eqref{eq:quadraticextensionfinitefield} are zero. Now we assume that $\beta\neq 0$.

We make change of variable $\alpha\mapsto\alpha\beta^{-1}$ and then the above becomes
\[
\sum_{\alpha\in \FF_{p^2}}\legendresymbol{\norm(\alpha\beta^{-1})}{p}\rme\legendresymbol{\Tr(\alpha)/2}{p}= \legendresymbol{\norm(\beta)}{p}\sum_{\alpha\in \FF_{p^2}}\legendresymbol{\norm(\alpha)}{p}\rme\legendresymbol{\Tr(\alpha)/2}{p}.
\]

Finally we compute the so-called \emph{Gauss sum}
\[
\sum_{\alpha\in \FF_{p^2}}\legendresymbol{\norm(\alpha)}{p}\rme\legendresymbol{\Tr(\alpha)/2}{p} =\sum_{x,y\in \FF_p}\legendresymbol{x^2-dy^2}{p}\rme\legendresymbol{x}{p}.
\]
The contribution when $x=0$ is
\[
\sum_{y\in \FF_p}\legendresymbol{-dy^2}{p}=(p-1)\legendresymbol{-d}{p}.
\]
For the contribution when $x\neq 0$, we use the above lemma and obtain
\[
\sum_{y\in \FF_p}\legendresymbol{x^2-dy^2}{p}\rme\legendresymbol{x}{p}=\legendresymbol{-d}{p} \rme\legendresymbol{x}{p}\sum_{y\in \FF_p}\legendresymbol{y^2+(-d)^{-1}x^2}{p}=-\legendresymbol{-d}{p} \rme\legendresymbol{x}{p}.
\]
Therefore
\[
\sum_{x,y\in \FF_p}\legendresymbol{x^2-dy^2}{p}\rme\legendresymbol{x}{p}=(p-1)\legendresymbol{-d}{p} -\legendresymbol{-d}{p} \sum_{x\in \FF_p^\times}\rme\legendresymbol{x}{p}=p\legendresymbol{-d}{p}.
\]
Since $\norm(\beta)=\xi^2-\eta^2/d$, we obtain 
\[
\legendresymbol{\norm(\beta)}{p}\sum_{\alpha\in \FF_{p^2}}\legendresymbol{\norm(\alpha)}{p}\rme\legendresymbol{\Tr(\alpha)/2}{p}= p\legendresymbol{-d}{p}\legendresymbol{\xi^2-\eta^2/d}{p}=p\legendresymbol{\eta^2-d\xi^2}{p}.
\]
Hence the conclusion follows.
\end{proof}
\begin{corollary}\label{cor:fouriertransformfinitefield}
We have
\[
\prescript{2}{c}{\widehat{\Kl}}_{p,1}^{(p)}(\xi,\alpha)=p\legendresymbol{\alpha^2-4c\xi^2}{p}.
\]
\end{corollary}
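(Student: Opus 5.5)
We compute directly from the definition \eqref{eq:deflocalkloostermandoublecharacter} with $u=1$, $v=0$ and reduce to \autoref{prop:fouriertransformfinitefield}. When $v=0$ the divisibility condition $p^{2v}\mid a^2-4cb^2$ is empty. Since $a,b$ are integers mod $p$ and $\xi,\alpha\in\ZZ_p$ (here $p\notin S$ and $\xi,\alpha\in\ZZ^S$), the convention $\rme_p(x)=\rme(-\langle x\rangle_p)$ gives $\rme_p\bigl(-\tfrac{a\xi}{p}\bigr)=\rme\bigl(\tfrac{a\xi}{p}\bigr)$. (Strictly, $\xi,\alpha$ should first be replaced by integer representatives mod $p$, which changes nothing.) So
\[
\prescript{2}{c}{\widehat{\Kl}}_{p,1}^{(p)}(\xi,\alpha)=\sum_{a,b\in\FF_p}\legendresymbol{a^2-4cb^2}{p}\rme\legendresymbol{a\xi+b\alpha}{p}.
\]

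Since $2\in S$ and $p\notin S$, $p$ is odd, so $b\mapsto y=2b$ is a bijection of $\FF_p$. Setting $x=a$, we get $a^2-4cb^2=x^2-cy^2$ and $a\xi+b\alpha=x\xi+y\cdot(\alpha/2)$, where $\alpha/2$ is computed in $\ZZ_p$. We are in the case $\legendresymbol{c}{p}=-1$, so \autoref{prop:fouriertransformfinitefield} applies with $d=c$ and $\eta=\alpha/2$. It gives the value
\[
p\legendresymbol{\alpha^2/4-c\xi^2}{p}.
\]
Finally, $\alpha^2/4-c\xi^2=\tfrac14(\alpha^2-4c\xi^2)$ and $\legendresymbol{1/4}{p}=1$, so this equals $p\legendresymbol{\alpha^2-4c\xi^2}{p}$, as claimed.

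The only points needing care are bookkeeping. First, the sign convention in $\rme_p$ and the compatibility \eqref{eq:localgeneralizedkloostermaneq2} must give exactly $\rme\bigl(\tfrac{x\xi+y\eta}{p}\bigr)$. Second, the degenerate case $\xi\equiv\alpha\equiv0 \pmod p$ must be handled. There the sum is $\sum_{a,b}\legendresymbol{a^2-4cb^2}{p}$. Since $a^2-4cb^2$ is a norm form from $\FF_{p^2}$, every nonzero value is hit equally often, so the sum is $0$. This matches the right-hand side, which is $p\legendresymbol{0}{p}=0$, and agrees with the $\beta=0$ case in the proof of \autoref{prop:fouriertransformfinitefield}.
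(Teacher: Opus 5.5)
Your proof is correct and is essentially the paper's argument: you unfold the definition with $u=1$, $v=0$, convert $\rme_p\bigl(-\tfrac{a\xi}{p}\bigr)$ into $\rme\bigl(\tfrac{a\xi}{p}\bigr)$, and invoke \autoref{prop:fouriertransformfinitefield}. The only difference is cosmetic: the paper applies that proposition directly with $d=4c$ and $\eta=\alpha$ (using $\legendresymbol{4c}{p}=\legendresymbol{c}{p}=-1$), which makes your substitution $y=2b$ unnecessary, and your separate degenerate-case check is already covered by the $\beta=0$ case of the proposition.
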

\begin{proof}
By definition we have
\[
\prescript{2}{c}{\widehat{\Kl}}_{p,1}^{(p)}(\xi,\alpha)=\sum_{a,b\bmod p}\legendresymbol{a^2-4cb^2}{p}\rme_p\legendresymbol{-a\xi}{p} \rme_p\legendresymbol{-b\alpha}{p}=\sum_{a,b\in \FF_p}\legendresymbol{a^2-4cb^2}{p}\rme\legendresymbol{a\xi+b\eta}{p}.
\]
Since $\legendresymbol{4c}{p}=\legendresymbol{c}{p}=-1$, by
\autoref{prop:fouriertransformfinitefield} we obtain the result.
\end{proof}

Now we consider the general case.
\begin{theorem}\label{thm:localkloostermaninert}
 For $p\notin S$ and $\legendresymbol{c}{p}=-1$, the value of the transformed Kloosterman sum $\prescript{2}{c}{\widehat{\Kl}}_{p^u,p^v}^{(p)}(\xi,\alpha)$ is given by the following:
\begin{enumerate}[itemsep=0pt,parsep=0pt,topsep=0pt, leftmargin=0pt,labelsep=2.5pt,itemindent=15pt,label=\upshape{(\arabic*)}]
  \item Suppose that $u$ is odd. Then $\prescript{2}{c}{\widehat{\Kl}}_{p^u,p^v}^{(p)}(\xi,\alpha)=0$ if $v_p(\xi)<u+v-1$ or $v_p(\alpha)<u+v-1$, and
      \[
      \prescript{2}{c}{\widehat{\Kl}}_{p^u,p^v}^{(p)}(\xi,\alpha)=p^{2u+2v-1} \legendresymbol{(\alpha/p^{u+v-1})^2-4c(\xi/p^{u+v-1})^2}{p}
      \]
      otherwise.
\item Suppose that $u$ is even. Then for $u=0$ we have
      \[
      \prescript{2}{c}{\widehat{\Kl}}_{p^u,p^v}^{(p)}(\xi,\alpha)=\begin{cases}
                                                                   p^{2v}, & v_p(\xi)\geq u+v\ \text{and}\ v_p(\alpha)\geq u+v, \\
                                                                   0, & \text{otherwise},
                                                                 \end{cases}
      \]
      and for $u\geq 2$ we have
      \[
      \prescript{2}{c}{\widehat{\Kl}}_{p^u,p^v}^{(p)}(\xi,\alpha)=\begin{cases}
                                                                   p^{2u+2v}-p^{2u+2v-2}, & v_p(\xi)\geq u+v\ \text{and}\ v_p(\alpha)\geq u+v, \\
                                                                   -p^{2u+2v-2},& v_{p}(\xi)=u+v-1,\ v_p(\alpha)\geq u+v-1,\\
                                                                   &\text{or}\ v_{p}(\alpha)=u+v-1,\ v_p(\xi)\geq u+v-1,\\
                                                                   0, & \text{otherwise}.
                                                                 \end{cases}
      \]
\end{enumerate}
\end{theorem}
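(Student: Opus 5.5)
Since $\legendresymbol{c}{p}=-1$, the form $a^2-4cb^2$ is, up to the unit $4c$, the norm form of the unramified quadratic extension $E=\QQ_p(\sqrt{c})$. The plan is to work directly with the double-sum form \eqref{eq:deflocalkloostermandoublecharacter} and identify $(a,b)\bmod p^{u+2v}$ with $\alpha_0=a+2b\sqrt{c}\in\cO_E/p^{u+2v}\cO_E$. Then $\norm(\alpha_0)=a^2-4cb^2$, and the additive character $\rme_p(-(a\xi+b\alpha)/p^{u+2v})$ becomes $\psi(\Tr(\alpha_0\beta)/p^{u+2v})$ with $\beta=\tfrac12(\xi+\alpha/(2\sqrt c))$, so that $\norm(\beta)$ is a unit multiple of $\alpha^2-4c\xi^2$. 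In this language we must evaluate
\[
\sum_{\substack{\alpha_0\in \cO_E/p^{u+2v}\\ p^{2v}\mid \norm(\alpha_0)}}\legendresymbol{\norm(\alpha_0)/p^{2v}}{p^u}\psi\!\left(\frac{\Tr(\alpha_0\beta)}{p^{u+2v}}\right).
\]
Because $E/\QQ_p$ is unramified, $v_p(\norm(\alpha_0))=2v_E(\alpha_0)$, so the condition $p^{2v}\mid\norm(\alpha_0)$ is simply $\alpha_0\in p^v\cO_E$. Writing $\alpha_0=p^v\gamma$ with $\gamma\in\cO_E/p^{u+v}$, the summand becomes $\legendresymbol{\norm(\gamma)}{p}^u\psi(\Tr(\gamma\beta)/p^{u+v})$. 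The dependence on $v$ thus collapses into a level shift, which explains why every threshold in the statement involves $u+v$.

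If $u$ is even, the symbol $\legendresymbol{\norm\gamma}{p}^u$ is the indicator of $\gamma\in\cO_E^\times$ when $u\geq 2$, and is identically $1$ when $u=0$. For $u=0$ we get a full additive character sum over $\cO_E/p^{v}$, which equals $p^{2v}$ when $\beta\in p^{v}\cO_E$ and $0$ otherwise; this matches the stated answer, after noting that $\beta\in p^m\cO_E$ is equivalent to $v_p(\xi),v_p(\alpha)\geq m$. For $u\geq 2$ we write the sum as the sum over all $\gamma\bmod p^{u+v}$ minus the sum over $\gamma\in p\cO_E/p^{u+v}$. The first piece gives $p^{2(u+v)}\triv[\beta\in p^{u+v}\cO_E]$, and the second gives $p^{2(u+v-1)}\triv[\beta\in p^{u+v-1}\cO_E]$. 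Taking the difference yields exactly the three cases listed: $p^{2u+2v}-p^{2u+2v-2}$, then $-p^{2u+2v-2}$ when $\min(v_p\xi,v_p\alpha)=u+v-1$, and $0$ otherwise.

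If $u$ is odd, the weight $\legendresymbol{\norm\gamma}{p}$ depends only on $\gamma\bmod p$ and vanishes unless $\gamma$ is a unit. We write $\gamma=\gamma_0+p\delta$, with $\gamma_0$ running over $(\cO_E/p)^\times$ and $\delta$ over $\cO_E/p^{u+v-1}$. The sum over $\delta$ is $p^{2(u+v-1)}$ if $\beta\in p^{u+v-1}\cO_E$ and $0$ otherwise, which gives the vanishing condition. In the nonvanishing case we set $\beta=p^{u+v-1}\beta'$. What remains is $p^{2(u+v-1)}\sum_{\gamma_0\in\FF_{p^2}}\legendresymbol{\norm\gamma_0}{p}\psi(\Tr(\gamma_0\beta')/p)$, where the term $\gamma_0=0$ contributes zero. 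This is precisely the finite-field sum evaluated in \autoref{prop:fouriertransformfinitefield} and \autoref{cor:fouriertransformfinitefield}, and it equals $p\legendresymbol{(\alpha/p^{u+v-1})^2-4c(\xi/p^{u+v-1})^2}{p}$. The total is therefore $p^{2u+2v-1}$ times the symbol, as claimed.

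The main obstacle is the bookkeeping of the identification. We need to check that $(a,b)\mapsto a+2b\sqrt c$ is a bijection modulo $p^{u+2v}$ (this uses $p$ odd, which holds since $2\in S$), and that the pairing $a\xi+b\alpha$ really equals $\Tr(\alpha_0\beta)$ with $\norm\beta$ a unit times $\alpha^2-4c\xi^2$. In particular, the constant $\legendresymbol{-d}{p}$ from \autoref{prop:fouriertransformfinitefield} must combine with the normalization of $\beta$ to produce exactly $\legendresymbol{\alpha^2-4c\xi^2}{p}$ with no extra sign. This is cleanest if we apply \autoref{cor:fouriertransformfinitefield} directly after the reduction, rather than recomputing the Gauss sum.
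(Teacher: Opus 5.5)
Your proposal is correct and follows essentially the same route as the paper. Both arguments reduce to $a,b\in p^v\ZZ_p$ (you phrase this via $v_p(\norm\alpha_0)=2v_E(\alpha_0)$ in the unramified extension), subtract the $p\mid a,\ p\mid b$ part for even $u\geq 2$, and reduce the odd-$u$ case to \autoref{cor:fouriertransformfinitefield}. The only differences are cosmetic: for odd $u$ you get the vanishing by summing over $\delta$ in $\gamma=\gamma_0+p\delta$ rather than by the paper's translation $a\mapsto a+a'$ with $a'\in p\ZZ_p$, and your subtracted term $p^{2(u+v-1)}$ is the correct one (the paper's proof has the typo $p^{2(u+v-2)}$ there).
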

\begin{proof}
Recall that
\[
\prescript{2}{c}{\widehat{\Kl}}_{p^u,p^v}^{(p)}(\xi,\alpha)=\sum_{\substack{a,b\bmod p^{u+2v}\\ p^{2v}\mid a^2-4cb^2}}\legendresymbol{(a^2-4cb^2)/p^{2v}}{p^u}\rme_p\legendresymbol{-a\xi}{p^{u+2v}} \rme_p\legendresymbol{-b\alpha}{p^{u+2v}}.
\]
Since $\legendresymbol{c}{p}=-1$, we know that $ p^{2v}\mid a^2-4cb^2$ if and only if $p^v\mid a$ and $p^v\mid b$. Hence by making change of variable $a\mapsto p^{-v}a$ and $b\mapsto p^{-v}b$ we obtain
\[
\prescript{2}{c}{\widehat{\Kl}}_{p^u,p^v}^{(p)}(\xi,\alpha)=\sum_{a,b\bmod p^{u+v}}\legendresymbol{a^2-4cb^2}{p^u} \rme_p\legendresymbol{-a\xi}{p^{u+v}} \rme_p\legendresymbol{-b\alpha}{p^{u+v}}.
\]

(1) For $u$ odd we have
\[
\prescript{2}{c}{\widehat{\Kl}}_{p^u,p^v}^{(p)}(\xi,\alpha)=\sum_{a,b\bmod p^{u+v}}\legendresymbol{a^2-4cb^2}{p} \rme_p\legendresymbol{-a\xi}{p^{u+v}} \rme_p\legendresymbol{-b\alpha}{p^{u+v}}.
\]
If $v_p(\xi)<u+v-1$, we can find $a'\in p\ZZ_p$ such that $\rme_p(-a'\xi/p^{u+v})\neq 1$. By making change of variable $a\mapsto a+a'$ we obtain
\[
\prescript{2}{c}{\widehat{\Kl}}_{p^u,p^v}^{(p)}(\xi,\alpha)=\sum_{a,b\bmod p^{u+v}}\legendresymbol{(a+a')^2-4cb^2}{p} \rme_p\legendresymbol{-(a+a')\xi}{p^{u+v}} \rme_p\legendresymbol{-b\alpha}{p^{u+v}}=\rme_p\legendresymbol{-a'\xi}{p^{u+v}} \prescript{2}{c}{\widehat{\Kl}}_{p^u,p^v}^{(p)}(\xi,\alpha).
\]
Hence we must have $\prescript{2}{c}{\widehat{\Kl}}_{p^u,p^v}^{(p)}(\xi,\alpha)=0$. Similarly, if $v_p(\alpha)<u+v-1$, we also have $\prescript{2}{c}{\widehat{\Kl}}_{p^u,p^v}^{(p)}(\xi,\alpha)=0$.

Now we assume that $v_p(\xi)\geq u+v-1$ and $v_p(\alpha)\geq u+v-1$. In this case, we can use \autoref{cor:fouriertransformfinitefield} and obtain
\begin{align*}
\prescript{2}{c}{\widehat{\Kl}}_{p^u,p^v}^{(p)}(\xi,\alpha)&=\sum_{a,b\bmod p^{u+v}}\legendresymbol{a^2-4cb^2}{p} \rme_p\legendresymbol{-a\xi}{p^{u+v}} \rme_p\legendresymbol{-b\alpha}{p^{u+v}}=p^{2(u+v-1)} \prescript{2}{c}{\widehat{\Kl}}_{p,1}^{(p)}(\xi/p^{u+v-1},\alpha/p^{u+v-1})\\
&=p^{2u+2v-1} \legendresymbol{(\alpha/p^{u+v-1})^2-4c(\xi/p^{u+v-1})^2}{p}.
\end{align*}

(2) First we assume that $u=0$. In this case $\legendresymbol{x}{p^u}=1$. Hence
\[
\prescript{2}{c}{\widehat{\Kl}}_{1,p^v}^{(p)}(\xi,\alpha)=\sum_{a,b\bmod p^{u+v}} \rme_p\legendresymbol{-a\xi}{p^{u+v}} \rme_p\legendresymbol{-b\alpha}{p^{u+v}},
\]
which is $p^{2(u+v)}$ if $v_p(\xi),v_p(\eta)\geq u+v$ and is $0$ otherwise.

Now we assume that $u\geq 2$. In this case $\legendresymbol{x}{p^u}=1$ if $p\nmid x$ and $\legendresymbol{x}{p^u}=0$ if $p\mid x$. Hence we obtain
\begin{align*}
\prescript{2}{c}{\widehat{\Kl}}_{p^u,p^v}^{(p)}(\xi,\alpha)&=\sum_{a,b\bmod p^{u+v}} \rme_p\legendresymbol{-a\xi}{p^{u+v}} \rme_p\legendresymbol{-b\alpha}{p^{u+v}}-\sum_{\substack{a,b\bmod p^{u+v}\\ p\mid a^2-4cb^2}} \rme_p\legendresymbol{-a\xi}{p^{u+v}} \rme_p\legendresymbol{-b\alpha}{p^{u+v}}\\
&=\sum_{a,b\bmod p^{u+v}} \rme_p\legendresymbol{-a\xi}{p^{u+v}} \rme_p\legendresymbol{-b\alpha}{p^{u+v}}-\sum_{\substack{a,b\bmod p^{u+v}\\ p\mid a,p\mid b}} \rme_p\legendresymbol{-a\xi}{p^{u+v}} \rme_p\legendresymbol{-b\alpha}{p^{u+v}}\\
&=\sum_{a,b\bmod p^{u+v}} \rme_p\legendresymbol{-a\xi}{p^{u+v}} \rme_p\legendresymbol{-b\alpha}{p^{u+v}}-\sum_{a,b\bmod p^{u+v-1}} \rme_p\legendresymbol{-a\xi}{p^{u+v-1}} \rme_p\legendresymbol{-b\alpha}{p^{u+v-1}},
\end{align*}
The first term above is $p^{2(u+v)}$ if $v_p(\xi),v_p(\eta)\geq u+v$ and is $0$ otherwise, and the second term is $-p^{2(u+v-2)}$ if $v_p(\xi),v_p(\eta)\geq u+v-1$ and is $0$ otherwise. Hence we obtain the desired result.
\end{proof}

\subsection{Case $\legendresymbol{c}{p}=1$}
In this subsection we assume that $\legendresymbol{c}{p}=1$. We fix an element $\sqrt{c}$ that is a square root of $c$ in $\ZZ_p$ (which exists by Hensel's lemma). The key point in this case is the following linear transformation.

\begin{proposition}\label{prop:kloostermanlocalsplit}
We have
\[
\prescript{2}{c}{\widehat{\Kl}}_{p^u,p^v}^{(p)}(\xi,\alpha)=\sum_{\substack{a,b\bmod p^{u+2v}\\ p^{2v}\mid ab}}\legendresymbol{(ab)/p^{2v}}{p^u}\rme_p\legendresymbol{-a\xi'}{p^{u+2v}} \rme_p\legendresymbol{-b\alpha'}{p^{u+2v}},
\]
where
\[
\xi'=\frac{\xi}{2}+\frac{\alpha}{4\sqrt{c}}\qquad\text{and}\qquad \alpha'=\frac{\xi}{2}-\frac{\alpha}{4\sqrt{c}}.
\]
\end{proposition}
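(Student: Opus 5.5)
The plan is to start from the double-sum expression \eqref{eq:deflocalkloostermandoublecharacter},
\[
\prescript{2}{c}{\widehat{\Kl}}_{p^u,p^v}^{(p)}(\xi,\alpha)=\sum_{\substack{a,b\bmod p^{u+2v}\\ p^{2v}\mid a^2-4cb^2}}\legendresymbol{(a^2-4cb^2)/p^{2v}}{p^u}\rme_p\legendresymbol{-a\xi}{p^{u+2v}} \rme_p\legendresymbol{-b\alpha}{p^{u+2v}},
\]
and factor the quadratic form over $\ZZ_p$. Since $\sqrt{c}\in\ZZ_p^\times$ and $p$ is odd (indeed $p\notin S$ and $2\in S$), we have $a^2-4cb^2=(a-2\sqrt c\, b)(a+2\sqrt c\, b)$.

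I would perform the linear change of variables $a'=a+2\sqrt c\,b$ and $b'=a-2\sqrt c\,b$ modulo $p^{u+2v}$. Its determinant is $-4\sqrt c\in\ZZ_p^\times$, so it is a bijection of $(\ZZ/p^{u+2v})^2$. The inverse is $a=(a'+b')/2$ and $b=(a'-b')/(4\sqrt c)$. Under this substitution the divisibility condition $p^{2v}\mid a^2-4cb^2$ becomes $p^{2v}\mid a'b'$, and the Legendre factor becomes $\legendresymbol{(a'b')/p^{2v}}{p^u}$.

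Next I would rewrite the phase. We have
\[
a\xi+b\alpha=\frac{(a'+b')\xi}{2}+\frac{(a'-b')\alpha}{4\sqrt c}=a'\Bigl(\frac\xi2+\frac{\alpha}{4\sqrt c}\Bigr)+b'\Bigl(\frac\xi2-\frac{\alpha}{4\sqrt c}\Bigr)=a'\xi'+b'\alpha'.
\]
Since $\rme_p(x)\rme_p(y)=\rme_p(x+y)$ for $x,y\in\QQ_p$, the product of the two characters equals $\rme_p(-a'\xi'/p^{u+2v})\rme_p(-b'\alpha'/p^{u+2v})$.

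Some care is needed here. The elements $\xi',\alpha'$ lie in $\ZZ_p$ because $2$ and $\sqrt c$ are $p$-adic units. Each summand depends only on $a',b'$ modulo $p^{u+2v}$, since changing $a'$ by a multiple of $p^{u+2v}$ changes the argument by an element of $\ZZ_p$, on which $\rme_p$ is trivial.

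The main point to check is well-definedness, that is, that the summands are genuinely functions on residues modulo $p^{u+2v}$ after the change of variables. For the phase this is the observation just made. For the Legendre factor I would check that $\legendresymbol{x/p^{2v}}{p^u}$ depends only on $x$ modulo $p^{u+2v}$, which holds because the Jacobi symbol modulo $p^u$ is periodic modulo $p$. Once these checks are done, the bijectivity of the change of variables gives the stated formula directly.
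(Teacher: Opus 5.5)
Your proposal is correct and is the same argument as the paper: factor $a^2-4cb^2=(a+2\sqrt c\,b)(a-2\sqrt c\,b)$ and apply the change of variables $a'=a+2\sqrt c\,b$, $b'=a-2\sqrt c\,b$, which is invertible modulo $p^{u+2v}$ because $p$ is odd and $\sqrt c\in\ZZ_p^\times$, and which turns the phase into $a'\xi'+b'\alpha'$. Your checks that the summands are well defined on residues modulo $p^{u+2v}$ are details the paper leaves implicit.
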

\begin{proof}
We have $a^2-4cb^2=(a+2\sqrt{c}b)(a-2\sqrt{c}b)$. By making change of variable
\[
a\mapsto\frac{a+b}{2}\qquad\text{and}\qquad b\mapsto\frac{a-b}{4\sqrt{c}},
\]
we have $a+2\sqrt{c}\mapsto a$ and $a-2\sqrt{c}\mapsto b$. Hence by \eqref{eq:deflocalkloostermandoublecharacter} we conclude that
\begin{align*}
\prescript{2}{c}{\widehat{\Kl}}_{p^u,p^v}^{(p)}(\xi,\alpha)&=\sum_{\substack{a,b\bmod p^{u+2v}\\ p^{2v}\mid ab}}\legendresymbol{(ab)/p^{2v}}{p^u}\rme_p\legendresymbol{-\frac{a+b}{2}\xi}{p^{u+2v}} \rme_p\legendresymbol{-\frac{a-b}{4\sqrt{c}}\alpha}{p^{u+2v}}\\
&=\sum_{\substack{a,b\bmod p^{u+2v}\\ p^{2v}\mid ab}}\legendresymbol{(ab)/p^{2v}}{p^u}\rme_p\legendresymbol{-a\xi'}{p^{u+2v}} \rme_p\legendresymbol{-b\alpha'}{p^{u+2v}}.\qedhere
\end{align*}
\end{proof}

Now we compute the sum in this case. First we assume that $u$ is odd.
\begin{lemma}\label{lem:gausssumsplit}
Let $p$ be an odd prime, $\xi\in \ZZ_p$ and $n\in \ZZ_{>0}$. Then we have
\[
\sum_{a\bmod p^n}\legendresymbol{a}{p}\rme_p\legendresymbol{-a\xi}{p^{n}}= p^{n-1} \legendresymbol{\xi/p^{n-1}}{p}\times \begin{cases}
                                              \sqrt{p}, & p\equiv 1\pmod 4,\\
                                              \rmi\sqrt{p}, & p\equiv 3\pmod 4 
                                            \end{cases}
\]
if $v_p(\xi)=n-1$, and is $0$ otherwise. 
\end{lemma}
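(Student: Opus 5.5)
The plan is to reduce the sum to a classical quadratic Gauss sum modulo $p$, after first rewriting the $p$-adic additive character as an ordinary exponential.

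For $x\in\QQ_p$ we have $\langle x\rangle_p\equiv x \pmod{\ZZ_p}$. Hence, for $\xi\in\ZZ_p$ and an integer $a$,
\[
\rme_p\legendresymbol{-a\xi}{p^n}=\rme\Bigl(\Bigl\langle \tfrac{a\xi}{p^n}\Bigr\rangle_p\Bigr)=\rme\legendresymbol{a\xi_0}{p^n},
\]
where $\xi_0\in\ZZ$ is any integer with $\xi_0\equiv\xi\pmod{p^n}$. This expression depends only on $a \bmod p^n$, so the sum is well defined. It also means we may replace $\xi$ by an integer.

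Next write $a=a_0+pb$ with $a_0$ running over residues mod $p$ and $b$ over residues mod $p^{n-1}$. The Legendre symbol $\legendresymbol{a}{p}=\legendresymbol{a_0}{p}$ depends only on $a_0$, so the sum factors as
\[
\sum_{a_0\bmod p}\legendresymbol{a_0}{p}\rme\legendresymbol{a_0\xi_0}{p^n}\sum_{b\bmod p^{n-1}}\rme\legendresymbol{b\xi_0}{p^{n-1}}.
\]
By orthogonality, the inner sum equals $p^{n-1}$ if $p^{n-1}\mid\xi_0$ and $0$ otherwise. This already gives vanishing whenever $v_p(\xi)<n-1$.

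Now suppose $v_p(\xi)\geq n-1$, and write $\xi_0=p^{n-1}\eta$. What remains is $p^{n-1}\sum_{a_0\bmod p}\legendresymbol{a_0}{p}\rme(a_0\eta/p)$, and there are two cases.

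If $p\mid\eta$, that is $v_p(\xi)\geq n$, the remaining sum is $\sum_{a_0}\legendresymbol{a_0}{p}=0$. This matches the claim that the sum is $0$ unless $v_p(\xi)=n-1$; it is also consistent with the stated formula, since then $\legendresymbol{\xi/p^{n-1}}{p}=0$.

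If $p\nmid\eta$, substitute $a_0\mapsto \eta^{-1}a_0$. This pulls out $\legendresymbol{\eta^{-1}}{p}=\legendresymbol{\eta}{p}=\legendresymbol{\xi/p^{n-1}}{p}$ and leaves the standard Gauss sum
\[
g_p=\sum_{a_0\bmod p}\legendresymbol{a_0}{p}\rme\legendresymbol{a_0}{p}.
\]

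The only nontrivial input is Gauss's evaluation: $g_p=\sqrt p$ if $p\equiv1\pmod 4$ and $g_p=\rmi\sqrt p$ if $p\equiv3\pmod4$. Here $\rme(x)=\rme^{2\uppi\rmi x}$ carries the positive sign, which is exactly the normalization in which Gauss's result holds. I would quote this classical evaluation rather than reprove it. The main care point is tracking the sign convention in $\rme_p$, so that we land on $\rme(+a\xi/p^n)$ and not its conjugate; the conjugate would replace $\rmi\sqrt p$ by $-\rmi\sqrt p$.
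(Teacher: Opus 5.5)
Your proof is correct and follows essentially the paper's route: the paper kills the case $v_p(\xi)\le n-2$ with the shift $a\mapsto a+p$ rather than your explicit orthogonality sum over $b$, treats $v_p(\xi)\ge n$ by summing the Legendre symbol directly, and for $v_p(\xi)=n-1$ reduces to the classical Gauss sum mod $p$ exactly as you do. Your explicit check that $\rme_p(-a\xi/p^n)=\rme(a\xi_0/p^n)$ is a detail the paper leaves implicit, and it confirms that the sign convention produces $\rmi\sqrt{p}$ rather than $-\rmi\sqrt{p}$.
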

\begin{proof}
If $v_p(\xi)\geq n$, then 
\[
\sum_{a\bmod p^n}\legendresymbol{a}{p}\rme_p\legendresymbol{-a\xi}{p^{n}}=\sum_{a\bmod p^n}\legendresymbol{a}{p}=0.
\]
If $v_p(\xi)\leq n-2$, then $\rme_p(p\xi/p^n)\neq 1$. Therefore from
\[
\sum_{a\bmod p^n}\legendresymbol{a}{p}\rme_p\legendresymbol{-a\xi}{p^{n}}=\sum_{a\bmod p^n}\legendresymbol{a+p}{p}\rme_p\legendresymbol{-a\xi-p\xi}{p^{n}}= \rme_p\legendresymbol{-p\xi}{p^{n}}\sum_{a\bmod p^n}\legendresymbol{a}{p}\rme_p\legendresymbol{-a\xi}{p^{n}}
\]
we conclude that 
\[
\sum_{a\bmod p^n}\legendresymbol{a}{p}\rme_p\legendresymbol{-a\xi}{p^{n}}=0.
\]

From now on, we assume that $v_p(\xi)=n-1$ and we assume that $\xi=\xi_0p^{n-1}$ so that $\xi_0\in \ZZ_p^\times$. In this case we have
\[
\sum_{a\bmod p^n}\legendresymbol{a}{p}\rme_p\legendresymbol{-a\xi}{p^{n}}= \sum_{a\bmod p^n}\legendresymbol{a}{p}\rme_p\legendresymbol{-a\xi_0}{p}=p^{n-1}\sum_{a\bmod p}\legendresymbol{a}{p}\rme_p\legendresymbol{-a\xi_0}{p}.
\]
The sum 
\[
\mf{g}=\sum_{a\bmod p}\legendresymbol{a}{p}\rme_p\legendresymbol{-a\xi_0}{p}
\]
is the Gauss sum and it is well-known (see \cite{shafarevich1966} for example) that
\[
\mf{g}=\legendresymbol{\xi/p^{n-1}}{p}\times \begin{cases}
                                              \sqrt{p}, & p\equiv 1\pmod 4,\\
                                              \rmi\sqrt{p}, & p\equiv 3\pmod 4 .
                                            \end{cases}
\]
Hence we obtain the desired conclusion.
\end{proof}

\begin{proposition}\label{prop:splitoddcase}
For $p\notin S$, $\legendresymbol{c}{p}=1$ and $u$ odd, we have
\[
\prescript{2}{c}{\widehat{\Kl}}_{p^u,p^v}^{(p)}(\xi,\alpha)=p^{2u+2v-1} \legendresymbol{(-\xi'\alpha')/p^{2u+2v-2}}{p}
\]
if $v_p(\xi'\alpha')=2u+2v-2$ and $v_p(\xi'),v_p(\alpha')\geq u-1$, and is $0$ otherwise.
\end{proposition}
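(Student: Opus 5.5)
The plan is to start from \autoref{prop:kloostermanlocalsplit}, which rewrites the sum with the factored quadratic form $ab$ and the twisted frequencies $\xi',\alpha'$. Since $p\notin S$ and $2\in S$, $p$ is odd, so $\xi',\alpha'\in\ZZ_p$. Because $u$ is odd, $\legendresymbol{x}{p^u}=\legendresymbol{x}{p}$, and this vanishes unless $p\nmid x$. Hence only pairs with $v_p(a)+v_p(b)=2v$ exactly contribute. We then split the sum according to $i=v_p(a)\in\{0,1,\dots,2v\}$, so that $v_p(b)=2v-i$.

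For fixed $i$, write $a=p^ia_0$ with $a_0$ a unit modulo $p^{u+2v-i}$, and $b=p^{2v-i}b_0$ with $b_0$ a unit modulo $p^{u+i}$. The summand factors as
\[
\legendresymbol{a_0}{p}\rme_p\legendresymbol{-a_0\xi'}{p^{u+2v-i}}\cdot\legendresymbol{b_0}{p}\rme_p\legendresymbol{-b_0\alpha'}{p^{u+i}},
\]
so the $i$-th piece is a product of two sums. Each is exactly of the form in \autoref{lem:gausssumsplit}: the Legendre symbol kills the non-units, so summing over all residues is the same as summing over units. Also $u+2v-i\geq 1$ and $u+i\geq 1$ since $u\geq 1$. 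The lemma then says the $a$-sum is nonzero only if $v_p(\xi')=u+2v-i-1$, and the $b$-sum is nonzero only if $v_p(\alpha')=u+i-1$.

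Consequently at most one $i$ survives, namely $i=u+2v-1-v_p(\xi')$. For this $i$, the constraint $v_p(\alpha')=u+i-1$ is equivalent to $v_p(\xi')+v_p(\alpha')=2u+2v-2$. The range condition $0\leq i\leq 2v$ translates into $v_p(\xi')\geq u-1$ and $v_p(\alpha')\geq u-1$. If any of these fail, every piece vanishes, giving the "$0$ otherwise" clause.

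In the surviving case, multiply the two evaluations from \autoref{lem:gausssumsplit}. This gives
\[
p^{u+2v-i-1}p^{u+i-1}\,\mf{g}^2\,\legendresymbol{\xi'/p^{v_p(\xi')}}{p}\legendresymbol{\alpha'/p^{v_p(\alpha')}}{p}.
\]
Then use $\mf{g}^2=\legendresymbol{-1}{p}p$, which holds in both cases $p\equiv1,3\pmod 4$. Combining the symbols gives $p^{2u+2v-1}\legendresymbol{(-\xi'\alpha')/p^{2u+2v-2}}{p}$, as claimed.

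The only delicate point is the bookkeeping of moduli. We must check that, as $a$ runs over residues mod $p^{u+2v}$ with $v_p(a)=i$, $a_0$ runs exactly once over units mod $p^{u+2v-i}$, and that the exponential correctly becomes $\rme_p(-a_0\xi'/p^{u+2v-i})$ (similarly for $b$). We also need to verify that the edge cases $i=0$ and $i=2v$ behave correctly. I expect no genuine obstacle beyond this.
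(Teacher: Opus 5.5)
Your proposal is correct and is essentially the paper's own proof. The paper likewise starts from \autoref{prop:kloostermanlocalsplit}, splits according to $w=v_p(a)\in\{0,\dots,2v\}$, factors each piece into two sums evaluated by \autoref{lem:gausssumsplit}, and combines them via $\mf{g}^2=\legendresymbol{-1}{p}p$; your explicit checks of the moduli bookkeeping and of the range condition $0\leq i\leq 2v$ fill in the step the paper calls ``easy to see''.
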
 
\begin{proof}
By \autoref{prop:kloostermanlocalsplit} and since $u$ is odd, we have
\begin{align*}
\prescript{2}{c}{\widehat{\Kl}}_{p^u,p^v}^{(p)}(\xi,\alpha)&=\sum_{\substack{a,b\bmod p^{u+2v}\\ p^{2v}\parallel ab}}\legendresymbol{(ab)/p^{2v}}{p}\rme_p\legendresymbol{-a\xi'}{p^{u+2v}} \rme_p\legendresymbol{-b\alpha'}{p^{u+2v}}\\
&=\sum_{w=0}^{2v}\sum_{\substack{a\bmod p^{u+2v}\\ p^{w}\parallel a}} \sum_{\substack{b\bmod p^{u+2v}\\ p^{2v-w}\parallel b}}\legendresymbol{a/p^{w}}{p}\legendresymbol{b/p^{2v-w}}{p}\rme_p\legendresymbol{-a\xi'}{p^{u+2v}} \rme_p\legendresymbol{-b\alpha'}{p^{u+2v}}\\
&=\sum_{w=0}^{2v}\sum_{\substack{a\bmod p^{u+2v-w}\\ p\nmid a}}\legendresymbol{a}{p}\rme_p\legendresymbol{-a\xi'}{p^{u+2v-w}}  \sum_{\substack{b\bmod p^{u+w}\\ p\nmid b}}\legendresymbol{b}{p} \rme_p\legendresymbol{-b\alpha'}{p^{u+w}}
\end{align*}
Clearly the assumptions $p\nmid a$ and $p\nmid b$ in the sum are redundant. Hence by \autoref{lem:gausssumsplit} we obtain
\[
\prescript{2}{c}{\widehat{\Kl}}_{p^u,p^v}^{(p)}(\xi,\alpha)=p^{2u+2v-1}\sum_{\substack{0\leq w\leq 2v\\ v_p(\xi')=u+2v-w-1,\,v_p(\alpha')=u+w-1}}\legendresymbol{\xi'/p^{u+2v-w-1}}{p} \legendresymbol{\alpha'/p^{u+w-1}}{p}\legendresymbol{-1}{p}.
\]
It is easy to see that there exists $w\in [0,2v]\cap \ZZ$ such that $v_p(\xi')=u+2v-w-1$ and $v_p(\alpha')=u+w-1$ if and only if $v_p(\xi')\geq u-1$, $v_p(\alpha')\geq u-1$, and $v_p(\xi'\alpha')=v_p(\xi')+v_p(\alpha')=2u+2v-2$. Hence we obtain the desired formula.
\end{proof}

Next we consider the case when $u=0$. In this case we have
\[
\prescript{2}{c}{\widehat{\Kl}}_{1,p^v}^{(p)}(\xi,\alpha)=\sum_{\substack{a,b\bmod p^{2v}\\ p^{2v}\mid ab}}\rme_p\legendresymbol{-a\xi'}{p^{2v}} \rme_p\legendresymbol{-b\alpha'}{p^{2v}}.
\]
The sum can be split as
\[
\sum_{\substack{a,b\bmod p^{2v}\\ p^{2v}\mid ab}}\rme_p\legendresymbol{-a\xi'}{p^{2v}} \rme_p\legendresymbol{-b\alpha'}{p^{2v}}=  \sum_{w=0}^{2v}\sum_{\substack{a\bmod p^{2v}\\ p^{w}\parallel a}}\rme_p\legendresymbol{-a\xi'}{p^{2v}} \sum_{\substack{b\bmod p^{2v}\\ p^{2v-w}\mid b}} \rme_p\legendresymbol{-b\alpha'}{p^{2v}}.
\]
For simplicity we introduce the following notations. Let $\xi\in \ZZ_p$ and $r,t\in \ZZ_{\geq 0}$, we define 
\begin{equation}\label{eq:defsrt}
S_{r,t}^{(p)}(\xi)=\begin{dcases}
                        \sum_{\substack{a\bmod p^t\\ p^r\mid a}}\rme_p\legendresymbol{-a\xi}{p^t}, & r\leq t, \\
                        0, & \text{otherwise}.
                      \end{dcases}
\end{equation}
Also, we define
\begin{equation}\label{eq:defdeltasrt}
\Delta S_{r,t}^{(p)}(\xi)=S_{r,t}^{(p)}(\xi)-S_{r+1,t}^{(p)}(\xi).
\end{equation}
It is easy to see that
\[
\Delta S_{r,t}^{(p)}(\xi)=\begin{dcases}
                        \sum_{\substack{a\bmod p^t\\ p^r\parallel a}}\rme_p\legendresymbol{-a\xi}{p^t}, & r\leq t, \\
                        0, & \text{otherwise}.
                      \end{dcases}
\]
Hence
\begin{equation}\label{eq:kloostermansumsrt}
\prescript{2}{c}{\widehat{\Kl}}_{1,p^v}^{(p)}(\xi,\alpha)= \sum_{w=0}^{2v}\Delta S_{w,2v}^{(p)}(\xi')S_{2v-w,2v}^{(p)}(\alpha').
\end{equation}
Hence we only need to compute $S_{r,t}^{(p)}(\xi)$.

\begin{proposition}\label{prop:srtexplicit}
Let $\xi\in \ZZ_p$. Then we have
\begin{equation}\label{eq:srtexplicit}
S_{r,t}^{(p)}(\xi)=\begin{dcases}
                        p^{t-r}, & v_p(\xi)\geq t-r, \\
                        0, & \text{otherwise}.
                      \end{dcases}
\end{equation}
\end{proposition}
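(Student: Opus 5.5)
We prove \eqref{eq:srtexplicit} by reparametrizing the multiples of $p^r$ and reducing to the orthogonality of additive characters on a cyclic group. The case $r>t$ needs no argument: $S_{r,t}^{(p)}(\xi)=0$ by definition, and the condition $v_p(\xi)\geq t-r$ (a negative number) is then automatic, so the stated formula should be read for $r\leq t$, which is the only range used in \eqref{eq:kloostermansumsrt}.

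For $0\leq r\leq t$, every $a\bmod p^t$ with $p^r\mid a$ can be written uniquely as $a=p^r a_1$ with $a_1\bmod p^{t-r}$. Substituting, we get
\[
S_{r,t}^{(p)}(\xi)=\sum_{a_1\bmod p^{t-r}}\rme_p\legendresymbol{-a_1\xi}{p^{t-r}}.
\]
Each summand depends only on $a_1$ modulo $p^{t-r}$ because $\xi\in\ZZ_p$, so the sum is well defined. The map $a_1\mapsto \rme_p(-a_1\xi/p^{t-r})$ is an additive character of $\ZZ/p^{t-r}\ZZ$, so the sum equals $p^{t-r}$ if this character is trivial and $0$ otherwise.

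It remains to identify when the character is trivial. It is trivial exactly when $\rme_p(-\xi/p^{t-r})=1$, that is, when $\langle \xi/p^{t-r}\rangle_p=0$, which holds if and only if $\xi/p^{t-r}\in\ZZ_p$, i.e. $v_p(\xi)\geq t-r$. If instead $v_p(\xi)<t-r$, then $a_1=1$ gives a nontrivial value, and the shift $a_1\mapsto a_1+1$ shows the sum is $0$, exactly as in the proof of \autoref{lem:gausssumsplit}. The only point needing care is the sign and fractional-part convention in $\rme_p$, but it plays no role here since only triviality of the character matters.
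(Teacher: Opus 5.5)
Your proof is correct and follows the paper's: substitute $a=p^r a_1$ to reduce to a complete additive character sum modulo $p^{t-r}$, then apply orthogonality. Your caveat about $r>t$ is well taken, since the displayed formula would give $p^{t-r}\neq 0$ there and the paper only says it "may assume" $r\le t$; however, $S^{(p)}_{t+1,t}$ does enter \eqref{eq:kloostermansumsrt} through $\Delta S^{(p)}_{2v,2v}=S^{(p)}_{2v,2v}-S^{(p)}_{2v+1,2v}$, and there the definitional value $0$ is what is needed, not the formula.
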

\begin{proof}
Clearly we may assume that $r\leq t$. In this case we have
\[
S_{r,t}^{(p)}(\xi)=\sum_{\substack{a\bmod p^t\\ p^r\mid a}}\rme_p\legendresymbol{-a\xi}{p^t}=\sum_{a\bmod p^{t-r}}\rme_p\legendresymbol{-a\xi}{p^{t-r}},
\]
which is $p^{t-r}$ if $v_p(\xi)\geq t-r$ and is $0$ otherwise.
\end{proof}
By \autoref{prop:srtexplicit} and \eqref{eq:defdeltasrt}, \eqref{eq:kloostermansumsrt} we obtain a formula for $\prescript{2}{c}{\widehat{\Kl}}_{p^u,p^v}^{(p)}(\xi,\alpha)$ for $u=0$.

Next, we consider the case that $u\geq 2$ and is even. In this case we have
\begin{align*}
\prescript{2}{c}{\widehat{\Kl}}_{p^u,p^v}^{(p)}(\xi,\alpha)&=\sum_{\substack{a,b\bmod p^{u+2v}\\ p^{u+2v}\mid ab}}\legendresymbol{(ab)/p^{2v}}{p^u}\rme_p\legendresymbol{-a\xi'}{p^{u+2v}} \rme_p\legendresymbol{-b\alpha'}{p^{u+2v}}=\sum_{\substack{a,b\bmod p^{u+2v}\\ p^{u+2v}\parallel ab}}\rme_p\legendresymbol{-a\xi'}{p^{u+2v}} \rme_p\legendresymbol{-b\alpha'}{p^{u+2v}}\\
&=\sum_{w=0}^{2v}\sum_{\substack{a\bmod p^{u+2v}\\ p^{w}\parallel a}}\rme_p\legendresymbol{-a\xi'}{p^{u+2v}} \sum_{\substack{b\bmod p^{u+2v}\\ p^{2v-w}\parallel b}} \rme_p\legendresymbol{-b\alpha'}{p^{u+2v}}
\end{align*}
Hence
\begin{equation}\label{eq:kloostermansumsrt2}
\prescript{2}{c}{\widehat{\Kl}}_{p^u,p^v}^{(p)}(\xi,\alpha) \sum_{w=0}^{2v}\Delta S_{w,u+2v}^{(p)}(\xi')\Delta S_{2v-w,u+2v}^{(p)}(\alpha').
\end{equation}

To summarize, we obtain the following theorem.
\begin{theorem}\label{thm:localkloostermansplit}
For $p\notin S$ and $\legendresymbol{c}{p}=1$, we let $\sqrt{c}$ to be a square root of $c$ in $\ZZ_p$ and denote
\[
\xi'=\frac{\xi}{2}+\frac{\alpha}{4\sqrt{c}}\qquad\text{and}\qquad \alpha'=\frac{\xi}{2}-\frac{\alpha}{4\sqrt{c}}.
\]
Then the value of the transformed Kloosterman sum $\prescript{2}{c}{\widehat{\Kl}}_{p^u,p^v}^{(p)}(\xi,\alpha)$ is given by the following:
\begin{enumerate}[itemsep=0pt,parsep=0pt,topsep=0pt, leftmargin=0pt,labelsep=2.5pt,itemindent=15pt,label=\upshape{(\arabic*)}]
  \item Suppose that $u$ is odd. Then 
  \[
  \prescript{2}{c}{\widehat{\Kl}}_{p^u,p^v}^{(p)}(\xi,\alpha)=p^{2u+2v-1} \legendresymbol{(-\xi'\alpha')/p^{2u+2v-2}}{p}
  \]
  if $v_p(\xi'\alpha')=2u+2v-2$ and $v_p(\xi'),v_p(\alpha')\geq u-1$, and is $0$ otherwise.
\item Suppose that $u$ is even. Then for $u=0$ we have
      \[
      \prescript{2}{c}{\widehat{\Kl}}_{p^u,p^v}^{(p)}(\xi,\alpha)=\sum_{w=0}^{2v}\Delta S_{w,2v}^{(p)}(\xi')S_{2v-w,2v}^{(p)}(\alpha')
      \]
      and for $u\geq 2$ we have
      \[
      \prescript{2}{c}{\widehat{\Kl}}_{p^u,p^v}^{(p)}(\xi,\alpha)=\sum_{w=0}^{2v}\Delta S_{w,u+2v}^{(p)}(\xi')\Delta S_{2v-w,u+2v}^{(p)}(\alpha'),
      \]
\end{enumerate}
where $S_{r,t}^{(p)}(\xi)$ is given in \eqref{eq:srtexplicit} and $\Delta S_{r,t}^{(p)}(\xi)=S_{r,t}^{(p)}(\xi)-S_{r+1,t}^{(p)}(\xi)$.
\end{theorem}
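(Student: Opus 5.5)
This theorem is a summary of computations carried out in the preceding subsection, so the proof amounts to assembling them and verifying each step. We start from \autoref{prop:kloostermanlocalsplit}. Since $p\notin S$ is odd and $\legendresymbol{c}{p}=1$, Hensel's lemma gives $\sqrt{c}\in\ZZ_p^\times$. The substitution $(a,b)\mapsto\bigl(\tfrac{a+b}{2},\tfrac{a-b}{4\sqrt c}\bigr)$ is therefore a bijection of $(\ZZ/p^{u+2v})^2$, because its determinant is a unit. It converts $a^2-4cb^2$ into $ab$ and transforms the additive characters into $\rme_p(-a\xi'/p^{u+2v})\rme_p(-b\alpha'/p^{u+2v})$, where $\xi',\alpha'$ are as in the statement. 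We then split according to the parity of $u$, because the Jacobi symbol $\legendresymbol{\cdot}{p^u}$ equals $\legendresymbol{\cdot}{p}$ when $u$ is odd and equals the indicator of being prime to $p$ when $u$ is even and positive.

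For $u$ odd, the symbol vanishes unless $p^{2v}\parallel ab$. We decompose according to $w=v_p(a)$, so that $v_p(b)=2v-w$, and write $a=p^w a_0$, $b=p^{2v-w}b_0$. The sum then factors, for each $w$, as a product of two twisted Gauss sums: the one in $a_0$ has modulus $p^{u+2v-w}$ and the one in $b_0$ has modulus $p^{u+w}$. \autoref{lem:gausssumsplit} evaluates each factor. The $a_0$-sum is nonzero only when $v_p(\xi')=u+2v-w-1$, and the $b_0$-sum only when $v_p(\alpha')=u+w-1$. In that case the product equals $p^{2u+2v-2}\mf g^2$ times the Legendre symbols of the reduced $\xi'$ and $\alpha'$. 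Using $\mf g^2=\legendresymbol{-1}{p}p$, the result is $p^{2u+2v-1}\legendresymbol{-\xi'\alpha'/p^{2u+2v-2}}{p}$. At most one $w$ contributes, and some $w\in[0,2v]$ contributes exactly when $v_p(\xi')+v_p(\alpha')=2u+2v-2$ with both valuations at least $u-1$. This is \autoref{prop:splitoddcase}.

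For $u$ even, the weight is simply the indicator that $v_p(ab/p^{2v})=0$ when $u\ge2$. Again we split on $w=v_p(a)$. The inner sums over $a$ with $p^w\parallel a$ and over $b$ with $p^{2v-w}\parallel b$ are exactly $\Delta S^{(p)}_{w,u+2v}(\xi')$ and $\Delta S^{(p)}_{2v-w,u+2v}(\alpha')$, which gives \eqref{eq:kloostermansumsrt2}. When $u=0$ the weight is identically $1$ on $\{p^{2v}\mid ab\}$. In that case only $a$ has exact valuation $w$, while $b$ ranges over all multiples of $p^{2v-w}$, which yields \eqref{eq:kloostermansumsrt}. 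The closed form of $S^{(p)}_{r,t}$ is \autoref{prop:srtexplicit}: it is a complete character sum over $\ZZ/p^{t-r}$.

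The main point requiring care is the $u\ge2$ even case. There the condition should read $p^{2v}\parallel ab$, i.e.\ $(ab)/p^{2v}$ is a unit; the displayed "$p^{u+2v}\mid ab$" should be understood this way. A second point is the bookkeeping when $w=0$ or $w=2v$ with $a\equiv0$: if $v_p(a)\ge u+2v$, then $p\mid ab/p^{2v}$, so such terms are excluded and correctly fall outside every $\Delta S$ with $w\le 2v$. Once these are checked, the theorem follows by collecting the three cases.
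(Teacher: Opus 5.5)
Your proposal is correct and follows the paper's own argument: the unimodular change of variables of \autoref{prop:kloostermanlocalsplit}, the decomposition by $w=v_p(a)$, the Gauss sums of \autoref{lem:gausssumsplit} for odd $u$, and the sums $S^{(p)}_{r,t}$, $\Delta S^{(p)}_{r,t}$ for even $u$. You are also right that the condition $p^{u+2v}\mid ab$ (and $p^{u+2v}\parallel ab$) in the paper's $u\geq 2$ computation is a typo for $p^{2v}\parallel ab$, as the paper's own subsequent decomposition over $w\in[0,2v]$ confirms.
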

\begin{proof}
The conclusion follows from \autoref{prop:splitoddcase}, \eqref{eq:kloostermansumsrt} and \eqref{eq:kloostermansumsrt2}.
\end{proof}

\section{Analysis of the Kloosterman-type series}
In this section we will analyze the following \emph{Kloosterman-type series}:
\begin{equation}\label{eq:kloostermantypeseries}
\prescript{2}{c}D_{\xi,\alpha}^S(s,\chi):=\sum_{k,f\in \ZZ_{(S)}^{>0}}\frac{\prescript{2}{c}{\widehat{\Kl}}_{k,f}^{S}(\xi,\alpha)\chi(kf^2)}{k^{1+s}f^{1+2s}}
\end{equation}
for fixed $\xi,\alpha\in \ZZ^{S}$, where $s$ is a complex variable and $\chi$ is a fixed Dirichlet character which is unramified outside $S$.

\begin{proposition}\label{prop:splitkloostermanseries}
We have
\[
\prescript{2}{c}D_{\xi,\alpha}^S(s,\chi)=\prod_{p\notin S}\prescript{2}{c}D_{\xi,\alpha}^{(p)}(s,\chi),
\]
where
\begin{equation}\label{eq:kloostermantypeserieslocal}
\prescript{2}{c}D_{\xi,\alpha}^{(p)}(s,\chi)=\sum_{u,v=0}^{+\infty} \frac{\prescript{2}{c}{\widehat{\Kl}}_{p^u,p^v}^{(p)}(\xi,\alpha)\chi(p^{u+2v})}{(p^u) ^{1+s}(p^v)^{1+2s}}.
\end{equation}
\end{proposition}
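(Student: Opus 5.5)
The plan is to derive the Euler product directly from the multiplicativity in \autoref{prop:prodkloostermanlocal}, together with the complete multiplicativity of $\chi$ and of the denominators. We write $k=\prod_{p\notin S}p^{u_p}$ and $f=\prod_{p\notin S}p^{v_p}$, where only finitely many $u_p,v_p$ are nonzero. Then $k_{(p)}=p^{u_p}$ and $f_{(p)}=p^{v_p}$, and \autoref{prop:prodkloostermanlocal} gives
\[
\prescript{2}{c}{\widehat{\Kl}}_{k,f}^S(\xi,\alpha)=\prod_{p\notin S}\prescript{2}{c}{\widehat{\Kl}}_{p^{u_p},p^{v_p}}^{(p)}(\xi,\alpha).
\]
For all but finitely many $p$ we have $u_p=v_p=0$, and then the local factor is the sum over $m\bmod 1$, i.e.\ equal to $1$. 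So the product is really finite.

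Next, $\chi$ is a Dirichlet character with $\chi(m)=0$ when $\gcd(m,S)\neq1$, and it is completely multiplicative. Hence $\chi(kf^2)=\prod_p\chi(p^{u_p+2v_p})$. The denominator also factors as $k^{1+s}f^{1+2s}=\prod_p (p^{u_p})^{1+s}(p^{v_p})^{1+2s}$. Therefore each term of \eqref{eq:kloostermantypeseries} is the product over $p\notin S$ of the $(u_p,v_p)$-term of \eqref{eq:kloostermantypeserieslocal}, whose $(0,0)$-term equals $1$.

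The map $(k,f)\mapsto (u_p,v_p)_{p\notin S}$ is a bijection from $(\ZZ_{(S)}^{>0})^2$ onto finitely supported families of pairs. Expanding the infinite product formally therefore reproduces the double sum term by term, and the identity follows. This is justified wherever the series converges absolutely: the product of the local sums of absolute values $\sum_{u,v}|\prescript{2}{c}{\widehat{\Kl}}_{p^u,p^v}^{(p)}|\,p^{-u(1+\sigma)-v(1+2\sigma)}$ is then dominated by the absolutely convergent global sum, and the standard argument for Euler products applies. Since the proposition only asserts an identity, I would state it in the region of absolute convergence, or as an identity of formal Dirichlet series.

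The only point needing care, and the main obstacle if any, is checking that \autoref{prop:prodkloostermanlocal} is stated with exactly the local normalization used in \eqref{eq:kloostermantypeserieslocal}, namely the unit twists $((kf^2)^{(p)})^{-1}$. Its proof already shows that these twists can be removed, via the invariance $\prescript{2}{c}{\widehat{\Kl}}^{(p)}(\kappa\xi,\kappa\alpha)=\prescript{2}{c}{\widehat{\Kl}}^{(p)}(\xi,\alpha)$ for $\kappa\in\ZZ_p^\times$. So no further work is needed, and the Euler product follows.
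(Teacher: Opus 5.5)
Your proposal is correct and matches the paper's proof, which simply cites \autoref{prop:prodkloostermanlocal} together with the multiplicativity of $\chi$. You also supply two details the paper leaves implicit: that the local factor equals $1$ when $u_p=v_p=0$, and that the identity should be read in the region of absolute convergence (or as formal Dirichlet series).
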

\begin{proof}
This follows from \autoref{prop:prodkloostermanlocal} and the multiplicativity of $\chi$. 
\end{proof}
By the above proposition it suffices to consider the \emph{local Kloosterman-type series} $\prescript{2}{c}D_{\xi,\alpha}^{(p)}(s,\chi)$.  

\subsection{Local computation}

As in the previous section, we consider the case $\legendresymbol{c}{p}=\pm 1$ separately. 
\subsubsection{Case $\legendresymbol{c}{p}=-1$}
In this subsubsection we assume that $\legendresymbol{c}{p}=-1$. Recall that the value of the transformed Kloosterman sum $\prescript{2}{c}{\widehat{\Kl}}_{p^u,p^v}^{(p)}(\xi,\alpha)$ is given in \autoref{thm:localkloostermaninert}.  We will consider cases depending on whether $\xi=0$ and $\alpha=0$.

We first consider the case $\xi\neq 0$ or $\alpha\neq 0$.
\begin{proposition}\label{prop:finitesuminert}
If $\xi\neq 0$ or $\alpha\neq 0$, then $\prescript{2}{c}D_{\xi,\alpha}^{(p)}(s,\chi)$ is a finite sum and defines an entire function for $s$.
\end{proposition}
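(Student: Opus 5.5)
The plan is to read off the support of the local sums from \autoref{thm:localkloostermaninert} and show that only finitely many pairs $(u,v)$ contribute to \eqref{eq:kloostermantypeserieslocal}. Since $\xi,\alpha\in\ZZ^S$ and $p\notin S$, we have $\xi,\alpha\in\ZZ_p$. The hypothesis says that at least one of them is nonzero. So the quantity
\[
M=\min(v_p(\xi),v_p(\alpha))
\]
is a finite nonnegative integer. Here $v_p(0)=+\infty$, so $M$ is simply the valuation of whichever of $\xi,\alpha$ is nonzero, or the smaller of the two if both are nonzero.

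Next, check case by case in \autoref{thm:localkloostermaninert} that $\prescript{2}{c}{\widehat{\Kl}}_{p^u,p^v}^{(p)}(\xi,\alpha)$ can be nonzero only when both valuations are large:
\begin{enumerate}[itemsep=0pt,parsep=0pt,topsep=2pt,leftmargin=0pt,labelsep=3pt,itemindent=9pt,label=\textbullet]
  \item For $u$ odd, nonvanishing requires $v_p(\xi)\geq u+v-1$ and $v_p(\alpha)\geq u+v-1$.
  \item For $u=0$, it requires both valuations to be at least $u+v$.
  \item For even $u\geq 2$, every nonzero case requires both valuations to be at least $u+v-1$.
\end{enumerate}
In all cases a nonzero term forces $u+v-1\leq M$, that is, $u+v\leq M+1$. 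Hence only finitely many pairs $(u,v)$ with $u,v\geq 0$ contribute.

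It follows that $\prescript{2}{c}D_{\xi,\alpha}^{(p)}(s,\chi)$ is a finite sum of terms of the form
\[
\prescript{2}{c}{\widehat{\Kl}}_{p^u,p^v}^{(p)}(\xi,\alpha)\,\chi(p^{u+2v})\,p^{-u(1+s)-v(1+2s)}.
\]
Each such term is a constant times an exponential in $s$, so it is entire. A finite sum of entire functions is entire, which proves the claim.

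There is no real obstacle in this argument. The only point needing care is the convention $v_p(0)=+\infty$: when exactly one of $\xi,\alpha$ vanishes, the condition on the zero one is automatic, and the bound $u+v\leq M+1$ comes entirely from the nonzero one.
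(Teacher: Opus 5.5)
Your proof is correct and follows the paper's argument: you read off from \autoref{thm:localkloostermaninert} that a nonzero term forces $v_p(\xi),v_p(\alpha)\geq u+v-1$, you bound $u+v$ using the finite valuation of a nonzero entry, and you conclude that the series is a finite sum of entire terms. The paper reaches the same bound by assuming $\xi\neq 0$ by symmetry, where you use $M=\min(v_p(\xi),v_p(\alpha))$; either way the argument relies on the standing assumption $\legendresymbol{c}{p}=-1$ of that subsubsection, under which \autoref{thm:localkloostermaninert} applies.
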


\begin{proof}
By \autoref{thm:localkloostermaninert} we find that $\prescript{2}{c}{\widehat{\Kl}}_{p^u,p^v}^{(p)}(\xi,\alpha)\neq 0$ only if $v_p(\xi)\geq u+v-1$ and $v_p(\alpha)\geq u+v-1$.

By symmetry we may assume that $\xi\neq 0$. Under such assumption $v_p(\xi)$ is finite. Hence there are finitely many choices of $u$ and $v$ satisfying $v_p(\xi)\geq u+v-1$ and hence $\prescript{2}{c}{\widehat{\Kl}}_{p^u,p^v}^{(p)}(\xi,\alpha)\neq 0$ for finitely many $u$ and $v$. Therefore the series \eqref{eq:kloostermantypeserieslocal} is a finite sum. Since each term in \eqref{eq:kloostermantypeserieslocal} is an entire function, we conclude that $\prescript{2}{c}D_{\xi,\alpha}^{(p)}(s,\chi)$ is an entire function for $s$.
\end{proof}

Now we compute $\prescript{2}{c}D_{\xi,\alpha}^{(p)}(s,\chi)$ for "general" $p$, that is, the primes $p$ satisfying $v_p(\xi)=0$ if $\xi\neq 0$ and $v_p(\alpha)=0$ if $\alpha\neq 0$.

\begin{proposition}\label{prop:nonzeroinert}
Suppose that $\xi\neq 0$ or $\alpha\neq 0$. If the prime $p\notin S$ satisfies $p\nmid \delta$, then
\[
\prescript{2}{c}D_{\xi,\alpha}^{(p)}(s,\chi)=1+\frac{\chi(p)}{p^{s}}\legendresymbol{\delta}{p},
\]
where $\delta=\alpha^2-4c \xi^2$.
\end{proposition}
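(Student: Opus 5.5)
We compute directly from \autoref{thm:localkloostermaninert}, which determines exactly which pairs $(u,v)$ give a nonzero local transformed Kloosterman sum. The hypothesis $p\nmid\delta$ with $\delta=\alpha^2-4c\xi^2$ forces $\min(v_p(\xi),v_p(\alpha))=0$: if $p$ divided both $\xi$ and $\alpha$, then $p^2\mid\delta$. So throughout we may assume that at least one of $v_p(\xi),v_p(\alpha)$ equals $0$.

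The first step is to enumerate the surviving terms of \eqref{eq:kloostermantypeserieslocal}.
\begin{itemize}
\item For $u$ odd, \autoref{thm:localkloostermaninert}(1) requires $v_p(\xi),v_p(\alpha)\geq u+v-1$. Since one valuation is $0$, this forces $u+v=1$, i.e.\ $(u,v)=(1,0)$. The resulting value is $p\legendresymbol{\alpha^2-4c\xi^2}{p}=p\legendresymbol{\delta}{p}$.
\item For $u=0$, a nonzero term needs $v_p(\xi),v_p(\alpha)\geq v$. This forces $v=0$, giving the term $1$.
\item For $u\geq 2$ even, every nonzero case needs both valuations to be at least $u+v-1\geq 1$, which contradicts the assumption. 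So all these terms vanish.
\end{itemize}

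Next we assemble the series. The $(0,0)$ term contributes $1$. The $(1,0)$ term contributes
\[
\frac{p\legendresymbol{\delta}{p}\chi(p)}{p^{1+s}}=\frac{\chi(p)}{p^{s}}\legendresymbol{\delta}{p}.
\]
Adding these gives the stated formula $\prescript{2}{c}D_{\xi,\alpha}^{(p)}(s,\chi)=1+\frac{\chi(p)}{p^{s}}\legendresymbol{\delta}{p}$.

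There is no real obstacle here; the only care needed is bookkeeping.
\begin{itemize}
\item We must deduce $\min(v_p(\xi),v_p(\alpha))=0$ from $p\nmid\delta$, which is immediate since $\delta$ is a quadratic form in $\xi,\alpha$.
\item We must check that the parity cases of \autoref{thm:localkloostermaninert} leave no other $(u,v)$. In particular, the $-p^{2u+2v-2}$ case for even $u\geq2$ needs one valuation to equal $u+v-1\geq1$ and the other to be $\geq u+v-1\geq 1$, so it is excluded.
\end{itemize}
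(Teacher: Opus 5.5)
Your proof is correct and follows the paper's argument. Both use Theorem~\ref{thm:localkloostermaninert} (the case $\legendresymbol{c}{p}=-1$) together with $\min(v_p(\xi),v_p(\alpha))=0$ to show that only $(u,v)=(0,0)$ and $(1,0)$ survive, contributing $1$ and $\chi(p)p^{-s}\legendresymbol{\delta}{p}$; your explicit check that the even $u\geq 2$ terms vanish is slightly more careful than the paper, which passes over that case quickly.
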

\begin{proof}
Since $\legendresymbol{c}{p}=-1$, $p\nmid \delta$ is equivalent to $p\nmid \xi$ or $p\nmid \alpha$.

If $u$ is odd, then by \autoref{thm:localkloostermaninert} and the assumption that $v_p(\xi)=0$ or $v_p(\alpha)=0$ we must have $v=0$. In this situation we have
\[
\prescript{2}{c}{\widehat{\Kl}}_{p^u,p^v}^{(p)}(\xi,\alpha)=p \legendresymbol{\alpha^2-4c \xi^2}{p}.
\]
Hence we get the contribution $\chi(p)\legendresymbol{\delta}{p}p^{-s}$.

If $u$ is even, then by \autoref{thm:localkloostermaninert} and that $v_p(\xi)=0$ or $v_p(\alpha)=0$ we know that $v=0$. In this case, $\prescript{2}{c}{\widehat{\Kl}}_{p^u,p^v}^{(p)}(\xi,\alpha)=1$ and hence we get the contribution $1$.

By combining the two cases above we obtain the desired result. 
\end{proof}

Finally, we consider the case $\xi=\alpha=0$.
\begin{proposition}\label{prop:zeroinert}
If $\xi=\alpha=0$, then
\[
\prescript{2}{c}D_{\xi,\alpha}^{(p)}(s,\chi)=\frac{1-\chi^2(p)p^{-2s}}{(1-\chi^2(p)p^{1-2s}) (1-\chi^2(p)p^{2-2s})}
\]
\end{proposition}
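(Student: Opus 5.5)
The plan is to evaluate the local series \eqref{eq:kloostermantypeserieslocal} term by term. When $\xi=\alpha=0$ we have $v_p(\xi)=v_p(\alpha)=+\infty$, so \autoref{thm:localkloostermaninert} gives every local transformed Kloosterman sum in closed form. After that the computation reduces to summing two geometric series. We work in the region of absolute convergence ($\Re s$ large, say $\Re s>1$); the identity then holds there and extends by meromorphic continuation. Throughout, $\chi(p)\neq0$ because $p\notin S$.

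\textbf{Step 1: the values from \autoref{thm:localkloostermaninert}.} There are three cases, according to $u$.
\begin{enumerate}[itemsep=0pt,parsep=0pt,topsep=2pt,leftmargin=0pt,labelsep=3pt,itemindent=9pt,label=\textbullet]
  \item $u$ odd. The valuation conditions hold, but the value is $p^{2u+2v-1}\legendresymbol{0}{p}=0$. So odd $u$ contribute nothing.
  \item $u=0$. The value is $p^{2v}$.
  \item $u=2m$ with $m\geq1$. Since both valuations are infinite, only the first case of the theorem occurs. The middle case, which needs a valuation equal to $u+v-1$, cannot happen. The value is $p^{2u+2v}(1-p^{-2})$.
\end{enumerate}

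\textbf{Step 2: summation.} Put $A=\chi^2(p)p^{2-2s}$ and $B=\chi^2(p)p^{1-2s}$.
\begin{enumerate}[itemsep=0pt,parsep=0pt,topsep=2pt,leftmargin=0pt,labelsep=3pt,itemindent=9pt,label=\textbullet]
  \item The $u=0$ terms are $\chi(p)^{2v}p^{2v}p^{-v(1+2s)}=B^v$. Their sum is $(1-B)^{-1}$.
  \item For $u=2m$, the general term is
\[
(1-p^{-2})\,\chi(p)^{2m+2v}p^{4m+2v}p^{-2m(1+s)}p^{-v(1+2s)}=(1-p^{-2})A^mB^v .
\]
  Summing over $m\geq1$ and $v\geq0$ gives $(1-p^{-2})\dfrac{A}{(1-A)(1-B)}$.
\end{enumerate}

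\textbf{Step 3: simplification.} Adding the two contributions gives
\[
\frac{1}{1-B}\left(1+\frac{(1-p^{-2})A}{1-A}\right)=\frac{1-p^{-2}A}{(1-A)(1-B)}.
\]
Since $p^{-2}A=\chi^2(p)p^{-2s}$, this is exactly the claimed expression.

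There is no real obstacle. The only points needing care are:
\begin{enumerate}[itemsep=0pt,parsep=0pt,topsep=2pt,leftmargin=0pt,labelsep=3pt,itemindent=9pt,label=\textbullet]
  \item checking that the odd-$u$ terms vanish because the Legendre symbol of $0$ is $0$;
  \item checking that the $-p^{2u+2v-2}$ branch of \autoref{thm:localkloostermaninert} never occurs when $\xi=\alpha=0$;
  \item tracking the exponent of $p$ correctly in the general term.
\end{enumerate}
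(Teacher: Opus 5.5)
Your proof is correct and follows the paper's own argument. You read off the values from \autoref{thm:localkloostermaninert} at $\xi=\alpha=0$: odd $u$ give $0$, $u=0$ gives $p^{2v}$, and even $u\geq 2$ gives $(1-p^{-2})p^{2u+2v}$. You then sum the two resulting geometric series and simplify, which is exactly the paper's route, and your $A,B$ bookkeeping is cleaner than the paper's displayed double sum, which writes $1/(1-\chi^2(p))$ where $1/(1-\chi^2(p)p^{2-2s})$ is meant.
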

\begin{proof}
By \autoref{thm:localkloostermaninert} we obtain the following result: If $u$ is odd, then    $\prescript{2}{c}{\widehat{\Kl}}_{p^u,p^v}^{(p)}(0,0)=0$. If $u$ is even, then
\[
\prescript{2}{c}{\widehat{\Kl}}_{p^u,p^v}^{(p)}(0,0)=\begin{cases}
                                                      p^{2v}, & u=0 \\
                                                      p^{2u+2v}(1-p^{-2}), & u\geq 2.
                                                    \end{cases}
\]
Hence
\[
\prescript{2}{c}D_{\xi,\alpha}^{(p)}(s,\chi)=\sum_{v=0}^{+\infty}\frac{p^{2v}\chi(p^{2v})} {(p^v)^{1+2s}}+\sum_{\substack{u=2\\ u\,\text{even}}}^{+\infty}\sum_{v=0}^{+\infty}\frac{(1-p^{-2})p^{2u+2v}\chi(p^{u+2v})} {(p^u)^{1+s}(p^v)^{1+2s}}.
\]

By direct computation we have
\[
\sum_{v=0}^{+\infty}\frac{p^{2v}\chi(p^{2v})} {(p^v)^{1+2s}}=\frac{1}{1-\chi^2(p)p^{1-2s}}
\]\
and
\[
\sum_{\substack{u=2\\ u\,\text{even}}}^{+\infty}\sum_{v=0}^{+\infty}\frac{(1-p^{-2})p^{2u+2v}\chi(p^{u+2v})} {(p^u)^{1+s}(p^v)^{1+2s}}=(1-p^{-2})\frac{\chi^2(p)}{p^{-2+2s}}\frac{1}{1-\chi^2(p)} \frac{1}{1-\chi^2(p)p^{1-2s}}.
\]
Hence
\[
\prescript{2}{c}D_{\xi,\alpha}^{(p)}(s,\chi)=\frac{1}{1-\chi^2(p)p^{1-2s}}\left[1+ \frac{(1-p^{-2})\chi^2(p)}{p^{-2+2s}-\chi^2(p)}\right]=\frac{1-\chi^2(p)p^{-2s}}{(1-\chi^2(p)p^{1-2s}) (1-\chi^2(p)p^{2-2s})}.\qedhere
\]
\end{proof}

\subsubsection{Case $\legendresymbol{c}{p}=1$}
In this subsubsection we assume that $\legendresymbol{c}{p}=1$. The value of the transformed Kloosterman sum $\prescript{2}{c}{\widehat{\Kl}}_{p^u,p^v}^{(p)}(\xi,\alpha)$ is now given in \autoref{thm:localkloostermansplit}.  We will consider cases depending on whether $\xi'=0$ and $\alpha'=0$.

First we consider the case when $\xi'\neq 0$ and $\alpha'\neq 0$.
\begin{lemma}\label{lem:srtzero}
If $v_p(\xi)<t-r$, then $S_{r,t}(\xi)=0$. If $v_p(\xi)<t-r-1$, then $\Delta S_{r,t}(\xi)=0$. 
\end{lemma}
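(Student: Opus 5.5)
The first claim follows directly from \autoref{prop:srtexplicit}, and the second is a consequence of the first via \eqref{eq:defdeltasrt}. Throughout, write $S_{r,t}$ for $S_{r,t}^{(p)}$.

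For the first assertion, suppose $v_p(\xi)<t-r$. If $r>t$, then $S_{r,t}(\xi)=0$ by the definition \eqref{eq:defsrt}. If $r\leq t$, then \eqref{eq:srtexplicit} says that $S_{r,t}(\xi)=p^{t-r}$ only when $v_p(\xi)\geq t-r$, and $S_{r,t}(\xi)=0$ otherwise. Our hypothesis puts us in the second case, so $S_{r,t}(\xi)=0$.

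For the second assertion, suppose $v_p(\xi)<t-r-1$. By \eqref{eq:defdeltasrt} we have
\[
\Delta S_{r,t}(\xi)=S_{r,t}(\xi)-S_{r+1,t}(\xi).
\]
Since $v_p(\xi)<t-r-1<t-r$, the first assertion gives $S_{r,t}(\xi)=0$. Since $t-(r+1)=t-r-1$, the first assertion applied with $r+1$ in place of $r$ gives $S_{r+1,t}(\xi)=0$. Hence $\Delta S_{r,t}(\xi)=0$.

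One boundary case needs care: when $r+1>t$, so that $S_{r+1,t}$ is defined to be $0$. This is covered by the convention in \eqref{eq:defsrt}, and it agrees with the first assertion, so no separate argument is required. There is no real obstacle here; the only point to watch is applying the case split $r\leq t$ versus $r>t$ in \eqref{eq:defsrt} consistently.
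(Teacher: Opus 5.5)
Your proposal is correct and is essentially the paper's argument: the paper's proof just says the lemma follows directly from Proposition~\ref{prop:srtexplicit} and the definition \eqref{eq:defdeltasrt}, and you spell out those same two steps. Your separate treatment of the case $r>t$ through the convention in \eqref{eq:defsrt} is worth keeping, since the closed formula in Proposition~\ref{prop:srtexplicit} is only valid for $r\leq t$.
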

\begin{proof}
It follows directly from \autoref{prop:srtexplicit} and \eqref{eq:defdeltasrt}.
\end{proof}

\begin{proposition}\label{prop:finitesumsplit}
If $\xi'\neq 0$ and $\alpha'\neq 0$, then $\prescript{2}{c}D_{\xi,\alpha}^{(p)}(s,\chi)$ is a finite sum and defines an entire function for $s$.
\end{proposition}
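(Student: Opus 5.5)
Since $\xi'\neq 0$ and $\alpha'\neq 0$, both $v_p(\xi')$ and $v_p(\alpha')$ are finite. The plan is to show that $\prescript{2}{c}{\widehat{\Kl}}_{p^u,p^v}^{(p)}(\xi,\alpha)$ vanishes outside a finite set of pairs $(u,v)$. Then \eqref{eq:kloostermantypeserieslocal} is a finite sum of terms of the form $\text{const}\cdot\chi(p^{u+2v})p^{-u(1+s)-v(1+2s)}$, each entire in $s$, exactly as in the proof of \autoref{prop:finitesuminert}. I would go through the cases of \autoref{thm:localkloostermansplit} and bound $u$ and $v$ in terms of $M:=v_p(\xi')+v_p(\alpha')$.

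\emph{Case $u$ odd.} By \autoref{thm:localkloostermansplit}(1), nonvanishing forces $v_p(\xi'\alpha')=2u+2v-2$. Hence $2u+2v-2=M$, which leaves only finitely many $(u,v)$.

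\emph{Case $u=0$.} The sum is $\sum_{w=0}^{2v}\Delta S_{w,2v}^{(p)}(\xi')S_{2v-w,2v}^{(p)}(\alpha')$. By \autoref{lem:srtzero}, a nonzero summand needs $v_p(\xi')\geq 2v-w-1$ and $v_p(\alpha')\geq 2v-(2v-w)=w$. Adding these gives $M\geq 2v-1$, so $v\leq (M+1)/2$.

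\emph{Case $u\geq 2$ even.} The summands are $\Delta S_{w,u+2v}^{(p)}(\xi')\Delta S_{2v-w,u+2v}^{(p)}(\alpha')$. \autoref{lem:srtzero} requires $v_p(\xi')\geq u+2v-w-1$ and $v_p(\alpha')\geq u+w-1$. Adding these gives $M\geq 2u+2v-2$, again leaving finitely many $(u,v)$.

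Combining the three cases, only finitely many $(u,v)$ contribute, which proves the proposition. The only step needing care is the correct indexing in \autoref{lem:srtzero}: the threshold is $t-r$ for $S_{r,t}$ and $t-r-1$ for $\Delta S_{r,t}$, and this must be applied with the right $(r,t)$ in each case so that the two inequalities add up to a bound independent of $w$. Everything else is bookkeeping.
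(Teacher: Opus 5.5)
Your proof is correct and follows essentially the same route as the paper: you split by the parity of $u$ and use \autoref{thm:localkloostermansplit} together with \autoref{lem:srtzero} to bound $u$ and $v$ in terms of $v_p(\xi')+v_p(\alpha')$, so the local series is a finite sum of entire terms. Your bookkeeping is slightly cleaner than the paper's, since you treat $u=0$ separately (where $S_{2v-w,2v}$ appears rather than $\Delta S$) and you obtain the sharp constraints $2u+2v-2=M$ and $2u+2v-2\leq M$.
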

\begin{proof}
We first consider the case that $u$ is odd. By \autoref{thm:localkloostermansplit}, $\prescript{2}{c}{\widehat{\Kl}}_{p^u,p^v}^{(p)}(\xi,\alpha)=0$ unless $v_p(\alpha')\geq u-1$. Since $v_p(\alpha')$ is a finite number, the choice of $u$ is finite. Finally, $v$ is determined  since $v_p(\xi'\alpha')=2u+2v-1$. 

Now we consider the case that $u$ is even. By \autoref{lem:srtzero} and \autoref{thm:localkloostermansplit} we conclude that $\prescript{2}{c}{\widehat{\Kl}}_{p^u,p^v}^{(p)}(\xi,\alpha)$ does not vanish only if there exists $w\in\{0,1,\dots,2v\}$ such that $u+w-1\leq v_p(\xi')$ and $u+2v-w-1\leq v_p(\alpha')$. In particular, $u+2v-2\leq v_p(\xi')+v_p(\alpha')$, which is a finite number. Hence the choice of $u$ and $v$ is finite and thus $\prescript{2}{c}D_{\xi,\alpha}^{(p)}(s,\chi)$ is a finite sum. Since each summand is entire, we know that $\prescript{2}{c}D_{\xi,\alpha}^{(p)}(s,\chi)$ is entire as well.
\end{proof}

\begin{proposition}\label{prop:bothnonzerosplit}
Suppose that $\xi'\neq 0$ and $\alpha'\neq 0$. If the prime $p\notin S$ satisfies $p\nmid\delta$, then
\[
\prescript{2}{c}D_{\xi,\alpha}^{(p)}(s,\chi)=1+\frac{\chi(p)}{p^{s}}\legendresymbol{\delta}{p},
\]
where $\delta=\alpha^2-4c \xi^2$.
\end{proposition}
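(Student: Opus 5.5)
The plan is to run through the cases of \autoref{thm:localkloostermansplit} one at a time, as in the proof of \autoref{prop:nonzeroinert}. The hypothesis $p\nmid\delta$ will force both $v_p(\xi')=0$ and $v_p(\alpha')=0$. With that, only $(u,v)=(0,0)$ and $(u,v)=(1,0)$ contribute to the local series \eqref{eq:kloostermantypeserieslocal}.

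First I translate the hypothesis on $\delta$ into the variables $\xi',\alpha'$. From the definitions in \autoref{thm:localkloostermansplit},
\[
\xi'\alpha'=\frac{\xi^2}{4}-\frac{\alpha^2}{16c}=-\frac{\alpha^2-4c\xi^2}{16c}=-\frac{\delta}{16c}.
\]
Since $p\notin S$ (so $p$ is odd) and $p\nmid c$ because $\legendresymbol{c}{p}=1$, the condition $p\nmid\delta$ is equivalent to $v_p(\xi'\alpha')=0$. As $\xi',\alpha'\in\ZZ_p$, this means $v_p(\xi')=v_p(\alpha')=0$. Moreover, since $c$ is a square mod $p$,
\[
\legendresymbol{-\xi'\alpha'}{p}=\legendresymbol{\delta/(16c)}{p}=\legendresymbol{\delta}{p}.
\]

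Next come the cases of \autoref{thm:localkloostermansplit}.

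\emph{$u$ odd.} The sum is nonzero only if $v_p(\xi'\alpha')=2u+2v-2$, i.e.\ $0=2u+2v-2$, which forces $u=1$ and $v=0$. The side conditions $v_p(\xi'),v_p(\alpha')\geq u-1=0$ then hold automatically. The value is $p\legendresymbol{\delta}{p}$, so this case contributes $\chi(p)\legendresymbol{\delta}{p}p^{-s}$ to the series.

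\emph{$u=0$.} I use \autoref{prop:srtexplicit}. The factor $S_{2v-w,2v}^{(p)}(\alpha')$ vanishes unless $0=v_p(\alpha')\geq w$, so only $w=0$ survives. For $w=0$, the factor $\Delta S_{0,2v}^{(p)}(\xi')=S_{0,2v}^{(p)}(\xi')-S_{1,2v}^{(p)}(\xi')$ requires $0\geq 2v$ or $0\geq 2v-1$, so $v=0$. In that case the value is $S_{0,0}^{(p)}-S_{1,0}^{(p)}=1-0=1$, and this case contributes the constant $1$.

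\emph{$u\geq2$ even.} By \autoref{lem:srtzero}, the factor $\Delta S_{w,u+2v}^{(p)}(\xi')$ is zero unless $0\geq u+2v-w-1$. Likewise $\Delta S_{2v-w,u+2v}^{(p)}(\alpha')$ is zero unless $0\geq u+w-1$. Adding these two inequalities gives $2u+2v-2\leq 0$, which contradicts $u\geq2$. So every term vanishes.

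Summing the three contributions gives $1+\chi(p)p^{-s}\legendresymbol{\delta}{p}$, which is the claimed formula. The only step needing care is the first one: checking that $\xi',\alpha'$ are $p$-integral (this uses that $2$ and $\sqrt c$ are $p$-adic units), and that the Legendre symbol of $-\xi'\alpha'$ equals that of $\delta$. The rest is bookkeeping of valuations.
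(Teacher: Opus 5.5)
Your proof is correct and follows the paper's argument: a case split over $u$ odd and $u$ even using \autoref{thm:localkloostermansplit}, \autoref{prop:srtexplicit} and \autoref{lem:srtzero}, which leaves only $(u,v)=(1,0)$ and $(0,0)$. You are somewhat more careful than the paper in two places. You derive explicitly that $p\nmid\delta$ forces $v_p(\xi')=v_p(\alpha')=0$ via $\xi'\alpha'=-\delta/(16c)$. You also treat $u=0$, where the formula involves $S_{2v-w,2v}^{(p)}$ rather than $\Delta S$, separately from $u\geq 2$.
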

\begin{proof}
Suppose that $\prescript{2}{c}{\widehat{\Kl}}_{p^u,p^v}^{(p)}(\xi,\alpha)$ is nonzero. 

If $u$ is odd, then by \autoref{thm:localkloostermansplit}, we have $0=v_p(\delta)=v_p(\xi'\alpha')=2u+2v-2$. Hence we must have $u=1$ and $v=0$. In this case, we have
\[
\frac{\widehat{\Kl}_{p^u,p^v}^{(p)}(\xi,\alpha)\chi(p^{u+2v})}{(p^{u})^{1+s}(p^{v}) ^{1+2s}}=\frac{\chi(p)}{p^{1+s}}p\legendresymbol{-\xi'\alpha'}{p}=\legendresymbol{\alpha^2/(16c)-\xi^2/4}{p}\frac{1}{p^s}.
\]
Since $\legendresymbol{c}{p}=1$, we have
\[
\legendresymbol{\alpha^2/(16c)-\xi^2/4}{p}=\legendresymbol{\alpha^2-4c\xi^2}{p}= \legendresymbol{\delta}{p}.
\]
Hence we get the contribution $\frac{\chi(p)}{p^{s}}\legendresymbol{\delta}{p}$.

If $u$ is even, then by \autoref{thm:localkloostermansplit} and \autoref{lem:srtzero} we must have $u+2v-w\leq 1$ and $w=0$. Therefore, $u=v=0$. In this case, we have
\[
\frac{{}_c^2\widehat{\Kl}_{p^u,p^v}^{(p)}(\xi,\alpha)\chi(p^{u+2v})}{(p^{u})^{1+s}(p^{v}) ^{1+2s}}=\Delta S_{0,0}^{(p)}(\xi')S_{0,0}^{(p)}(\alpha')=1.
\]
The conclusion now follows from adding the two cases together.
\end{proof}

Next we consider the case that exactly one of $\xi'$ and $\alpha'$ is zero.
\begin{proposition}\label{prop:onezerosplit}
If $\xi'\neq 0$, $\alpha'=0$ or $\xi'=0$, $\alpha'\neq 0$, then $\prescript{2}{c}D_{\xi,\alpha}^{(p)}(s,\chi)$ can be written as
\[
\frac{_cA^{(p)}_{\xi,\alpha}(s)}{1-\chi^2(p)p^{1-2s}},
\]
where $A^{(p)}_{\xi,\alpha}(s)$ is an entire function. Moreover, if $v_p(\alpha')=0$, then 
\[
A_{\xi,\alpha}^{(p)}(s)=1-\chi^2(p)p^{-2s}.
\]
\end{proposition}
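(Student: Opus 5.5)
We work from \autoref{prop:kloostermanlocalsplit}. The sum there is symmetric under $(a,\xi')\leftrightarrow(b,\alpha')$, because the condition $p^{2v}\mid ab$ and the symbol $\legendresymbol{(ab)/p^{2v}}{p^u}$ depend only on the product $ab$. So we may assume $\xi'\neq 0$, $\alpha'=0$, and write $m=v_p(\xi')<\infty$. In the normalization $v_p(\alpha')=0$ of the last assertion the roles are swapped: there $\xi'=0$ and $\alpha'$ is a unit. By symmetry, the case to compute is the one where the vanishing variable pairs with the full $S$-factor, which is what we do below. We also put $x=\chi^2(p)p^{1-2s}$.

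First, the $u$ odd terms vanish. By \autoref{thm:localkloostermansplit}(1), a nonzero term needs $v_p(\xi'\alpha')=2u+2v-2<\infty$, which is impossible when $\alpha'=0$. For $u\geq 2$ even we use the formula $\sum_{w}\Delta S_{w,u+2v}^{(p)}(\xi')\Delta S_{2v-w,u+2v}^{(p)}(0)$. By \autoref{lem:srtzero}, the first factor vanishes unless $u+2v-w-1\leq m$. Since $w\leq 2v$, this forces $u\leq m+1$, so only finitely many $u$ occur. For fixed $u$, only the $w$ with $j:=2v-w\in[\max(0,u-1-m'),\ldots]$, more precisely $j$ satisfying $u+j-1\leq m$, survive, and there are boundedly many. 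For these, \autoref{prop:srtexplicit} gives explicit values: $\Delta S^{(p)}_{w,u+2v}(\xi')$ depends only on $u+j$ and $m$, and $\Delta S^{(p)}_{j,u+2v}(0)=p^{u+2v-j}(1-p^{-1})$ (or $1$ when $j=u+2v$). Hence for $v$ larger than a bound depending on $m$, the $(u,v)$-coefficient equals $C_u\,p^{2v}$ with $C_u$ independent of $v$. After multiplying by $\chi(p^{u+2v})p^{-u(1+s)}p^{-v(1+2s)}$, the $v$-sum becomes a polynomial in $p^{-s}$ plus $C'_u\,x^{v_0}/(1-x)$. The $u=0$ term, $\sum_w\Delta S^{(p)}_{w,2v}(\xi')S^{(p)}_{2v-w,2v}(0)=\sum_w p^{w}\Delta S^{(p)}_{w,2v}(\xi')$, is treated identically: only $w\geq 2v-1-m$ contribute, and each contribution is a $v$-independent constant times $p^{2v}$. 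Summing over the finitely many $u$ and multiplying by $1-x$, we obtain $A^{(p)}_{\xi,\alpha}(s)$ as a finite Laurent polynomial in $p^{-s}$ (with coefficients involving $\chi(p)$). In particular it is entire.

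For the explicit case, take $\xi'=0$ and $\alpha'$ a unit. The $u$ odd terms vanish as above. For $u\geq 2$ even, the factor $\Delta S^{(p)}_{2v-w,u+2v}(\alpha')$ needs $u+w-1\leq 0$, which is impossible. For $u=0$, the term $S^{(p)}_{2v-w,2v}(\alpha')$ is nonzero only when $w=0$, where it equals $1$. Then $\Delta S^{(p)}_{0,2v}(0)$ equals $1$ if $v=0$ and $p^{2v}(1-p^{-1})$ if $v\geq1$. Therefore
\[
\prescript{2}{c}D_{\xi,\alpha}^{(p)}(s,\chi)=1+(1-p^{-1})\frac{x}{1-x}=\frac{1-\chi^2(p)p^{-2s}}{1-\chi^2(p)p^{1-2s}}.
\]

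The main obstacle is the bookkeeping in the general case: checking that the coefficient $C_u p^{2v}$ really stabilizes, i.e.\ that the set of admissible $j=2v-w$ and the values of $\Delta S$ become independent of $v$ once $v$ is large. This should follow from the explicit formula \eqref{eq:srtexplicit}, provided one tracks the boundary cases $j=0$ and $w=2v$ carefully.
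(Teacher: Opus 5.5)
Your proposal is correct and follows essentially the same route as the paper. Both use the $(a,\xi')\leftrightarrow(b,\alpha')$ symmetry of the sum in \autoref{prop:kloostermanlocalsplit}, discard odd $u$, and bound $u$ via \autoref{lem:srtzero}. Both then note that for large $v$ the $(u,v)$-coefficient is a $v$-independent constant times $p^{2v}$, so the tail is geometric in $\chi^2(p)p^{1-2s}$, and obtain the unit case from the $u=0$ term alone.

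The stabilization you flag as the main obstacle does hold: the surviving $j=2v-w$ are bounded independently of $v$, so the boundary value $j=u+2v$ never occurs once $v$ is large. Putting the nonzero variable on the opposite side from the paper (which takes $\xi'=0$ throughout) is harmless.
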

\begin{proof}
By symmetry we may assume that $\xi'=0$ and $\alpha'\neq 0$.
 
If $u$ is odd, then by \autoref{thm:localkloostermansplit} we have ${}_c^2\widehat{\Kl}_{p^u,p^v}^{(p)}(\xi,\alpha)=0$ since $\xi'\alpha'=0$. Now we assume that $u$ is even.

\underline{\emph{Case 1:}}\ \ $u=0$. 

We have
\begin{equation}\label{eq:kloostermanuzero}
{}_c^2\widehat{\Kl}_{1,p^v}^{(p)}(\xi,\alpha)=\sum_{w=0}^{2v}=\sum_{w=0}^{2v}\Delta S_{w,2v}^{(p)}(0)S_{2v-w,2v}^{(p)}(\alpha')=\sum_{w=0}^{\min\{2v,v_p(\alpha')\}}\bm\phi(p^{2v-w})p^w
\end{equation}
where $\bm \phi$ is the Euler totient function. Hence for $v$ sufficiently large (say $v>v_0$) we have
\[
{}_c^2\widehat{\Kl}_{1,p^v}(\xi,\alpha)=\sum_{w=0}^{v_p(\alpha')}\bm \phi(p^{2v-w})p^w =(v_p(\alpha')+1)p^{2v}(1-p^{-1}).
\]
Therefore
\[
\sum_{v=v_0+1}^{+\infty}\frac{{}_c^2\widehat{\Kl}_{1,p^v}(\xi,\alpha) \chi(p^{2v})}{(p^{2v})^ {1+2s}}=(v_p(\alpha')+1)(1-p^{-1})\sum_{v=v_0+1}^{+\infty}\frac{\chi(p^{2v})}{(p^{2v})^{-1+2s}}= \frac{(v_p(\alpha')+1)(1-p^{-1})\chi^2(p^{v_0+1})}{(p^{2v_0+2})^{-1+2s}(1-\chi^2(p)p^{-2s+1})}.
\]
Moreover,
\[
\sum_{v=0}^{v_0}\frac{{}_c^2\widehat{\Kl}^{(p)}_{1,p^v}(\xi,\alpha)\chi(p^{2v})}{(p^{2v})^ {1+2s}}
\]
is a finite sum and hence it defines an entire function.

\underline{\emph{Case 2:}}\ \ $u\geq 2$. 

We have
\[
{}_c^2\widehat{\Kl}_{p^u,p^v}^{(p)}(\xi,\alpha)=\sum_{w=0}^{2v}\Delta S_{w,u+2v}^{(p)}(0)\Delta S_{2v-w,u+2v}^{(p)}(\alpha')=\sum_{w=0}^{2v}\bm\phi(p^{u+2v-w})\Delta S_{2v-w,u+2v}^{(p)}(\alpha').
\]
By \autoref{prop:srtexplicit} and \eqref{eq:defdeltasrt} we have
\[
\Delta S_{2v-w,u+2v}^{(p)}(\alpha')=\begin{cases}
                                         p^{u+w}(1-p^{-1}), & u+w\leq v_p(\alpha') , \\
                                        -p^{u+w-1} & u+w-1=v_p(\alpha') ,\\
                                         0, & \text{otherwise}.
                                       \end{cases}
\]
Hence we find that the sum over $u$ is finite. 

Now we fix $u$. For $v$ sufficiently large (say $v>v_u$) we have
\begin{align*}
{}_c^2\widehat{\Kl}_{p^u,p^v}^{(p)}(\xi,\alpha)&=\sum_{w=0}^{v_p(\alpha')-u}\bm \phi(p^{u+2v-w}) \bm\phi(p^{u+w})-\bm\phi(p^{2u+2v-v_p(\alpha')+1})p^{v_p(\alpha')}\\
&=(v_p(\alpha')-u+1)(1-p^{-1})^2p^{2u+2v}-(1-p^{-1})p^{2u+2v+1}.
\end{align*}
Therefore 
\[
\sum_{v=v_u+1}^{+\infty}\frac{{}_c^2\widehat{\Kl}^{(p)}_{p^u,p^v}(\xi,\alpha) \chi(p^{2v})}{(p^{2v})^{1+2s}}=(v_p(\alpha')+1)(1-p^{-1})\sum_{v=v_u+1}^{+\infty} \frac{\chi(p^{2v})}{(p^{2v})^{-1+2s}}= \frac{(v_p(\alpha')+1)(1-p^{-1})\chi^2(p^{v_u+1})}{(p^{2v_u+2})^{-1+2s}(1-\chi^2(p)p^{-2s+1})}.
\]
Finally,
\[
\sum_{v=0}^{v_u}\frac{{}_c^2\widehat{\Kl}^{(p)}_{p^u,p^v}(\xi,\alpha)\chi(p^{u+2v})} {(p^u)^{1+s}(p^{2v})^ {1+2s}}
\]
is a finite sum and hence it defines an entire function.
The first assertion is now proved by adding the cases together.

Now we prove the second assertion. If $v_p(\alpha')=0$, then ${}_c^2\widehat{\Kl}_{p^u,p^v}^{(p)}(\xi,\alpha)=0$ if $u$ is odd or $u\geq 2$, by the above analysis. Moreover, by \eqref{eq:kloostermanuzero}, we have
\[
{}_c^2\widehat{\Kl}_{1,p^v}^{(p)}(\xi,\alpha)= \sum_{w=0}^{\min\{2v,v_p(\alpha')\}}\bm\phi(p^{2v-w})p^w=\bm\phi(p^{2v}).
\]
Therefore,
\[
\prescript{2}{c}D_{\xi,\alpha}^{(p)}(s,\chi)=\sum_{v=0}^{+\infty}\frac{\bm\phi(p^{2v})\chi(p^{2v})} {(p^{2v})^{1+2s}}=\frac{1-\chi^2(p)p^{-2s}}{1-\chi^2(p)p^{1-2s}}.
\]
Hence $A_{\xi,\alpha}^{(p)}(s)=1-\chi^2(p)p^{-2s}$.
\end{proof}

Finally we consider case when $\xi'=\alpha'=0$, which is equivalent to $\xi=\alpha=0$.
\begin{proposition}\label{prop:zerosplit}
If $\xi=\alpha=0$, then
\[
\prescript{2}{c}D_{\xi,\alpha}^{(p)}(s,\chi)=\frac{1-\chi^2(p)p^{-2s}}{(1-\chi^2(p)p^{1-2s}) (1-\chi^2(p)p^{2-2s})}.
\]
\end{proposition}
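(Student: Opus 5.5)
Since $\xi=\alpha=0$ forces $\xi'=\alpha'=0$, I will compute the local series \eqref{eq:kloostermantypeserieslocal} term by term from \autoref{thm:localkloostermansplit}, using $v_p(0)=+\infty$ throughout. For $u$ odd the formula in \autoref{thm:localkloostermansplit}(1) requires $v_p(\xi'\alpha')=2u+2v-2$, which fails when $\xi'\alpha'=0$, so all odd $u$ contribute nothing. Thus only even $u$ survive, as in the inert case \autoref{prop:zeroinert}.

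For $u=0$, \autoref{prop:srtexplicit} gives $S_{r,t}^{(p)}(0)=p^{t-r}$ for $r\le t$. Hence $\Delta S_{w,2v}^{(p)}(0)=\bm\phi(p^{2v-w})$ for $w<2v$, with $\Delta S_{2v,2v}^{(p)}(0)=1$, and $S_{2v-w,2v}^{(p)}(0)=p^{w}$. Therefore ${}^2_c\widehat{\Kl}_{1,p^v}^{(p)}(0,0)=\sum_{w=0}^{2v}\bm\phi(p^{2v-w})p^w$. This sum telescopes to $p^{2v}+2v\,p^{2v}(1-p^{-1})$; it is not simply $p^{2v}$ as in the inert case, so I will compute it carefully. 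For $u\ge 2$ even, $\Delta S_{r,t}^{(p)}(0)=\bm\phi(p^{t-r})$ for $r<t$, so ${}^2_c\widehat{\Kl}_{p^u,p^v}^{(p)}(0,0)=\sum_{w=0}^{2v}\bm\phi(p^{u+2v-w})\bm\phi(p^{u+w})=(2v+1)(1-p^{-1})^2p^{2u+2v}$.

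Next I insert these values into the series, with $\chi(p^{u+2v})=\chi^2(p)^{u/2+v}$. Writing $X=\chi^2(p)p^{1-2s}$ and $Y=\chi^2(p)p^{2-2s}$, the $u=0$ part is $\sum_v (1+2v(1-p^{-1}))X^v$. The $u\ge2$ part factors as $(1-p^{-1})^2\sum_{u\ge2\text{ even}}Y^{u/2}\sum_v(2v+1)X^v$. Both sums are elementary: $\sum_v vX^v=X/(1-X)^2$ and $\sum_v(2v+1)X^v=(1+X)/(1-X)^2$. The final step is to combine them over the common denominator $(1-X)^2(1-Y)$ and simplify the result to the claimed $(1-\chi^2(p)p^{-2s})/((1-X)(1-Y))$.

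The main obstacle is this last algebraic simplification. The double pole $(1-X)^2$ produced by the factor $2v+1$ must cancel against the numerator, and I am not confident that it does. If it does not, I will recheck the evaluation of $\Delta S$ at the boundary index $w=2v$ and the convention in \eqref{eq:kloostermansumsrt2}, since that appears to be where the paper's formulas may need adjusting. As an independent cross-check, I will compute the same sum directly from \eqref{eq:deflocalkloostermandoublecharacter} at $\xi=\alpha=0$: there it counts pairs $(a,b)$ with prescribed valuation of $ab$, weighted by the Legendre symbol. For the second check I will verify the $s$-independent normalization by comparing low-order coefficients, namely $v\le1$ and $u\le2$.
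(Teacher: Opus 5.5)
This is the paper's own route: the same three local evaluations (vanishing for odd $u$, $(1+2v(1-p^{-1}))p^{2v}$ for $u=0$, and $(2v+1)(1-p^{-1})^2p^{2u+2v}$ for even $u\ge 2$), followed by summing the resulting series, and all your values are correct. The simplification you were unsure of does go through, so nothing at $w=2v$ needs adjusting. Put $t=\chi^2(p)p^{-2s}$, so that $X=pt$ and $Y=p^2t$. Then the numerator $(1-X)(1-Y)+2(1-p^{-1})X(1-Y)+(1-p^{-1})^2Y(1+X)$ equals $1-(p+1)t+pt^2=(1-t)(1-X)$. Hence one factor of $1-X$ cancels and you get exactly $(1-t)/\bigl((1-X)(1-Y)\bigr)$.
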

\begin{proof}
If $u$ is odd, then by \autoref{thm:localkloostermansplit} we must have ${}_c^2\widehat{\Kl}_{p^u,p^v}^{(p)}(\xi,\alpha)=0$ as in the previous proposition. It now suffices to consider the case when $u$ is even.

If $u=0$, then we have
\[
{}_c^2\widehat{\Kl}_{1,p^v}^{(p)}(\xi,\alpha)=\sum_{w=0}^{2v}\Delta S_{w,2v}^{(p)}(0)S_{2v-w,2v}^{(p)}(0)=\sum_{w=0}^{2v}p^{2v}(1-p^{-1})+p^{2v}= (2v(1-p^{-1})+1)p^{2v}.
\]
Hence
\begin{align*}
\sum_{v=0}^{+\infty}\frac{{}_c^2\widehat{\Kl}_{1,p^v}^{(p)}(\xi,\alpha)\chi(p^v)} {(p^v)^{1+2s}}&= \sum_{v=0}^{+\infty}\frac{2v(1-p^{-1})\chi(p)^v}{(p^v)^{-1+2s}}+\sum_{v=0}^{+\infty} \frac{\chi(p)^v}{(p^v)^{-1+2s}}\\
&=(1-p^{-1})\frac{2\chi^2(p)p^{-1+2s}} {(\chi^2(p)-p^{2s-1})^2}+ \frac{1}{1-\chi^2(p)p^{1-2s}}.
\end{align*}
If $u\geq 2$, we have
\[
{}_c^2\widehat{\Kl}_{1,p^v}^{(p)}(\xi,\alpha)=\sum_{w=0}^{2v}\Delta S_{w,u+2v}^{(p)}(0)\Delta S_{2v-w,u+2v}^{(p)}(0)=\sum_{w=0}^{2v}(1-p^{-1})^2p^{2u+2v}=(2v+1)(1-p^{-1})^2p^{2u+2v}.
\]
Hence
\begin{align*}
&\sum_{\substack{u=2\\u\text{ even}}}^{+\infty}\sum_{v=0}^{+\infty}\frac{{}_c^2\widehat{\Kl}_{p^u,p^v}^{(p)}(\xi,\alpha)\chi(p^{u+2v})} {(p^u)^{1+s}(p^v)^{1+2s}}= \sum_{\substack{u=2\\u\text{ even}}}^{+\infty}\sum_{v=0}^{+\infty}\frac{(1-p^{-1})^2(2v+1)\chi(p)^{u+2v}} {(p^u)^{-1+s}(p^v)^{-1+2s}} \\
&=(1-p^{-1})^2\frac{\chi^2(p)}{p^{-2+2s}}\frac{1}{1-\chi^2(p)p^{2-2s}} \left(\frac{2\chi^2(p)p^{-1+2s}}{(\chi^2(p)-p^{2s-1})^2}+\frac{1}{1-\chi^2(p)p^{1-2s}}\right).
\end{align*}
The conclusion holds by adding the two cases together, and then do direct computation.
\end{proof}

Finally we give an estimate of the local Kloosterman-type series.

\subsection{Global computations}
In this subsection we analyze the global Kloosterman-type series 
\[
\prescript{2}{c}D^S_{\xi,\alpha}(s,\chi)=\prod_{p\notin S}\prescript{2}{c}D_{\xi,\alpha}^{(p)}(s,\chi).
\]

For any $0\neq d\in \ZZ^S$, we define
\begin{equation}\label{eq:defsxi}
S_d=S\cup\{p\notin S\,|\, v_p(d)\neq 0\}.
\end{equation}

Let $\delta=\alpha^2-4c\xi^2$. We first assume that $\delta\neq 0$. In this case we also have $\xi',\alpha'\neq 0$ if $\legendresymbol{c}{p}=1$. Hence by \autoref{prop:nonzeroinert} and \autoref{prop:bothnonzerosplit} we have
\begin{align*}
{}^2_cD^S_{\xi,\alpha}(s,\chi)&=\prod_{\substack{p\notin S\\p\mid\delta}}\prescript{2}{c}D_{\xi,\alpha}^{(p)}(s,\chi)\times\prod_{\substack{p\notin S\\p\nmid\delta}} \left(1+\frac{\chi(p)}{p^{s}}\legendresymbol{\delta}{p}\right)
=\prod_{\substack{p\notin S\\p\mid\delta}}\prescript{2}{c}D_{\xi,\alpha}^{(p)}(s,\chi)\prod_{p\notin S_\delta}\frac{1-\frac{\chi^2(p)}{p^{2s}}}{1-\frac{\chi(p)}{p^{s}}\legendresymbol {\delta}{p}}\\
&=\prod_{\substack{p\notin S\\p\mid\delta}}\prescript{2}{c}D_{\xi,\alpha}^{(p)}(s,\chi) \frac{L^{S_\delta}(s,\chi \legendresymbol{\delta}{\cdot})}{L^{S_\delta}(2s,\chi^2)}.
\end{align*}

By \autoref{prop:finitesuminert}, \autoref{prop:finitesumsplit} and the property of $L$-functions,
the function is holomorphic for $\Re s>1/2$ except for a possible pole at $s=1$, which occurs if and only if $\chi=\legendresymbol{\delta}{\cdot}$.

Now let's assume that $\delta=0$. If $c$ is not a square, then we must have $\xi=\alpha=0$. Hence by \autoref{prop:zeroinert} and \autoref{prop:zerosplit} we obtain
\[
{}^2_cD^S_{\xi,\alpha}(s,\chi)=\prod_{p\notin S}\frac{1-\chi^2(p)p^{-2s}}{(1-\chi^2(p)p^{1-2s}) (1-\chi^2(p)p^{2-2s})}=\frac{L^S(2s-1,\chi^2)L^S(2s-2,\chi^2)}{L^S(2s,\chi^2)}.
\]
If $c$ is a square, then $\legendresymbol{c}{p}=1$ for all $p\notin S$ and
\begin{equation}\label{eq:defxiprime}
\xi'=\frac{\xi}{2}+\frac{\alpha}{4\sqrt{c}}\qquad\text{and}\qquad \alpha'=\frac{\xi}{2}-\frac{\alpha}{4\sqrt{c}}.
\end{equation}
both exist in $\ZZ_p$. In this case we have $\delta=-16\xi'\alpha'=0$. Hence $\xi'=0$ or $\alpha'=0$. If $\xi'=\alpha'=0$, then $\xi=\alpha=0$ and the computation is the same as above. Now by symmetry we may assume that $\xi'=0$ and $\alpha'\neq 0$. By \autoref{prop:onezerosplit} we then have
\begin{align*}
\prescript{2}{c}D^S_{\xi,\alpha}(s,\chi)&=\prod_{p\in S_{\alpha'}-S}\frac{\prescript{}{c}{A}_{\xi,\alpha}^{(p)}(s,\chi)}{1-\chi^2(p)p^{1-2s}}\times \prod_{p\notin S_{\alpha'}}\frac{1-\chi^2(p)p^{-2s}}{1-\chi^2(p)p^{1-2s}}=\prod_{p\in S_{\alpha'}-S}\prescript{}{c}{A}_{\xi,\alpha}^{(p)}(s,\chi)\frac{L^{S}(2s-1,\chi^2)}{L^{S_{\alpha'}}(2s,\chi^2)}.
\end{align*}

In sum, we have the following theorem:
\begin{theorem}\label{thm:kloostermantypeseries}
Let $\xi,\alpha,c\in \ZZ^S$ and $p\notin S$ be a prime. We define
$S_\xi$ as in \eqref{eq:defsxi}. Let $\delta=\alpha^2-4c\xi^2$. Let $\xi'$ and $\alpha'$ be as in \eqref{eq:defxiprime} if $c$ is a square.
Then the value of the series ${}_c^2D_{\xi,\alpha}^S(s,\chi)$ is given by the following:
\begin{enumerate}[itemsep=0pt,parsep=0pt,topsep=0pt, leftmargin=0pt,labelsep=2.5pt,itemindent=15pt,label=\upshape{(\arabic*)}]
  \item\label{item:610firstterm} Suppose that $\delta\neq 0$. Then 
  \[
  \prescript{2}{c}D_{\xi,\alpha}^{S}(s,\chi)=E(s) \frac{L^{S_\delta}(s,\chi \legendresymbol{\delta}{\cdot})}{L^{S_\delta}(2s,\chi^2)}
  \]
  where 
\[
E(s)=\prod_{\substack{p\notin S\\p\mid\delta}}\prescript{2}{c}D_{\xi,\alpha}^{(p)}(s,\chi) 
\]
is an entire function (depending on $\xi,\alpha$ and $\chi$) that can be explicitly computed.
\item\label{item:610secondterm} Suppose that $\xi=\alpha=0$. Then 
  \[
  \prescript{2}{c}D_{\xi,\alpha}^{S}(s,\chi)=\frac{L^S(2s-1,\chi^2)L^S(2s-2,\chi^2)}{L^S(2s,\chi^2)}.
  \]
\item\label{item:610thirdterm} Suppose that $\delta=0$ while $(\xi,\alpha)\neq (0,0)$. Then $c$ is a square and
  \[
  \prescript{2}{c}D_{\xi,\alpha}^{S}(s,\chi)=E(s)\frac{L^{S}(2s-1,\chi^2)}{L^{S_{\kappa}}(2s,\chi^2)},
  \]
  where $\kappa\in \{\xi',\alpha'\}$ is nonzero, and 
  \[
  E(s)=\prod_{\substack{p\notin S\\ p\mid \kappa}}\prescript{}{c}{A}_{\xi,\alpha}^{(p)}(s,\chi)
  \] 
  is an entire function (depending on $\xi,\alpha$ and $\chi$) that can be explicitly computed.
\end{enumerate}
In particular, $\prescript{2}{c}D_{\xi,\alpha}^{S}(s,\chi)$ admits a meromorphic continuation to the complex plane for any $\xi,\alpha$ and $\chi$.
\end{theorem}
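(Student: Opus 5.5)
The plan is to assemble the global series from the local computations, using the Euler product of \autoref{prop:splitkloostermanseries}: $\prescript{2}{c}D^S_{\xi,\alpha}(s,\chi)=\prod_{p\notin S}\prescript{2}{c}D^{(p)}_{\xi,\alpha}(s,\chi)$. This identity holds in a right half-plane of absolute convergence, where the trivial bound $|\prescript{2}{c}{\widehat{\Kl}}_{p^u,p^v}^{(p)}|\le p^{2u+4v}$ suffices. I then split according to $\delta=\alpha^2-4c\xi^2$. In each case, at all but finitely many primes the local factor has a closed form, and at the remaining primes it is an entire function. The product of the closed forms is recognised as a ratio of partial Dirichlet $L$-functions. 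Meromorphic continuation then follows from that of $L^{S'}(s,\psi)$ together with the entirety of the finite correction product.

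\emph{Case (1), $\delta\neq0$.} For $p\notin S$ with $p\nmid\delta$, I apply \autoref{prop:nonzeroinert} when $\legendresymbol{c}{p}=-1$ and \autoref{prop:bothnonzerosplit} when $\legendresymbol{c}{p}=1$. For the latter I note that $\delta=-16c\xi'\alpha'$ in $\ZZ_p$, so $\xi',\alpha'\neq0$, and $p\nmid\delta$ gives $v_p(\xi')=v_p(\alpha')=0$. The case $p\mid c$ cannot occur, since $c$ is a unit outside $S$. Each such factor equals $1+\chi(p)\legendresymbol{\delta}{p}p^{-s}=(1-\chi^2(p)p^{-2s})/(1-\chi(p)\legendresymbol{\delta}{p}p^{-s})$, using $\legendresymbol{\delta}{p}^2=1$. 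Taking the product over $p\notin S_\delta$ gives $L^{S_\delta}(s,\chi\legendresymbol{\delta}{\cdot})/L^{S_\delta}(2s,\chi^2)$. The finitely many $p\mid\delta$ contribute entire factors by \autoref{prop:finitesuminert} and \autoref{prop:finitesumsplit}, and their product is $E(s)$.

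\emph{Case (2), $\xi=\alpha=0$.} Here \autoref{prop:zeroinert} and \autoref{prop:zerosplit} give the same factor at every $p\notin S$, whatever the value of $\legendresymbol{c}{p}$. Multiplying over $p\notin S$ yields $L^S(2s-1,\chi^2)L^S(2s-2,\chi^2)/L^S(2s,\chi^2)$ directly.

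\emph{Case (3), $\delta=0$ with $(\xi,\alpha)\neq(0,0)$.} First I show that $c$ is a square. If $\xi=0$ then $\alpha^2=0$, which contradicts $(\xi,\alpha)\neq(0,0)$. So $\xi\neq0$, and $c=(\alpha/2\xi)^2$ is a square in $\QQ$, hence $\legendresymbol{c}{p}=1$ for every $p\notin S$. One may take $\sqrt c=\alpha/(2\xi)$ globally, and then exactly one of $\xi',\alpha'$, call it $\kappa$, is a nonzero element of $\ZZ^S$. For $p\notin S_\kappa$ (so $v_p(\kappa)=0$), \autoref{prop:onezerosplit} gives the factor $(1-\chi^2(p)p^{-2s})/(1-\chi^2(p)p^{1-2s})$. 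For the finitely many $p\mid\kappa$, $p\notin S$, the same proposition gives $A^{(p)}(s)/(1-\chi^2(p)p^{1-2s})$ with $A^{(p)}$ entire. Collecting terms, the denominators combine over all $p\notin S$ into $L^S(2s-1,\chi^2)$, the numerators over $p\notin S_\kappa$ combine into $1/L^{S_\kappa}(2s,\chi^2)$, and the remaining factors form $E(s)$.

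I expect the main obstacle to be the bookkeeping in case (3) rather than any single estimate. One must check that the choice of $\sqrt c$, which is local in \autoref{prop:kloostermanlocalsplit}, can be made consistently so that the same one of $\xi',\alpha'$ vanishes at every $p$. This is handled by the global choice of $\sqrt c$ above, together with the fact that swapping $\sqrt c\mapsto-\sqrt c$ interchanges $\xi'$ and $\alpha'$, which is harmless by the symmetry used in \autoref{prop:onezerosplit}. A secondary point is to justify the Euler product manipulation as an identity of absolutely convergent products in some half-plane before continuing. This follows because all but finitely many local factors are $1+O(p^{-\Re s})$ or of the explicit geometric forms above.
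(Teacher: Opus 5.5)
Your proposal is correct and follows the paper's own argument. You factor the series via \autoref{prop:splitkloostermanseries}, use the closed-form local factors at the primes not dividing $\delta$ (respectively $\kappa$) from \autoref{prop:nonzeroinert}, \autoref{prop:bothnonzerosplit}, \autoref{prop:zeroinert}, \autoref{prop:zerosplit} and \autoref{prop:onezerosplit}, and absorb the finitely many remaining entire local factors into $E(s)$. Your global choice $\sqrt{c}=\alpha/(2\xi)$, which gives $\xi'=\xi$ and $\alpha'=0$ at every $p\notin S$, together with your remark on absolute convergence in a right half-plane, simply makes explicit what the paper does silently under ``by symmetry''.
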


\section{Bounds of Kloosterman-type series}
In this subsection we will prove that the Kloosterman-type series $_c^2D_{\xi,\alpha}^S(s,\chi)$ is of moderate growth. More precisely, we want to prove that
\begin{theorem}\label{thm:kloostermanmoderate}
For $\Re s>1/2$, there exist constants $A,B,C>0$ such that
\[
_c^2D_{\xi,\alpha}^S(s,\chi)\ll (1+|s|)^A(1+|\xi|_\infty)^B\prod_{i=1}^{r}(1+|\xi|_{q_i})^B (1+|\alpha|_\infty)^C\prod_{i=1}^{r}(1+|\alpha|_{q_i})^C.
\]
The implied constant depends only on $c$, $\chi$, $A$, $B$, and $C$. Moreover, we may take $A=1/2$.
\end{theorem}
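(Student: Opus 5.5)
We use the explicit factorization in \autoref{thm:kloostermantypeseries} and bound each factor separately. The cases are \ref{item:610firstterm} $\delta\neq 0$, \ref{item:610secondterm} $\xi=\alpha=0$, and \ref{item:610thirdterm} $\delta=0$, $(\xi,\alpha)\neq(0,0)$. In every case the series is a product of an $L$-quotient and a finite Euler product $E(s)$ over primes $p\notin S$ dividing $\delta$ (resp.\ $\kappa$). The $L$-quotient carries the $s$-dependence; $E(s)$ carries the polynomial dependence on $\xi,\alpha$.

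\textbf{The $L$-quotient.} For $\Re s>1/2$ we bound
\[
\frac{L^{S_\delta}(s,\chi(\tfrac{\delta}{\cdot}))}{L^{S_\delta}(2s,\chi^2)}.
\]
\begin{itemize}
\item Denominator: $\Re 2s>1$, so $1/L^{S_\delta}(2s,\chi^2)$ is given by an absolutely convergent Euler product. Removing the finitely many Euler factors at $p\mid\delta$ costs at most $\prod_{p\mid\delta}(1+p^{-1})\ll \bm d(\delta)$. We note that $1/L(2s,\chi^2)$ is not uniformly bounded as $\Re 2s\to 1$. Since the statement is for fixed $\Re s>1/2$, we allow the implied constant to depend on $\Re s$, or we use the standard bound $1/L(1+it)\ll\log(2+|t|)$.
\item Numerator: the convexity bound for $L(s,\chi(\tfrac{\delta}{\cdot}))$ has conductor $\ll|\delta|$, giving
\[
L\ll\big((1+|s|)|\delta_0|\big)^{(1-\Re s)/2+\varepsilon}
\]
for $1/2<\Re s\leq 1$, and a bound of $O(1)$ for $\Re s>1+\varepsilon$. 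Since $(1-\Re s)/2<1/4$, this stays inside $(1+|s|)^{1/2}$, and this is where $A=1/2$ comes from.
\item Pole: the possible pole at $s=1$ when $\chi(\tfrac{\delta}{\cdot})$ is trivial has to be handled separately. We would either state the bound away from $s=1$, or multiply by $(s-1)/s$, which does not change the growth rate.
\item Correcting from $S$ to $S_\delta$: the missing local factors $(1-\chi(p)(\tfrac{\delta}{p})p^{-s})$ at $p\mid\delta$ contribute at most $2^{\omega(\delta)}\ll|\delta|^\varepsilon$.
\end{itemize}
Cases \ref{item:610secondterm} and \ref{item:610thirdterm} are the same, using $\zeta$-type bounds for $L^S(2s-1,\chi^2)$ and $L^S(2s-2,\chi^2)$. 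These have no $\xi,\alpha$ dependence apart from $E(s)$ and $S_\kappa$, and they have poles at $2s-1=1$ and $2s-2=1$ that must be excluded in the same way.

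\textbf{The finite Euler product $E(s)$ (main obstacle).} We need to bound $E(s)=\prod_{p\mid\delta}{}^2_cD^{(p)}_{\xi,\alpha}(s,\chi)$ polynomially, uniformly in $p$. Fix $p\mid\delta$, $p\notin S$, and set $m=v_p(\delta)$.
\begin{itemize}
\item Inert case $(\tfrac{c}{p})=-1$: by \autoref{thm:localkloostermaninert}, only terms with $u+v\leq m/2+1$ survive. Each term has size at most
\[
p^{2u+2v}\,p^{-u(1+\sigma)-v(1+2\sigma)}\leq p^{u+v}.
\]
Summing gives $\ll (m+2)^2p^{m/2+1}$.
\item Split case $(\tfrac{c}{p})=1$: by \autoref{thm:localkloostermansplit} and \autoref{lem:srtzero}, nonvanishing forces $u+2v\ll v_p(\xi')+v_p(\alpha')\ll m$. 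Each $\Delta S$ is bounded by a power of $p$ of size at most $p^{u+2v}$, and the number of $w$ is $O(v)$. So the local factor is $\ll (m+1)^3p^{O(m)}$.
\end{itemize}
Taking the product over $p\mid\delta$ gives
\[
E(s)\ll |\delta^{(q)}|^{B'}\bm d(\delta)^3\ll |\delta^{(q)}|^{B'+\varepsilon}.
\]
Case \ref{item:610thirdterm} works the same way, with the entire factors $A^{(p)}$ from \autoref{prop:onezerosplit} controlled through the explicit formula for $\Delta S$.

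\textbf{Converting to the norms in the statement.} Write $\delta=\alpha^2-4c\xi^2$. The part of $\delta$ prime to $S$ satisfies
\[
|\delta^{(q)}|=|\delta|_\infty\prod_i|\delta|_{q_i}\ll(1+|\alpha|_\infty)^2(1+|\xi|_\infty)^2\prod_i(1+|\alpha|_{q_i})^2(1+|\xi|_{q_i})^2,
\]
using $|\alpha^2-4c\xi^2|_{q_i}\leq\max(|\alpha|_{q_i}^2,|c\xi^2|_{q_i})$. The same estimate handles $\kappa=\xi/2\pm\alpha/(4\sqrt c)$ after clearing the fixed denominator. This yields the claimed bound with $A=1/2$ and suitable $B,C$.

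The delicate points are:
\begin{itemize}
\item making the local bound uniform in $p$ with an exponent linear in $v_p(\delta)$, so that no $p$-dependent constant accumulates;
\item handling the pole at $s=1$ and the behaviour near $\Re s=1/2$ honestly, since the theorem as stated needs the pole excluded or normalized.
\end{itemize}
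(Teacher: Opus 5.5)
Your route is the paper's. You factor via \autoref{thm:kloostermantypeseries} and bound $1/L(2s,\chi^2)$ by absolute convergence. You bound the numerator $L$-function in the conductor and $t$-aspects; you use convexity where the paper uses the Weyl bound, and either suffices. You bound $E(s)$ by local estimates uniform in $p$ with exponent linear in $v_p$, as the paper does in \autoref{lem:kloostermanlocalestimate} and \autoref{lem:kloostermanlocalestimatesplit}. Finally you convert $|\delta^{(q)}|$ into $S$-adic norms as in \eqref{eq:estimatedeltaq}.

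Your caveats are justified. The paper applies the Weyl bound even when $\chi\legendresymbol{\delta}{\cdot}$ or $\chi^2$ is principal, so the stated bound fails near $s=1$. Its constants also depend on $\Re s$. Both issues disappear in the application, \autoref{thm:contributionxinot0}, because the bound is only used on the line $\Re s=\frac12+\varepsilon$.

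\textbf{The step that fails.} You claim case (2), $\xi=\alpha=0$, works ``the same'' as case (3) with $A=\frac12$. There
${}^2_cD^S_{0,0}(s,\chi)=L^S(2s-1,\chi^2)L^S(2s-2,\chi^2)/L^S(2s,\chi^2)$.
For $\frac12<\Re s<1$ the factor $L^S(2s-2,\chi^2)$ lies to the left of the critical strip. By the functional equation it has size $\asymp(1+|t|)^{5/2-2\Re s}$. This is a genuine order of growth, not just an upper bound, because the dual value at $3-2\Re s>1$ is bounded below. Since $5/2-2\Re s>\frac12$ whenever $\Re s<1$, $A=\frac12$ is not available in that case.

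The paper avoids this because the theorem is only needed for the $\xi\neq 0$ terms. Its proof opens by noting that $\xi\neq0$ rules out case (2). You should do the same, or accept a larger $A$ there. Relatedly, in case (1) convexity gives an exponent below $\frac14$. The value $A=\frac12$ is forced by $L^S(2s-1,\chi^2)$ in case (3), not by case (1).

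\textbf{A smaller point on case (3).} The local series at $p\mid\kappa$ is not a finite sum: for each admissible $u$ the $v$-sum is infinite. So your term-counting from case (1) does not carry over verbatim. You need the geometric tail $\sum_v p^{v(1-2\Re s)}\ll_{\Re s}1$, uniform in $p$. This is what gives the paper its bound $(v_p(\alpha')+1)^2$ for the factor $A^{(p)}$.
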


In this subsection, we will use the notation $\delta=\alpha^2-4c \xi^2$. 

Since $\xi\neq 0$, \autoref{item:610secondterm} of \autoref{thm:kloostermantypeseries} cannot occur. Hence we only need to consider \autoref{item:610firstterm} and \autoref{item:610thirdterm} of this theorem.

\subsection{Case $\delta\neq 0$} We assume that  \autoref{item:610firstterm}  of \autoref{thm:kloostermantypeseries}  holds, that is, $\delta\neq 0$.
\begin{lemma}\label{lem:kloostermantrivial}
We have
\[
|\prescript{2}{c}{\widehat{\Kl}}_{p^u,p^v}^{(p)}(\xi,\alpha)|\leq (2v+1)p^{2u+2v}.
\]
\end{lemma}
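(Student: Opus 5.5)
We read the bound off the explicit formulas already obtained, treating the inert and split cases separately. If $\legendresymbol{c}{p}=-1$, \autoref{thm:localkloostermaninert} gives the value exactly. For $u$ odd the sum is either $0$ or $\pm p^{2u+2v-1}$. For $u=0$ it is $0$ or $p^{2v}$. For even $u\geq 2$ it is one of $0$, $-p^{2u+2v-2}$, or $p^{2u+2v}-p^{2u+2v-2}$. Every one of these has absolute value at most $p^{2u+2v}\leq(2v+1)p^{2u+2v}$, so the inert case holds even without the factor $2v+1$.

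Now let $\legendresymbol{c}{p}=1$ and use \autoref{thm:localkloostermansplit}. For $u$ odd the sum is $0$ or $\pm p^{2u+2v-1}$, which is again within the bound. For even $u$ the sum is a sum over $w\in\{0,\dots,2v\}$, which has $2v+1$ terms. It therefore suffices to bound each summand by $p^{2u+2v}$.

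By \autoref{prop:srtexplicit}, $|S^{(p)}_{r,t}(\xi')|\leq p^{t-r}$ whenever $r\leq t$. Since $\Delta S_{r,t}^{(p)}$ is a sum over the $a\bmod p^t$ with $p^r\parallel a$, we get $|\Delta S_{r,t}^{(p)}|\leq \#\{a \bmod p^t: p^r\mid a\}=p^{t-r}$ directly, with no need for the difference formula.

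For $u=0$ the $w$-th summand is therefore at most $p^{2v-w}\cdot p^{2v-(2v-w)}=p^{2v}$. For $u\geq 2$ it is at most $p^{u+2v-w}\cdot p^{u+2v-(2v-w)}=p^{2u+2v}$. Summing over the $2v+1$ values of $w$ gives the claim.

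The only point needing care is the split case with $u$ even, where cancellation is ignored and the factor $2v+1$ genuinely appears. Since we use the trivial counting bound for $\Delta S$, nothing delicate arises. One should just check that the linear change of variables in \autoref{prop:kloostermanlocalsplit} is a bijection modulo $p^{u+2v}$; it is, because $p$ is odd and $\sqrt{c}\in\ZZ_p^\times$. This guarantees that the formulas in \autoref{thm:localkloostermansplit} are exact, so the bounds above apply to the original sum.
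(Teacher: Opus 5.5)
Your proposal is correct and takes the same approach as the paper, whose proof simply reads the bound off \autoref{thm:localkloostermaninert} and \autoref{thm:localkloostermansplit}; you supply the details the paper leaves out. In particular, you bound $\Delta S_{r,t}^{(p)}$ by counting the residues $a$ with $p^r\parallel a$ rather than through the difference $S_{r,t}^{(p)}-S_{r+1,t}^{(p)}$, and this is what keeps each of the $2v+1$ summands below $p^{2u+2v}$.
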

\begin{proof}
This follows directly from \autoref{thm:localkloostermaninert} and \autoref{thm:localkloostermansplit}.
\end{proof}

\begin{lemma}\label{lem:kloostermanlocalestimate}
Suppose that $\delta\neq 0$ and $\Re s>1/2$. Then we have
\[
|{}_c^2D_{\xi,\alpha}^{(p)}(s,\chi)|\ll p^{2+v_p(\delta)}.
\]
\end{lemma}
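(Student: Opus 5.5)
The plan is to bound the local series \eqref{eq:kloostermantypeserieslocal} term by term. I will combine the trivial bound of \autoref{lem:kloostermantrivial} with the support restrictions from \autoref{thm:localkloostermaninert} and \autoref{thm:localkloostermansplit}. Together these show that only a bounded range of $(u,v)$, controlled by $n:=v_p(\delta)$, can contribute. Write $\sigma=\Re s>1/2$. By \autoref{lem:kloostermantrivial}, each nonzero term is bounded by
\[
(2v+1)\,p^{2u+2v-u(1+\sigma)-v(1+2\sigma)}=(2v+1)\,p^{u(1-\sigma)+v(1-2\sigma)}\leq (2v+1)\,p^{u/2}.
\]
It therefore suffices to show that the terms with nonvanishing transformed Kloosterman sum satisfy $u\leq n+2$ and $v\leq n+2$.

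\emph{Inert case $\legendresymbol{c}{p}=-1$.} Since $c$ is not a square modulo $p$, we have $v_p(\alpha^2-4c\xi^2)=2\min\{v_p(\xi),v_p(\alpha)\}$. By \autoref{thm:localkloostermaninert}, a nonzero term requires $v_p(\xi),v_p(\alpha)\geq u+v-1$ in all cases ($u$ odd, $u=0$, $u\geq 2$ even). Hence $2(u+v-1)\leq n$, i.e.\ $u+v\leq n/2+1$.

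\emph{Split case $\legendresymbol{c}{p}=1$.} A direct computation from \eqref{eq:defxiprime} gives $\xi'\alpha'=\xi^2/4-\alpha^2/(16c)$, so $\delta=-16c\,\xi'\alpha'$. Since $p\notin S$ is odd and $c$ is a $p$-unit, $v_p(\xi')+v_p(\alpha')=n$, and both are finite because $\delta\neq0$. For $u$ odd, \autoref{thm:localkloostermansplit} forces $2u+2v-2=n$. For $u$ even, \autoref{lem:srtzero} applied to the two factors $\Delta S$ (or $S$) gives some $w\in[0,2v]$ with $u+w-1\leq v_p(\xi')$ and $u+2v-w-1\leq v_p(\alpha')$, exactly as in the proof of \autoref{prop:finitesumsplit}. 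Adding these yields $2u+2v-2\leq n$. In either case $u+v\leq n/2+1$.

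Consequently, at most $O((n+2)^2)$ pairs $(u,v)$ contribute, and for each of them $2v+1\leq n+3$ and $p^{u/2}\leq p^{1+n/2}$. This gives
\[
|{}_c^2D_{\xi,\alpha}^{(p)}(s,\chi)|\ll (n+3)^3\,p^{1+n/2}.
\]
Since $p\geq3$, we have $(n+3)^3\ll p^{1+n/2}$ with an absolute implied constant, because $(n+3)^3/3^{n/2}$ is bounded in $n$. The bound $\ll p^{2+v_p(\delta)}$ follows.

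The only step needing care is the split even case. There one must check that the vanishing criteria in \autoref{lem:srtzero} really bound both $u$ and $v$ (not only $u+2v$), and that the identity $v_p(\xi')+v_p(\alpha')=v_p(\delta)$ holds. I expect the latter to be the main point to verify, using that $2$ and $c$ are units at $p\notin S$.
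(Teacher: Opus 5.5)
Your proof is correct and follows essentially the paper's route: you restrict the contributing $(u,v)$ using the support conditions of \autoref{thm:localkloostermaninert} and \autoref{thm:localkloostermansplit}, then bound each surviving term with \autoref{lem:kloostermantrivial}. The differences are only in bookkeeping. You obtain the sharper range $u+v\le v_p(\delta)/2+1$ and absorb the factor $(n+3)^3$ into $p^{1+n/2}$, which makes your implied constant uniform in $\Re s>1/2$. The paper instead bounds $u$ and $v$ separately and controls the $v$-sum through the geometric factor $p^{-v(\Re s-1/2)}$, so its constant depends on $\Re s$.
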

\begin{proof}
By definition we have
\[
{}_c^2D_{\xi,\alpha}^{(p)}(s,\chi)=\sum_{u,v=0}^{+\infty} \frac{\prescript{2}{c}{\widehat{\Kl}}_{p^u,p^v}^{(p)}(\xi,\alpha)\chi(p^{u+2v})}{(p^u) ^{1+s}(p^u)^{1+2s}}.
\]

We consider the case $\legendresymbol{c}{p}=1$ and $\legendresymbol{c}{p}=-1$ separately.

\underline{\emph{Case 1:}}\ \  $\legendresymbol{c}{p}=-1$.

By \autoref{thm:localkloostermaninert} we find that $\prescript{2}{c}{\widehat{\Kl}}_{p^u,p^v}^{(p)}(\xi,\alpha)\neq 0$ only if $v_p(\xi)\geq u+v-1$ and $v_p(\alpha)\geq u+v-1$.

Hence we have 
\[
\max\{u,v\}\leq 1+\min\{v_p(\xi),v_p(\alpha)\}\leq 1+v_p(\delta).
\]
if $\alpha\neq 0$ and $\max\{u,v\}\leq 1+v_p(\xi)$ if $\alpha=0$.
Hence by \autoref{lem:kloostermantrivial} we obtain
\begin{align*}
|{}_c^2D_{\xi,\alpha}^{(p)}(s,\chi)|&\leq \sum_{u=0}^{1+v_p(\delta)} \sum_{v=0}^{1+v_p(\delta)}\frac{|\prescript{2}{c}{\widehat{\Kl}}_{p^u,p^v}^{(p)} (\xi,\alpha)|}{(p^u)^{1+\Re s}(p^v)^{1+2\Re s}}\leq \sum_{u=0}^{1+v_p(\delta)} \sum_{v=0}^{1+v_p(\delta)}\frac{(2v+1)p^{2u+2v}}{p^{(3/2+\varepsilon) u+(2+\varepsilon)v}}\\
&=\sum_{u=0}^{1+v_p(\delta)}p^{u(1/2-\varepsilon)} \sum_{v=0}^{1+v_p(\delta)}\frac{2v+1}{p^{v\varepsilon}}
\end{align*}
for $\alpha\neq 0$ and similarly
\[
|{}_c^2D_{\xi,\alpha}^{(p)}(s,\chi)|\leq \sum_{u=0}^{1+v_p(\xi)}p^{u(1/2-\varepsilon)} \sum_{v=0}^{1+v_p(\xi)}\frac{2v+1}{p^{v\varepsilon}}
\]
if $\alpha=0$,
where $\varepsilon=\Re s-1/2>0$. Since $\varepsilon>0$, the sum over $v$ is bounded and is independent of $p$. Since $v_p(\delta)=2v_p(\xi)$ if $\alpha=0$, we obtain
\[
|{}_c^2D_{\xi,\alpha}^{(p)}(s,\chi)|\ll \sum_{u=0}^{1+v_p(\delta)}p^{u/2}\ll p^{2+v_p(\delta)}. 
\]
for $\alpha\neq 0$ and
\[
|{}_c^2D_{\xi,\alpha}^{(p)}(s,\chi)|\ll \sum_{u=0}^{1+v_p(\delta)}p^{u/2}\ll p^{2+v_p(\delta)}
\]
for $\alpha=0$.

\underline{\emph{Case 2:}}\ \  $\legendresymbol{c}{p}=1$.

Note that we have $\delta=-16c\xi'\alpha'\neq 0$ by the definition \eqref{eq:defxiprime}. Hence by the proof of \autoref{prop:finitesumsplit}, we know that $\prescript{2}{c}{\widehat{\Kl}}_{p^u,p^v}^{(p)}(\xi,\alpha)\neq 0$ only if 
\[
u+2v-2\leq v_p(\xi')+v_p(\alpha')=v_p(\xi'\alpha')=v_p(\delta).
\]
Therefore, $u\leq 2+v_p(\delta)$ and $v\leq 1+v_p(\delta)/2$. Hence by \autoref{lem:kloostermantrivial}, we have
\begin{align*}
|{}_c^2D_{\xi,\alpha}^{(p)}(s,\chi)|&\leq \sum_{u=0}^{2+v_p(\delta)} \sum_{v=0}^{1+\lfloor v_p(\delta)/2\rfloor}\frac{|\prescript{2}{c}{\widehat{\Kl}}_{p^u,p^v}^{(p)} (\xi,\alpha)|}{(p^u)^{1+\Re s}(p^v)^{1+2\Re s}}\leq \sum_{u=0}^{2+v_p(\delta)} \sum_{v=0}^{1+\lfloor v_p(\delta)/2\rfloor}\frac{(2v+1)p^{2u+2v}}{p^{(3/2+\varepsilon) u+(2+\varepsilon)v}}\\
&=\sum_{u=0}^{2+v_p(\delta)}p^{u(1/2-\varepsilon)} \sum_{v=0}^{1+\lfloor v_p(\delta)/2\rfloor}\frac{2v+1}{p^{v\varepsilon}}\ll p^{1+(1/2-\varepsilon)(2+v_p(\delta))}\ll p^{2+v_p(\delta)}
\end{align*}
since the sum over $v$ is bounded and is independent of $p$.
\end{proof} 

\begin{proposition}\label{prop:kloostermansumbound}
If $\delta\neq 0$ and $\Re s>1/2$, then
\[
E(s)=\prod_{\substack{p\notin S\\p\mid\delta}}\prescript{2}{c}D_{\xi,\alpha}^{(p)}(s,\chi),
\]
defined in \autoref{item:610firstterm} of \autoref{thm:kloostermantypeseries}, satisfies
\[
E(s)\ll \prod_{v\in S}(1+|\xi|_v)^{6+\varepsilon}(1+|\alpha|_v)^{6+\varepsilon},
\]
for any $\varepsilon>0$,
where the implied constant depends only on $c$, $\varepsilon$, $\Re s$ and $\chi$.
\end{proposition}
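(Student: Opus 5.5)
The plan is to bound $E(s)$ prime by prime with \autoref{lem:kloostermanlocalestimate} and then translate the resulting power of $\delta$ into the $S$-adic sizes of $\xi$ and $\alpha$. For every $p\notin S$ with $p\mid\delta$ and $\Re s>1/2$, \autoref{lem:kloostermanlocalestimate} gives $|\prescript{2}{c}D_{\xi,\alpha}^{(p)}(s,\chi)|\leq C p^{2+v_p(\delta)}$. Here $C$ depends only on $\Re s$ (through $\varepsilon=\Re s-1/2$), $c$ and $\chi$, and not on $p$. Taking the product over the primes $p\notin S$ dividing $\delta$, we obtain
\[
|E(s)|\leq C^{\omega(\delta^{(q)})}\prod_{\substack{p\notin S\\ p\mid\delta}}p^{2}\,p^{v_p(\delta)}\leq C^{\omega(\delta^{(q)})}(\delta^{(q)})^{3},
\]
where $\delta^{(q)}=\prod_{p\notin S}p^{v_p(\delta)}$ and $\omega$ counts distinct prime factors. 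The inequality uses that the radical of $\delta^{(q)}$ divides $\delta^{(q)}$.

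The factor $C^{\omega(\delta^{(q)})}$ is harmless. Since $\omega(m)\ll \log m/\log\log m$, we have $C^{\omega(m)}\ll_{\varepsilon} m^{\varepsilon}$, just as for the divisor bound. This leaves the bound $E(s)\ll_\varepsilon (\delta^{(q)})^{3+\varepsilon}$.

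It remains to control $\delta^{(q)}$ by the $S$-adic sizes of $\xi$ and $\alpha$. Because $\delta\in\ZZ^S$ is nonzero, the product formula gives $\delta^{(q)}=|\delta|_\infty\prod_{i=1}^r|\delta|_{q_i}$. At each $v\in S$ we have $|\delta|_v=|\alpha^2-4c\xi^2|_v\ll_c \max(|\alpha|_v,|\xi|_v)^2\leq (1+|\alpha|_v)^2(1+|\xi|_v)^2$, with the ultrametric inequality at $q_i$ and the triangle inequality at $\infty$. Hence $\delta^{(q)}\ll_c\prod_{v\in S}(1+|\xi|_v)^2(1+|\alpha|_v)^2$. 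Raising to the power $3+\varepsilon$ yields the exponent $6+2\varepsilon$, and renaming $\varepsilon$ gives the claimed bound.

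The main point to watch is the uniformity of the constant in \autoref{lem:kloostermanlocalestimate}. If one simply wrote the bound as $\ll p^{2+v_p(\delta)}$ and multiplied, the constants would compound over the primes dividing $\delta$. I will therefore check that the proof of that lemma gives an explicit absolute constant, depending only on $\Re s$, through the bounded sum $\sum_v(2v+1)p^{-v\varepsilon}\leq\sum_v(2v+1)2^{-v\varepsilon}$. That constant is then absorbed by the $m^{\varepsilon}$ estimate above. Everything else is bookkeeping.
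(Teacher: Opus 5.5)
Your proposal is correct and follows the paper's proof essentially step for step: you apply \autoref{lem:kloostermanlocalestimate} at each $p\mid\delta$, absorb the product of the local constants into $(\delta^{(q)})^{\varepsilon}$, and use $\prod_{p\mid\delta,\,p\notin S}p^{2+v_p(\delta)}\le(\delta^{(q)})^3$. You then bound $\delta^{(q)}=\prod_{v\in S}|\delta|_v$ via $|\delta|_v\ll_c(1+|\xi|_v)^2(1+|\alpha|_v)^2$, exactly as the paper does, and your explicit check that the local constant is uniform in $p$ (depending only on $\Re s$) is what the paper uses implicitly when it writes $\prod_{p\mid a}O(1)\ll_\varepsilon|a^{(q)}|^{\varepsilon}$.
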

\begin{proof}
By \autoref{lem:kloostermanlocalestimate} we have
\[
  E(s)\ll\prod_{\substack{p\notin S\\p\mid\delta}}O(1)p^{2+v_p(\delta)}.
\]
We have (see the proof of \cite[Corollary B.3]{cheng2025b})
\[
\prod_{p\mid a}O(1)\ll_\varepsilon |a^{(q)}|^\varepsilon
\]
and
\[
\prod_{p\mid a}p^{2+v_p(a)}=a^{(q)}\left(\prod_{p\mid a}p\right)^2\leq |a^{(q)}|^3.
\]
Hence know that for any $\varepsilon>0$,
\[
E(s)\ll|\delta^{(q)}|^{3+\varepsilon}.
\]

Since $\delta=\alpha^2-4c\xi^2$, we obtain
\[
|\delta|_v\ll_c |\alpha|_v^2+|\xi|_v^2
\]
for any $v\in S$. Therefore
\begin{equation}\label{eq:estimatedeltaq}
|\delta^{(q)}|=\prod_{v\in S}|\delta|_v\ll \prod_{v\in S}(|\alpha|_v^2+|\xi|_v^2)\ll \prod_{v\in S}(1+|\xi|_v)^2(1+|\alpha|_v)^2.
\end{equation}
Hence
\[
E(s)\ll\left[(1+|\xi|_v)^2(1+|\alpha|_v)^2\right]^{3+\varepsilon}=\left[(1+|\xi|_v) (1+|\alpha|_v)\right]^{6+\varepsilon'},
\]
where the implied constant depends only on $c$, $\varepsilon$, $\Re s$ and $\chi$.
\end{proof}

\subsection{Case $\delta=0$} Now we assume that  \autoref{item:610thirdterm}  of \autoref{thm:kloostermantypeseries} holds. In this case, we have $\delta=0$, $c$ is a square, and $\xi'\alpha'=0$. Recall that
\[
E(s)=\prod_{\substack{p\mid \kappa\\ p\notin S}}\prescript{}{c}{A}_{\xi,\alpha}^{(p)}(s,\chi),
\]
where $\kappa\in \{\xi',\alpha'\}$ is nonzero, and
\begin{equation}\label{eq:relad}
\prescript{}{c}{A}_{\xi,\alpha}^{(p)}(s,\chi)= \prescript{2}{c}D_{\xi,\alpha}^{(p)}(s,\chi)(1-\chi^2(p)p^{1-2s}).
\end{equation}
By symmetry we may assume that $\xi'=0$.
\begin{lemma}\label{lem:kloostermanlocalestimatesplit}
Suppose that $\legendresymbol{c}{p}=1$, $\xi'=0$, and $\Re s>1/2$. Then we have
\[
\prescript{}{c}{A}_{\xi,\alpha}^{(p)}(s,\chi)\ll (v_p(\alpha')+1)^2,
\]
where the implied constant depends only on $c$, $\Re s$ and $\chi$.
\end{lemma}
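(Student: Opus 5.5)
We estimate $\prescript{}{c}{A}_{\xi,\alpha}^{(p)}(s,\chi)$ through the relation \eqref{eq:relad}, i.e.\ by writing
\[
\prescript{}{c}{A}_{\xi,\alpha}^{(p)}(s,\chi)=(1-\chi^2(p)p^{1-2s})\sum_{u,v\geq 0}\frac{\prescript{2}{c}{\widehat{\Kl}}_{p^u,p^v}^{(p)}(\xi,\alpha)\chi(p^{u+2v})}{(p^u)^{1+s}(p^v)^{1+2s}},
\]
and then bounding the local sums term by term, using the explicit formulas from the proof of \autoref{prop:onezerosplit}. Put $m=v_p(\alpha')$ and $\varepsilon=\Re s-1/2>0$. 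Since $\xi'=0$, the odd $u$ terms vanish by \autoref{thm:localkloostermansplit}. This leaves two groups of terms: $u=0$, and even $u$ with $2\leq u\leq m+1$ (the latter range holds because $\Delta S_{2v-w,u+2v}^{(p)}(\alpha')=0$ unless $u+w-1\leq m$).

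For $u=0$, formula \eqref{eq:kloostermanuzero} gives
\[
\prescript{2}{c}{\widehat{\Kl}}_{1,p^v}^{(p)}(\xi,\alpha)=\sum_{w=0}^{\min\{2v,m\}}\bm\phi(p^{2v-w})p^w.
\]
For $v\geq m/2$ this equals the constant $(m+1)(1-p^{-1})p^{2v}$. For $v<m/2$ it is at most $(2v+1)p^{2v}$. I split the $v$-sum at $v_0=\lceil m/2\rceil$. The tail is a geometric series times $(m+1)(1-p^{-1})$. Multiplying it by $(1-\chi^2(p)p^{1-2s})$ telescopes it into $(m+1)(1-p^{-1})\chi^2(p)^{v_0}p^{v_0(1-2s)}$, which is $\ll m+1$ because $|p^{1-2s}|=p^{-2\varepsilon}<1$. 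The initial segment is bounded by
\[
|1-\chi^2(p)p^{1-2s}|\sum_{v<v_0}(2v+1)p^{-2\varepsilon v}\ll 1,
\]
uniformly in $p$. This is the crucial point: after the factor $p^{2v}$ cancels against the denominator $p^{v(1+2\Re s)}$, what remains decays like $p^{-2\varepsilon v}$.

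For $u\geq 2$ even, I use the formula for $\Delta S$ in the proof of \autoref{prop:onezerosplit}. Each Kloosterman sum has at most $m+1$ nonzero $w$-terms, each of size at most $p^{2u+2v}$, so $|\prescript{2}{c}{\widehat{\Kl}}_{p^u,p^v}^{(p)}|\leq(m+2)p^{2u+2v}$. The denominator has size $p^{u(3/2+\varepsilon)}p^{v(2+2\varepsilon)}$. The $v$-sum therefore converges to $O(m+2)$ times $p^{u(1/2-\varepsilon)}$. I would sharpen this in the same way as the $u=0$ case: for $v$ large the sum is constant in $v$, so after multiplying by $(1-\chi^2(p)p^{1-2s})$ only boundedly many terms of uniform size survive.

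The main obstacle is the $u$-sum. The crude bound gives $\sum_{u\leq m+1}p^{u(1/2-\varepsilon)}$, which is not $\ll (m+1)^2$ uniformly in $p$. To fix this I will exploit cancellation in $\Delta S_{2v-w,u+2v}^{(p)}(\alpha')$. The positive contributions $\bm\phi(p^{u+2v-w})p^{u+w}(1-p^{-1})$ and the negative boundary term $-\bm\phi(\cdot)p^{u+w-1}$ nearly cancel, leaving an effective size $p^{u+2v}$ rather than $p^{2u+2v}$. In other words, the $\alpha'$-variable sum is a Ramanujan-type sum whose total mass is small. If that cancellation holds, the $u$-term becomes $\ll (m+1)p^{-u(1/2+\varepsilon)}$. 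Summing over $u$ and over the $O(m+1)$ values of $w$ then gives $O((m+1)^2)$, uniformly in $p$, with constants depending only on $\Re s$ and $\chi$. If the cancellation fails, I would fall back to a bound $p^{O(m)}$, which still suffices for the later \autoref{thm:kloostermanmoderate} by the argument of \autoref{prop:kloostermansumbound}.
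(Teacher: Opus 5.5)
\textbf{The cancellation step fails.} The cancellation you hope for in $\Delta S_{2v-w,u+2v}^{(p)}(\alpha')$ does not occur. Write $m=v_p(\alpha')$ and take $\xi'=0$. For even $u$ with $2\leq u\leq m+1$, every index $0\leq w\leq\min\{2v,m-u\}$ contributes
\[
\bm\phi(p^{u+2v-w})\,p^{u+w}(1-p^{-1})=(1-p^{-1})^2p^{2u+2v}.
\]
All of these have the same sign. The only negative term is the single boundary term at $w=m-u+1$. So for $2v\geq m-u+1$,
\[
\prescript{2}{c}{\widehat{\Kl}}_{p^u,p^v}^{(p)}(\xi,\alpha)=(1-p^{-1})p^{2u+2v}\bigl[(m-u+1)(1-p^{-1})-p^{-1}\bigr].
\]
There is no Ramanujan-sum telescoping, because the weights $\bm\phi(p^{u+2v-w})$ depend on $w$ and $w$ is truncated at $2v$. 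You can check this directly: take $c=1/4$, $\sqrt c=1/2$, $\alpha=-\xi$, $\xi=p^2$, so that $\xi'=0$ and $\alpha'=p^2$. Then the $(u,v)=(2,0)$ sum is $\#\{(a,b)\bmod p^2: a\not\equiv\pm b\ (p)\}=(p^2-p)^2$. Hence the $u$-th piece of $\prescript{}{c}{A}_{\xi,\alpha}^{(p)}(s,\chi)$ genuinely carries the factor $p^{2u}/(p^u)^{1+s}=p^{u(1-s)}$. Your bound $\ll(m+1)p^{-u(1/2+\varepsilon)}$ for it is false.

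\textbf{The statement itself is false for $1/2<\Re s<1$,} so no argument can give it. Take $m=2$. Then only $u\in\{0,2\}$ contribute. With $x=\chi^2(p)p^{1-2s}$, a direct computation from the formulas above gives
\[
\prescript{}{c}{A}_{\xi,\alpha}^{(p)}(s,\chi)=1+2(1-p^{-1})x-p^{-1}x^2+\chi^2(p)p^{2-2s}(1-p^{-1})(1-p^{-1}-p^{-1}x).
\]
Its modulus is at least $(1-p^{-1})(1-2p^{-1})p^{2-2\Re s}-4$, which tends to $\infty$ as $p\to\infty$, whereas $(v_p(\alpha')+1)^2=9$.

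The paper's proof reaches $(v_p(\alpha')+1)^2$ only because, for $u\geq 2$, it copies the $u=0$ tail into its formula for $\prescript{}{c}{A}_{\xi,\alpha}^{(p)}(s,\chi)_u$. It then asserts $|A_u|\ll v_p(\alpha')+1$ ``similarly'', dropping exactly this factor $p^{u(1-s)}$. You located the real obstruction, which the paper overlooks.

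What is true follows from your $u=0$ argument together with \autoref{lem:kloostermantrivial} and $\sum_v(2v+1)p^{v(1-2\Re s)}\ll_{\Re s}1$:
\[
\prescript{}{c}{A}_{\xi,\alpha}^{(p)}(s,\chi)\ll_{\Re s}\sum_{\substack{0\leq u\leq m+1\\ u\ \text{even}}}p^{u(1-\Re s)}.
\]
This is $\ll v_p(\alpha')+1$ for $\Re s\geq 1$. In general it is $\ll(v_p(\alpha')+1)\,p^{(v_p(\alpha')+1)(1-\Re s)}$.

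Your fallback is therefore the correct repair. It gives $E(s)\ll|\alpha'^{(q)}|^{1+\varepsilon}$, which suffices for \autoref{thm:kloostermanmoderate}, whose exponents $B,C$ are unspecified. It does not give the lemma as stated, nor the $\varepsilon$-exponents claimed in \autoref{prop:kloostermansumbound2}.
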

\begin{proof}
Denote
\[
{}_c^2D_{\xi,\alpha}^{(p)}(s,\chi)_{u}:=\sum_{v=0}^{+\infty} \frac{{}_c^2\widehat{\Kl}^{(p)}_{p^u,p^v}(\xi,\alpha)\chi(p^{2v})}{(p^u)^{1+s} (p^{2v})^ {1+2s}}
\]
and
\[
\prescript{}{c}{A}_{\xi,\alpha}^{(p)}(s,\chi)_u:= \prescript{2}{c}D_{\xi,\alpha}^{(p)}(s,\chi)_u(1-\chi^2(p)p^{1-2s})
\]
for $u\in \ZZ_{\geq 0}$. Hence
\[
{}_c^2D_{\xi,\alpha}^{(p)}(s,\chi)=\sum_{u=0}^{+\infty}{}_c^2D_{\xi,\alpha}^{(p)}(s,\chi)_{u}
\]
and
\[
\prescript{}{c}{A}_{\xi,\alpha}^{(p)}(s,\chi)=\sum_{u=0}^{+\infty}\prescript{}{c}{A}_{\xi,\alpha}^{(p)}(s,\chi)_u.
\]
We now recall the proof of \autoref{prop:onezerosplit}. If $u$ is odd, then
\[
{}_c^2D_{\xi,\alpha}^{(p)}(s,\chi)_{u}=0.
\]
If $u=0$, we have
\[
{}_c^2D_{\xi,\alpha}^{(p)}(s,\chi)_{0}=\sum_{v=0}^{v_0} \frac{{}_c^2\widehat{\Kl}^{(p)}_{1,p^v}(\xi,\alpha)\chi(p^{2v})}{(p^{2v})^ {1+2s}}+\frac{(v_p(\alpha')+1) (1-p^{-1})\chi^2(p^{v_0+1})}{(p^{2v_0+2})^{-1+2s}(1-\chi^2(p)p^{-2s+1})}.
\]
Thus by \eqref{eq:relad},
\[
\prescript{}{c}{A}_{\xi,\alpha}^{(p)}(s,\chi)_{u=0}=(1-\chi^2(p)p^{1-2s})\sum_{v=0}^{v_0}\frac{{}_c^2\widehat{\Kl}^{(p)}_{1,p^v}(\xi,\alpha)\chi(p^{2v})}{(p^{2v})^ {1+2s}}+\frac{(v_p(\alpha')+1) (1-p^{-1})\chi^2(p^{v_0+1})}{(p^{2v_0+2})^{-1+2s}}.
\]
Hence by \autoref{lem:kloostermantrivial} we obtain
\[
|\prescript{}{c}{A}_{\xi,\alpha}^{(p)}(s,\chi)_{0}|\ll \sum_{v=0}^{v_0}\frac{p^{2v}}{(p^{2v})^ {1+2\Re s}}+v_p(\alpha')\ll v_p(\alpha')+1.
\]
Now we consider $u\geq 2$ even. By \eqref{eq:relad},
\[
\prescript{}{c}{A}_{\xi,\alpha}^{(p)}(s,\chi)_{u}=(1-\chi^2(p)p^{1-2s})\sum_{v=0}^{v_u}\frac{{}_c^2\widehat{\Kl}^{(p)}_{p^u,p^v}(\xi,\alpha)\chi(p^{u+2v})} {(p^u)^{1+s}(p^{2v})^ {1+2s}}+\frac{(v_p(\alpha')+1)(1-p^{-1})\chi^2(p^{v_u+1})}{(p^{2v_u+2})^{-1+2s} }.
\]
Hence by \autoref{lem:kloostermantrivial} and a similar argument as in the $u=0$ case, we obtain 
\[
|\prescript{}{c}{A}_{\xi,\alpha}^{(p)}(s,\chi)_{u}|\ll v_p(\alpha')+1.
\]
Moreover, by the proof of \autoref{prop:onezerosplit} we know that ${}_c^2\widehat{\Kl}^{(p)}_{p^u,p^v}(\xi,\alpha)=0$ if $u>v_p(\alpha')+1$. Therefore
\[
\prescript{}{c}{A}_{\xi,\alpha}^{(p)}(s,\chi)=\sum_{u=0}^{+\infty}\prescript{}{c}{A}_{\xi,\alpha}^{(p)}(s,\chi)_{u} \ll (1+v_p(\alpha'))^2.\qedhere
\]
\end{proof}

\begin{proposition}\label{prop:kloostermansumbound2}
If $\delta=0$, $\Re s>1/2$, and $(\xi,\alpha)\neq (0,0)$, then
\[
E(s)=\prod_{\substack{p\mid \kappa\\ p\notin S}}\prescript{}{c}{A}_{\xi,\alpha}^{(p)}(s,\chi),
\]
defined in \autoref{item:610thirdterm} of \autoref{thm:kloostermantypeseries}, satisfies
\[
E(s)\ll \prod_{v\in S}(1+|\xi|_v)^{\varepsilon}(1+|\alpha|_v)^{\varepsilon},
\]
for any $\varepsilon>0$,
where the implied constant depends only on $c$, $\varepsilon$, $\Re s$ and $\chi$.
\end{proposition}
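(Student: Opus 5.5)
The plan is to combine the local bound of \autoref{lem:kloostermanlocalestimatesplit} with a divisor-type estimate over the primes dividing $\kappa$, in the same way \autoref{prop:kloostermansumbound} was deduced from \autoref{lem:kloostermanlocalestimate}.

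By symmetry we assume $\xi'=0$, so $\kappa=\alpha'\neq 0$. Since $\delta=0$ forces $c$ to be a square, $\legendresymbol{c}{p}=1$ for every $p\notin S$. Hence \autoref{lem:kloostermanlocalestimatesplit} applies at every prime $p\mid\kappa$, $p\notin S$, and gives
\[
|\prescript{}{c}{A}_{\xi,\alpha}^{(p)}(s,\chi)|\leq C_0\,(v_p(\alpha')+1)^2,
\]
where $C_0$ depends only on $c$, $\Re s$ and $\chi$, and not on $p$.

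Multiplying over the primes dividing $\alpha'$ outside $S$ gives
\[
|E(s)|\leq \prod_{\substack{p\mid \alpha'\\ p\notin S}} C_0\,(v_p(\alpha')+1)^2 = C_0^{\omega}\,{\bm d}(m)^2,
\]
where $m$ is the prime-to-$S$ part of the numerator of $\alpha'$ and $\omega$ is the number of primes dividing $m$. The classical bounds $C_0^{\omega(m)}\ll_\varepsilon m^{\varepsilon}$ and ${\bm d}(m)\ll_\varepsilon m^{\varepsilon}$ (the same input as in the proof of \cite[Corollary B.3]{cheng2025b} quoted above) then give $E(s)\ll_\varepsilon |\alpha'^{(q)}|^{\varepsilon}$.

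It remains to bound $|\alpha'^{(q)}|$ by $S$-adic sizes of $\xi$ and $\alpha$. Here $\alpha'=\xi/2-\alpha/(4\sqrt c)$ with $\sqrt c$ a fixed element of $\ZZ^S$, and $2\in S$. So $\alpha'\in\ZZ^S$ and $|\alpha'|_v\ll_c |\xi|_v+|\alpha|_v$ for every $v\in S$. By the product formula, $|\alpha'^{(q)}|=\prod_{v\in S}|\alpha'|_v$, as in \eqref{eq:estimatedeltaq}. Therefore
\[
|\alpha'^{(q)}|\ll\prod_{v\in S}(1+|\xi|_v)(1+|\alpha|_v),
\]
and raising to the power $\varepsilon$ gives the claimed bound.

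The main point to check is that the constant $C_0$ in \autoref{lem:kloostermanlocalestimatesplit} is genuinely uniform in $p$. The lemma is stated this way, and it is needed so that $C_0^{\omega(m)}$ is absorbed into $m^{\varepsilon}$. If the lemma gave only a $p$-dependent constant, I would redo its estimate: for $\Re s>1/2$ the geometric tails are bounded by absolute constants, and the finitely many $u$ contribute $O(v_p(\alpha')+1)$ each.
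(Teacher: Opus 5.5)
Your route is the paper's own: apply \autoref{lem:kloostermanlocalestimatesplit} at each $p\mid\alpha'$, absorb $C_0^{\omega}\,\bm{d}^2$ into $|\alpha'^{(q)}|^{\varepsilon}$, and bound $|\alpha'^{(q)}|=\prod_{v\in S}|\alpha'|_v$ via the product formula. The last two steps are fine. The step you flagged, however, is a genuine gap, and your fallback (``the finitely many $u$ contribute $O(v_p(\alpha')+1)$ each'') is false for $u\geq 2$ when $\Re s<1$. The $u$-th piece carries a factor $p^{u(1-s)}$, because the sums have size about $p^{2u+2v}$ while the denominator contains $(p^u)^{1+s}$.

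For instance, take $\xi'=0$ and $v_p(\alpha')\geq 2$. Then \autoref{thm:localkloostermansplit} gives $\prescript{2}{c}{\widehat{\Kl}}_{p^2,1}^{(p)}(\xi,\alpha)=\Delta S^{(p)}_{0,2}(0)\,\Delta S^{(p)}_{0,2}(\alpha')=\bm{\phi}(p^2)^2$. So the $(u,v)=(2,0)$ summand of $\prescript{2}{c}D^{(p)}_{\xi,\alpha}(s,\chi)$ is $\chi(p)^2(1-p^{-1})^2p^{2-2s}$. Nothing cancels it. For $v_p(\alpha')=2$ exactly, only $u\in\{0,2\}$ contribute, and a direct computation gives
\[
\prescript{}{c}{A}_{\xi,\alpha}^{(p)}(s,\chi)=A_0+\chi(p)^2p^{2-2s}(1-p^{-1})\bigl(1-p^{-1}-\chi(p)^2p^{-2s}\bigr),\qquad |A_0|\leq 4 .
\]
Hence $|\prescript{}{c}{A}_{\xi,\alpha}^{(p)}(s,\chi)|\gg p^{2-2\Re s}$ for large $p$. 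The paper's proof of \autoref{lem:kloostermanlocalestimatesplit} has the same oversight: \autoref{lem:kloostermantrivial} only bounds the $u$-th piece by a constant times $p^{u(1-\Re s)}$.

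Consequently the statement itself fails for $1/2<\Re s<1$. Take $c=1$, $\xi=p^2$, $\alpha=-2p^2$, so that $\delta=0$, $\xi'=0$ and $\kappa=\alpha'=p^2$, with $p\notin S$ large. Then $E(s)=\prescript{}{c}{A}_{\xi,\alpha}^{(p)}(s,\chi)$ has modulus $\gg p^{2-2\Re s}$, while the right-hand side is $\asymp p^{4\varepsilon}$; choose $\varepsilon<(1-\Re s)/2$.

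Your argument, like the paper's, is valid for $\Re s\geq 1$, where $|p^{u(1-s)}|\leq 1$. For $1/2<\Re s<1$ the same bookkeeping, with telescoping in $v$ against the factor $1-\chi^2(p)p^{1-2s}$, gives
\[
|\prescript{}{c}{A}_{\xi,\alpha}^{(p)}(s,\chi)|\ll(v_p(\alpha')+1)^2p^{(v_p(\alpha')+1)(1-\Re s)}
\]
uniformly in $p$. Hence $E(s)\ll_\varepsilon|\alpha'^{(q)}|^{2(1-\Re s)+\varepsilon}$, and your last step turns this into a polynomial bound in $\prod_{v\in S}(1+|\xi|_v)(1+|\alpha|_v)$. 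That weaker bound is all \autoref{thm:kloostermanmoderate} actually needs, since it allows arbitrary exponents $B$ and $C$.
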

\begin{proof}
By symmetry we may assume that $\xi'= 0$ and $\alpha'\neq 0$ so that $\kappa=\alpha'$. By \autoref{lem:kloostermanlocalestimatesplit} we have
\[
E(s)=\prod_{\substack{p\mid \alpha'\\ p\notin S}}O(1)(v_p(\alpha')+1)^2=(\alpha'^{(q)})^\varepsilon\bm d(\alpha'^{(q)})^2\ll (\alpha'^{(q)})^{\varepsilon'}.
\]
By \eqref{eq:defxiprime}, for any $v\in S$ we have
\[
|\alpha'|_v=\left|\frac{\xi}{2}-\frac{\alpha}{4\sqrt{c}}\right|_v\ll_c |\xi|_v+|\alpha|_v\ll (1+|\xi|_v)(1+|\alpha|_v).
\]
Hence
\[
E(s)\ll \left[(1+|\xi|_v)(1+|\alpha|_v)\right]^\varepsilon.\qedhere
\]
\end{proof}

\subsection{Proof of the main theorem in this section} Now we prove \autoref{thm:kloostermanmoderate}.
\begin{proof}[Proof of \autoref{thm:kloostermanmoderate}]
We consider the following cases:

\underline{\emph{Case 1:}} \ \ $\delta\neq 0$.

In this case, $_c^2D_{\xi,\alpha}^S(s,\chi)$ becomes
\[
_c^2D_{\xi,\alpha}^S(s,\chi)=E(s) \frac{L^{S_\delta}(s,\chi \legendresymbol{\delta}{\cdot})}{L^{S_\delta}(2s,\chi^2)}.
\]
We have 
\begin{equation}\label{eq:estimateluxialpha}
\frac{1}{L^{S_\delta}(2s,\chi^2)}=\sum_{\gcd(m,S_\delta)=1}\frac{\bm\mu(m)\chi^2(m)}{m^s}\ll_{\Re s} 1
\end{equation}
since $\Re(2s)>1$, where $\bm\mu(m)$ denotes the M\"obius function. By \autoref{prop:kloostermansumbound}, we have
\begin{equation}\label{eq:kloostermanentire}
E(s)\ll \prod_{v\in S}(1+|\xi|_v)^{6+\varepsilon}(1+|\alpha|_v)^{6+\varepsilon}.
\end{equation}

Finally, we deal with the term $L^{S_\delta}(s,\chi \legendresymbol{\delta}{\cdot})$. We have
\[
L^{S_\delta}\left(s,\chi \legendresymbol{\delta}{\cdot}\right)=\prod_{\substack{p\notin S\\ p\mid\delta}}\left(1-\legendresymbol{\delta}{p}p^{-s}\right)L^{S}\left(s,\chi \legendresymbol{\delta}{\cdot}\right)=L^{S}\left(s,\chi \legendresymbol{\delta}{\cdot}\right).
\]
By the Weyl bound of Dirichlet $L$-functions, we have
\[
L^S\left(s,\chi \legendresymbol{\delta}{\cdot}\right)\ll_{S}\left[(1+|s|)C\!\left(\chi\legendresymbol{\delta}{\cdot}\right) \right]^{1/6+\varepsilon}\ll_{S,\chi} ((1+|s|)|\delta^{(q)}|)^{1/6+\varepsilon}
\]
for $\Re s\in \lopen 1/2,+\infty\ropen$, where $C(\chi)$ denotes the conductor of $\chi$. Now we use \eqref{eq:estimatedeltaq} and obtain
\begin{equation}\label{eq:estimateweylbound}
L^S\left(s,\chi \legendresymbol{\delta}{\cdot}\right)\ll_{S,\chi}  \left[(1+|s|)\prod_{v\in S}(1+|\xi|_v)^{2}\prod_{v\in S}(1+|\alpha|_v)^2\right]^{1/6+\varepsilon}.
\end{equation}
Now \eqref{eq:estimateluxialpha}, \eqref{eq:kloostermanentire}, \eqref{eq:estimateweylbound} yield the result.

\underline{\emph{Case 2:}} \ \ $\delta= 0$. 

By symmetry we may assume that $\xi'=0$. In this case, we obtain
\[
_c^2D_{\xi,\alpha}^S(s,\chi)=E(s)\frac{L^{S}(2s-1,\chi^2)}{L^{S_{\alpha'}}(2s,\chi^2)}.
\]
Since $\Re (2s)>1$, we have
\begin{equation}\label{eq:estimateluxialpha2}
\frac{1}{L^{S_{\alpha'}}(2s,\chi^2)}\ll_{\Re s} 1
\end{equation}
as in the previous case. Also, by the Weyl bound (or merely the convexity bound) we have
\begin{equation}\label{eq:estimateweylbound2}
L^S(2s-1,\chi^2)\ll_\chi (1+|s|)^{1/2}.
\end{equation}
for $\Re s>1/2$. Finally, by \autoref{prop:kloostermansumbound2} we have
\begin{equation}\label{eq:kloostermanentire2}
E(s)\ll \prod_{v\in S}(1+|\xi|_v)^{\varepsilon}(1+|\alpha|_v)^{\varepsilon}.
\end{equation}

Now \eqref{eq:estimateluxialpha2}, \eqref{eq:estimateweylbound2} and \eqref{eq:kloostermanentire2} yield the result.
\end{proof}

\section{Contribution of the $\xi\neq 0$ term}\label{sec:mainanalysis}
Now we back to the main problem. We don't use \autoref{ass:nonarchimedean} except for the last subsection. Recall that in \autoref{subsec:secondpoisson} we showed that
\[
S_G(X)=\frac12\sum_{c\in \{\pm 1\}\times q^{\FF_2^r}}\sum_{k,f\in \ZZ_{(S)}^{>0}}\frac{1}{k^3f^5}\sum_{\xi,\alpha\in \ZZ^S}\prescript{}{c}{\widehat{J}}_{k,f}(X,\xi,\alpha)\prescript{2}{c}{\widehat{\Kl}}_{k,f}^S(\xi,\alpha).
\]
Let
\[
\prescript{}{c}S_G(X)=\frac12\sum_{k,f\in \ZZ_{(S)}^{>0}}\frac{1}{k^3f^5}\sum_{\xi,\alpha\in \ZZ^S}\prescript{}{c}{\widehat{J}}_{k,f}(X,\xi,\alpha)\prescript{2}{c}{\Kl}_{k,f}^S(\xi,\alpha).
\]
Then we obtain
\[
S_G(X)=\sum_{c\in \{\pm 1\}\times q^{\FF_2^r}}\prescript{}{c}S_G(X).
\]
Now we fix $c\in \{\pm 1\}\times q^{\FF_2^r}$.
By \eqref{eq:transformedj} we obtain
\begin{align*}
\prescript{}{c}S_G(X)=&\sum_{\xi,\alpha\in \ZZ^S}\sum_{k,f\in \ZZ_{(S)}^{>0}}\frac{\prescript{2}{c}{\Kl}_{k,f}^S(\xi,\alpha)}{k^3f^5}\prod_{i=1}^{r}(1-q_i^{-1})^{-1} \int_{(x,a)\in\RR^2}\int_{(y,b)\in\QQ_{S_\fin}^2} k^2f^4 G\legendresymbol{kf^2|a|_\infty|b|_q}{X} \theta_\infty(x,ca^2)\\
\times&\sum_{\chi}\widehat{\theta}_{q}(\chi,y,cb^2)\chi(kf^2)\left[F\legendresymbol {1}{|x^2-4ca^2|_\infty^{1/2}|y^2- 4cb^2|_q'^{1/2}}+\frac{1}{|x^2-4ca^2|_\infty^{1/2}|y^2- 4cb^2|_q'^{1/2}}\right.\\
\times&\left.V\legendresymbol{1}{|x^2-4ca^2|_\infty^{1/2}|y^2- 4cb^2|_q'^{1/2}}\right]\rme(-x\xi-a\alpha)\rme_{q}(-y\xi-b\alpha)\rmd x\rmd a\rmd y\rmd b.
\end{align*}
Let $\widetilde{G}(s)$ be the Mellin transform of $G$. By Mellin inversion formula for $G$ we then have
\begin{align*}
&\prescript{}{c}S_G(X)=\prod_{i=1}^{r}(1-q_i^{-1})^{-1} \sum_{\xi,\alpha\in \ZZ^S}\sum_{k,f\in \ZZ_{(S)}^{>0}}\frac{\prescript{2}{c}{\Kl}_{k,f}^S(\xi,\alpha)}{kf} \int_{(x,a)\in\RR^2}\int_{(y,b)\in\QQ_{S_\fin}^2} \int_{(\sigma)}\widetilde{G}(s)\frac{X^s}{(kf^2)^s|a|_\infty^s|b|_q^s}\rmd s \\
\times&\theta_\infty(x,ca^2)\sum_{\chi}\widehat{\theta}_{q}(\chi,y,cb^2)\chi(kf^2) \left[F\legendresymbol{1}{|x^2-4ca^2|_\infty^{1/2}|y^2- 4cb^2|_q'^{1/2}}+\frac{1}{|x^2-4ca^2|_\infty^{1/2}|y^2- 4cb^2|_q'^{1/2}}\right.\\
\times&\left.V\legendresymbol{1}{|x^2-4ca^2|_\infty^{1/2}|y^2- 4cb^2|_q'^{1/2}}\right]\rme(-x\xi-a\alpha)\rme_{q}(-y\xi-b\alpha)\rmd x\rmd a\rmd y\rmd b\\
=&\prod_{i=1}^{r}(1-q_i^{-1})^{-1}\sum_{\chi}\int_{(\sigma)}\widetilde{G}(s) \sum_{\xi,\alpha\in \ZZ^S}{}^2_c D_{\xi,\alpha}^S(s,\chi){}_c\Phi_{\xi,\alpha}(s,\chi)X^s\rmd s,
\end{align*}
where the sum over $\chi$ is over all characters such that it is unramified at places outside $S$, and
\begin{equation}\label{eq:defphi}
\begin{split}
\prescript{}{c}\Phi_{\xi,\alpha}(s,\chi)=& \int_{(x,a)\in\RR^2}\int_{(y,b)\in\QQ_{S_\fin}^2} \frac{1}{|a|_\infty^s|b|_q^s} \theta_\infty(x,ca^2)\widehat{\theta}_{q}(\chi,y,cb^2) \left[F\legendresymbol{1}{|x^2-4ca^2|_\infty^{1/2}|y^2- 4cb^2|_q'^{1/2}}\right.\\
+&\left.\frac{1}{|x^2-4ca^2|_\infty^{1/2}|y^2- 4cb^2|_q'^{1/2}}V\legendresymbol{1}{|x^2-4ca^2|_\infty^{1/2}|y^2- 4cb^2|_q'^{1/2}}\right]\\
\times&\rme(-x\xi-a\alpha)\rme_{q}(-y\xi-b\alpha)\rmd x\rmd a\rmd y\rmd b.
\end{split}
\end{equation}
\subsection{Estimate of $\prescript{}{c}\Phi_{\xi,\alpha}(s,\chi)$}
By making change of variable $x\mapsto ax$ and $y\mapsto by$, we obtain
\begin{align*}
&\prescript{}{c}\Phi_{\xi,\alpha}(s,\chi)= \int_{(x,a)\in\RR^2}\int_{(y,b)\in\QQ_{S_\fin}^2} \frac{1}{|a|_\infty^{s-1}|b|_q^{s-1}} \theta_\infty(x,c)\widehat{\theta}_{q}(\chi,by,cb^2) \left[F\legendresymbol{1/(|a|_\infty|b|_q)}{|x^2-4c|_\infty^{1/2}|y^2- 4c|_q'^{1/2}}\right.\\
+&\left.\frac{1/(|a|_\infty|b|_q)}{|x^2-4c|_\infty^{1/2}|y^2- 4c|_q'^{1/2}}V\legendresymbol{1/(|a|_\infty|b|_q)}{|x^2-4c|_\infty^{1/2}|y^2- 4c|_q'^{1/2}}\right]\rme(-ax\xi-a\alpha)\rme_{q}(-by\xi-b\alpha)\rmd x\rmd a\rmd y\rmd b.
\end{align*}

The main goal of this subsection is to prove the following theorem:
\begin{theorem}\label{thm:globalestimatefinal}
$\prescript{}{c}\Phi_{\xi,\alpha}(s,\chi)$, as a function of $s$, can be analytically continued to the half plane $\Re s>1/2$. Moreover, for any $M_1,M_2,N_1,N_2>0$ we have
\[
\prescript{}{c}\Phi_{\xi,\alpha}(s,\chi)\ll|\xi|^{-M_1}\prod_{i=1}^{r}(1+|\xi|_{q_i})^{-M_2}(1+|\alpha|)^{-N_1} \prod_{i=1}^{r}(1+|\alpha|_{q_i})^{-N_2},
\]
where the implied constant depends only on $\sigma=\Re s$, $c$, $\chi$, $M_1$, $M_2$, $N_1$ and $N_2$.
\end{theorem}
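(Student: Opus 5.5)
The plan is to treat ${}_c\Phi_{\xi,\alpha}(s,\chi)$ as a product of an archimedean and a nonarchimedean factor after separating variables by Mellin inversion. I will then obtain decay in $\xi$ and $\alpha$ from integration by parts at $\infty$ and from local constancy at the $q_i$.

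\textbf{Step 1: separating variables.} I will write $F$ and $V$ via Mellin inversion, using \eqref{eq:mellinf} and \eqref{eq:defv}. The bracket in \eqref{eq:defphi} then becomes a line integral over $w$ of $\widetilde{F}(w)$, or of $\widetilde{F}(w)$ times the gamma and local factors. These multiply
\[
|a|_\infty^{w}|b|_q^{w}\,|x^2-4c|_\infty^{-w/2}\,|y^2-4c|_q'^{-w/2}
\]
(with the shift $w\mapsto w+1$ for the $V$ part). After this, ${}_c\Phi_{\xi,\alpha}(s,\chi)$ is $\frac{1}{2\pi i}\int_{(\tau)}$ of $\widetilde F(w)$ times a product of two factors. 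The archimedean factor is
\[
\Phi_\infty(s,w)=\int_{\RR^2}|a|^{w+1-s}\theta_\infty(x,c)|x^2-4c|^{-w/2}\rme(-a(x\xi+\alpha))\,\rmd x\,\rmd a,
\]
and the nonarchimedean factor $\Phi_q(s,w)$ is the analogous integral over $\QQ_{S_\fin}^2$ with $\widehat\theta_q(\chi,by,cb^2)$. I will choose $\tau$ with $0<\Re w$ small, so that the local singularities $|x^2-4c|^{-\Re w/2}$ are integrable, as in \autoref{lem:integralsmooth}.

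\textbf{Step 2: archimedean decay.} For the $a$-integral I will use the distributional Fourier transform of $|a|^{\beta}$ with $\beta=w+1-s$: it equals $\Gamma$-factors times $|x\xi+\alpha|^{-1-\beta}$. This is legitimate because $\Re\beta>-1$ when $\Re s$ is near $1/2$; for larger $\Re s$ I will insert a smooth cutoff in $a$ and continue analytically. The problem then reduces to bounding
\[
\int \theta_\infty(x,c)|x^2-4c|^{-w/2}|x\xi+\alpha|^{-1-\beta}\,\rmd x.
\]
Since $\theta_\infty(\cdot,c)$ is compactly supported, for $|\alpha|\gg|\xi|$ the factor $|x\xi+\alpha|^{-1-\beta}$ gives $|\alpha|^{-1-\Re\beta}$. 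To upgrade this to arbitrary powers of $|\xi|$ and $|\alpha|$, I will avoid computing the $a$-transform first. Instead I will integrate by parts in $x$ against $\rme(-ax\xi)$, which gains $(a\xi)^{-1}$ each time. The singular points $x=\pm2\sqrt c$ force a dyadic decomposition around the singularity, where $\theta_{\infty,1}$ behaves like $|x^2-4c|^{1/2}$ times a smooth function. Integrating by parts in $a$ against $\rme(-a\alpha)$ then gains $\alpha^{-1}$, and the singularity of $|a|^{\beta}$ at $a=0$ is handled by the same smooth partition.

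\textbf{Step 3: nonarchimedean decay.} By \autoref{lem:orbitalintegralsmooth} and \autoref{lem:integralsmooth}, $\widehat\theta_q(\chi,\cdot)$ is locally constant on shells, with compact support in $y$. Hence the $y$-integral of $\rme_{q}(-by\xi)$ vanishes once $|b\xi|_{q_i}$ exceeds a fixed threshold. The $b$-integral is a Tate-type integral of $|b|^{\beta}$ against an additive character. It vanishes when $|\alpha|_{q_i}$ is large relative to the conductor, which gives the factors $(1+|\xi|_{q_i})^{-M_2}$ and $(1+|\alpha|_{q_i})^{-N_2}$. Analytic continuation in $s$ comes from the geometric series over the shells $|b|_{q_i}=q_i^{-n}$, which converges for $\Re s>1/2$ after the $w$-shift.

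\textbf{Main obstacle.} The hardest step is Step 2 near the discriminant singularity. Repeated integration by parts there produces growing negative powers of $|x^2-4c|$, and these must be balanced against the decay gained in $|\xi|$. I will first try the dyadic scaling argument: on the shell $|x-2\sqrt c|\sim 2^{-j}$, each integration by parts costs $2^{j}$ and gains $|a\xi|^{-1}$. I will truncate at $2^{-j}\approx|\xi|^{-1}$ and bound the remaining piece trivially, which gives arbitrary polynomial decay. Finally I will check that all of these bounds are uniform on vertical lines in $w$, since $\widetilde F$ decays rapidly there by \eqref{eq:frapiddecay}.
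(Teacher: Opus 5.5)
The step you single out as the main obstacle is where the argument fails, and the dyadic scheme does not rescue it. The oscillation in $x$ has frequency $a\xi$, so the innermost piece is $|x\mp2\sqrt c|\lesssim|a\xi|^{-1}$. There the trivial bound is $|a\xi|^{-1-\kappa}$, where $|x^2-4c|^{\kappa}$ is the local size of the $x$-integrand ($\kappa=(d+\Re w)/2$ for the $F$-part with $\theta_{\infty,d}$). This bound is sharp, because the Fourier transform of $|t|^{\kappa}$ at frequency $\eta$ has exact order $|\eta|^{-1-\kappa}$. So you get one fixed power of $|a\xi|$, not arbitrary decay.

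Your Step 1 is what forces this. Mellin-inverting $F$ and $V$ replaces $F\bigl(1/(|a|\,|b|_q\,|x^2-4c|^{1/2}|y^2-4c|_q'^{1/2})\bigr)$ by the pure power $(|a|\,|x^2-4c|^{1/2})^{w}$; note the exponent of $|x^2-4c|$ is $+w/2$, not $-w/2$. The original factor vanishes to infinite order on $x^2=4c$, and that is the property the paper uses to integrate by parts in $x$ over $X_\iota$ without boundary terms. Moving $\Re w$ to the right raises $\kappa$ but multiplies the $a$-integrand by $|a|^{\Re w}$. Then $\int^{\infty}|a|^{\Re w+1-\sigma}|a\xi|^{-1-\kappa}\,\rmd a$ diverges unless $\Re w$ is small, so this route yields at best a fixed power of $|\xi|$.

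The distributional variant has the same defect. $C(\beta)|x\xi+\alpha|^{-1-\beta}$ is not locally integrable once $\Re\beta\geq0$, and its singular point $x=-\alpha/\xi$ meets the singularity $\pm2\sqrt c$ of $\theta_\infty(\cdot,c)$ exactly when $\delta=\alpha^2-4c\xi^2=0$.

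Step 3 has an analogous flaw. Near $y^2=4c$ the function $\widehat\theta_q(\chi,\cdot)$ is not locally constant on a uniform scale, so the $y$-integral does not vanish beyond a fixed threshold in $|b\xi|_{q_i}$; it only decays polynomially. This is why the paper invokes \cite[Theorem A.5]{cheng2025b} with a large Mellin abscissa, paid for by a factor $|a|^{L}$. Also, $b$ ranges over a compact subset of $\QQ_{q_i}^\times$, so the $q$-adic factor is entire in $s$. The restriction $\Re s>1/2$ comes from the archimedean tail $|a|\to\infty$, not from a geometric series over shells.

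This is not only a weakness of your method: the bound with arbitrary $M_1,N_1$ fails along $\delta=0$. Take $c$ a square (e.g.\ $c=1$, which occurs in the sum over $c$) and $\alpha=-2\sqrt c\,\xi$. Work in the rescaled integral (after $x\mapsto ax$, $y\mapsto by$) and substitute $a=\xi u$, $x-2\sqrt c=v/\xi^{2}$. This leaves $|a|\,|x^2-4c|^{1/2}$ and the phase $a(x\xi+\alpha)=a\xi(x-2\sqrt c)$ unchanged to leading order. Hence the neighbourhood of $x=2\sqrt c$ contributes $|\xi|^{-s}$ times a quantity that does not depend on $\xi$, apart from the bounded $q$-adic factor. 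By Fourier inversion, at $s=1$ this quantity is a fixed linear combination, with nonzero coefficients, of the values of $\theta_\infty(\cdot,c)$ at $\pm2\sqrt c$. If that combination vanishes, $\theta_{\infty,1}$ generically still produces a term of order $|\xi|^{-1-s}$. So ${}_c\Phi_{\xi,\alpha}(s,\chi)$ decays only like a fixed power of $|\xi|$ there. Similarly, when $c>0$ is not a square, the same happens along sequences with $\alpha^2-4c\xi^2$ fixed.

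The paper's route differs from yours. It keeps $H$ intact, integrates by parts in $a$ and then $L$ times in $x$, and only afterwards uses Mellin inversion of $H$ to decouple the places $q_i$, splitting into the cases $|y\xi|_{q_i}$ smaller than, equal to, or larger than $|\alpha|_{q_i}$. But it obtains rapid decay only by asserting that the resulting integrand is $\ll|a|^{-\Re s-L}$. That overlooks the following: each $x$-derivative of $H(\cdots)$ costs a factor of size $|a|^{2}$ on a strip of width about $|a|^{-2}$ around $x=\pm2\sqrt c$, which outweighs the gain $|a\xi|^{-1}$ once $|a|\gg|\xi|$. Its nonarchimedean lemma, which carries a factor $|a|^{L}$, relies on the same claim.

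A correct version has to treat the directions $\alpha\approx\mp2\sqrt c\,\xi$ separately, since only a fixed power of decay is available there. The sums over $(\xi,\alpha)$ in \autoref{thm:contributionxinot0} must then be re-examined, in particular the terms $\alpha=\pm\xi$ for $c=\tfrac14$ that make up $\ms{C}$.
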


\subsubsection{Warm up: The unramified case} 
Our aim is to prove an estimate of $\prescript{}{c}\Phi_{\xi,\alpha}(s,\chi)$. In order to increase readability, we first consider the unramified case. In this case, the analog of $\prescript{}{c}\Phi_{\xi,\alpha}(s,\chi)$ is 
\[
\prescript{}{c}\Phi_{\xi,\alpha}(s)=\int_{\RR^2}\frac{1}{|a|^{s-1}}\theta_\infty(x,c)\left[F\legendresymbol {1/|a|}{|x^2-4c|^{1/2}}+ \frac{1/|a|}{|x^2-4c|^{1/2}} V\legendresymbol{1/|a|}{|x^2-4c|^{1/2}}\right]\rme(-ax\xi-a\alpha)\rmd x\rmd a.
\]

By making change of variable $x\mapsto 2\sqrt{|c|}x$ and $a\mapsto |c|^{-1/2}a/2$ we may assume that $c=\pm 1/4$. Hence it suffices to consider
\begin{equation}\label{eq:archimedeanintegralestimate}
\prescript{}{\pm}\Phi_{\xi,\alpha}(s)=\int_{\RR^2}\frac{1}{|a|^{s-1}}\theta_\infty^\pm(x)\left[F\legendresymbol {1/|a|}{|x^2\mp 1|^{1/2}}+ \frac{1/|a|}{|x^2\mp 1|^{1/2}} V\legendresymbol{1/|a|}{|x^2\mp 1|^{1/2}}\right]\rme(-ax\xi-a\alpha)\rmd x\rmd a.
\end{equation}

The main observation is that we can split the integral involving $F$ and $V$. Hence it suffices to consider a general case. 
\begin{theorem}\label{thm:archimedeanintegralestimate}
Let $H\in \cS$ and $\iota\in \{0,1\}$. Let $\varphi(x)$ be a continuous function on $X_\iota$ supported on a bounded set, of the form $\varphi(x)=\overline{\varphi}(x)|x^2\mp 1|^{d/2}$ for some smooth function $\overline{\varphi}$ and $d\in \ZZ_{\geq -1}$.
Then
\[
\Phi(s):=\int_{\RR}\int_{X_\iota}\frac{1}{|a|^{s}}\varphi(x)H\legendresymbol {1/|a|}{|x^2\mp 1|^{1/2}}\rme(-ax\xi-a\alpha)\rmd x\rmd a 
\]
has an analytic continuation to the half plane $\Re s>-1/2$, and 
\[
\Phi(s)\ll_{\sigma,M,N} |\xi|^{-M}(1+|\alpha|)^{-N}
\] 
for $\sigma\in \lopen -1/2,+\infty\ropen$.
\end{theorem}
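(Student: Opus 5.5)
The plan is to reduce $\Phi(s)$ to a one-dimensional oscillatory integral in $x$ by first computing the $a$-integral in closed form, then to get decay in $\alpha$ and $\xi$ by integrating by parts.

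\emph{Step 1: the $a$-integral.} Put $D(x)=|x^2\mp 1|^{1/2}$ and substitute $a=b/D(x)$ for fixed $x$. Then
\[
\Phi(s)=\int_{X_\iota}\varphi(x)D(x)^{s-1}K_s\!\left(\frac{x\xi+\alpha}{D(x)}\right)\rmd x,\qquad K_s(t)=\int_{\RR}|b|^{-s}H\!\left(\frac{1}{|b|}\right)\rme(-bt)\rmd b .
\]
Since $H\in\cS$, the function $b\mapsto|b|^{-s}H(1/|b|)$ vanishes to infinite order at $b=0$. Near $|b|=\infty$ it is $|b|^{-s}$ times a function of $1/|b|$.

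I would expand $H(1/|b|)$ at $|b|\to\infty$ through its Mellin transform. Following the proofs of \autoref{prop:propertyf} and \autoref{prop:propertyh}, $\widetilde{H}(w)$ is holomorphic and of rapid vertical decay on the relevant strip. Writing $H(y)=\frac{1}{\dpii}\int_{(w_0)}\widetilde{H}(w)y^{-w}\rmd w$ gives
\[
K_s(t)=\frac{1}{\dpii}\int_{(w_0)}\widetilde{H}(w)\,c(s-w)\,|t|^{s-w-1}\rmd w,
\]
where $c(z)=2(2\uppi)^{z-1}\Gamma(1-z)\sin(\uppi z/2)$ is the classical Fourier transform constant of $|b|^{-z}$. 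This exhibits $K_s$ as smooth on $t\neq0$ and shows it continues in $s$. Shifting $w$ to the right gives rapid decay of $K_s(t)$ as $|t|\to\infty$. Equivalently, integrating by parts in $b$ $k$ times gives $K_s(t)\ll|t|^{-k}$ once $\Re s+k>1$. Near $t=0$ one gets $K_s(t)\ll|t|^{\Re s-1-\varepsilon}+1$.

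\emph{Step 2: analytic continuation.} Now insert this into the $x$-integral. Near $x=\pm1$, the factor $\varphi D^{s-1}$ is $\overline{\varphi}\,|x^2\mp1|^{(d+s-1)/2}$, and $d\ge-1$ makes it integrable for $\Re s>-1/2$ after averaging against $K_s$. Near the zero $x_0=-\alpha/\xi$ of $x\xi+\alpha$, the singularity is $|x-x_0|^{s-1}$. For $\Re s\le0$ this must be understood via meromorphic continuation in the variable $s-w$: the $x$-integral of $|x\xi+\alpha|^{s-w-1}$ against a smooth compactly supported weight is a standard homogeneous distribution. I would check that $\Gamma(1-z)\sin(\uppi z/2)$ kills the would-be pole at $z=s-w=0$ (the $\sin$ vanishes there), so no pole appears in $\Re s>-1/2$. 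Uniformity in vertical strips comes from the decay of $\widetilde{H}$.

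\emph{Step 3: decay in $\alpha$ and $\xi$.} For decay in $\alpha$ with $|\alpha|$ large compared to $|\xi|$: on the bounded support of $\varphi$ we have $|x\xi+\alpha|/D\gg|\alpha|$ away from a neighbourhood of $x=\pm1$, so the rapid decay of $K_s$ applies. The neighbourhood of $x=\pm1$ is handled by the same bound, since $1/D$ only enlarges the argument.

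For decay in $|\xi|$ I would go back to the double integral and integrate by parts in $x$, using $\partial_x\rme(-ax\xi)=-\dpii a\xi\,\rme(-ax\xi)$. Each step gains $(a\xi)^{-1}$ and differentiates $\varphi(x)H(1/(|a|D))$. On a dyadic region $|x\mp1|\asymp2^{-j}$, every derivative costs a factor $2^{j}$. The factor $|a|^{-1}$ is harmless because $H(1/(|a|D))$ is rapidly decreasing for $|a|\ll D^{-1}$, which forces $|a|\gg 2^{j/2}$. Choosing a cutoff $2^{-j}\approx|\xi|^{-\theta}$ near $\pm1$, I would estimate the innermost piece trivially using $d\ge-1$, and integrate by parts $M'$ times on the remaining dyadic pieces. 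Balancing the two contributions should give $|\xi|^{-M}$ for any $M$, and combining with the $\alpha$-decay gives the stated product bound.

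\emph{Main obstacle.} I expect the difficulty to be exactly this interaction at $x=\pm1$. The singular factor $|x^2\mp1|^{(d+s-1)/2}$ with $d=-1$ and $\Re s$ near $-1/2$ is barely integrable, so trivially estimating the innermost dyadic piece loses powers of $|\xi|$. I would first try to use the extra decay of $H(1/(|a|D))$ in $a$ when $D$ is small, and I would adjust the cutoff exponent $\theta$ to recover an arbitrary power of $|\xi|^{-1}$.
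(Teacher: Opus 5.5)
Your reduction in Step~1 to $\Phi(s)=\int_{X_\iota}\varphi(x)D(x)^{s-1}K_s\bigl((x\xi+\alpha)/D(x)\bigr)\,\rmd x$ is correct. It also handles an \emph{interior} zero $x_0=-\alpha/\xi$ of the phase, because all full-line moments of $K_s$ vanish ($|b|^{-s}H(1/|b|)$ is flat at $b=0$; equivalently $c(0)=0$).

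The gap is a configuration you never treat: $x_0$ equal to a boundary point $\pm1$ of $X_\iota$ (sign $x^2-1$), i.e.\ $\alpha=\mp\xi$. There the argument of $K_s$ tends to $0$, not to $\infty$, at the boundary, so ``averaging against $K_s$'' gains nothing. In fact both conclusions fail as soon as $H(0)\overline{\varphi}(\pm1)\neq0$.

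Since $D^{s-1}|t|^{s-1}=|\xi|^{s-1}|x\mp1|^{s-1}$, the singular part $H(0)c(s)|t|^{s-1}$ of $K_s$ contributes $H(0)c(s)|\xi|^{s-1}\int\overline{\varphi}(x)|x^2-1|^{d/2}|x\mp1|^{s-1}\,\rmd x$ over a \emph{one-sided} neighbourhood of $\pm1$. This has a pole at $s=-d/2$. The pole is harmless for $d\geq0$: it is cancelled by $c(0)=0$ if $d=0$, and lies outside the half-plane if $d\geq1$. For $d=-1$, however, it is a genuine pole of $\Phi$ at $s=1/2$, with residue $2^{-1/2}H(0)\overline{\varphi}(\pm1)|\xi|^{-1/2}$. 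Here $c(1/2)=1$, and the remainder $\int\varphi D^{s-1}\bigl(K_s-H(0)c(s)|t|^{s-1}\bigr)\rmd x$ is holomorphic near $s=1/2$.

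For $d\geq0$ and, say, $0<\Re s<1$, the substitution $|x\mp1|=2u^2/\xi^2$ gives
\[
\Phi(s)=\overline{\varphi}(\pm1)\,2^{d+s+1}|\xi|^{-1-d-s}\Bigl(\int_0^\infty u^{d+s}K_s(u)\,\rmd u+o(1)\Bigr)\qquad(|\xi|\to\infty).
\]
Essentially all of this comes from your innermost piece $|x\mp1|\lesssim|\xi|^{-2}$. Unlike the full-line moments, this \emph{half-line} moment does not vanish in general. A Mellin computation identifies it, up to a nonzero constant, with $\Gamma(d+s+1)\cos\bigl(\uppi(d+s+1)/2\bigr)\widetilde{H}(d+2s)$, which is nonzero e.g.\ for $H=F$, $d=0$, $0<s<1$. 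So $\Phi(s)\asymp|\xi|^{-1-d-\sigma}$ along $\alpha=\mp\xi$, and no choice of the cutoff exponent $\theta$ in Step~3 can give $|\xi|^{-M}$.

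The paper's proof does not get past this point either. After integrating by parts $N$ times in $a$ and $L$ times in $x$, it asserts that the integrand is $\ll|a|^{-\Re s-L}$ as $|a|\to\infty$. But each $x$-derivative of $\varphi(x)H\bigl(1/(|a|D(x))\bigr)$ costs a factor $|x\mp1|^{-1}$ all the way down to the transition scale $|x\mp1|\asymp|a|^{-2}$. So estimating the integrand only gives $O(|a|^{L-2-d}|\xi|^{-L})$ for the $x$-integral. Its true size for $|a|\gg|\xi|$ is governed by the boundary Fourier decay $|a\xi|^{-1-d/2}$. When the boundary phase $\rme(-a(\alpha\pm\xi))$ is trivial, the $a$-integral of $|a|^{-s}|a\xi|^{-1-d/2}$ accounts for exactly the pole and the slow decay above.

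You were right to single out the boundary as the main obstacle, but it is an obstruction rather than a technicality: the statement itself has to be modified. One must require $d\geq0$ for continuation to $\Re s>-1/2$, and exclude the configurations $\alpha=\mp\xi$ from the bound (or give them the weaker bound $|\xi|^{-1-d-\sigma}$). After the rescaling to $c=1/4$ these are exactly the terms with $\delta=\alpha^2-4c\xi^2=0$, so this also affects how those terms are summed later in the paper.
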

\begin{proof}
We have
\[
\Phi(s):=\int_{\RR}\int_{X_\iota}\frac{1}{|a|^{s}}\varphi(x)H\legendresymbol {1/|a|}{|x^2\mp 1|^{1/2}}\rme^{-\dpii ax\xi}\rme^{-\dpii a\alpha}\rmd x\rmd a.
\]

\underline{\emph{Case 1:}}\ \ $|\alpha|\gg 1$.

Since $H$ is of rapid decay, the integrand is smooth at $a=0$. By integration by parts $N$ times with respect to the variable $a$, and the Leibniz rule, we have
\begin{align*}
\Phi(s)=&\frac{1}{(\dpii\alpha)^N}\int_{\RR}\int_{X_\iota}\varphi(x)\frac{\diff^N}{\diff a^N}\left(\frac{1}{|a|^s}H\legendresymbol{1/|a|}{|x^2\mp 1|^{1/2}}\rme^{-\dpii ax\xi}\right)\rme^{-\dpii a\alpha}\rmd x\rmd a\\
=&\frac{1}{(\dpii\alpha)^N}\int_{\RR}\int_{X_\iota}\varphi(x)\sum_{j=1}^{N}\binom{N}{j} \frac{\diff^{N-j}}{\diff a^{N-j}}\left(\frac{1}{|a|^s}H\legendresymbol{1/|a|}{|x^2\mp 1|^{1/2}}\right)(-\dpii x\xi)^j\rme^{-\dpii ax\xi}\rme^{-\dpii a\alpha}\rmd x\rmd a.
\end{align*}
Then we consider the integral over $x$. Since $H$ and all its derivatives are of rapid decay, the integrand vanishes on the boundary of $X_\iota$.
By integration by parts $L>N$ times we obtain
\begin{align*}
\Phi(s)=&\frac{1}{(\dpii\alpha)^N}\sum_{j=1}^{N}\binom{N}{j}\int_{\RR}\int_{X_\iota} \frac{1}{(\dpii a\xi)^L}\frac{\diff^{L}}{\diff x^{L}}\left[\varphi(x)\right.\\
\times&\left. \frac{\diff^{N-j}}{\diff a^{N-j}}\left(\frac{1}{|a|^s}F\legendresymbol{1/|a|}{|x^2\mp 1|^{1/2}}\right)(-\dpii x\xi)^j\right]\rme^{-\dpii ax\xi}\rme^{-\dpii a\alpha}\rmd x\rmd a.
\end{align*}
Since $H$ and all its derivatives are of rapid decay, the integrand vanishes at $a=0$. Also, by the expression it is direct to show that the integrand is $\ll |a|^{-\Re s-L}$ as $|a|\to +\infty$. Hence the integral is absolutely convergent on the half plane $\Re s>1-L$ and thus $\Re s>-1/2$ for $L\geq 2$. Moreover, in this region we have
\[
\Phi(s)\ll_{\sigma,L,N} |\alpha|^{-N}\sum_{j=1}^{N}\binom{N}{j}|\xi|^{j-L}\ll |\alpha|^{-N}|\xi|^{N-L}.
\]
Let $M=L-N$, we then obtain $\Phi(s)\ll_{\sigma,L,N}|\xi|^{-M}|\alpha|^{-N}$.

\underline{\emph{Case 2:}}\ \ $|\alpha|\ll 1$.

In this case, we directly consider the integral over $x$. Using integration by parts $M$ times yields
\[
\Phi(s)=\int_{\RR}\int_{X_\iota} \frac{1}{(\dpii a\xi)^M}\frac{\diff^{M}}{\diff x^{M}}\left[\varphi(x) \frac{1}{|a|^s}F\legendresymbol{1/|a|}{|x^2\mp 1|^{1/2}}\right]\rme^{-\dpii ax\xi}\rme^{-\dpii a\alpha}\rmd x\rmd a.
\]
A similar argument shows that $ \Phi(s)$ can be analytically continued to the half plane $\Re s>-1/2$ and  
\[
\Phi(s)\ll_{\sigma,L,N}|\xi|^{-M}.
\]

Combining the two cases yields the result.
\end{proof}

Therefore we obtain
\begin{corollary}
The function $\prescript{}{\pm}\Phi_{\xi,\alpha}(s)$ has an analytic continuation to the half plane $\Re s>1/2$. Moreover, for any $\sigma\in \lopen 1/2,+\infty\ropen$ we have
\[
\prescript{}{\pm}\Phi_{\xi,\alpha}(s)\ll_{\sigma,M,N} |\xi|^{-M}(1+|\alpha|)^{-N}
\]
for any $s$ with $\Re s=\sigma$ and $M,N\in \ZZ_{\geq 0}$.
\end{corollary}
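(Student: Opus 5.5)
The corollary follows by applying \autoref{thm:archimedeanintegralestimate} to each of the two pieces of \eqref{eq:archimedeanintegralestimate} separately, after splitting the $x$-integral over $X_0$ and $X_1$. We write $\theta_\infty^\pm=\theta_{\infty,0}^\pm+\theta_{\infty,1}^\pm$. On each $X_\iota$, the component $\theta_{\infty,\tau}^\pm$ has the form $|x^2\mp 1|^{\tau/2}\varphi_\tau(x)$ with $\varphi_\tau$ smooth up to the boundary and of bounded support. This is the decomposition $\theta_\infty=\theta_{\infty,1}+\theta_{\infty,0}$ coming from \autoref{thm:archimedeanintegral}, specialized to $N=\pm 1/4$. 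Thus \eqref{eq:archimedeanintegralestimate} becomes a finite sum of integrals of the form
\[
\int_{\RR}\int_{X_\iota}\frac{1}{|a|^{s-1}}\varphi(x)\,F\legendresymbol{1/|a|}{|x^2\mp 1|^{1/2}}\rme(-ax\xi-a\alpha)\rmd x\rmd a
\]
and
\[
\int_{\RR}\int_{X_\iota}\frac{1}{|a|^{s}}\frac{\varphi(x)}{|x^2\mp 1|^{1/2}}\,V\legendresymbol{1/|a|}{|x^2\mp 1|^{1/2}}\rme(-ax\xi-a\alpha)\rmd x\rmd a .
\]
Here $\varphi(x)=\overline{\varphi}(x)|x^2\mp1|^{d/2}$ with $d\in\{0,1\}$.

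In the first type the exponent of $|a|$ is $s-1$, and $H=F\in\cS$ by \autoref{prop:propertyf}. With $s'=s-1$, \autoref{thm:archimedeanintegralestimate} gives analytic continuation to $\Re s'>-1/2$, that is $\Re s>1/2$, together with the bound $|\xi|^{-M}(1+|\alpha|)^{-N}$. In the second type we absorb $|x^2\mp1|^{-1/2}$ into $\varphi$. This lowers $d$ by one, so $d\in\{-1,0\}\subseteq\ZZ_{\geq -1}$, which the theorem allows. Here $H=V_{\iota,\epsilon}\in\cS$ by \autoref{prop:propertyh}, and the exponent is $s$ itself, so the theorem gives continuation to $\Re s>-1/2$, which contains $\Re s>1/2$. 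Summing the finitely many pieces gives the corollary uniformly on $\Re s=\sigma>1/2$.

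The only point needing care is the hypothesis on $\varphi$ in the case $d=-1$. There the $x$-integrand has an integrable $|x^2\mp1|^{-1/2}$ singularity at $x=\pm1$ (or at no real point in the $+$ case). We must check that the repeated $x$-integrations by parts in the proof of \autoref{thm:archimedeanintegralestimate} remain legitimate. They are, because the factor $H\bigl(1/(|a||x^2\mp1|^{1/2})\bigr)$ and all its derivatives vanish to infinite order as $|x^2\mp1|\to 0$ for fixed $a$, since $H$ and its derivatives decay rapidly at $+\infty$. This kills every boundary term and every negative power of $|x^2\mp1|$ produced by differentiation. I expect this to be the main thing to verify, and it is handled exactly as in the cited proof.
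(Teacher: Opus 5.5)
Your proposal is correct and is essentially the paper's proof, which likewise just applies \autoref{thm:archimedeanintegralestimate} term by term to \eqref{eq:archimedeanintegralestimate}. The only difference is bookkeeping in the $V$-term: the paper takes $H(t)=tV(t)$ with exponent $s-1$, so $\varphi$ keeps $d\in\{0,1\}$, whereas you take $H=V$ with exponent $s$ and move $|x^2\mp 1|^{-1/2}$ into $\varphi$; this lands in the $d=-1$ case that the theorem already permits, so your closing re-check of the integrations by parts is not needed.
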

\begin{proof}
Since we can replace $H(t)$ by $F(t)$ or $tV_\iota(t)$ in \autoref{thm:archimedeanintegralestimate}, by the expression \eqref{eq:archimedeanintegralestimate} we get the result.
\end{proof}

\subsubsection{The ramified case}
Now we consider the ramified case.
Observe that $\prescript{}{c}\Phi_{\xi,\alpha}(s,\chi)$ is the sum of
\begin{align*}
\prescript{}{c}\Phi_{\xi,\alpha}^1(s,\chi)=& \int_{(x,a)\in\RR^2}\int_{(y,b)\in\QQ_{S_\fin}^2} \frac{1}{|a|_\infty^{s-1}|b|_q^{s-1}} \theta_\infty(x,c)\widehat{\theta}_{q}(\chi,by,cb^2) \\
\times&F\legendresymbol{1/(|a|_\infty|b|_q)}{|x^2-4c|_\infty^{1/2}|y^2- 4c|_q'^{1/2}}\rme(-ax\xi-a\alpha)\rme_{q}(-by\xi-b\alpha)\rmd x\rmd a\rmd y\rmd b
\end{align*}
and
\begin{align*}
\prescript{}{c}\Phi_{\xi,\alpha}^2(s,\chi)=& \int_{(x,a)\in\RR^2}\int_{(y,b)\in\QQ_{S_\fin}^2} \frac{1}{|a|_\infty^{s-1}|b|_q^{s-1}} \theta_\infty(x,c)\widehat{\theta}_{q}(\chi,by,cb^2) \frac{1/(|a|_\infty|b|_q)}{|x^2-4c|_\infty^{1/2}|y^2- 4c|_q'^{1/2}}\\
\times&V\legendresymbol{1/(|a|_\infty|b|_q)}{|x^2-4c|_\infty^{1/2}|y^2- 4c|_q'^{1/2}}\rme(-ax\xi-a\alpha)\rme_{q}(-by\xi-b\alpha)\rmd x\rmd a\rmd y\rmd b.
\end{align*}

Hence it suffices to prove the following theorem:
\begin{theorem}\label{thm:globalnintegralestimate}
Let $H\in \cS$ and $\iota\in \{0,1\}$, $\epsilon\in \{0,\pm 1\}^r$. Let $\chi$ be a character of $\ZZ_{S_\fin}^\times$. Then the integral
\begin{align*}
\Phi(s):=& \int_{(x,a)\in\RR^2}\int_{(y,b)\in\QQ_{S_\fin}^2} \frac{1}{|a|_\infty^{s}|b|_q^{s}} \theta_\infty(x,c)\widehat{\theta}_{q}(\chi,by,cb^2) \\
\times&H\legendresymbol{1/(|a|_\infty|b|_q)}{|x^2-4c|_\infty^{1/2}|y^2- 4c|_q'^{1/2}}\rme(-ax\xi-a\alpha)\rme_{q}(-by\xi-b\alpha)\rmd x\rmd a\rmd y\rmd b
\end{align*}
can be analytically continued to the half plane $\Re s>-1/2$. Moreover, for any $\sigma\in\lopen -1/2,+\infty\ropen$ and $M_0$, $M_1$, $N_0$, $N_1>0$ we have
\[
\Phi(s)\ll |\xi|_\infty^{-M_0}\prod_{i=1}^{r}(1+|\xi|_{q_i})^{-M_1}(1+|\alpha|)^{-N_0} \prod_{i=1}^{r}(1+|\alpha|_{q_i})^{-N_1},
\]
where the implied constant depends only on $\sigma=\Re s$, $M_0$, $M_0$, $N_0$ and $N_1$.
\end{theorem}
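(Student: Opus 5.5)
We follow the proof of \autoref{thm:archimedeanintegralestimate}: integration by parts in the archimedean variables $(a,x)$, and the corresponding invariance arguments at each $q_i$. First we decompose the integrand. Write $\theta_\infty(x,c)=\theta_{\infty,0}+\theta_{\infty,1}$ and $\widehat{\theta}_q(\chi,\cdot)$ as a sum of products $\prod_i\theta_{q_i,\tau_i}$, restricted to the regions $X_\iota\times\prod_i Y_{\epsilon_i}$. On each piece the singular factors $|x^2-4c|^{\tau/2}$ and $|y_i^2-4c|_{q_i}'^{\tau_i/2}$ are explicit, and the remaining functions are smooth (locally constant at $q_i$) with bounded support. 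Then we use the Mellin inversion $H(t)=\frac{1}{\dpii}\int_{(\kappa)}\widetilde H(w)t^{-w}\rmd w$, with $\kappa$ chosen suitably; $\widetilde H$ decays rapidly since $H\in\cS$. This separates the variables: $\Phi(s)$ becomes an integral in $w$ of a product of an archimedean factor
\[
\int_{\RR}\int_{X_\iota}|a|_\infty^{w-s}\varphi(x)|x^2-4c|^{w/2}\rme(-ax\xi-a\alpha)\rmd x\rmd a
\]
and $r$ nonarchimedean factors of the same shape in $(y_i,b_i)$. Because $H$ couples $|a|$ and $|b|_q$ only through their product, this factorization is legitimate only inside the $w$-integral. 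Before it, we must check absolute convergence near $a=0$, $b=0$ and near the singular loci $x^2=4c$ and $y_i^2=4c$.

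The archimedean factor we treat exactly as in \autoref{thm:archimedeanintegralestimate}. We integrate by parts $N_0$ times in $a$ to gain $|\alpha|^{-N_0}$ when $|\alpha|\gg1$, and $L$ times in $x$ to gain $|a\xi|^{-L}$. The boundary terms vanish because the integrand vanishes to high order at $\partial X_\iota$ or is smooth across it. Together these give $|\xi|_\infty^{-M_0}(1+|\alpha|)^{-N_0}$ and continuation to $\Re s>-1/2$, uniformly for $w$ on the chosen line.

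For the $q_i$-factors, integration by parts is replaced by translation invariance. By \autoref{lem:orbitalintegralsmooth}, and since $\widehat\theta_q$ has a finite Fourier expansion, the integrand is invariant under $y_i\mapsto y_i+t$ for $t\in q_i^{L}\ZZ_{q_i}$ away from the singular locus. Near the locus we use the explicit Shalika form \eqref{eq:shalikalocal}, which depends only on $|y_i^2-4c|'$ and is therefore invariant under translations that are small relative to $|y_i^2-4c|$. We integrate first over $y_i$ in shells where $|y_i^2-4c|'_{q_i}$ is constant. The character $\rme_{q_i}(-b_iy_i\xi)$ integrates to zero unless $|b_i\xi|_{q_i}$ is bounded by a constant times the shell scale. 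This truncates the support in $b_i$ to $|b_i|_{q_i}\ll|\xi|_{q_i}^{-1}$ up to the scale of the shell. Similarly, invariance in $b_i$, coming from $\chi$ being trivial on $U$ and from $\theta_{q_i}(cT,c^2N)=\theta_{q_i}(T,N)$, kills the region $|b_i\alpha|_{q_i}$ large. The truncated $b_i$-integral of $|b_i|^{w-s}$ over $|b_i|\le R$ is $\asymp R^{1+\Re(w-s)}$, so it yields the factor $(1+|\xi|_{q_i})^{-M_1}(1+|\alpha|_{q_i})^{-N_1}$. Here we use that $\xi,\alpha\in\ZZ^S$ have only finitely many relevant valuations. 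The constants $M_1,N_1$ should come out arbitrary because the truncation is a sharp vanishing, not mere decay. Finally we shift $\kappa$ as needed and integrate in $w$ using the rapid decay of $\widetilde H$.

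The main obstacle is the nonarchimedean step near the singular set $y_i^2=4c$, especially at $q_i=2$, where the modified norm $|\cdot|'$ is defined in a case-by-case way. The invariance radius shrinks with $|y_i^2-4c|$, and we must check that the shell sums over scales converge for $\Re s>-1/2$ while still producing the claimed polynomial decay in $|\xi|_{q_i}$ and $|\alpha|_{q_i}$. To handle this, I would first establish, as a separate lemma, a local estimate for the $(y_i,b_i)$-integral of a function of the form $|y^2-4c|'^{\tau/2}\psi$ against $\rme_{q_i}(-b(y\xi+\alpha))$. I would prove it by an explicit computation over shells, taking care of the $2$-adic case separately.
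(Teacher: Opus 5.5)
\textbf{There is a genuine gap.} Your plan breaks where you Mellin-invert $H$ \emph{before} doing the archimedean integrations by parts.

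\emph{The archimedean factor.} The proof of \autoref{thm:archimedeanintegralestimate} that you want to reuse depends on $H$ itself. It is the factor $H\bigl(1/(|a|\,|x^2-4c|^{1/2})\bigr)$ that makes the integrand smooth at $a=0$ and flat on $\partial X_\iota$. After separation, the archimedean factor is $|a|^{w-s}\overline{\varphi}(x)|x^2-4c|^{(w+d)/2}$, which has neither property. On a line $\Re w=\kappa$ the requirements then clash:
\begin{enumerate}
\item $N_0$ integrations by parts in $a$ need $\kappa>\sigma+N_0-1$, for integrability at $a=0$;
\item $L$ integrations by parts in $x$ are legitimate only for $L<(\kappa+d)/2+1$, because of boundary terms and integrability at $\partial X_\iota$;
\item convergence as $|a|\to\infty$ needs $L>\kappa-\sigma+1$.
\end{enumerate}
The last two force $\kappa<2\sigma+d$. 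Hence $M_0$ and $N_0$ stay bounded, by roughly $\sigma+d+1$.

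\emph{The $q_i$-adic factors.} This step pulls $\kappa$ the other way. Your claim that $M_1,N_1$ come out arbitrary because the truncation is a sharp vanishing contradicts the obstacle you name yourself. On the shell $|y_i^2-4c|_{q_i}'\approx q_i^{-k}$ the invariance radius is about $q_i^{-k}$, so that shell survives until $|b_i\xi|_{q_i}$ exceeds about $q_i^{k}$. Summing the shells gives only $(1+|\xi|_{q_i})^{-1-(d_i+\kappa)/2}$ when $c$ is a square in $\QQ_{q_i}$. This is power decay whose exponent grows with $\kappa$, and it is what the paper takes from \cite[Theorem A.5]{cheng2025b}. So no single line $\Re w=\kappa$ serves both sides.

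\emph{Smaller points.}
\begin{itemize}
\item $b_i$ ranges over a compact subset of $\QQ_{q_i}^\times$, because the determinant is bounded away from $0$ on the support of $f_{q_i}$. So nothing happens at $b_i=0$, and no decay comes from $\int_{|b_i|\le R}|b_i|^{w-s}$.
\item Invariance in $b_i$ kills large $|b_i(y_i\xi+\alpha)|_{q_i}$, not large $|b_i\alpha|_{q_i}$. The set $|y_i\xi|_{q_i}=|\alpha|_{q_i}$ therefore needs separate treatment; this is the paper's Case~3, handled via \autoref{cor:absolutespherical}.
\item \autoref{lem:orbitalintegralsmooth} concerns the central scaling $(T,N)\mapsto(zT,z^2N)$, not translations in $y_i$.
\end{itemize}

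\textbf{The paper's route, and a warning about it.} The paper orders things the other way. It integrates by parts in $a$ and $x$ with $H$ still present. Only then, for fixed $(x,a)$, does it Mellin-invert $H$ to separate the finite places. It takes $\tau=\Re u$ large and pays with a factor $|a|^{\tau}$, which the archimedean decay is meant to absorb.

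That absorption is exactly where the difficulty sits, and it is not justified there either. Near $\partial X_\iota$, each $x$-derivative of $H\bigl(1/(|a|\,|x^2-4c|^{1/2}\cdots)\bigr)$ costs a factor $\asymp|a|^{2}$ on a set of width $\asymp|a|^{-2}$. So the claimed bound $\ll|a|^{-\Re s-L}$ after $L$ integrations by parts does not follow.

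In fact the stated bound fails in one configuration. Take $\alpha=-x_b\xi$ with $x_b\in\partial X_\iota$; for $\xi,\alpha\in\ZZ^S$ this is exactly the case $\delta=0$ with $c$ a square. Substitute $x-x_b=t/\xi^2$ and $a=\xi\beta$ in the integral of \autoref{thm:archimedeanintegralestimate}, with $d=0$ and $\overline{\varphi}(x_b)\neq0$. The integral then equals $|\xi|^{-1-s}\bigl(C_{H,s}\,\overline{\varphi}(x_b)+o(1)\bigr)$. For $\sigma>1$, $C_{H,s}$ is a nonzero multiple of $\sin(\pi s/2)\,\widetilde H(2s)$, so it is generically nonzero.

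Hence in that configuration the bound with arbitrary $M_0,N_0$ is false, whichever method is used. A correct version of your local lemma, and of the statement, has to allow a fixed power of $|\xi|_\infty$ there.
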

Now \autoref{thm:globalnintegralestimate} implies \autoref{thm:globalestimatefinal}.

To enhance readability, we only consider the nonarchimedean analog and we will prove \autoref{thm:globalnintegralestimate} in \autoref{sec:globalnintegralestimateproof}.

\begin{definition}
Let $\chi$ be a character on $\ZZ_p^\times$. We say a function $f$ on $\QQ_p$ is \emph{$\chi$-spherical} if for any $a\in \ZZ_p^\times$ and $x\in \QQ_p$ we have
\[
f(ax)=\chi(a)f(x).
\]
We say $f$ is \emph{spherical} if it is $\triv$-spherical.
\end{definition}
Clearly $\widehat{\theta}_{q_i}(\chi_i,by,cb^2)$ is $\chi_i$-spherical. Hence the nonarchimedean analog can be written as follows.
\begin{theorem}\label{thm:nonarchimedeanintegralestimate}
Let $H\in \cS$ and $\epsilon\in \{0,\pm 1\}$. Let $\chi$ be a character of $\ZZ_p^\times$. Let $\psi(y,b)$ be a continuous function on $Y_\epsilon\times \QQ_p$ such that the function is compactly supported (in $\QQ_p^\times$) and $\chi$-spherical with respect to $b$, and compactly supported (in $\QQ_p$) with respect to $y$ uniformly in $b$, and $\psi(y,b) =\overline{\psi}_i(y_i,b_i)|y_i^2-4c|_p'^{d_i/2}$ for some smooth function $\overline{\psi}_i$ and $d_i\in \ZZ_{\geq -1}$.
Then
\[
\Phi(s):=\int_{\QQ_p}\int_{Y_\epsilon}\frac{1}{|b|_p^{s}}\psi(y,b)H\legendresymbol {1/|b|_p}{|y^2-4c|_p'^{1/2}}\rme_p(-by\xi-b\alpha)\rmd y\rmd b 
\]
has an analytic continuation to the half plane $\Re s>-1/2$, and 
\[
\Phi(s)\ll(1+|\xi|_p)^{-M}(1+|\alpha|_p)^{-N}
\] 
for $\sigma\in \lopen -1/2,+\infty\ropen$. The implied constant depends only on $\psi$, $H$, $\sigma$, $M$ and $N$. 
\end{theorem}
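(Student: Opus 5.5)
The plan is to mimic the proof of \autoref{thm:archimedeanintegralestimate}, replacing integration by parts with its $p$-adic counterpart: invariance of the integrand under additive translations by small balls, which makes the character sums over those balls vanish. I would first decompose the $b$-integral into shells $|b|_p=p^{-m}$, $m\in\ZZ$. Since $\psi(\cdot,b)$ is compactly supported in $\QQ_p^\times$ in the variable $b$, only finitely many $m$ occur. On each shell $|b|_p^{-s}=p^{ms}$ is a constant, so $\Phi(s)$ is a finite sum of entire functions of $s$ times the shell integrals
\[
I_m=\int_{|b|_p=p^{-m}}\int_{Y_\epsilon}\psi(y,b)H\legendresymbol{p^{m}}{|y^2-4c|_p'^{1/2}}\rme_p(-by\xi-b\alpha)\rmd y\rmd b .
\]
The possible singularity of $\psi$, of type $|y^2-4c|_p'^{-1/2}$, is integrable. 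Moreover $H$ is bounded, and for $|y^2-4c|_p'$ small it decays rapidly. Hence each $I_m$ converges absolutely, which already gives the analytic continuation, indeed to all $s$, with bounds uniform on vertical strips. What remains is the decay in $\xi$ and $\alpha$.

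For the decay I would exploit local constancy. On the support, $y$ ranges over a compact set, and the function $y\mapsto \psi(y,b)H(\cdots)$ is locally constant away from the two points $y=\pm2\sqrt c$ (when these exist). Near these points I would cut out a ball $|y\mp2\sqrt c|_p\le p^{-K}$. Because $|y^2-4c|_p'\le p^{-K}$ there, the factor $H(p^m|y^2-4c|_p'^{-1/2})$ is $\ll_A p^{-AK/2}$ by rapid decay. On the complement, the integrand in $y$ is invariant under $y\mapsto y+t$ with $|t|_p\le p^{-L(K)}$, where $L(K)=O(K)+O(1)$ is controlled by the smoothness of $\overline\psi$ and by how $|y^2-4c|_p'$ varies. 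Summing $\rme_p(-bt\xi)$ over such a ball kills the integral once $|b\xi|_p>p^{L(K)}$, that is, once $|\xi|_p\gg p^{L(K)}$ (recall $|b|_p$ is bounded above and below). I would then choose $K\asymp \log_p|\xi|_p$. With this choice the non-singular part vanishes identically, and the singular part is $\ll|\xi|_p^{-M}$ for any prescribed $M$, after taking $A$ large. For bounded $|\xi|_p$ the trivial bound suffices.

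For $\alpha$ I would apply the same device to the $b$-variable. After the substitution $y\mapsto$ fixed $y$, the function $b\mapsto\psi(y,b)\rme_p(-by\xi)$ on a fixed shell is invariant under $b\mapsto b(1+t)$ for $|t|_p\le p^{-L'}$. This uses $\chi$-sphericity together with the local constancy of $\chi$, which holds because $\chi$ is trivial on some $1+p^{L_0}\ZZ_p$. The invariance is at level $L'$ depending on $|\xi|_p$ only polynomially in $\log$. I would handle $\alpha$ in the same region as $\xi$ by first fixing the truncation $K$ from the $\xi$-step and then averaging in $b$. The step I expect to be the main obstacle is the interplay between $\alpha$ and $\xi$: the phase $\rme_p(-by\xi)$ spoils the $b$-invariance when $|\xi|_p$ is large. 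I would resolve this by treating two regimes, mirroring the two cases in the archimedean proof. If $|\alpha|_p\ge C|\xi|_p p^{L_0}$, then the multiplicative averaging in $b$ kills the integral, since $\rme_p(-bt(y\xi+\alpha))$ averages to zero. Otherwise $|\alpha|_p\ll|\xi|_p$, so any power saving in $|\xi|_p$ also dominates $(1+|\alpha|_p)^{-N}$. Combining the two regimes gives $\Phi(s)\ll(1+|\xi|_p)^{-M}(1+|\alpha|_p)^{-N}$, uniformly on vertical lines with $\Re s>-1/2$.
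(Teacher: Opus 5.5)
Your proof is correct, but it is organized differently from the paper's.

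The paper splits the $y$-domain into $Y_{\epsilon,<}$, $Y_{\epsilon,=}$, $Y_{\epsilon,>}$ according to $|y\xi|_p$ versus $|\alpha|_p$. It then:
\begin{itemize}
\item uses the ultrametric inequality with \autoref{cor:absolutespherical} to replace $y\xi+\alpha$ by $\alpha$ or by $y\xi$ inside the $b$-Fourier transform;
\item kills large $|\alpha|_p$ because the Fourier transform of a compactly supported locally constant function of $b$ vanishes at high frequency;
\item gets all $\xi$-decay by quoting \autoref{lem:rapiddcay}, a special case of a lemma of \cite{cheng2025}.
\end{itemize}

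You split the parameter space $(\xi,\alpha)$ instead of the $y$-domain. You prove the $y$-decay directly: truncate a ball of radius $p^{-K}$ about $\pm2\sqrt c$, use the rapid decay of $H$ inside it and translation invariance at level $K+O(1)$ outside it, and take $K\approx\log_p|\xi|_p$. You get the $\alpha$-decay by the multiplicative averaging $b\mapsto b(1+t)$, $t\in p^{L_0}\ZZ_p$, applied to the full phase $\rme_p(-b(y\xi+\alpha))$. This gives exact vanishing for each fixed $y$ once $|\alpha|_p$ dominates. In the complementary regime, $(1+|\alpha|_p)^{-N}$ is absorbed into extra decay in $|\xi|_p$.

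Your route is self-contained. It also sidesteps the paper's use of \autoref{cor:absolutespherical}, which really gives a modulus identity only for each fixed $y$, hence an inequality after integrating in $y$. The paper's three-case layout has its own advantage: it carries over almost verbatim to the semilocal \autoref{thm:globalnintegralestimatefinal}. There $H$ is Mellin-inverted to separate the places, and \autoref{lem:rapiddcay} is replaced by \cite[Theorem A.5]{cheng2025b}.

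Two details need tightening.
\begin{itemize}
\item The vanishing regime should be $|\alpha|_p\geq Cp^{L_0}\max(1,|\xi|_p)$, not $|\alpha|_p\geq C|\xi|_pp^{L_0}$. When $|\xi|_p<1$, your condition does not force $|b(y\xi+\alpha)|_p>p^{L_0}$. The leftover case then has $|\alpha|_p\ll 1$ and is trivial, so this is easily fixed.
\item The claim $L(K)=K+O(1)$ needs $\overline{\psi}$ to be locally constant uniformly up to the points $\pm2\sqrt c$. This is the ``smooth up to boundary'' property from \autoref{subsec:singularities}. If $\overline{\psi}$ were merely locally constant on the open set $Y_\epsilon$, its level of constancy could grow arbitrarily fast in $K$, and you would lose polynomial decay in $|\xi|_p$. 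State explicitly that this is the reading of the hypothesis you use; the paper's route needs it too, through \autoref{lem:rapiddcay}.
\end{itemize}
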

Before proving this theorem, we need the following lemma.
\begin{lemma}
If $f$ is $\chi$-spherical, then $|f|$ is spherical.
\end{lemma}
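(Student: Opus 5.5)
The claim is that if $f$ is $\chi$-spherical then $|f|$ is spherical. The plan is to evaluate $|f|$ at a translate $ax$ and reduce to the defining identity for $f$. The only input needed is that a character of the compact group $\ZZ_p^\times$ takes values of modulus one.

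First, I record that $|\chi(a)|=1$ for every $a\in\ZZ_p^\times$. Here ``character'' means a continuous homomorphism $\chi:\ZZ_p^\times\to\CC^\times$. The image $\chi(\ZZ_p^\times)$ is a compact subgroup of $\CC^\times$. Composing with $\log|\cdot|$ gives a compact subgroup of $(\RR,+)$, which must be $\{0\}$. Hence the image lies in the unit circle. If the paper's convention already means unitary characters, this step is immediate. The characters actually used so far are trivial on some $1+p^{L}\ZZ_p$, so they have finite order and the statement is clear for them in any case.

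Second, fix $a\in\ZZ_p^\times$ and $x\in\QQ_p$. The $\chi$-spherical property gives $f(ax)=\chi(a)f(x)$. Taking absolute values,
\[
|f|(ax)=|\chi(a)|\,|f(x)|=|f|(x).
\]
By the definition with $\chi=\triv$, this says exactly that $|f|$ is $\triv$-spherical, i.e.\ spherical.

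The only step with any content is the unitarity of $\chi$, and I do not expect it to be an obstacle.
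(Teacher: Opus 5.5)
Your proof is correct and is the paper's argument: the paper's proof reads ``Clear since $\chi$ is unitary,'' which is exactly taking absolute values in $f(ax)=\chi(a)f(x)$. Your extra step, that a continuous character of the compact group $\ZZ_p^\times$ is automatically unitary, is correct and just fills in what the paper takes for granted.
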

\begin{proof}
Clear since $\chi$ is unitary.
\end{proof}
\begin{lemma}
Let $f\in L^1(\QQ_p)$. If $f$ is $\chi$-spherical, then its Fourier transform $\widehat{f}$ is $\chi^{-1}$-spherical.
\end{lemma}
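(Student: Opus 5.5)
We prove the statement by a direct change of variables in the defining integral of the Fourier transform, using the invariance of the additive Haar measure on $\QQ_p$ under multiplication by units. Fix the Fourier transform convention used in the paper, $\widehat{f}(\eta)=\int_{\QQ_p}f(x)\rme_p(x\eta)\rmd x$. The sign or normalization of the character $\rme_p$ plays no role, since only linearity of $x\mapsto x\eta$ is used. Since $f\in L^1(\QQ_p)$, the integral converges absolutely for every $\eta\in\QQ_p$, so $\widehat{f}$ is defined pointwise and any substitution is legitimate.

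Let $a\in\ZZ_p^\times$ and $\eta\in\QQ_p$. We compute
\[
\widehat{f}(a\eta)=\int_{\QQ_p}f(x)\rme_p(xa\eta)\rmd x .
\]
Substitute $x\mapsto a^{-1}x$. Because $|a^{-1}|_p=1$, the Haar measure satisfies $\rmd(a^{-1}x)=|a^{-1}|_p\rmd x=\rmd x$. This gives
\[
\widehat{f}(a\eta)=\int_{\QQ_p}f(a^{-1}x)\rme_p(x\eta)\rmd x .
\]

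Now apply $\chi$-sphericity with the unit $a^{-1}$: $f(a^{-1}x)=\chi(a^{-1})f(x)=\chi^{-1}(a)f(x)$. Pulling the constant out of the integral gives
\[
\widehat{f}(a\eta)=\chi^{-1}(a)\widehat{f}(\eta),
\]
which is exactly the statement that $\widehat{f}$ is $\chi^{-1}$-spherical.

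The only point that needs care is the measure-preservation step. It holds because the modulus of multiplication by $a$ on $\QQ_p$ is $|a|_p=1$ for a unit $a$. Everything else is formal.
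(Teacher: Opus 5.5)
Your proof is correct and is essentially the paper's argument: substitute $x\mapsto a^{-1}x$, which preserves Haar measure since $|a|_p=1$, then pull out $\chi(a^{-1})=\chi^{-1}(a)$ using $\chi$-sphericity. Your write-up is slightly cleaner than the paper's displayed chain, which has a typo: after extracting $\chi(a^{-1})$ it still writes $f(a^{-1}x)$ where $f(x)$ is meant.
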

\begin{proof}
For any $a\in \ZZ_p^\times$ we have
\[
\widehat{f}(a\xi)=\int_{\QQ_p}f(x)\rme_p(-xa\xi)\rmd x.
\]
By making change of variable $x\mapsto a^{-1}x$, we obtain
\[
\widehat{f}(a\xi)=\int_{\QQ_p}f(a^{-1}x)\rme_p(-x\xi)\rmd x=\chi(a^{-1}) \int_{\QQ_p}f(a^{-1}x)\rme_p(-x\xi)\rmd x=\chi^{-1}(a)\widehat{f}(\xi).\qedhere
\]
\end{proof}
By the above two lemmas, we obtain the following corollary:
\begin{corollary}\label{cor:absolutespherical}
If $f\in L^1(\QQ_p)$ is $\chi$-spherical, then $|\widehat{f}(\xi)|_p$ is spherical.
\end{corollary}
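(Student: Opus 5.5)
The plan is to chain the two preceding lemmas. Let $f\in L^1(\QQ_p)$ be $\chi$-spherical. The Fourier transform $\widehat{f}$ is a well-defined bounded continuous function on $\QQ_p$, since $f\in L^1$. By the lemma on Fourier transforms just proved, $\widehat{f}$ is $\chi^{-1}$-spherical, that is, $\widehat{f}(a\xi)=\chi^{-1}(a)\widehat{f}(\xi)$ for all $a\in \ZZ_p^\times$ and $\xi\in\QQ_p$.

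Next I apply the first lemma to $\widehat{f}$ with the character $\chi^{-1}$. The character $\chi^{-1}=\overline{\chi}$ is still unitary, because $\chi$ is a continuous character of the compact group $\ZZ_p^\times$. Hence $|\widehat{f}(a\xi)|=|\chi^{-1}(a)|\,|\widehat{f}(\xi)|=|\widehat{f}(\xi)|$, so $|\widehat{f}|$ is spherical.

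The statement writes $|\widehat{f}(\xi)|_p$. I read this as the complex absolute value of the complex number $\widehat{f}(\xi)$, since $\widehat{f}$ takes values in $\CC$. With that reading, the argument above gives exactly the claim.

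There is no real obstacle here. The only points to check are that $\chi^{-1}$ is unitary, which holds because characters of the compact group $\ZZ_p^\times$ are unitary, and that the Fourier transform lemma applies because $f\in L^1(\QQ_p)$.
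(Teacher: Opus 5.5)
Your proposal is correct and matches the paper's route: the paper derives the corollary by chaining the lemma that the Fourier transform of a $\chi$-spherical function is $\chi^{-1}$-spherical with the lemma that the absolute value of a $\chi$-spherical function is spherical, which uses the unitarity of $\chi^{-1}$. Your reading of $|\widehat{f}(\xi)|_p$ as the complex absolute value is also the intended one.
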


\begin{lemma}\label{lem:rapiddcay}
Under the assumptions of \autoref{thm:nonarchimedeanintegralestimate}, we have
\[
\int_{U}\psi(y,b)H\legendresymbol {1/|b|_p}{|y^2-4c|_p'^{1/2}}\rme_p(-by\xi)\rmd y\ll (1+|b\xi|_p)^{-N}
\]
for any $N>0$, where $U$ is a compact open subset of $Y_\epsilon$. The implied constant depends only on $U$, $\psi$ and $H$.
\end{lemma}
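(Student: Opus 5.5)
The plan is to replace integration by parts, used in the archimedean \autoref{thm:archimedeanintegralestimate}, by its nonarchimedean analogue. A function of $y$ that is invariant under translation by $p^m\ZZ_p$ has Fourier transform supported on $|\cdot|_p\leq p^m$. So I will show that, for $b$ in the relevant range, the function
\[
g_b(y)=\psi(y,b)H\legendresymbol{1/|b|_p}{|y^2-4c|_p'^{1/2}}\triv_U(y)
\]
is invariant under $y\mapsto y+p^m\ZZ_p$ for some $m$ independent of $b$. This gives the stronger conclusion that the integral \emph{vanishes} for $|b\xi|_p>p^m$, and is trivially bounded otherwise.

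The first step is local constancy in $y$. Since $U$ is compact open in $Y_\epsilon$ and the roots of $y^2-4c$ do not lie in any $Y_\epsilon$ ($\omega$ is undefined there), $U$ stays at positive distance from these roots. Hence $y\mapsto |y^2-4c|_p'$ is locally constant on $U$, with a uniform radius by compactness; in particular it takes finitely many values. Likewise $\overline{\psi}(y,b)$ is smooth, that is locally constant, and I claim it is uniformly so for $(y,b)\in U\times \operatorname{supp}_b$. The reason is that $\psi$ is compactly supported in $b\in\QQ_p^\times$ and $\chi$-spherical in $b$. So one can restrict $b$ to finitely many shells $p^k\ZZ_p^\times$ and write $b=p^k u$ with $\psi(y,p^ku)=\chi(u)\psi(y,p^k)$. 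The $y$-dependence then comes from finitely many locally constant functions $y\mapsto\overline{\psi}(y,p^k)$ on the compact set $U$, and these admit a common radius $p^{-m_0}$. The factor $H(\cdot)$ depends on $y$ only through $|y^2-4c|_p'$, and its argument depends on $b$ only through $|b|_p$. Enlarging $m_0$ so that $p^m\ZZ_p$-cosets refine both $U$ and the level sets of $|y^2-4c|_p'$, I get an $m$, depending only on $U$, $\psi$ and $c$, with $g_b(y+t)=g_b(y)$ for all $t\in p^m\ZZ_p$ and all $b$.

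The second step is the coset decomposition. Write $U=\bigsqcup_j (y_j+p^m\ZZ_p)$, a finite union. Then
\[
\int_U g_b(y)\rme_p(-by\xi)\rmd y=\sum_j g_b(y_j)\rme_p(-by_j\xi)\int_{p^m\ZZ_p}\rme_p(-bt\xi)\rmd t .
\]
The inner integral is $p^{-m}$ if $|b\xi|_p\leq p^m$ and $0$ otherwise, so the whole integral vanishes when $|b\xi|_p>p^m$. When $|b\xi|_p\leq p^m$, the integral is bounded by $\operatorname{vol}(U)\sup|\psi|\sup|H|$. Here $\sup|\psi|$ is finite because $|y^2-4c|_p'$ is bounded away from $0$ on $U$, which tames the possible factor $|y^2-4c|_p'^{-1/2}$ when $d=-1$; and $H\in\cS$ is bounded. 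Hence the integral is $\ll p^{mN}(1+|b\xi|_p)^{-N}$ for every $N$, with the implied constant depending only on $U$, $\psi$, $H$ and $N$.

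The main obstacle is the uniformity of the translation-invariance radius in $b$. This is exactly where the $\chi$-sphericity in $b$ and the compact support in $\QQ_p^\times$ are needed, to reduce to finitely many representatives $b=p^k$. I also need that the singular locus $y^2=4c$ is genuinely excluded from $U$, and I would check this carefully in the ramified case $\epsilon=0$. If it failed, one would instead have to split $U$ into shells according to $|y^2-4c|_p'$ and sum the contributions, controlling them with the $|\cdot|'^{d/2}$ factor.
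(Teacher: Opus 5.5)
Your argument is correct for the lemma as stated. It also takes a genuinely different route from the paper, which gives no argument here and only calls the lemma a baby case of \cite[Lemma B.8]{cheng2025}. Your version is self-contained, and its steps work as follows.
\begin{enumerate}
\item Compactness of $U$ inside $Y_\epsilon$ keeps $U$ at positive distance from the roots of $y^2-4c$. No case check is needed here, including $\epsilon=0$, since $U$ is closed in $\QQ_p$ and misses the roots. Hence $|y^2-4c|_p'$, and with it the $H$-factor, is uniformly locally constant on $U$.
\item $\chi$-sphericity and compact support of $\psi$ in $b\in\QQ_p^\times$ reduce uniformity in $b$ to finitely many representatives $b=p^k$.
\item Orthogonality of $t\mapsto\rme_p(-bt\xi)$ on $p^m\ZZ_p$ then gives exact vanishing for $|b\xi|_p>p^m$. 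This is stronger than the stated decay.
\end{enumerate}
One small correction: boundedness of $H$ is not part of the definition of $\cS$. This is harmless, because on $U\times\operatorname{supp}_b\psi$ the argument $\frac{1/|b|_p}{|y^2-4c|_p'^{1/2}}$ takes only finitely many values.

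What your approach does not give is anything beyond compact $U$, and this matters for how the lemma is used. In Cases 2 and 3 of the proof of \autoref{thm:nonarchimedeanintegralestimate}, it is applied on $Y_{\epsilon,>}$ and $Y_{\epsilon,=}$, cut down only by the $y$-support of $\psi$. That support is compact in $\QQ_p$ but not necessarily in $Y_\epsilon$, and it can accumulate at a root of $y^2-4c$ when $c$ is a square in $\QQ_p$. Near such a root there is in general no uniform translation-invariance radius, so one needs the shell decomposition you sketch at the end. The decay over the shells would have to come from $H\in\cS$, because its argument $\frac{1/|b|_p}{|y^2-4c|_p'^{1/2}}$ tends to $+\infty$ at a root. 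It cannot come from the factor $|y^2-4c|_p'^{d/2}$, which for $d=-1$ works against you.
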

\begin{proof}
This is essentially a baby case of \cite[Lemma B.8]{cheng2025}.
\end{proof}

Now we have all the ingredients for proving  \autoref{thm:nonarchimedeanintegralestimate}.
\begin{proof}[Proof of \autoref{thm:nonarchimedeanintegralestimate}]
Since $\psi(y,b)$ is compactly supported for $b\in \QQ_p^\times$, it is zero near $0$ and infinity. Hence $\Phi(s)$ is an entire function for $s$.

Note that the function
\[
\frac{1}{|b|_p^{s}}\psi(y,b)H\legendresymbol {1/|b|_p}{|y^2-4c|_p'^{1/2}}
\]
is $\chi$-spherical with respect to $b$ by assumption. We will split the integral over $y$ into three pieces: Let
\[
Y_<=\{y\in\QQ_p\,|\, |y\xi|_p<|\alpha|_p\},\ Y_==\{y\in\QQ_p\,|\, |y\xi|_p=|\alpha|_p\},\ Y_>=\{y\in\QQ_p\,|\, |y\xi|_p>|\alpha|_p\}.
\] 
and we define $Y_{\epsilon,i}=Y_\epsilon\cap Y_i$ for $i\in\{<,=,>\}$. Hence
\[
Y_\epsilon=Y_{\epsilon,<}\sqcup Y_{\epsilon,=}\sqcup Y_{\epsilon,>}.
\]
We write the corresponding functions of $\Phi$ as $\Phi_<$, $\Phi_=$ and $\Phi_>$. 

\underline{\emph{Case 1:}}\ \ $|y\xi|_p<|\alpha|_p$.

In this case $|y\xi+\alpha|_p=|\alpha|_p$. Hence by \autoref{cor:absolutespherical} we obtain
\[
|\Phi_<(s)|=\left|\int_{\QQ_p}\int_{Y_{\epsilon,<}}\frac{1}{|b|_p^{s}}\psi(y,b)H\legendresymbol {1/|b|_p}{|y^2-4c|_p'^{1/2}}\rme_p(-b\alpha)\rmd y\rmd b\right|.
\]
Hence it suffices to estimate the integral in the absolute value.

We claim that $\Phi_<(s)=0$ if $|\alpha|_p$ is sufficiently large (independent of $y$ and $\xi$). Indeed, since the integrand is zero near $0$ and infinity,
\[
\frac{1}{|b|_p^{s}}\psi(y,b)H\legendresymbol {1/|b|_p}{|y^2-4c|_p'^{1/2}}
\]
is smooth and compactly supported with respect to $b$. Hence the Fourier transform is $0$ for $|\alpha|_p$ sufficiently large.

Now we consider the case $|\alpha|_p\ll 1$. By definition of $\Phi_<(s)$ we have
\[
\Phi_<(s)=\int_{\QQ_p}\int_{Y_{\epsilon,<}}\frac{1}{|b|_p^{s}}\psi(y,b)H\legendresymbol {1/|b|_p}{|y^2-4c|_p'^{1/2}}\rme_p(-by\xi)\rme_p(-b\alpha)\rmd y\rmd b. 
\]

In this time, we consider the integral over $y$, that is
\begin{equation}\label{eq:integraly}
\int_{Y_{\epsilon,<}}\psi(y,b)H\legendresymbol {1/|b|_p}{|y^2-4c|_p'^{1/2}}\rme_p(-by\xi)\rmd y.
\end{equation}
Recall that
\[
Y_{\epsilon,<}=\left\{y\in Y_\epsilon\,\middle|\, |y|_p<\frac{|\alpha|_p}{|\xi|_p}\right\}
\]
Since the function
\[
\psi(y,b)H\legendresymbol {1/|b|_p}{|y^2-4c|_p'^{1/2}}
\]
is smooth at $0$, there exists a constant $C$ (independent of $\alpha$) depending only on $\psi$ such that if $|\xi|_p>C$, then the above function is a smooth function. Hence we find that $\eqref{eq:integraly}=0$ if $|b\xi|_p$ is sufficiently large (independent of $\alpha$). Since $|b|_p\asymp 1$ in the support of $\Phi$, we may replace $|b\xi|_p$ by $|\xi|_p$. Finally, for any $\alpha$ and $\xi$ we have the trivial estimate
\[
\Phi_<(s)\ll\int_{\QQ_p}\int_{Y_{\epsilon,<}}\left|\frac{1}{|b|_p^{s}}\psi(y,b)H\legendresymbol {1/|b|_p}{|y^2-4c|_p'^{1/2}}\right|\rmd y\rmd b\ll_{\sigma,\psi,H} 1.
\]
Therefore we have
\[
\Phi_<(s)\ll (1+|\xi|_p)^{-M}(1+|\alpha|_p)^{-N},
\]
where the implied constant depends only on $\psi$, $H$, $\sigma$, $M$ and $N$. 

\underline{\emph{Case 2:}}\ \ $|y\xi|_p>|\alpha|_p$.

In this case $|y\xi+\alpha|_p=|y\xi|_p$. Hence by \autoref{cor:absolutespherical} we obtain
\[
|\Phi_>(s)|=\left|\int_{\QQ_p}\int_{Y_{\epsilon,>}}\frac{1}{|b|_p^{s}}\psi(y,b)H\legendresymbol {1/|b|_p}{|y^2-4c|_p'^{1/2}}\rme_p(-by\xi)\rmd y\rmd b\right|.
\]
As in Case 1, the integration over $b$ vanishes if $|y\xi|_p$ is sufficiently large (independent of $\alpha$). Since $|y\xi|_p>|\alpha|_p$, $\Phi_>(s)=0$ for $|\alpha|_p$ sufficiently large. 

Now we assume that $|\alpha|_p\ll 1$. By \autoref{lem:rapiddcay} we have
\[
\int_{Y_{\epsilon,>}}\psi(y,b)H\legendresymbol {1/|b|_p}{|y^2-4c|_p'^{1/2}}\rme_p(-by\xi)\rmd y\ll (1+|b\xi|_p)^{-M}
\]
for any $M>0$. Since $|b|_p\asymp 1$, we know that the above integral is $(1+|\xi|_p)^{-M}$.

Therefore we have
\[
\Phi_>(s)\ll (1+|\xi|_p)^{-M}(1+|\alpha|_p)^{-N},
\]
where the implied constant depends only on $\psi$, $H$, $\sigma$, $M$ and $N$. 

\underline{\emph{Case 3:}}\ \ $|y\xi|_p=|\alpha|_p$.

First we assume that $|\alpha|\gg 1$. Then we have
\[
\Phi_=(s)=\frac{1}{|\alpha|_p^N}\int_{\QQ_p}\int_{Y_{\epsilon,=}}\frac{1}{|b|^s}\psi(y,b)H\legendresymbol{1/|b|_p }{|y^2-4c|_p'^{1/2}}|y\xi|_p^N\rme_p(-by\xi)\rme_p(-b\alpha)\rmd y\rmd b
\]
for any $N>0$.

Since $\psi$ is compactly supported with respect to $y$, we have $|y|_p^N\ll 1$ and hence by \autoref{lem:rapiddcay}, for any $M>0$ we have
\[
\int_{Y_{\epsilon,=}}\psi(y,b)H\legendresymbol {1/|b|_p}{|y^2-4c|_p'^{1/2}}|y\xi|_p^N\rme_p(-by\xi)\rmd y\ll |\xi|_p^N(1+|b\xi|_p)^{-M-N}\asymp (1+|\xi|_p)^{-M}
\]
since $|b|\asymp 1$. Therefore we obtain
\[
\Phi_=(s)\ll |\alpha|_p^{-N}(1+|\xi|_p)^{-M}.
\]
If $|\alpha|_p\ll 1$, we bound the integral over $y$ directly and obtain
\[
\int_{Y_{\epsilon,=}}\psi(y,b)H\legendresymbol {1/|b|_p}{|y^2-4c|_p'^{1/2}}\rme_p(-by\xi)\rmd y\ll (1+|b\xi|_p)^{-M}\asymp (1+|\xi|_p)^{-M}.
\]
Combining them we obtain 
\[
\Phi_=(s)\ll (1+|\alpha|_p)^{-N}(1+|\xi|_p)^{-M}.
\]
where the implied constant depends only on $\psi$, $H$, $\sigma$, $M$ and $N$.

Now the theorem follows from the above three cases and that
\[
\Phi(s)=\Phi_<(s)+\Phi_=(s)+\Phi_>(s).\qedhere
\]
\end{proof}

\subsection{Asymptotic formula for $S_G^{\xi\neq 0}(X)$}
In this subsection we compute the contribution of the $\xi\neq 0$ part. Recall that
\[
_cS_G^{\xi\neq 0}(X)=\prod_{i=1}^{r}(1-q_i^{-1})^{-1}\sum_{\chi}\frac{1}{\dpii}\int_{(\sigma)}\widetilde{G}(s) \sum_{\substack{\xi,\alpha\in \ZZ^S\\ \xi\neq 0}}{}^2_c D_{\xi,\alpha}^S(s,\chi){}\prescript{}{c}\Phi_{\xi,\alpha}(s,\chi)X^s\rmd s.
\]

We want to move the contour from $(\sigma)$ to $(\frac12+\varepsilon)$. By \autoref{thm:kloostermantypeseries}, the poles only come from the following cases:
\begin{enumerate}[itemsep=0pt,parsep=0pt,topsep=0pt,leftmargin=0pt,labelsep=2.5pt,itemindent=9pt,label=\textbullet]
  \item For $\xi,\alpha\in \ZZ^S$ such that $\delta=\xi^2-4c\alpha^2\neq 0$, there is a simple pole at $s=1$ if and only if 
      \begin{equation}\label{eq:defchidelta}
      \chi=\chi_\delta:=\legendresymbol{D}{\cdot},
      \end{equation}
      where $D\in \ZZ^S$ such that $v_p(D)\in \{0,1\}$ for all $p\notin S$ and $\delta=Dy^2$ for some $y\in \ZZ_{(S)}$. The residue at $s=1$ is 
      \[
      \res_{s=1}\prescript{2}{c}D_{\xi,\alpha}^S(s,\chi)=E(1)\prod_{p\in S_\delta-S}(1-p^{-1})\frac{1}{\zeta^{S_\delta}(2)}=\prod_{p\in S_\delta-S}\prescript{2}{c}D_{\xi,\alpha}^{(p)}(1,\chi)\prod_{p\in S_\delta-S}(1-p^{-1})\frac{1}{\zeta^{S_\delta}(2)}.
      \]
      Since $\chi$ is unramified at all places outside $S$ by definition, we must have $D\in (\ZZ^S)^\times$. Equivalently, $v_p(\delta)$ is even for all $p\notin S$. 
      
  \item For $\xi,\alpha\in \ZZ^S$ such that $\delta=\alpha^2-4c \xi^2=0$, we know that $c$ is a square. By rescaling we may assume that $c=1/4$. Hence we have $\xi=\pm\alpha$.
      
      First we assume that $\xi=\alpha$. In this case we have $\alpha'= 0$ and $\xi'=\xi\neq 0$ by \eqref{eq:defxiprime}. Hence $\kappa=\xi'=\xi$ and there exists a simple pole at $s=1$ with 
      \[
      \res_{s=1}{}\prescript{2}{\frac14}D_{\xi,\xi}^S(s,\chi)=\frac12E(1)\prod_{p\in S}(1-p^{-1})\frac{1}{\zeta^{S_{\xi}}(2)}=\frac12\prod_{\substack{p\mid \xi\\ p\notin S}}\prescript{}{\frac14}A_{\xi,\xi}^{(p)}(1,\chi)\prod_{p\in S}(1-p^{-1})\frac{1}{\zeta^{S_{\xi}}(2)}
      \]
      for any quadratic character $\chi$.
      
      Similarly, for the case $\xi=-\alpha$, we have $\xi'=0$ and $\alpha'=-\alpha=\xi$. Hence $\kappa=\alpha'=\xi$ and there exists a simple pole at $s=1$ with 
      \[
      \res_{s=1}\prescript{2}{\frac14}D_{\xi,-\xi}^S(s,\chi)=\frac12\prod_{\substack{p\mid \xi\\ p\notin S}}\prescript{}{\frac14}A_{\xi,-\xi}^{(p)}(1,\chi)\prod_{p\in S}(1-p^{-1})\frac{1}{\zeta^{S_{\xi}}(2)}
      \]
      for any quadratic character $\chi$.
\end{enumerate} 
\begin{lemma}\label{lem:sintegersum}
Suppose that $M>1$ and $N>M$. Then
\[
\sum_{\xi\in \ZZ^S-\{0\}}|\xi|_\infty^{-M}\prod_{i=1}^{r}(1+|\xi|_{q_i})^{-N}
\]
converges absolutely.
\end{lemma}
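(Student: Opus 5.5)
We will parametrize $\ZZ^S-\{0\}$ explicitly and reduce the sum to products of one-dimensional sums. Every nonzero $\xi\in\ZZ^S$ can be written uniquely as
\[
\xi=\pm m\,q^{\mu},\qquad m\in \ZZ_{(S)}^{>0},\ \mu=(\mu_1,\dots,\mu_r)\in \ZZ^r.
\]
For such $\xi$ we have $|\xi|_\infty=m q^{\mu}=m\prod_{i} q_i^{\mu_i}$ and $|\xi|_{q_i}=q_i^{-\mu_i}$. The summand therefore becomes
\[
m^{-M}\prod_{i=1}^{r}q_i^{-M\mu_i}\bigl(1+q_i^{-\mu_i}\bigr)^{-N},
\]
and the sum factors as $2\sum_{m}m^{-M}\cdot\prod_{i=1}^{r}\sum_{\mu_i\in\ZZ}q_i^{-M\mu_i}(1+q_i^{-\mu_i})^{-N}$. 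All terms are nonnegative, so absolute convergence is equivalent to finiteness of each factor.

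The first factor is bounded by $\zeta(M)<\infty$, since $M>1$.

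For each local factor we split according to the sign of $\mu_i$.
\begin{itemize}
\item If $\mu_i\geq 0$, then $(1+q_i^{-\mu_i})^{-N}\leq 1$. The terms are bounded by $q_i^{-M\mu_i}$, a convergent geometric series because $M>0$.
\item If $\mu_i=-t<0$, then $(1+q_i^{t})^{-N}\leq q_i^{-Nt}$. The terms are bounded by $q_i^{Mt-Nt}=q_i^{-(N-M)t}$, which is summable since $N>M$.
\end{itemize}
Hence every local factor is finite, and the whole sum converges absolutely.

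The argument has no real obstacle. The only point needing care is the bookkeeping of the product formula: the archimedean absolute value of $q^{\mu}$ grows exactly when the $q_i$-adic absolute value decays. This is precisely why the hypothesis $N>M$ is needed, rather than merely $N>0$.
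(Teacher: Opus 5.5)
Your proof is correct and is essentially the paper's argument. The paper splits $\xi$ according to the set $I$ of indices with $v_{q_i}(\xi)<0$ and absorbs the remaining nonnegative $q_j$-powers into an integer $\xi_0\in\ZZ-\{0\}$, whereas you factor completely via $\xi=\pm m q^{\mu}$; in both versions the negative valuations are controlled by $q_i^{-(N-M)t}$. Your exact factorization is also slightly cleaner than the paper's display: its last line has the exponent signs flipped, since $q_i^{Mc_i}/(1+q_i^{c_i})^{N}$ should read $q_i^{-Mc_i}/(1+q_i^{-c_i})^{N}$, and that corrected term is where $N>M$ is actually used.
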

\begin{proof}
The proof is similar to the last part of the proof of \cite[Proposition B.6]{cheng2025}. For rigidity we will give a proof. We have
\begin{align*}
   \sum_{\xi\in \ZZ^S}|\xi|_\infty^{-M}\prod_{i=1}^{r}(1+|\xi|_{q_i})^{-N}
   \ll&
   \sum_{I\subseteq \{1,\dots,r\}}\sum_{\substack{c_i\in \ZZ_{<0}\ i\in I\\ c_j\in \ZZ_{\geq 0}\ j\notin I}}\sum_{\substack{\xi\in \ZZ^S\\ v_{q_i}(\xi)=c_i \ i\in I}}|\xi|_\infty^{-M}\prod_{i=1}^{r} (1+|\xi|_{q_i})^{-N}\\
     \ll &\sum_{I\subseteq \{1,\dots,r\}}\prod_{i\in I}\sum_{\substack{c_i=-\infty\\ i\in I}}^{-1}\sum_{\xi\in \ZZ-\{0\}}\left(|\xi| \prod_{i\in I}q_i^{c_i}\right)^{-M}\prod_{i\in I}(1+q_i^{-c_i})^{-N}
     \\
     \ll &\sum_{\xi\in \ZZ-\{0\}}|\xi|^{-M}\sum_{I\subseteq \{1,\dots,r\}}\prod_{i\in I}\sum_{\substack{c_i=-\infty\\ i\in I}}^{-1}\frac{q_i^{Mc_i}}{(1+q_i^{c_i})^{N}},
\end{align*}
which converges absolutely since  $M>1$ and $N>M$.
\end{proof}
Now we can prove the asymptotic formula for $S_G^{\xi\neq 0}(X)$. 
\begin{theorem}\label{thm:contributionxinot0}
For $\alpha,\xi\in \ZZ^S$ we define $\delta=\prescript{}{c}\delta_{\xi,\alpha}=\alpha^2-4c\xi^2$. Then for any $\varepsilon>0$, we have
\[
S_G^{\xi\neq 0}(X)=\widetilde{G}(1)\ms{B}X+\widetilde{G}(1)\ms{C}X+O(\|G\|_{2,1}X^{\frac{1}{2}+\varepsilon}),
\]
where
\[
  \ms{B}=\sum_{c\in \{\pm 1\}\times q^{\FF_2^r}}\sum_{\substack{\xi,\alpha\in \ZZ^S\, \xi\neq 0\\ 2\mid v_p(\delta)\,\forall p\notin S}}\prod_{p\in S_\delta-S}\prescript{2}{c}D_{\xi,\alpha}^{(p)}(1,\chi_\delta) \\
  \times\prod_{p\in S_\delta-S}(1-p^{-1})\frac{1}{\zeta^{S_\delta}(2)}{}_c \Phi_{\xi,\alpha}(1,\chi_\delta)
  \]
and
\[
  \ms{C}=\frac12\sum_{\chi^2=\triv}\sum_{\xi\in \ZZ^S-\{0\}}\left(\prod_{\substack{p\mid \xi\\ p\notin S}}\prescript{}{\frac14}A_{\xi,\xi}^{(p)}(1,\chi){}_\frac14 \Phi_{\xi,\xi}(1,\chi)+\prod_{\substack{p\mid \xi\\ p\notin S}}\prescript{}{\frac14}A_{\xi,-\xi}^{(p)}(1,\chi)\prescript{}{\frac14}\Phi_{\xi,-\xi}(1,\chi)\right) \frac{1}{\zeta^{S_{\xi}}(2)}.
\]
The functions $\prescript{}{c}\Phi_{\xi,\alpha}(1,\chi)$, $\prescript{2}{c}D_{\xi,\alpha}^{(p)}(1,\chi)$ and $\prescript{}{c}{A}_{\xi,\alpha}^{(p)}(1,\chi)$ are defined in \eqref{eq:defphi}, \autoref{prop:splitkloostermanseries} and \autoref{prop:onezerosplit}, respectively, $S_\delta$ is defined in \eqref{eq:defsxi}, and $\chi_\delta$ is defined in \eqref{eq:defchidelta}. The sum over $\chi$ in the second equation is over all the quadratic characters such that it is unramified at places outside $S$.
The implied constant depends only on $f_\infty$, $f_{q_i}$ and $\varepsilon$.
\end{theorem}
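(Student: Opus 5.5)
We start from the Mellin representation derived at the beginning of \autoref{sec:mainanalysis},
\[
{}_cS_G^{\xi\neq 0}(X)=\prod_{i=1}^{r}(1-q_i^{-1})^{-1}\sum_{\chi}\frac{1}{\dpii}\int_{(\sigma)}\widetilde{G}(s) \sum_{\substack{\xi,\alpha\in \ZZ^S\\ \xi\neq 0}}{}^2_c D_{\xi,\alpha}^S(s,\chi)\,{}_c\Phi_{\xi,\alpha}(s,\chi)X^s\rmd s,
\]
with $\sigma$ large (say $\sigma=2$), and we sum over the finitely many $c\in\{\pm1\}\times q^{\FF_2^r}$ at the end. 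The sum over $\chi$ is finite. The plan is to shift the contour to $(\frac12+\varepsilon)$, pick up residues at $s=1$, and bound what remains.

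\textbf{Step 1: absolute convergence.} We first check that the double sum over $(\xi,\alpha)$ converges absolutely and uniformly for $\Re s\geq \frac12+\varepsilon$, away from $s=1$. This lets us interchange sum and integral and move the contour term by term. By \autoref{thm:kloostermanmoderate}, ${}^2_cD^S_{\xi,\alpha}(s,\chi)$ grows at most polynomially: it is bounded by $(1+|s|)^{1/2}$ times a fixed power $B,C$ of $(1+|\xi|_v)$ and $(1+|\alpha|_v)$ for $v\in S$. We multiply by $(s-1)$ to neutralise the pole before applying this bound. On the other side, \autoref{thm:globalestimatefinal} gives arbitrary polynomial decay of ${}_c\Phi_{\xi,\alpha}(s,\chi)$ in $|\xi|_\infty$, $|\xi|_{q_i}$, $|\alpha|_\infty$ and $|\alpha|_{q_i}$, uniformly on vertical lines. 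Choosing $M_1,M_2,N_1,N_2$ larger than $B,C$ plus a margin, \autoref{lem:sintegersum} (applied separately to $\xi\neq0$ and to $\alpha$, with the term $\alpha=0$ handled trivially) shows the sum converges absolutely.

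\textbf{Step 2: poles in $\frac12<\Re s\leq\sigma$.} By \autoref{thm:kloostermantypeseries}, item \autoref{item:610secondterm} cannot occur since $\xi\neq0$, so only the other two items contribute.
\begin{itemize}
\item In item \autoref{item:610firstterm}, the factor $E(s)$ is entire and $L^{S_\delta}(2s,\chi^2)^{-1}$ is holomorphic there. Hence the only pole is at $s=1$, coming from $L^{S_\delta}(s,\chi\legendresymbol{\delta}{\cdot})$, and it occurs exactly when $\chi=\chi_\delta$. Since $\chi$ is unramified outside $S$, this forces $v_p(\delta)$ to be even for every $p\notin S$. The residue is the one listed before \autoref{lem:sintegersum}; summing it over $c,\xi,\alpha$ gives $\widetilde{G}(1)\ms{B}X$.
\item In item \autoref{item:610thirdterm}, $c$ is a square, which we normalise to $c=\frac14$, so $\alpha=\pm\xi$. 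The pole comes from $L^S(2s-1,\chi^2)$ and exists only when $\chi^2=\triv$. These residues give $\widetilde{G}(1)\ms{C}X$.
\end{itemize}
The factor $\prod(1-q_i^{-1})^{-1}$ cancels against the $\prod_{p\in S}(1-p^{-1})$ coming from the residues of the partial zeta and $L$-functions. The bounds from Step 1, evaluated at $s=1$, show that $\ms{B}$ and $\ms{C}$ converge absolutely.

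\textbf{Step 3: the shifted integral.} On $(\frac12+\varepsilon)$, the same bounds give an integrand of size at most $|\widetilde{G}(s)|(1+|s|)^{1/2+\varepsilon}X^{1/2+\varepsilon}$, where the power of $|s|$ comes from \autoref{thm:kloostermanmoderate} with $A=\frac12$. By \autoref{cor:mellinnorm} this is $\ll\|G\|_{2,1}X^{1/2+\varepsilon}$. The horizontal segments of the contour vanish because $\widetilde{G}$ decays rapidly (\autoref{lem:grapiddeacy}).

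\textbf{Main obstacle: uniformity in $s$ of the bound for ${}_c\Phi_{\xi,\alpha}(s,\chi)$.} \autoref{thm:globalestimatefinal} is stated with constants depending only on $\Re s$. Step 3 needs this bound to hold with at most polynomial growth in $|\Im s|$. If it does not, we would redo the integration-by-parts argument of \autoref{thm:archimedeanintegralestimate}: each derivative in $a$ brings down at most a factor $|s|$, so the loss is polynomial in $|s|$. That polynomial loss is then absorbed by the rapid decay of $\widetilde{G}$, at the cost of a higher Sobolev norm of $G$.
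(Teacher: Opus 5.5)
This is essentially the paper's own proof. You shift the contour to $(\frac12+\varepsilon)$ and collect the residues at $s=1$ coming from \autoref{item:610firstterm} and \autoref{item:610thirdterm} of \autoref{thm:kloostermantypeseries}, which produce $\widetilde{G}(1)\ms{B}X$ and $\widetilde{G}(1)\ms{C}X$; you then bound the shifted integral via \autoref{thm:kloostermanmoderate}, \autoref{thm:globalestimatefinal}, \autoref{lem:sintegersum} and \autoref{cor:mellinnorm} to get $O(\|G\|_{2,1}X^{1/2+\varepsilon})$. Your two extra precautions address points the paper passes over: inserting the factor $(s-1)$ before invoking \autoref{thm:kloostermanmoderate}, and asking whether \autoref{thm:globalestimatefinal} is uniform in the imaginary part of $s$ (a legitimate concern, since differentiating $|a|^{-s}$ in the proof of \autoref{thm:archimedeanintegralestimate} does produce powers of $|s|$), but your fallback for the latter would only give the error term with a higher Sobolev norm of $G$ than the stated $\|G\|_{2,1}$.
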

\begin{proof}
We first consider the estimate of $\prescript{}{c}S_G^{\xi\neq 0}(X)$. 
By moving the contour from $(\sigma)$ to $(\frac12+\varepsilon)$, the residue formula, and the argument at the beginning of this subsection, we have
\begin{align*}
_cS_G^{\xi\neq 0}(X)&=\prod_{i=1}^{r}(1-q_i^{-1})^{-1}\sum_{\chi}\frac{1}{\dpii}\int_{(\frac12+\varepsilon)}\widetilde{G}(s) \sum_{\substack{\xi,\alpha\in \ZZ^S\\ \xi\neq 0}}{}^2_c D_{\xi,\alpha}^S(s,\chi){}\prescript{}{c}\Phi_{\xi,\alpha}(s,\chi)X^s\rmd s\\
&+\sum_{\substack{\xi,\alpha\in \ZZ^S\, \xi\neq 0\\ 2\mid v_p(\delta)\,\forall p\notin S}}\prod_{p\in S_\delta-S}\prescript{2}{c}D_{\xi,\alpha}^{(p)}(1,\chi_\delta) \prod_{p\in S_\delta-S}(1-p^{-1})\frac{1}{\zeta^{S_\delta}(2)}\prescript{}{c}\Phi_{\xi,\alpha} (1,\chi_\delta)
\end{align*}
if $c\in \{\pm 1\}\times q^{\FF_2^r}$ is not a square and for $c=1/4$,
\begin{align*}
_cS_G^{\xi\neq 0}(X)=&\prod_{i=1}^{r}(1-q_i^{-1})^{-1}\sum_{\chi}\frac{1}{\dpii}\int_{(\frac12+\varepsilon)}\widetilde{G}(s) \sum_{\substack{\xi,\alpha\in \ZZ^S\\ \xi\neq 0}}{}^2_c D_{\xi,\alpha}^S(s,\chi){}\prescript{}{c}\Phi_{\xi,\alpha}(s,\chi)X^s\rmd s\\
+&\sum_{\substack{\xi,\alpha\in \ZZ^S\, \xi\neq 0\\ 2\mid v_p(\delta)\,\forall p\notin S}}\prod_{p\in S_\delta-S}\prescript{2}{c}D_{\xi,\alpha}^{(p)}(1,\chi_\delta) \prod_{p\in S_\delta-S}(1-p^{-1})\frac{1}{\zeta^{S_\delta}(2)}\prescript{}{c}\Phi_{\xi,\alpha} (1,\chi_\delta)\\
+&\sum_{\chi^2=\triv}\sum_{\xi\in \ZZ^S-\{0\}}\left(\prod_{\substack{p\mid \xi\\ p\notin S}}{}_{\frac14}A_{\xi,\xi}^{(p)}(1,\chi){}_\frac14 \Phi_{\xi,\xi}(1,\chi)+\prod_{\substack{p\mid \xi\\ p\notin S}}{}_{\frac14}A_{\xi,-\xi}^{(p)}(1,\chi){}_\frac14 \Phi_{\xi,-\xi}(1,\chi)\right) \frac{1}{\zeta^{S_{\xi}}(2)}.
\end{align*}

Next we estimate the integral
\begin{equation}\label{eq:sxialphaestimate}
\int_{(\frac12+\varepsilon)}\widetilde{G}(s) \sum_{\substack{\xi,\alpha\in \ZZ^S\\ \xi\neq 0}}{}^2_c D_{\xi,\alpha}^S(s,\chi){}\prescript{}{c}\Phi_{\xi,\alpha}(s,\chi)X^s\rmd s.
\end{equation}

By \autoref{thm:kloostermanmoderate}, there exists $B,C>0$ such that 
\[
{}^2_c D_{\xi,\alpha}^S(s,\chi)\ll (1+|s|)^{1/2}(1+|\xi|)^B\prod_{i=1}^{r}(1+|\xi|_{q_i})^B (1+|\alpha|)^C\prod_{i=1}^{r}(1+|\alpha|_{q_i})^C.
\]
By \autoref{thm:globalestimatefinal} for any $M_1,M_2,N_1,N_2>0$ we have
\[
\prescript{}{c}\Phi_{\xi,\alpha}(s,\chi)\ll|\xi|^{-M_1}\prod_{i=1}^{r}(1+|\xi|_{q_i})^{-M_2}(1+|\alpha|)^{-N_1} \prod_{i=1}^{r}(1+|\alpha|_{q_i})^{-N_2}.
\]
Hence we know that for any $K_1,K_2,L_1,L_2>0$ we have
\[
\eqref{eq:sxialphaestimate}\ll \int_{(\frac12+\varepsilon)}|\widehat{G}(s)|(1+|s|)^{1/2}\sum_{\substack{\xi,\eta\in \ZZ^S\\\xi\neq 0}}|\xi|^{-K_1} \prod_{i=1}^{r}(1+|\xi|_{q_i})^{-K_2}(1+|\alpha|)^{-L_1} \prod_{i=1}^{r}(1+|\alpha|_{q_i})^{-L_2}\rmd |s| X^{\frac12+\varepsilon}.
\]
Clearly we may choose $K_i$ and $L_i$ such that $1<K_1<K_2$ and $1<L_1<L_2$. Hence by \autoref{lem:sintegersum} we have
\[
\sum_{\substack{\xi,\eta\in \ZZ^S\\\xi\neq 0}}|\xi|^{-K_1} \prod_{i=1}^{r}(1+|\xi|_{q_i})^{-K_2}(1+|\alpha|)^{-L_1} \prod_{i=1}^{r}(1+|\alpha|_{q_i})^{-L_2}\ll 1.
\]
Therefore by \autoref{cor:mellinnorm},
\[
\eqref{eq:sxialphaestimate}\ll \int_{(\frac12+\varepsilon)}|\widehat{G}(s)|(1+|s|)^{1/2}\rmd |s|X^{\frac12+\varepsilon}\ll \|G\|_{M_{1/2+\varepsilon}^{1/2}}X^{\frac12+\varepsilon}\ll \|G\|_{2,1}X^{\frac12+\varepsilon}.
\]

Since
\[
S_G^{\xi\neq 0}(X)=\sum_{c\in \{\pm 1\}\times q^{\FF_2^r}}{}_cS_G^{\xi\neq 0}(X),
\]
we obtain the desired result.
\end{proof}

Combining \autoref{thm:contributionxi0} and \autoref{thm:contributionxinot0}, we obtain

\begin{theorem}\label{thm:contributionxig}
Suppose that \autoref{ass:nonarchimedean} holds. Then for any $\varepsilon>0$, we have
\[
S_G(X)=\widetilde{G}(1)(\ms{A}+\ms{B}+\ms{C})X+O(\|G\|_{3,1}X^{\frac{1}{2}+\varepsilon}),
\]
where the implied constant depends only on $f_\infty$, $f_{q_i}$ and $\varepsilon$.
\end{theorem}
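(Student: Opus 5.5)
The statement to prove is \autoref{thm:contributionxig}. I would obtain it by splitting $S_G(X)$ according to whether the dual variable $\xi$ vanishes, namely
\[
S_G(X)=S_G^{\xi=0}(X)+S_G^{\xi\neq 0}(X).
\]
This splitting is legitimate because, after the first Poisson summation (\cite[Section 4]{cheng2025}), $\Sigma^{n^2}(\xi)$ is an absolutely convergent sum over $\xi\in\ZZ^S$. The second Poisson summation of \autoref{subsec:secondpoisson} is carried out only on the $\xi\neq 0$ terms. Absolute convergence, which \autoref{lem:secondsmooth} and Corollary B.4 of \cite{cheng2025} give before the split, allows the two pieces to be treated independently.

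For the $\xi=0$ piece I would invoke \autoref{thm:contributionxi0}, which uses \autoref{ass:nonarchimedean}. It gives
\[
S_G^{\xi=0}(X)=\ms{A}\widetilde{G}(1)X+O(\|G\|_{3,1}X^{1/2+\varepsilon}).
\]
The $X^{3/2}$ terms of \autoref{prop:sigmaxi01} and \autoref{prop:sigmaxi02} have already been shown to cancel, and the $X^2$ term vanishes by \autoref{lem:supercuspidaltrivialvanish}. For the $\xi\neq 0$ piece I would invoke \autoref{thm:contributionxinot0}, which needs no assumption. It gives
\[
S_G^{\xi\neq 0}(X)=\widetilde{G}(1)(\ms{B}+\ms{C})X+O(\|G\|_{2,1}X^{1/2+\varepsilon}).
\]

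Adding the two expansions yields the main term $\widetilde{G}(1)(\ms{A}+\ms{B}+\ms{C})X$. Since $\|G\|_{2,1}\leq \|G\|_{3,1}$ by the definition of the Sobolev norm, the error terms combine into $O(\|G\|_{3,1}X^{1/2+\varepsilon})$. The implied constants in both theorems depend only on $f_\infty$, $f_{q_i}$ and $\varepsilon$, and there are finitely many $c\in\{\pm1\}\times q^{\FF_2^r}$, so the combined constant has the same dependence.

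The only point needing care is to check that $\ms{A}$, $\ms{B}$ and $\ms{C}$ are finite constants independent of $X$ and $G$. For $\ms{B}$ and $\ms{C}$ this follows from the bounds \autoref{thm:kloostermanmoderate} and \autoref{thm:globalestimatefinal} at $s=1$, together with \autoref{lem:sintegersum}, as in the proof of \autoref{thm:contributionxinot0}. For $\ms{A}$, the sum over $\nu$ is finite because of compact support in the determinant. I expect no serious obstacle: the substantive work lies in the two cited theorems, and this proof is bookkeeping.
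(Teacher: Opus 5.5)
Your proposal is correct and is exactly the paper's argument. The paper proves this theorem by adding \autoref{thm:contributionxi0} (the $\xi=0$ part) and \autoref{thm:contributionxinot0} (the $\xi\neq 0$ part), and absorbing the $O(\|G\|_{2,1}X^{1/2+\varepsilon})$ error into $O(\|G\|_{3,1}X^{1/2+\varepsilon})$.
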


\section{Comparison of $S(X)$ and $S_G(X)$}
The goal of this section is to give an asymptotic formula of $S(X)$ from that of $S_G(X)$. To do this, we need to bound a single term $\Sigma^n(\xi)$. We will always assume that \autoref{ass:nonarchimedean} holds in this section. The method of this section is the same as that of \cite[Section 7]{cheng2025c}.
\begin{proposition}\label{prop:ramanujan}
For any $\varepsilon>0$, we have
\[
\Sigma^n(\xi)\ll_\varepsilon n^{\varrho+\varepsilon},
\]
where $\varrho$ is a bound towards the Ramanujan conjecture.
\end{proposition}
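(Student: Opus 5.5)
Under \autoref{ass:nonarchimedean}, the plan is to run the identity $\Sigma^n(\xi)=I_{\el}(f^n)+\Sigma^n(\square)$ in reverse and pass through the spectral side. The simple trace formula, available because $f_\infty$ is supercuspidal and $f_{q_1}$ is supercuspidal or elliptically supported, gives $I_{\el}(f^n)=I_{\cusp}(f^n)$. The first step is to show $\Sigma^n(\square)=0$. By \cite{cheng2025} the square term is a sum over $\nu$ and signs of products of local factors. Its archimedean factor is (up to constants) $\sum_\pm\int\theta_\infty^\pm$, or a finite-part variant of it. By \autoref{lem:supercuspidaltrivialvanish}(1) this integral vanishes, since it equals an integral of $f_\infty$ over $Z_+\bs\G(\RR)$, which is zero by supercuspidality. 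The $q_1$ factor likewise vanishes by \autoref{lem:supercuspidaltrivialvanish}(2) or, in the elliptic case, by \autoref{lem:hyperbolicvanish}. I would check from the explicit formula in \cite{cheng2025} that every summand of $\Sigma^n(\square)$ carries one of these two factors. The outcome is $\Sigma^n(\xi)=I_{\cusp}(f^n)$.

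Next I would expand the spectral side:
\[
I_{\cusp}(f^n)=\sum_\pi m_\pi\prod_{v\in S}\Tr(\pi_v(f_v))\prod_{p\notin S}\Tr(\pi_p(f^n_p)).
\]
Only $\pi$ unramified outside $S$ contribute, and the archimedean component must be matched by $f_\infty$. Since $f_\infty$ is supercuspidal and $Z_+$-invariant, $\Tr\pi_\infty(f_\infty)$ is nonzero only for $\pi_\infty$ in a finite set of discrete series classes. Together with the fixed level at $S$ coming from the $f_{q_i}$, this leaves a finite set of $\pi$, independent of $n$. For $p\notin S$ the normalization $f^n_p=p^{-n_p}\triv_{\cX_p^{n_p}}$ (with the appropriate central character twist) gives $\Tr\pi_p(f^n_p)=\lambda_\pi(p^{n_p})$, the normalized Hecke eigenvalue. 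It equals $\sum_{j=0}^{n_p}\alpha_p^j\beta_p^{n_p-j}$ up to a unitary central factor, where $\alpha_p\beta_p$ is unitary and $|\alpha_p|,|\beta_p|\le p^{\varrho}$.

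It then follows that $|\lambda_\pi(p^{m})|\le(m+1)p^{m\varrho}$. Multiplying over $p\mid n$ gives $|\lambda_\pi(n)|\le\bm d(n)\,n^{\varrho}\ll_\varepsilon n^{\varrho+\varepsilon}$, and summing over the finitely many $\pi$ yields the claim. The main obstacle I expect is the vanishing of $\Sigma^n(\square)$: one has to confirm that the precise shape of the square term in \cite{cheng2025}, including finite-part terms at $s$-poles, factors through the two vanishing integrals. If it only partially factors, the fallback is to use Arthur's simple trace formula directly, where all nonelliptic terms vanish and so $\Sigma^n(\square)$ must equal zero by comparing the two expressions for $I_{\el}$.
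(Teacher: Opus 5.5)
Your outer structure matches the paper: the simple trace formula gives $I_{\el}(f^n)=I_{\cusp}(f^n)$, the term $\Sigma^n(\square)$ vanishes, and a Ramanujan-type bound finishes. Your spectral bound via $|\lambda_\pi(p^m)|\le(m+1)p^{m\varrho}$ is a correct expansion of what the paper simply calls ``the definition of $\varrho$''.

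The gap is the step you flag yourself. The vanishing of $\Sigma^n(\square)$ cannot come from \autoref{lem:supercuspidaltrivialvanish}. The square term is a sum over individual $T\in\ZZ^S$ with $T^2\mp 4nq^\nu$ a square. Each summand contains the point values $\theta_\infty^\pm\bigl(T/(2n^{1/2}q^{\nu/2})\bigr)$ and $\theta_{q_i}(T,\pm nq^\nu)$, multiplied by $F$- and $V$-terms. It does not contain integrals of $\theta$. The lemma only says a total integral vanishes (summed over the sign, resp.\ over $\nu$ and $z$), and that says nothing about these point values. So the check you plan to run on the formula in \cite{cheng2025} will fail. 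Integrals of $\theta$ appear only after Poisson summation, in the $\xi=0$ analysis.

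Your fallback is a non sequitur. $\Sigma^n(\square)$ is not a term of Arthur's trace formula; it is the auxiliary correction defined by $\Sigma^n(\xi)=I_{\el}(f^n)+\Sigma^n(\square)$. The simple trace formula only yields $\Sigma^n(\xi)-\Sigma^n(\square)=I_{\cusp}(f^n)$, which does not determine $\Sigma^n(\square)$.

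The missing idea is pointwise. If $T^2\mp 4nq^\nu$ is a nonzero square, the element $\gamma$ with $\Tr\gamma=T$ and $\det\gamma=\pm nq^\nu$ is regular split, so in particular it is hyperbolic in $\G(\RR)$. Since $f_\infty$ is supercuspidal, its hyperbolic orbital integrals vanish, so $\theta_\infty^\pm$ vanishes on $X_0$ by \autoref{lem:hyperbolicvanish}(1). Hence every summand of $\Sigma^n(\square)$ is individually zero; this is exactly the paper's proof, and with it your argument is complete.

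One minor point: for a supercuspidal $f_\infty\in C^\infty(Z_+\bs\G(\RR))$ that is not $K$-finite, infinitely many discrete series can have nonzero trace. So ``finitely many $\pi$'' should be replaced by absolute convergence of $\sum_\pi m_\pi\prod_{v\in S}|\Tr\pi_v(f_v)|$, which still gives a bound uniform in $n$.
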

\begin{proof}
By the definition of $\varrho$, we have
\[
I_\cusp(f^n)\ll_\varepsilon n^{\varrho+\varepsilon}
\]
for any $\varepsilon>0$. By \autoref{ass:nonarchimedean} we obtain
\[
\Sigma^n(\xi)-\Sigma^n(\square)=I_\el(f^n)\ll_\varepsilon n^{\varrho+\varepsilon}.
\]
Hence it suffices to show that $\Sigma^n(\square)=0$.

Recall that
\begin{align*}
\Sigma^n(\square)=&2\sum_{\pm}\sum_{\nu\in \ZZ^r}q^{\nu/2}\sum_{\substack{T\in \ZZ^S\\ T^2\mp 4nq^\nu= \square}}\sum_{\substack{f\in \ZZ_{(S)}\\ f^2\mid T^2\mp 4nq^\nu}}  \sum_{k\in \ZZ_{(S)}^{>0}}\frac{1}{kf}\legendresymbol{(T^2\mp 4nq^\nu)/f^2}{k}\theta_\infty^\pm\legendresymbol{T}{2n^{1/2}q^{\nu/2}}\\
    \times &\prod_{i=1}^{r}\theta_{q_i}(T,\pm nq^\nu)\left[F\legendresymbol{kf^2}{|T^2\mp 4nq^\nu|_{\infty,q}'^{\vartheta}}+\frac{kf^2}{\sqrt{|T^2\mp 4nq^\nu|_{\infty,q}'}}V\legendresymbol{kf^2}{|T^2\mp 4nq^\nu|_{\infty,q}'^{1-\vartheta}}\right].
\end{align*}

Suppose that $T^2\mp 4nq^\nu$ is a square. If $T^2\mp 4nq^\nu\neq 0$, then the element $\gamma$ with $\Tr\gamma=T$ and $\det\gamma=\pm nq^\nu$ is regular hyperbolic. Since we are in \autoref{ass:nonarchimedean}, there exists $v\in S$ such that $f_v$ is supercuspidal. Hence by \autoref{lem:hyperbolicvanish}, we know that 
\[
\theta_\infty^\pm\legendresymbol{T}{2n^{1/2}q^{\nu/2}}=0
\]
if $f_\infty$ is supercuspidal and
\[
\theta_{q_i}(T,\pm nq^\nu)=0
\]
if $f_{q_i}$ is supercuspidal. Thus we proved that $\Sigma^n(\square)=0$.
\end{proof}

Now we choose $G$ more specific. Fix $\delta\in \lopen 0,1\ropen$. Let $\phi\in C_c^\infty(\lopen -1,1\ropen)$ such that $\int\phi=1$. For any $Y>1$, let $\phi_{Y^\delta}(x)=Y^{1-\delta}\phi(xY^{1-\delta})$ be the kernel function. For $\beta\in [1/2,1\ropen$, we define
\[
G_{Y^\delta,\beta}(x)=\triv_{[\beta,1]}* \phi_{Y^\delta}(x)=Y^{1-\delta}\int_{\RR}\triv_{[\beta,1]}(x-y) \phi(yY^{1-\delta})\rmd y.
\]
Thus we have
\begin{equation}\label{eq:propertyconvolution}
G_{Y^\delta,\beta}\legendresymbol{x}{Y}=\begin{cases}
                                          1, & \beta Y+Y^\delta\leq x\leq Y-Y^\delta \\
                                          O(1), & |x-\beta Y|<Y^\delta\ \text{or}\ |x-Y|<Y^\delta\\
                                          0,&\text{otherwise}.
                                        \end{cases}
\end{equation}

Since $G_{Y^\delta,\beta}^{(j)}=\triv_{[\beta,1]}* \phi_{Y^\delta}^{(j)}$, we have
\[
G_{Y^\delta,\beta}(x)=\int_{x-1}^{x-\beta}\phi_{Y^\delta}(y)\rmd y,
\]
we get
\[
  G'_{Y^\delta,\beta}(x)
  =\phi_{Y^\delta}(x-\beta)-\phi_{Y^\delta}(x-1).
\]
More generally, for \(j\ge 1\),
\[
  G_{Y^\delta,\beta}^{(j)}(x)
  =\phi_{Y^\delta}^{(j-1)}(x-\beta)
   -\phi_{Y^\delta}^{(j-1)}(x-1).
\]
Therefore
\[
  \|G_{Y^\delta,\beta}\|_\infty\ll 1,\qquad
  \|G_{Y^\delta,\beta}^{(j)}\|_\infty
  \ll_j Y^{j(1-\delta)}\quad (j\geq1),
\]
and
\[
  \|G_{Y^\delta,\beta}\|_1\ll 1,\qquad
  \|G_{Y^\delta,\beta}^{(j)}\|_1
  \ll_j Y^{(j-1)(1-\delta)}\quad (j\geq1).
\]
Hence for $M\geq 1$ we have $\|G_{Y^\delta,\beta}\|_{M,1}\ll Y^{(M-1)(1-\delta)}$ and $\|G_{Y^\delta,\beta}\|_{M,\infty}\ll Y^{M(1-\delta)}$.
The implied constants in the above two estimates depend only on $\phi$ and $\delta$.

\begin{lemma}
We have $\widetilde{G}_{X^\delta,\beta}(1)=1-\beta$.
\end{lemma}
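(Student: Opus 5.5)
The plan is to compute $\widetilde{G}_{X^\delta,\beta}(1)$ directly from the definition of the Mellin transform. At $u=1$ the weight $x^{u}\,\rmd x/x$ is just Lebesgue measure, so
\[
\widetilde{G}_{X^\delta,\beta}(1)=\int_0^{+\infty}G_{X^\delta,\beta}(x)\,\rmd x .
\]
I would first check that the integral over $\lopen 0,+\infty\ropen$ equals the integral over all of $\RR$. By construction $G_{X^\delta,\beta}=\triv_{[\beta,1]}*\phi_{X^\delta}$, and $\phi_{X^\delta}$ is supported in $\lopen -X^{\delta-1},X^{\delta-1}\ropen$. Hence $G_{X^\delta,\beta}$ is supported in $[\beta-X^{\delta-1},1+X^{\delta-1}]$. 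Since $\beta\geq 1/2$, this set lies inside $\lopen 1/4,5/4\ropen\subset\lopen 0,+\infty\ropen$ once $X$ is large. For small $X$ one either enlarges the implicit range or simply notes that the statement is only used for large $X$.

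Next I would use the fact that the integral of a convolution of two $L^1$ functions is the product of their integrals. Here both $\triv_{[\beta,1]}$ and $\phi_{X^\delta}$ are compactly supported and bounded, so Fubini applies and gives
\[
\int_{\RR}G_{X^\delta,\beta}(x)\,\rmd x=\int_{\RR}\triv_{[\beta,1]}(x)\,\rmd x\cdot\int_{\RR}\phi_{X^\delta}(y)\,\rmd y=(1-\beta)\int_{\RR}X^{1-\delta}\phi(yX^{1-\delta})\,\rmd y .
\]

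Finally, the substitution $t=yX^{1-\delta}$ turns the last integral into $\int\phi=1$, which gives $\widetilde{G}_{X^\delta,\beta}(1)=1-\beta$. There is no real obstacle in this argument. The only point needing care is the support bookkeeping in the first step, which ensures that restricting to $x>0$ in the Mellin transform loses nothing.
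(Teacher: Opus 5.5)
Your argument is correct and is essentially what the paper relies on: the paper only cites \cite[Lemma 7.11]{cheng2025c}, and your computation ($\widetilde{G}_{X^\delta,\beta}(1)=\int_0^{+\infty}G_{X^\delta,\beta}(x)\,\rmd x$, Fubini to factor the integral of the convolution, then $\int\phi_{X^\delta}=\int\phi=1$) is the self-contained content of that citation. One sharpening of your support remark: the identity is guaranteed once $X^{\delta-1}\leq\beta$ but can genuinely fail otherwise (e.g.\ $X=2$, $\beta=1/2$, where the support of $G_{X^\delta,\beta}$ may cross $0$ and the Mellin integral truncates it), so ``enlarging the range'' only helps while the range stays inside $\lopen 0,+\infty\ropen$; the real justification is the large-$X$ restriction implicit in the standing hypothesis $G\in C_c^\infty(\lopen 1/4,5/4\ropen)$.
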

\begin{proof}
This is precisely a part of \cite[Lemma 7.11]{cheng2025c}.
\end{proof}
Hence we obtain
\begin{theorem}\label{thm:contributionxigfinal}
We have
\[
S_{G_{X^\delta,\beta}}(X)=(\ms{A}+\ms{B}+\ms{C})(1-\beta)X+O(X^{2(1-\delta)}X^{\frac12+\varepsilon}).
\]
\end{theorem}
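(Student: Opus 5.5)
This is a direct specialization of \autoref{thm:contributionxig} to $G=G_{X^\delta,\beta}$. The plan has three parts.

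First, check admissibility. \autoref{thm:contributionxig} (through \autoref{thm:contributionxi0} and \autoref{thm:contributionxinot0}) is stated for $G\in C_c^\infty(\lopen 1/4,5/4\ropen)$. Since $\phi$ is supported in $\lopen -1,1\ropen$, the function $G_{X^\delta,\beta}=\triv_{[\beta,1]}*\phi_{X^\delta}$ is supported in $[\beta-X^{\delta-1},1+X^{\delta-1}]$. With $\beta\geq 1/2$ and $X$ large, this interval lies in $\lopen 1/4,5/4\ropen$. For the finitely many small $X$ the statement is trivial after enlarging the implied constant. The constants in \autoref{thm:contributionxig} depend only on $f_\infty$, $f_{q_i}$, $\varepsilon$ and the fixed support window, never on $G$ itself. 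The only dependence on $G$ is through $\widetilde G(1)$ and $\|G\|_{3,1}$, so applying the theorem to a family of $G$'s varying with $X$ is legitimate.

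Second, substitute. By the preceding lemma, $\widetilde{G}_{X^\delta,\beta}(1)=1-\beta$, so the main term becomes $(\ms{A}+\ms{B}+\ms{C})(1-\beta)X$. For the error term, use the Sobolev bound derived just before the lemma with $Y=X$ and $M=3$: $\|G_{X^\delta,\beta}\|_{3,1}\ll X^{2(1-\delta)}$, with the implied constant depending only on $\phi$ and $\delta$. Hence the error $O(\|G\|_{3,1}X^{1/2+\varepsilon})$ becomes $O(X^{2(1-\delta)}X^{1/2+\varepsilon})$, which is exactly the claim.

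The main point to verify is the $L^1$ Sobolev estimate. One might worry about the derivatives of $G$ at $j=0$ and $j=1$. The $j=0$ term is $\ll 1$. For $j\geq1$, $G^{(j)}$ is a difference of two translates of $\phi_{X^\delta}^{(j-1)}$, so its $L^1$ norm is $\ll X^{(j-1)(1-\delta)}$. Summing over $j\leq 3$, the largest term is $X^{2(1-\delta)}$. This is why the exponent is $2(1-\delta)$ and not $3(1-\delta)$.

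One more check: $\varepsilon$ in \autoref{thm:contributionxig} is arbitrary, so no loss from shifting $\varepsilon$ needs tracking.
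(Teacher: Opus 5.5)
Your proposal is correct and is the paper's argument: specialize \autoref{thm:contributionxig} to $G=G_{X^\delta,\beta}$, use $\widetilde{G}_{X^\delta,\beta}(1)=1-\beta$ for the main term, and use $\|G_{X^\delta,\beta}\|_{3,1}\ll X^{2(1-\delta)}$ for the error term. Your additional checks — that the support lies in $\lopen 1/4,5/4\ropen$ for large $X$ (the remaining bounded range of $X$ is absorbed into the constant), and that the implied constant does not depend on $G$ — are details the paper leaves implicit.
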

\begin{proof}
The theorem follows from the previous lemma and \autoref{thm:contributionxig}.
\end{proof}

We end this section by giving an asymptotic formula for $S(X)$.
\begin{proposition}
Let $K=\lfloor \log_2 X\rfloor$. Then we have
\[
S(X)-\sum_{j=1}^{K}S_{G_{2^{j\delta},1/2}}(2^j)-S_{G_{X^{\delta},X^2/K}}(X)\ll X^{2\varrho+\delta+\varepsilon},
\]
where the implied constant depends only on $f_\infty$ and $f_{q_i}$.
\end{proposition}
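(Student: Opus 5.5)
Write $S(X)$ as the sum over dyadic blocks already introduced in \autoref{sec:secondpoisson}:
\[
S(X)=\sum_{j=1}^{K}\sum_{\substack{2^{j-1}\leq n<2^{j}\\ n\in\ZZ_{(S)}^{>0}}}\Sigma^{n^2}(\xi)+\sum_{\substack{2^{K}\leq n<X\\ n\in\ZZ_{(S)}^{>0}}}\Sigma^{n^2}(\xi).
\]
By definition,
\[
S_{G_{Y^\delta,\beta}}(Y)=\sum_{n}G_{Y^\delta,\beta}(n/Y)\,\Sigma^{n^2}(\xi).
\]
The second parameter of the last smoothed term is to be read as $\beta=2^K/X$, the value for which $\triv_{[\beta,1]}(n/X)$ is exactly the indicator of $2^K\leq n<X$. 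Thus each difference is a sum of $\Sigma^{n^2}(\xi)$ weighted by $\triv_{[\beta,1]}(n/Y)-G_{Y^\delta,\beta}(n/Y)$. This is the same comparison as in \cite[Section 7]{cheng2025c}.

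Next, use \eqref{eq:propertyconvolution}. The weight $\triv_{[\beta,1]}(n/Y)-G_{Y^\delta,\beta}(n/Y)$ vanishes unless $|n-\beta Y|<Y^\delta$ or $|n-Y|<Y^\delta$, and there it is $O(1)$. Consequently, for each block the difference is
\[
\ll\sum_{|n-\beta Y|<Y^\delta}|\Sigma^{n^2}(\xi)|+\sum_{|n-Y|<Y^\delta}|\Sigma^{n^2}(\xi)|.
\]
Each sum has $O(Y^\delta)$ terms.

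Each term is controlled by \autoref{prop:ramanujan} applied with $n^2$ in place of $n$; this is legitimate because \autoref{ass:nonarchimedean} is in force, so the square term vanishes. It gives
\[
\Sigma^{n^2}(\xi)\ll n^{2\varrho+\varepsilon}\ll Y^{2\varrho+\varepsilon}.
\]
Hence block $j$ contributes $O(2^{j(2\varrho+\delta+\varepsilon)})$, and the final block contributes $O(X^{2\varrho+\delta+\varepsilon})$. Since $2\varrho+\delta+\varepsilon>0$, summing over $j\leq K$ is a geometric series dominated by its last term, which is $O(X^{2\varrho+\delta+\varepsilon})$. The implied constants come only from \autoref{prop:ramanujan} and $\phi$, so they depend only on $f_\infty$ and the $f_{q_i}$.

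The main obstacle is bookkeeping rather than analysis: one must match the boundary conventions (half-open versus closed intervals and the restriction $\gcd(n,S)=1$) with the support of $G_{Y^\delta,\beta}$. One must also check that $\beta=2^K/X\in[1/2,1)$, so the final block fits the framework; if $2^K$ equals $X$, that block is empty. Any mismatch at the endpoints affects only $O(1)$ terms, each $\ll X^{2\varrho+\varepsilon}$, so it is absorbed into the error.
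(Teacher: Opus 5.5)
Your proposal is correct and is essentially the paper's proof: split $S(X)$ into dyadic blocks plus the tail $[2^K,X)$, use \eqref{eq:propertyconvolution} to confine each difference to two windows of length $O(Y^\delta)$ around $\beta Y$ and $Y$, bound each term by \autoref{prop:ramanujan} applied to $n^2$, and sum the resulting geometric series. You are right to read the last parameter as $\beta=2^K/X$ (the paper's $X^2/K$ is a typo), and your window half-width $Y^\delta=2^{j\delta}$ around $2^{j-1}$ is the correct one (the paper's $2^{(j-1)\delta}$ is a harmless slip).
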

\begin{proof}
The proof is nearly the same as \cite[Proposition 7.15]{cheng2025c}. Since the method is simple, we repeat it.
We have
\[
S(X)=\sum_{j=1}^{K}S(2^{j-1},2^j)+S(2^K,X),
\]
where
\[
S(2^{j-1},2^j)=\sum_{\substack{2^{j-1}\leq n<2^j\\n\in \ZZ_{(S)}^{>0}}}\Sigma^{n^2}(\xi)
\]
and
\[
S(2^K,X)=\sum_{\substack{2^K\leq n<X\\n\in \ZZ_{(S)}^{>0}}}\Sigma^{n^2}(\xi).
\]
By the definition of $S_G(X)$ and \eqref{eq:propertyconvolution} we obtain
\[
S_{G_{2^{j\delta},1/2}}(2^j)-S(2^{j-1},2^j)\ll \sum_{2^j-2^{j\delta}<n<2^j+2^{j\delta}}\Sigma^{n^2}(\xi)+ \sum_{2^{j-1}-2^{(j-1)\delta}<n<2^{j-1}+2^{(j-1)\delta}} \Sigma^{n^2}(\xi).
\]
By \autoref{prop:ramanujan},
\[
\Sigma^{n^2}(\xi)\ll_{f_\infty,f_{q_i},\varepsilon} n^{2\varrho+\varepsilon}.
\]
Hence
\[
S_{G_{2^{j\delta},1/2}}(2^j)-S(2^{j-1},2^j)\ll_{f_\infty,f_{q_i},\varepsilon} 2^{j(\delta+2\varrho+\varepsilon)}.
\]
Similarly we have
\[
S_{G_{X^{\delta},X^2/K}}(X)-S(2^K,X)\ll X^{\delta+2\varrho+\varepsilon}.
\]
Therefore
\begin{align*}
S(X)-\sum_{j=1}^{K}S_{G_{2^{j\delta},1/2}}(2^j)-S_{G_{X^{\delta},X^2/K}}(X)\ll& \sum_{j=1}^{K}2^{j(\delta+2\varrho+\varepsilon)}+X^{\delta+2\varrho+\varepsilon}\\
\ll& (2^K)^{\delta+2\varrho+\varepsilon}+X^{\delta+2\varrho+\varepsilon}\ll X^{\delta+2\varrho+\varepsilon}. \qedhere
\end{align*}
\end{proof}

\begin{proposition}
Let $K=\lfloor \log_2 X\rfloor$. Then we have
\[
\sum_{j=1}^{K}S_{G_{2^{j\delta},1/2}}(2^j) +S_{G_{X^{\delta},X^2/K}}(X)
=(\ms{A}+\ms{B}+\ms{C})X+ O(X^{2(1-\delta)+\frac12+\varepsilon}).
\]
\end{proposition}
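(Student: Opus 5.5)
The plan is to apply \autoref{thm:contributionxigfinal} to each summand and then add the resulting main terms and error terms. For $1\leq j\leq K$, \autoref{thm:contributionxigfinal} with $X$ replaced by $2^j$ and $\beta=1/2$ gives
\[
S_{G_{2^{j\delta},1/2}}(2^j)=(\ms{A}+\ms{B}+\ms{C})\tfrac12 2^j+O\bigl(2^{j(2(1-\delta)+\frac12+\varepsilon)}\bigr).
\]
For the last term we use the same theorem with $\beta=2^K/X\in[1/2,1]$. This is the natural choice, since the last dyadic block is $2^K\leq n<X$, and the parameter written as $X^2/K$ in the statement should be read this way. It gives main term $(\ms{A}+\ms{B}+\ms{C})(1-2^K/X)X=(\ms{A}+\ms{B}+\ms{C})(X-2^K)$ and error $O(X^{2(1-\delta)+\frac12+\varepsilon})$. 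One thing must be checked first: the implied constant in \autoref{thm:contributionxigfinal} is uniform in $\beta\in[1/2,1)$. This holds because the only $\beta$-dependence enters through $\|G_{Y^\delta,\beta}\|_{3,1}\ll Y^{2(1-\delta)}$, and the derivative bounds established before the lemma $\widetilde{G}_{X^\delta,\beta}(1)=1-\beta$ depend only on $\phi$ and $\delta$. The support condition $G\in C_c^\infty(\lopen 1/4,5/4\ropen)$ also holds for $Y$ large, because $\phi_{Y^\delta}$ is supported in $\lopen -Y^{\delta-1},Y^{\delta-1}\ropen$. The finitely many small $j$ are absorbed into the constant.

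Summing the main terms telescopes:
\[
\sum_{j=1}^{K}\tfrac12 2^j+(X-2^K)=(2^K-1)+X-2^K=X-1,
\]
so the total main term is $(\ms{A}+\ms{B}+\ms{C})X+O(1)$. For the errors, the exponent $\kappa=2(1-\delta)+\frac12+\varepsilon$ is positive, so the geometric sum satisfies $\sum_{j\leq K}2^{j\kappa}\ll 2^{K\kappa}\ll X^{\kappa}$. Adding the last-block error $X^\kappa$ gives the claimed $O(X^{2(1-\delta)+\frac12+\varepsilon})$. The $O(1)$ is absorbed as well.

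The only delicate point is the uniformity of the implied constant across the family $G_{Y^\delta,\beta}$ and across scales $Y=2^j$. Concretely, we need the error in \autoref{thm:contributionxig} to be $\ll\|G\|_{3,1}Y^{\frac12+\varepsilon}$ with a constant independent of $G$. That theorem states exactly this, with the constant depending only on $f_\infty$, $f_{q_i}$ and $\varepsilon$, so the argument closes.
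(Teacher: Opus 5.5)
Your proof is correct and is the same argument as the paper's: apply \autoref{thm:contributionxigfinal} to each dyadic block with $\beta=1/2$ and to the last block with $\beta=2^K/X$, telescope the main terms, and bound the errors by a geometric series. Your bookkeeping is cleaner than the printed proof in two places. The paper writes the parameter as $X^2/K$ and drops the factor $X$ in the last main term, which should read $(\ms{A}+\ms{B}+\ms{C})(1-2^K/X)X=(\ms{A}+\ms{B}+\ms{C})(X-2^K)$. You also check explicitly that the implied constant is uniform in $\beta$, via $\|G_{Y^\delta,\beta}\|_{3,1}\ll Y^{2(1-\delta)}$ and the support condition, which the paper leaves implicit.
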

\begin{proof}
By \autoref{thm:contributionxigfinal} we have
\begin{align*}
&\sum_{j=1}^{K}S_{G_{2^{j\delta},1/2}}(2^j) +S_{G_{X^{\delta},X^2/K}}(X)\\
=&\sum_{j=1}^{K}(\ms{A}+\ms{B}+\ms{C})\frac{2^j}{2}+(\ms{A}+\ms{B}+\ms{C})\left(1-\frac{2^K}{X}\right)
+\sum_{j=1}^{K}O((2^j)^{2(1-\delta)+\frac12+\varepsilon})+O(X^{2(1-\delta)+\frac12+\varepsilon}).
\end{align*}
For the main term we have
\[
\sum_{j=1}^{K}2^{j-1}+1-\frac{2^K}{X}=X+O(1).
\]
For the error term we have
\[
\sum_{j=1}^{K}O((2^j)^{2(1-\delta)+\frac12+\varepsilon})+O(X^{2(1-\delta)+\frac12+\varepsilon})\ll (2^K)^{2(1-\delta)+\frac12+\varepsilon}+X^{2(1-\delta)+\frac12+\varepsilon}\ll X^{2(1-\delta)+\frac12+\varepsilon}.
\]
Hence we obtain the desired result.
\end{proof}

Combining the above two propositions with $\delta=5/6-2/3\varrho$, we obtain
\begin{theorem}\label{thm:contributionfinal}
Suppose that $\varrho<1/8$ is a bound towards the Ramanujan conjecture. Then for any $\varepsilon>0$, we have
\[
S(X)=(\ms{A}+\ms{B}+\ms{C})X+O(X^{\frac43\varrho+\frac56+\varepsilon}).
\] 
where the implied constant depends only on$\varepsilon$, $f_\infty$ and $f_{q_i}$.
\end{theorem}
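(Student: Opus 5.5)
The plan is to combine the two propositions immediately preceding the statement and then optimize the smoothing parameter $\delta$. Write
\[
S(X)=\Bigl[S(X)-\sum_{j=1}^{K}S_{G_{2^{j\delta},1/2}}(2^j)-S_{G_{X^{\delta},X^2/K}}(X)\Bigr]+\Bigl[\sum_{j=1}^{K}S_{G_{2^{j\delta},1/2}}(2^j)+S_{G_{X^{\delta},X^2/K}}(X)\Bigr].
\]
The first bracket is $O(X^{2\varrho+\delta+\varepsilon})$ by the first proposition. That proposition rests on \autoref{prop:ramanujan}, applied with $n^2$ in place of $n$, which gives $\Sigma^{n^2}(\xi)\ll n^{2\varrho+\varepsilon}$; this is where \autoref{ass:nonarchimedean} enters, via the vanishing of $\Sigma^n(\square)$. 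The second bracket equals $(\ms{A}+\ms{B}+\ms{C})X+O(X^{2(1-\delta)+\frac12+\varepsilon})$ by the second proposition. That proposition rests on \autoref{thm:contributionxigfinal}, hence on \autoref{thm:contributionxig}, together with the Sobolev bound $\|G_{Y^\delta,\beta}\|_{3,1}\ll Y^{2(1-\delta)}$ and $\widetilde{G}_{X^\delta,\beta}(1)=1-\beta$.

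Adding these gives
\[
S(X)=(\ms{A}+\ms{B}+\ms{C})X+O\bigl(X^{2\varrho+\delta+\varepsilon}+X^{\frac52-2\delta+\varepsilon}\bigr).
\]
Balancing the two exponents means solving $2\varrho+\delta=\tfrac52-2\delta$, which gives $\delta=\tfrac56-\tfrac23\varrho$. Both error terms then become $X^{\frac43\varrho+\frac56+\varepsilon}$.

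Next I check admissibility. The smoothing construction needs $\delta\in\lopen 0,1\ropen$, which holds for $0\le\varrho<5/4$. The result is only meaningful if the error is $o(X)$, that is $\tfrac43\varrho+\tfrac56<1$, or equivalently $\varrho<1/8$. This is exactly the hypothesis of the statement.

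The only step needing care is the bookkeeping in the dyadic decomposition. The main term must sum to $X+O(1)$ times the constant, and each dyadic error term is dominated by its value at the top scale, since the exponents $2(1-\delta)+\tfrac12$ and $2\varrho+\delta$ are positive. Both of these facts are already contained in the two propositions, so I do not expect a genuine obstacle. Since $S(X)=\sum_{n<X,\gcd(n,S)=1}I_{\cusp}(f^{n^2})$ under the simple trace formula together with \autoref{ass:nonarchimedean}, the result is exactly \autoref{thm:totalestimatefinal}.
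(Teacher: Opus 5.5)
Your proposal is correct and is exactly the paper's argument: combine the two preceding propositions and take $\delta=\frac56-\frac23\varrho$ to balance $X^{2\varrho+\delta}$ against $X^{\frac52-2\delta}$, with $\varrho<1/8$ being precisely what makes $\frac43\varrho+\frac56<1$. One cosmetic point, inherited from the paper: the parameter $X^2/K$ should read $2^K/X$, so that $G_{X^\delta,2^K/X}(\cdot/X)$ approximates the indicator of $2^K\le n<X$.
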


\section{Relation between the standard and the symmetric square representation}\label{sec:sym2std}
Now we have given an asymptotic formula for
\[
\sum_{\substack{n<X\\ \gcd(n,S)=1}}I_\cusp(f^{n^2}).
\]
However, what we want is the asymptotic behavior of
\[
\sum_{\substack{n<X\\ \gcd(n,S)=1}}I_\cusp(f^{n,2}).
\]
We have $a_{\pi,\mathrm{Sym}^2}(p)=a_\pi(p^2)$ for any prime $p\notin S$. However, it is not true if $p$ is replaced by an arbitrary $n$. Thus the asymptotic formula that we proved does not imply any information of $\res_{s=1}L^S(s,\pi,\mathrm{Sym}^2)$. However, it implies some information of
\[
\res_{s=1}\frac{L^S(s,\pi,\mathrm{Sym}^2)}{L^S(2s,\chi_\pi^2)},
\]
where $\chi_\pi$ is the Dirichlet character associated with the central character of $\pi$.

The main goal of this section is to give a formula of
\[
\sum_{n\in \ZZ_{(S)}^{>0}}\frac{a_{\pi}(n^2)}{n^s}=\prod_{p\notin S}\sum_{u=0}^{+\infty}\frac{a_\pi(p^{2u})}{p^{us}}
\]
involving the symmetric square representation. We first recall some facts of the automorphic representations on $\GL_2$. For each $p\notin S$, let $\{\alpha_p,\beta_p\}$ be the Satake parameter of $\pi$. Then we have
\begin{equation}\label{eq:standardrep}
L^S(s,\pi)=\prod_{p\notin S}\frac{1}{(1-\alpha_pp^{-s})(1-\beta_pp^{-s})}=\sum_{n\in \ZZ_{(S)}^{>0}}\frac{a_\pi(n)}{n^s}
\end{equation}
and
\[
L^S(s,\pi,\Sym^2)=\prod_{p\notin S}\frac{1}{(1-\alpha_p^2p^{-s})(1-\alpha_p\beta_pp^{-s})(1-\beta_p^2p^{-s})}
\]
for $\Re s$ sufficiently large.
Also we have $\alpha_p\beta_p=\chi_\pi(p)$. Hence $L^S(s,\pi,\Sym^2)$ can be rewritten as
\begin{equation}\label{eq:symrep}
\begin{split}
L^S(s,\pi,\Sym^2)&=\prod_{p\notin S}\frac{1}{(1-\alpha_p^2p^{-s})(1-\chi_\pi(p)p^{-s})(1-\beta_p^2p^{-s})}\\
&=\prod_{p\notin S}\frac{1}{(1-\alpha_p^2p^{-s})(1-\beta_p^2p^{-s})}L^S(s,\chi_\pi)
\end{split}
\end{equation}
for $\Re s$ sufficiently large.

Now we denote
\begin{equation}\label{eq:symrepcoefficient}
\prod_{p\notin S}\frac{1}{(1-\alpha_p^2p^{-s})(1-\beta_p^2p^{-s})}=\sum_{n\in \ZZ_{(S)}^{>0}}\frac{A_\pi(n)}{n^s}=\prod_{p\notin S}\sum_{u=0}^{+\infty}\frac{A_\pi(p^u)}{p^{us}}.
\end{equation}

\begin{proposition}\label{prop:sym2stdrelation}
For $n\in \ZZ_{>0}$ and $p\notin S$ we have
\[
a_\pi(p^{2n})=A_\pi(p^n)+\chi_\pi(p)A_\pi(p^{n-1}).
\]
\end{proposition}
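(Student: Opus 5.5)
The plan is to compare generating functions in the formal variable $X=p^{-s}$ at a fixed prime $p\notin S$, using \eqref{eq:standardrep} and \eqref{eq:symrepcoefficient}. Write $\alpha=\alpha_p$, $\beta=\beta_p$, so that $\alpha\beta=\chi_\pi(p)$. By \eqref{eq:standardrep} the local coefficients are the complete homogeneous symmetric polynomials
\[
a_\pi(p^m)=h_m(\alpha,\beta)=\sum_{i+j=m}\alpha^i\beta^j .
\]
By \eqref{eq:symrepcoefficient}, $A_\pi(p^n)=h_n(\alpha^2,\beta^2)=\sum_{i+j=n}\alpha^{2i}\beta^{2j}$.

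The identity then becomes the claim that the terms of $h_{2n}(\alpha,\beta)$ split into two families according to the parity of the exponent of $\alpha$. If $\alpha^{i}\beta^{2n-i}$ has $i$ even, say $i=2k$, the term is $\alpha^{2k}\beta^{2(n-k)}$ with $0\le k\le n$. These terms sum to $h_n(\alpha^2,\beta^2)=A_\pi(p^n)$.

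If $i$ is odd, say $i=2k+1$ with $0\le k\le n-1$, the term is $\alpha^{2k+1}\beta^{2n-2k-1}=\alpha\beta\cdot\alpha^{2k}\beta^{2(n-1-k)}$. These terms sum to $\alpha\beta\, h_{n-1}(\alpha^2,\beta^2)=\chi_\pi(p)A_\pi(p^{n-1})$. Adding the two families gives the proposition.

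Equivalently, one can work with generating series. Take $\sum_m a_\pi(p^m)X^m=\frac{1}{(1-\alpha X)(1-\beta X)}$ and extract its even part by averaging over $X\mapsto -X$. This gives
\[
\frac{1+\alpha\beta X^2}{(1-\alpha^2X^2)(1-\beta^2X^2)}.
\]
Comparing the coefficient of $X^{2n}$ with \eqref{eq:symrepcoefficient} (with $p^{-s}$ replaced by $X^2$) gives the same identity. The two-line computation is $(1-\alpha X)(1-\beta X)+(1+\alpha X)(1+\beta X)=2(1+\alpha\beta X^2)$, divided by $2(1-\alpha^2X^2)(1-\beta^2X^2)$.

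There is no real obstacle here. The only point needing care is the bookkeeping that the coefficients $a_\pi(p^m)$ really are $h_m(\alpha_p,\beta_p)$ under the normalization in \eqref{eq:standardrep}, and that $\alpha_p\beta_p=\chi_\pi(p)$; both are stated just before the proposition. The case $n\ge1$ is as stated, and the convention $A_\pi(p^{-1})=0$ would extend it to $n=0$.
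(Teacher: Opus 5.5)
Your proof is correct and is the same argument as the paper's: expand $a_\pi(p^{2n})=\sum_{j=0}^{2n}\alpha_p^j\beta_p^{2n-j}$ and $A_\pi(p^n)=\sum_{j=0}^{n}\alpha_p^{2j}\beta_p^{2n-2j}$, then split the terms by the parity of the exponent of $\alpha_p$, using $\alpha_p\beta_p=\chi_\pi(p)$. Your generating-series variant, which takes the even part of $1/((1-\alpha_pX)(1-\beta_pX))$, is a valid repackaging; it also yields directly the local identity $\sum_{u}a_\pi(p^{2u})X^{2u}=(1+\chi_\pi(p)X^2)/((1-\alpha_p^2X^2)(1-\beta_p^2X^2))$ that the paper uses in the theorem that follows.
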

\begin{proof}
In the proof we omit the subscript $\pi$.

By expanding the formulas \eqref{eq:standardrep} and \eqref{eq:symrepcoefficient}, we have
\[
a(p^{2n})=\sum_{j=0}^{2n}\alpha_p^j\beta_p^{2n-j}
\]
and 
\[
A(p^n)=\sum_{j=0}^{n}\alpha_p^{2j}\beta_p^{2n-2j}.
\]
Since $\chi(p)=\alpha_p\beta_p$, we obtain
\begin{align*}
A_\pi(p^n)+\chi_\pi(p)A_\pi(p^{n-1})&=\sum_{j=0}^{n}\alpha_p^{2j}\beta_p^{2n-2j}+ \alpha_p\beta_p\sum_{j=0}^{n-1}\alpha_p^{2j}\beta_p^{2n-2-2j}=\sum_{j=0}^{n}\alpha_p^{2j}\beta_p^{2n-2j}+ \sum_{j=0}^{n-1}\alpha_p^{1+2j}\beta_p^{2n-1-2j}\\
&=\sum_{j=0}^{2n}\alpha_p^j\beta_p^{2n-j}=a(p^{2n}).\qedhere
\end{align*}
\end{proof}

Now we prove the main theorem in this section.
\begin{theorem}
We have
\[
\sum_{n\in \ZZ_{(S)}^{>0}}\frac{a_\pi(n^2)}{n^s}=\frac{L^S(s,\pi,\Sym^2)}{L^S(2s,\chi_\pi^2)}.
\]
\end{theorem}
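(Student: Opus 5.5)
The plan is to prove the identity one Euler factor at a time. Both sides are absolutely convergent Euler products for $\Re s$ sufficiently large, so the identity extends to all $s$ by meromorphic continuation. On the left we use that $n\mapsto a_\pi(n^2)$ is multiplicative on $\ZZ_{(S)}^{>0}$; this holds because $a_\pi$ is multiplicative and squaring preserves coprimality. Hence
\[
\sum_{n\in \ZZ_{(S)}^{>0}}\frac{a_\pi(n^2)}{n^s}=\prod_{p\notin S}\sum_{u=0}^{+\infty}\frac{a_\pi(p^{2u})}{p^{us}},
\]
as already recorded before the statement. On the right, \eqref{eq:symrep} and \eqref{eq:symrepcoefficient} give, at each $p\notin S$ and with $X=p^{-s}$,
\[
\frac{L_p(s,\pi,\Sym^2)}{L_p(2s,\chi_\pi^2)}=\frac{1-\chi_\pi(p)^2X^2}{1-\chi_\pi(p)X}\sum_{u\geq 0}A_\pi(p^u)X^u=(1+\chi_\pi(p)X)\sum_{u\geq 0}A_\pi(p^u)X^u .
\]
It therefore suffices to show, for each $p\notin S$, the power series identity
\[
\sum_{u\geq 0}a_\pi(p^{2u})X^u=(1+\chi_\pi(p)X)\sum_{u\geq 0}A_\pi(p^u)X^u .
\]

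To prove this, compare coefficients of $X^u$. For $u=0$ both sides equal $1$, since $a_\pi(1)=A_\pi(1)=1$. For $u\geq 1$ the coefficient on the right is $A_\pi(p^u)+\chi_\pi(p)A_\pi(p^{u-1})$, and this equals $a_\pi(p^{2u})$ by \autoref{prop:sym2stdrelation}. The formal identity holds, so the Euler factors agree.

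The only point needing care is convergence. The Satake parameters satisfy $|\alpha_p|,|\beta_p|\leq p^{\varrho}$, which gives $a_\pi(p^{2u})\ll (2u+1)p^{2u\varrho}$ and $A_\pi(p^u)\ll (u+1)p^{2u\varrho}$. All series and products therefore converge absolutely for $\Re s>1+2\varrho$ (in particular for $\Re s>2$), and rearranging the multiplicative sum into an Euler product is justified there. The $L$-functions on the right admit meromorphic continuation, so the equality of Dirichlet series extends. This convergence check is routine; the substantive algebraic step is \autoref{prop:sym2stdrelation}, which is already available, so there is no real obstacle.
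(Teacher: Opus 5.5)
Your proposal is correct and follows the paper's proof: you factor both sides into Euler products over $p\notin S$, reduce to the local identity $\sum_{u\geq 0}a_\pi(p^{2u})X^u=(1+\chi_\pi(p)X)\sum_{u\geq 0}A_\pi(p^u)X^u$ with $X=p^{-s}$, and verify it coefficientwise via \autoref{prop:sym2stdrelation}. Your convergence check for $\Re s>1+2\varrho$ is something the paper leaves implicit. You also correctly write $p^{-us}$ where the paper's first displayed Euler product has the typo $p^{-2us}$; the extension by meromorphic continuation goes beyond what the paper claims but is harmless.
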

\begin{proof}
We have
\[
\sum_{n\in \ZZ_{(S)}^{>0}}\frac{a_\pi(n^2)}{n^s}=\prod_{p\notin S}\sum_{u=0}^{+\infty}\frac{a_\pi(p^{2u})}{p^{2us}}
\]
and
\[
\frac{L^S(s,\pi,\Sym^2)}{L^S(2s,\chi_\pi^2)}=\prod_{p\notin S}\frac{1-\chi_\pi(p)^2p^{-2s}}{(1-\alpha_p^2p^{-s})(1-\beta_p^2p^{-s})(1-\chi_\pi(p)p^{-s})}= \prod_{p\notin S}\frac{1+\chi_\pi(p)p^{-s}}{(1-\alpha_p^2p^{-s})(1-\beta_p^2p^{-s})}.
\]
Hence it suffices to show that
\begin{equation}\label{eq:sym2relation}
\sum_{u=0}^{+\infty}\frac{a_\pi(p^{2u})}{p^{2us}}=\frac{1+\chi_\pi(p)p^{-s}}{(1-\alpha_p^2p^{-s})(1-\beta_p^2p^{-s})}.
\end{equation}
By \autoref{prop:sym2stdrelation} we have
\[
\sum_{u=0}^{+\infty}\frac{a_\pi(p^{2u})}{p^{us}}= 1+\sum_{u=1}^{+\infty}\frac{A_\pi(p^u)+\chi_\pi(p)A_\pi(p^{u-1})}{p^{us}}=1+ \sum_{u=1}^{+\infty}\frac{A_\pi(p^u)}{p^{us}}+\frac{\chi_\pi(p)}{p^s}\sum_{u=0}^{+\infty} \frac{A_\pi(p^{u})}{p^{us}},
\]
which is precisely \eqref{eq:sym2relation} by the definition \eqref{eq:symrepcoefficient}.
\end{proof}

The theorem implies the following philosophy: Suppose that we have known the meromorphic continuation of $L^S(s,\pi,\Sym^2)$ and it has at most a simple pole at $s=1$. Then we have
\[
\lim_{X\to +\infty}\frac{1}{X}\sum_{\substack{n<X\\\gcd(n,S)=1}}a_\pi(n^2)= \res_{s=1}\frac{L^S(s,\pi,\Sym^2)}{L^S(2s,\chi_\pi^2)}.
\]
Thus by using the trace formula, the average analog is
\[
\lim_{X\to +\infty}\frac{1}{X}\sum_{\substack{n<X\\\gcd(n,S)=1}}I_\cusp(f^{n^2})=\sum_{\pi}m_\pi \prod_{v\in S}\Tr(\pi_v(f_v))\res_{s=1}\frac{L^S(s,\pi,\Sym^2)}{L^S(2s,\chi_\pi^2)}.
\]
Thus we can detect the automorphic representations such that 
\[
\frac{L^S(s,\pi,\Sym^2)}{L^S(2s,\chi_\pi^2)}
\]
and thus $L^S(s,\pi,\Sym^2)$ has a pole at $s=1$.
\appendix

\section{Estimate of the semilocal Fourier transform}\label{sec:globalnintegralestimateproof}
In this appendix we will prove \autoref{thm:globalnintegralestimate}. By \autoref{subsec:singularities} and linearity, it suffices to consider a more general case:
\begin{theorem}\label{thm:globalnintegralestimatefinal}
Let $H\in \cS$ such that the Mellin transform of all the derivatives of $H$ has rapid decay vertically. (In particular, holds for $F$ and $V$ by \autoref{subsec:deffv}.) Let $\iota\in \{0,1\}$, $\epsilon\in \{0,\pm 1\}^r$. Let $\chi$ be a character of $\ZZ_{S_\fin}^\times$. Let $\varphi(x)$ be a continuous function on $X_\iota$ supported on a bounded set, of the form $\varphi(x)=\overline{\varphi}(x)|x^2\mp 1|^{d/2}$ for some smooth function $\overline{\varphi}$ and $d\in \ZZ_{\geq -1}$. Let $\psi_i(y_i,b_i)$ be a continuous function on $Y_\epsilon\times \QQ_{q_i}$ such that the function is compactly supported (in $\QQ_{q_i}^\times$) and $\chi_i$-spherical with respect to $b_i$, and compactly supported (in $\QQ_{q_i}$) with respect to $y_i$ uniformly in $b_i$, and $\psi_i(y_i,b_i) =\overline{\psi}_i(y_i,b_i)|y_i^2-4c|_p'^{d_i/2}$ for some smooth function $\overline{\psi}_i$ and $d_i\in \ZZ_{\geq -1}$.
Then the integral
\begin{align*}
\Phi(s):=& \int_{(x,a)\in X_\iota\times\RR}\int_{(y,b)\in Y_\epsilon\times\QQ_{S_\fin}} \frac{1}{|a|_\infty^{s}|b|_q^{s}} \varphi(x)\prod_{i=1}^{r}\psi_{i}(y_i,b_i) \\
\times&H\legendresymbol{1/(|a|_\infty|b|_q)}{|x^2-4c|_\infty^{1/2}|y^2- 4c|_q'^{1/2}}\rme(-ax\xi-a\alpha)\rme_{q}(-by\xi-b\alpha)\rmd x\rmd a\rmd y\rmd b
\end{align*}
can be analytically continued to the half plane $\Re s>-1/2$. Moreover, for any $\sigma\in\lopen -1/2,+\infty\ropen$ and $M_0$, $M_1$, $N_0$, $N_1>0$ we have
\[
\Phi(s)\ll |\xi|_\infty^{-M_0}\prod_{i=1}^{r}(1+|\xi|_{q_i})^{-M_1}(1+|\alpha|)^{-N_0} \prod_{i=1}^{r}(1+|\alpha|_{q_i})^{-N_1},
\]
where the implied constant depends only on $\sigma=\Re s$, $M_0$, $M_1$, $N_0$ and $N_1$.
\end{theorem}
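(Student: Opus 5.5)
The plan is to decouple the archimedean and nonarchimedean variables by Mellin inversion in $H$. Then the single-place arguments of \autoref{thm:archimedeanintegralestimate} and \autoref{thm:nonarchimedeanintegralestimate} can be run factor by factor.

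Write
\[
H(t)=\frac{1}{\dpii}\int_{(\kappa)}\widetilde{H}(w)t^{-w}\rmd w .
\]
Substituting this with
\[
t=\frac{1/(|a|_\infty|b|_q)}{|x^2-4c|_\infty^{1/2}|y^2-4c|_q'^{1/2}}
\]
factorizes the integrand. We get
\[
\Phi(s)=\frac{1}{\dpii}\int_{(\kappa)}\widetilde{H}(w)\,\Phi_\infty(s-w)\prod_{i=1}^{r}\Phi_{i}(s-w)\,\rmd w ,
\]
with the local pieces
\[
\Phi_\infty(z)=\int_{\RR}\int_{X_\iota}|a|^{-z}\varphi(x)|x^2-4c|^{w/2}\rme(-ax\xi-a\alpha)\rmd x\rmd a
\]
and
\[
\Phi_i(z)=\int_{\QQ_{q_i}}\int_{Y_{\epsilon_i}}|b_i|^{-z}\psi_i(y_i,b_i)|y_i^2-4c|'^{w/2}\rme_{q_i}(-b_iy_i\xi-b_i\alpha)\rmd y_i\rmd b_i .
\]
The sign of the exponent $w$ depends on orientation. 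I choose $\kappa$ small and positive so that the factor $|x^2-4c|^{w/2}$, combined with $|x^2-4c|^{d/2}$ for $d\ge -1$, stays locally integrable, and similarly at the $q_i$.

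There is one obstruction: with $H$ removed, the $a$-integral $\int|a|^{-z}\rme(-a(x\xi+\alpha))\rmd a$ no longer converges absolutely at $a\to\infty$. So I will not fully separate. Instead I keep a smooth dyadic cutoff in $|a|$, or, more simply, first perform the integrations by parts of \autoref{thm:archimedeanintegralestimate} on the full integrand, while $H$ is still present, and only then Mellin-expand. These are $N_0$ times in $a$ against $\rme(-a\alpha)$ and $M_0+N_0$ times in $x$ against $\rme(-ax\xi)$. Every derivative falling on $H$ produces $H^{(j)}$ times powers of $t$. The hypothesis that all $\widetilde{H^{(j)}}$ decay rapidly is exactly what makes the resulting $w$-integral converge absolutely, uniformly in $\Re w$ in a compact range.

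The archimedean saving is $|\alpha|^{-N_0}|\xi|^{-M_0}$ when $|\alpha|\gg1$, and $|\xi|^{-M_0}$ otherwise. Boundary terms on $\partial X_\iota$ vanish because derivatives of $H(t)$ decay rapidly as $t\to\infty$, i.e. near $x^2=4c$; the weight $|x^2-4c|^{d/2}$ with $d\ge-1$ is absorbed as in the warm-up. Absolute convergence for $\Re s>-1/2$ follows as there, from the extra $|a|^{-L}$ decay.

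For the nonarchimedean factors, $b_i$ ranges over a compact subset of $\QQ_{q_i}^\times$, so no analytic-continuation issue arises at $q_i$. I apply the three-region splitting $Y_<,Y_=,Y_>$ from the proof of \autoref{thm:nonarchimedeanintegralestimate} separately for each $i$, using \autoref{cor:absolutespherical} for the $\chi_i$-sphericity in $b_i$ and \autoref{lem:rapiddcay} for the $y_i$-integral. After Mellin expansion, each $\Phi_i$ depends on $w$ only through the harmless factors $|b_i|^{w}$ and $|y_i^2-4c|'^{w/2}$, whose implied constants are uniform for $\Re w$ in a compact range. This yields $(1+|\xi|_{q_i})^{-M_1}(1+|\alpha|_{q_i})^{-N_1}$ for each $i$.

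Multiplying the local bounds and integrating against $|\widetilde{H^{(j)}}(w)|$ gives the claimed estimate, with the analytic continuation to $\Re s>-1/2$ inherited from the archimedean factor. I expect the main obstacle to be the bookkeeping that makes the archimedean integration by parts compatible with the Mellin decoupling. Specifically, one must check that derivatives in $x$ of $H(t)$ and of $|x^2-4c|^{d/2}$ near the boundary of $X_\iota$ still produce integrable singularities. I would handle this with a smooth partition separating a neighborhood of $x^2=4c$, where I integrate by parts in $a$ only, from its complement, where I integrate by parts in $x$.
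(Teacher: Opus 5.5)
You follow the paper's route: integrate by parts in $a$ and $x$ while $H$ is still present, Mellin-expand $H$ to decouple the places, then run the $Y_<,Y_=,Y_>$ splitting at each $q_i$. The nonarchimedean step, however, fails as you set it up.

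\textbf{The decay at the $q_i$.} After the Mellin expansion, $H$ no longer appears in the $y_i$-integrand, so \autoref{lem:rapiddcay} is unavailable. Its rapid decay comes precisely from $H$ suppressing the singularity at $y_i^2=4c$. What remains is $\psi_i(y_i,b_i)(|y_i^2-4c|_{q_i}')^{w/2}$. Near a root $y_0\in\QQ_{q_i}$ of $y^2=4c$ this behaves like $|y_i-y_0|_{q_i}^{(d_i+\Re w)/2}$. Summing over the shells $|y_i-y_0|_{q_i}=q_i^{-k}$ shows that its Fourier transform in $y_i$ decays, in general, only like $|\xi|_{q_i}^{-1-(d_i+\Re w)/2}$. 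With $\Re w=\kappa$ small and $d_i=-1$ this is roughly $|\xi|_{q_i}^{-1/2}$, not $(1+|\xi|_{q_i})^{-M_1}$. So the factor $(|y_i^2-4c|_{q_i}')^{w/2}$ is not harmless: it is exactly what controls the decay, and uniformity for $\Re w$ in a compact range is the wrong requirement.

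The missing idea, which is what the paper does, is to place the Mellin line at $\Re w=\tau$ with $\tau$ large in terms of $M_1$. This costs nothing at the $q_i$, because $b$ runs over a compact subset of $\QQ_{S_\fin}^\times$ and $y$ over a compact set. It yields the bound $(1+|\xi|_{q_i})^{-(d_i+\tau)/2}$ of \cite[Theorem A.5]{cheng2025b}. The price is a factor $|a|^{\tau}$ on the archimedean side, which the paper offsets with correspondingly more integrations by parts in $x$. This is the origin of the $|a|^L$ in its inner lemma and of the hypothesis on all $\widetilde{H^{(j)}}$.

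\textbf{The boundary of $X_\iota$.} Your proposed treatment of $\partial X_\iota$ also fails, and the difficulty is more than bookkeeping. Take $c>0$, so $X_\iota$ has edges at $x=\pm2\sqrt{c}$. For large $|a|$ the factor $H$ creates a layer of width about $|a|^{-2}$ at each edge. An integration by parts in $x$ there gains $|a\xi|^{-1}$ but costs $|a|^{2}$. An integration by parts in $a$ gains only $|x\xi+\alpha|^{-1}$, which is useless near the lines $\alpha=\mp2\sqrt{c}\,\xi$.

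On those lines the edge genuinely contributes whenever $\varphi$ does not vanish there. For instance, take $c=1/4$, $d=0$, $\overline{\varphi}(1)\neq0$, $H=F$ and $\alpha=-\xi$. A direct computation of the layer at $x=1$ gives a term of order $|\xi|^{-1-\sigma}$ with a generically nonzero constant; at $s=0$ it is about $-\overline{\varphi}(1)/(2|\xi|)$. So no argument can give arbitrary $M_0$ uniformly in $\alpha$ without additional vanishing of $\varphi$ at $\partial X_\iota$.

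The paper's proof does not handle this either. Its claim that after $L$ integrations by parts in $x$ the integrand is $\ll|a|^{-\Re s-L}$ ignores the same boundary layer. The same issue also affects the step, described above, of absorbing $|a|^{\tau}$ by extra integrations by parts in $x$.
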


The proof of the above theorem is actually a mix of the proofs of \autoref{thm:archimedeanintegralestimate} and \autoref{thm:nonarchimedeanintegralestimate}. The difference is that we need the Mellin transform to separate the finite places, and thus we use \cite[Theorem A.5]{cheng2025b} instead of \autoref{lem:rapiddcay}.

\begin{proof}
We first deal with the archimedean case as in the proof of \autoref{thm:archimedeanintegralestimate}. 

First, we assume that $|\alpha|\gg 1$. Since $H$ is of rapid decay, the integrand is smooth at $a=0$. By integration by parts $N$ times with respect to the variable $a$, and the Leibniz rule, we have
\begin{align*}
\Phi(s)=&\frac{1}{(\dpii\alpha)^N}\int_{\RR}\int_{X_\iota}\int_{Y_\epsilon\times \QQ_{S_\fin}}\frac{1}{|b|_q^s}\varphi(x)\prod_{i=1}^{r}\psi_{i}(y_i,b_i)\\
\times&\frac{\diff^N}{\diff a^N}\left[\frac{1}{|a|^s}H\legendresymbol{1/|a||b|_q}{|x^2-4c|^{1/2}|y^2-4c|_q'^{1/2}}\rme^{-\dpii ax\xi}\right]\rme^{-\dpii a\alpha}\rme_{q}(-by\xi-b\alpha)\rmd x\rmd a\rmd y\rmd b\\
=&\frac{1}{(\dpii\alpha)^N}\int_{\RR}\int_{X_\iota}\int_{Y_\epsilon\times \QQ_{S_\fin}}\frac{1}{|b|_q^s}\varphi(x)\prod_{i=1}^{r}\psi_{i}(y_i,b_i)\sum_{j=1}^{N}\binom{N}{j}\\
\times& \frac{\diff^{N-j}}{\diff a^{N-j}}\left(\frac{1}{|a|^s}H\legendresymbol{1/|a||b|_q}{|x^2-4c|^{1/2}|y^2-4c|_q'^{1/2}}\right)(-\dpii x\xi)^j\rme^{-\dpii ax\xi}\rme^{-\dpii a\alpha}\rme_{q}(-by\xi-b\alpha)\rmd x\rmd a\rmd y\rmd b.
\end{align*}
Then we consider the integral over $x$. Since $H$ and all its derivatives is of rapid decay, the integrand vanishes on the boundary of $X_\iota$.
By integration by parts $L>N$ times we obtain
\begin{align*}
\Phi(s)=&\frac{1}{(\dpii\alpha)^N}\sum_{j=1}^{N}\binom{N}{j}\int_{\RR}\int_{X_\iota}\int_{Y_\epsilon\times \QQ_{S_\fin}} \prod_{i=1}^{r}\psi_{i}(y_i,b_i) \frac{1}{(\dpii a\xi)^L}\frac{\diff^{L}}{\diff x^{L}}\left[\varphi(x)\frac{\diff^{N-j}}{\diff a^{N-j}}\left(\frac{1}{|a|^s}\right.\right.\\
\times&\left.\left. H\legendresymbol{1/|a||b|_q}{|x^2-4c|^{1/2}|y^2-4c|_q'^{1/2}}\right)(-\dpii x\xi)^j\right]\rme^{-\dpii ax\xi}\rme^{-\dpii a\alpha}\rme_{q}(-by\xi-b\alpha)\rmd x\rmd a\rmd y\rmd b.
\end{align*}
Since $H$ and all its derivatives are of rapid decay, the integrand vanishes at $a=0$. Also, by the expression it is direct to show that the integrand is $\ll |a|^{-\Re s-L}$ as $|a|\to +\infty$. Hence the integral is absolutely convergent on the half plane $\Re s>1-L$ and thus $\Re s>-1/2$ for $L\geq 2$. Moreover, in this region we have
\[
\Phi(s)\ll_{\sigma,L,N} |\alpha|^{-N}\sum_{j=1}^{N}\binom{N}{j}|\xi|^{j-L}\times\text{nonarchimedean part}
\]
Let $M=L-N$, we then obtain $\Phi(s)\ll_{\sigma,L,N}|\xi|^{-M}|\alpha|^{-N}\times\text{nonarchimedean part}$.

If $|\alpha|\ll 1$, we also get $\Phi(s)\ll_{\sigma,L,N}|\xi|^{-M}(1+|\alpha|)^{-N}\times\text{nonarchimedean part}$ by a similar argument (without integration by parts with respect to $a$).

Now it suffices to prove the estimate of nonarchimedean part uniformly with respect to archimedean part since the support of $\theta(x,c)$ is compact and $N$ can be chosen to be sufficiently large such that we still have the analytic continuation to $\Re s>-1/2$. More precisely, it suffices to prove the following lemma:
\begin{lemma}
Let $H\in \cS$ such that the Mellin transform of $H$ has rapid decay vertically, and $\epsilon\in \{0,\pm 1\}^r$. Let $\chi$ be a character of $\ZZ_{S_\fin}^\times$. Let $\psi_i(y_i,b_i)$ be a continuous function on $Y_\epsilon\times \QQ_{q_i}$ such that the function is compactly supported (in $\QQ_{q_i}^\times$) and $\chi_i$-spherical with respect to $b_i$, and compactly supported (in $\QQ_{q_i}$) with respect to $y_i$ uniformly in $b_i$, and $\psi_i(y_i,b_i) =\overline{\psi}_i(y_i,b_i)|y_i^2-4c|_p'^{d_i/2}$ for some smooth function $\overline{\psi}_i$ and $d_i\in \ZZ_{\geq -1}$.
Then the integral
\[
\Psi(s):=\int_{(y,b)\in Y_\epsilon\times\QQ_{S_\fin}} \frac{1}{|a|_\infty^{s}|b|_q^{s}} \prod_{i=1}^{r}\psi_{i}(y_i,b_i) H\legendresymbol{1/(|a|_\infty|b|_q)}{|x^2-4c|_\infty^{1/2}|y^2- 4c|_q'^{1/2}}\rme_{q}(-by\xi-b\alpha)\rmd y\rmd b
\]
can be analytically continued to the half plane $\Re s>-1/2$. Moreover, for any $\sigma\in\lopen -1/2,+\infty\ropen$ and $M_1$, $N_1>0$, there exists $L>0$ such that we have
\[
\Psi(s)\ll \prod_{i=1}^{r}(1+|\xi|_{q_i})^{-M_1}\prod_{i=1}^{r}(1+|\alpha|_{q_i})^{-N_1}|a|^L,
\]
where the implied constant depends only on $\sigma=\Re s$, $L$, $M_1$ and $N_1$, and uniformly in each compact subset of $x\in X_\iota$.
\end{lemma}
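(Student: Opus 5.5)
The plan is to separate the finite places by Mellin inversion on $H$, and then to apply the one-place estimate of \autoref{thm:nonarchimedeanintegralestimate} (more precisely, its engine \cite[Theorem A.5]{cheng2025b}) to each $q_i$ separately.

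Write $H(t)=\frac{1}{\dpii}\int_{(\kappa)}\widetilde{H}(w)t^{-w}\rmd w$ with $\kappa$ chosen later. Substituting this gives
\[
\Psi(s)=\frac{1}{\dpii}\int_{(\kappa)}\widetilde{H}(w)\,|a|_\infty^{w-s}|x^2-4c|_\infty^{w/2}\prod_{i=1}^{r}\Psi_i(s-w,w)\,\rmd w,
\]
where
\[
\Psi_i(s',w)=\int_{\QQ_{q_i}}\int_{Y_{\epsilon_i}}|b_i|_{q_i}^{-s'}\psi_i(y_i,b_i)|y_i^2-4c|_{q_i}'^{w/2}\rme_{q_i}(-b_iy_i\xi-b_i\alpha)\rmd y_i\rmd b_i .
\]
Because $\psi_i$ is compactly supported in $b_i\in\QQ_{q_i}^\times$, the factor $|b_i|^{-s'}$ is harmless. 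So $\Psi_i$ is entire in $s'$ and holomorphic in $w$ as long as $\Re w/2+d_i/2>-1$, that is, $\Re w>-1$ suffices since $d_i\ge -1$. Choosing $\kappa$ slightly negative (say $\kappa\in\lopen -1,0\ropen$) keeps $|x^2-4c|^{\kappa/2}$ bounded on compacta of $X_\iota$. The interchange of integrals is justified by the rapid vertical decay of $\widetilde{H}$ together with the compact supports. This gives the analytic continuation in $s$ to $\Re s>-1/2$ (indeed to all $s$), and it produces the factor $|a|^{\kappa-\Re s}$, which is the claimed $|a|^L$ with $L=\kappa-\sigma$.

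The key step is the bound for each local factor:
\[
\Psi_i(s-w,w)\ll (1+|w|)^{A}(1+|\xi|_{q_i})^{-M_1}(1+|\alpha|_{q_i})^{-N_1},
\]
uniform for $\Re w=\kappa$, with polynomial dependence on $|w|$. To get it, I would repeat the proof of \autoref{thm:nonarchimedeanintegralestimate}, replacing the kernel $H(\cdot)$ there by $|y^2-4c|'^{w/2}$. I would split $Y_{\epsilon_i}$ into the regions $|y\xi|<|\alpha|$, $|y\xi|=|\alpha|$ and $|y\xi|>|\alpha|$, and use \autoref{cor:absolutespherical}.
\begin{itemize}
\item The vanishing of the $b$-Fourier transform for large $|\alpha|_{q_i}$ uses only local constancy in $b$, which is unaffected by $w$.
\item The decay in $\xi$ comes from the nonarchimedean stationary-phase statement \cite[Theorem A.5]{cheng2025b} (the analogue of \autoref{lem:rapiddcay}) applied to $\overline{\psi}_i|y^2-4c|'^{(w+d_i)/2}$.
\end{itemize}
Since $|\cdot|'$ takes discrete values, the power $|y^2-4c|'^{w/2}$ is locally constant away from the roots. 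The thresholds beyond which the integral vanishes therefore depend only on $\psi_i$, not on $w$, while the trivial bound costs only $|q^{w}|$-type factors that are bounded on $\Re w=\kappa$. I expect this to give $A=0$, or at worst a polynomial $A$, which the rapid decay of $\widetilde{H}$ absorbs.

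The main obstacle is the uniformity near the singular set $y^2=4c$ when $d_i=-1$ and $\Re w<0$. There the integrand behaves like $|y^2-4c|'^{(\kappa-1)/2}$, which is still integrable for $\kappa>-1$. However, the local-constancy radius of $\overline{\psi}_i$ times this power shrinks near the roots, so the stationary-phase vanishing is not uniform. To handle this I would decompose dyadically in $k=v(y^2-4c)$ shells. On each shell the integrand is smooth at a scale $\asymp q_i^{-k}$, so it contributes $0$ once $|\xi|\gg q_i^{k}$, and otherwise at most $q_i^{-k(1+(\kappa-1)/2)}$. Summing over the shells gives a bound of size $|\xi|^{-(1+\kappa)/2}$, which is not rapid decay. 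If that is all one gets, I would instead argue that $\psi_i$ vanishes near the roots for the relevant $\epsilon$ on the support, or else trade the missing decay against the archimedean factor $|\xi|_\infty^{-M_0}$ together with \autoref{lem:sintegersum}. Finally I would integrate over $w$ and multiply the $r$ local bounds to conclude.
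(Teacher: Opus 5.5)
\textbf{There is a genuine gap: the Mellin contour is on the wrong side.} Your skeleton matches the paper's: Mellin inversion in $H$ to factor $\Psi$ over the $q_i$, then the three-region split ($|y_i\xi|_{q_i}$ smaller than, equal to, or larger than $|\alpha|_{q_i}$) using \autoref{cor:absolutespherical} and \cite[Theorem A.5]{cheng2025b}. But you put the contour at $\Re w=\kappa\in\lopen -1,0\ropen$, and this creates exactly the obstacle you leave unresolved. Near a root of $y_i^2=4c$ the local integrand has size $|y_i^2-4c|_{q_i}'^{(\kappa+d_i)/2}$. As your shell computation correctly shows, its Fourier transform then decays only like a fixed power of $|\xi|_{q_i}$, roughly $(1+|\xi|_{q_i})^{-(1+\kappa)/2}$ when $d_i=-1$. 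So no admissible $\kappa$ gives an arbitrary $M_1$.

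Neither of your fallbacks closes this.
\begin{itemize}
\item The lemma is stated for general $\psi_i$. In the application, $\theta_{q_i}$ for $i\geq 2$ is typically nonzero near the center, i.e.\ near $y_i^2=4c$, so you cannot assume $\psi_i$ vanishes there.
\item Archimedean decay cannot substitute for the missing nonarchimedean decay. For $0\neq\xi\in\ZZ^S$ the product formula gives $|\xi|_\infty\prod_i|\xi|_{q_i}\geq 1$. So large $|\xi|_{q_i}$ (e.g.\ $\xi=q_1^{-m}$) comes with small $|\xi|_\infty$, and then $|\xi|_\infty^{-M_0}$ is large, not small. This is why \autoref{lem:sintegersum} requires the nonarchimedean exponent to exceed the archimedean one.
\item Separately, $\widetilde{H}$ is in general only holomorphic for $\Re w>0$; for instance $\widetilde{F}$ has a pole at $0$. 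Inverting on $\Re w<0$ therefore also produces a residue term with no damping factor at all.
\end{itemize}

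\textbf{The missing idea is to move the contour far to the right.} Take $\Re u=\tau$ large, which is legitimate because $\widetilde{H}$ is holomorphic with rapid vertical decay there. Then the factor $|y_i^2-4c|_{q_i}'^{u/2}$ makes the local integrand vanish to high order at the roots. \cite[Theorem A.5]{cheng2025b} then gives the local bound $(1+|\xi|_{q_i})^{-(d_i+\tau)/2}(1+|\alpha|_{q_i})^{-N_1}$, which beats any prescribed $M_1$ once $\tau$ is large enough.

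The price is a factor $|a|_\infty^{\tau-\sigma}$, together with $|x^2-4c|_\infty^{\tau/2}$, which is bounded on bounded sets. This is precisely the $|a|^L$ in the statement: $L$ grows with $M_1$, rather than being $\kappa-\sigma$ as you read it. That polynomial growth in $|a|$ is then absorbed in \autoref{thm:globalnintegralestimatefinal} by the extra powers of $a^{-1}$ produced by the integrations by parts in $x$.
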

\begin{insertproof}
We will use the notation $\sigma=\Re s$ and $\tau=\Re u$ throughout the proof.
Since $\psi_i(y_i,b_i)$ is compactly supported with respect to $b_i\in \QQ_{q_i}^\times$, $\Psi(s)$ is automatically an entire function. By Mellin inversion formula with respect to $H$, we have
\begin{align*}
\Psi(s)&=\int_{Y_\epsilon\times\QQ_{S_\fin}} \frac{1}{\dpii}\int_{(\tau)}\widetilde{H}(u)|x^2-4c|_\infty^{u/2}|a|^u\frac{1}{|b|_q^{s+u}} \prod_{i=1}^{r}\psi_{i}(y_i,b_i)|y^2-4c|_q'^{u/2}\rme_{q}(-by\xi-b\alpha)\rmd y\rmd b\\
&=\frac{1}{\dpii}\int_{(\tau)}\widetilde{H}(u)|x^2-4c|_\infty^{u/2}|a|^u\prod_{i=1}^{r} \int_{Y_{\epsilon_i}\times \QQ_{q_i}}\frac{1}{|b_i|_{q_i}^{s+u}} \psi_{i}(y_i,b_i)|y_i^2-4c|_{q_i}'^{u/2}\rme_{q_i}(-b_iy_i\xi-b_i\alpha)\rmd y_i\rmd b_i.
\end{align*}
We will choose $\tau$ sufficiently large to obtain the desired theorem.

The key point is to estimate the local integral
\[
\Omega_i(s):=\int_{Y_{\epsilon_i}\times \QQ_{q_i}}\frac{1}{|b_i|_{q_i}^{s+u}} \psi_{i}(y_i,b_i)|y_i^2-4c|_{q_i}'^{u/2}\rme_{q_i}(-b_iy_i\xi-b_i\alpha)\rmd y_i\rmd b_i.
\]
As in the proof of \autoref{thm:nonarchimedeanintegralestimate}, we split the integral over $i$ into three parts. We define $Y_{\epsilon_i,<}$, $Y_{\epsilon_i,=}$ and $Y_{\epsilon_i,>}$, and $\Omega_{i,<}(s)$, $\Omega_{i,=}(s)$ and $\Omega_{i,>}(s)$ as in the proof of \autoref{thm:nonarchimedeanintegralestimate}. Then we consider the following three cases:

\underline{\emph{Case 1:}}\ \ $|y_i\xi|_{q_i}<|\alpha|_{q_i}$.

In this case $|y_i\xi+\alpha|_{q_i}=|\alpha|_{q_i}$. Hence by \autoref{cor:absolutespherical} we obtain
\[
|\Omega_{i,<}(s)|=\left|\int_{\QQ_{q_i}}\int_{Y_{\epsilon,<}} \frac{1}{|b_i|_{q_i}^{s+u}}\psi_{i}(y_i,b_i)|y_i^2-4c|_{q_i}'^{u/2}\rme_{q_i}(-b_i\alpha)\rmd y_i\rmd b_i\right|.
\]
Hence it suffices to estimate the integral in the absolute value.

Since the integrand for $b_i$ is zero near $0$ and infinity, the function
\[
\frac{1}{|b_i|_{q_i}^{s+u}}\psi_i(y_i,b_i)
\]
is smooth and compactly supported with respect to $b_i$. Hence the Fourier transform is $0$ for $|\alpha|_{q_i}$ sufficiently large.

Now we consider the case $|\alpha|_{q_i}\ll 1$. By definition of $\Omega_{i,<}(s)$ we have
\[
\Omega_{i,<}(s)=\int_{\QQ_{q_i}}\int_{Y_{\epsilon_i,<}}\frac{1}{|b_i|_{q_i}^{s}} \psi_i(y_i,b_i)|y_i^2-4c|_{q_i}'^{u/2}\rme_{q_i}(-b_iy_i\xi)\rme_{q_i}(-b_i\alpha)\rmd y_i\rmd b_i. 
\]

In this time, we consider the integral over $y_i$, that is
\begin{equation}\label{eq:integralyappendix}
\int_{Y_{\epsilon_i,<}}\psi_i(y_i,b_i)|y_i^2-4c|_{q_i}'^{u/2}\rme_{q_i}(-b_iy_i\xi)\rmd y.
\end{equation}
Since the function $\psi_i(y_i,b_i)|y_i^2-4c|_p'^{u/2}$
is smooth at $y_i=0$, there exists a constant $C$ (independent of $\alpha$) depending only on $\psi$ such that if $|\xi|_{q_i}>C$, then the above function is a smooth function. Hence we find that $\eqref{eq:integralyappendix}=0$ if $|b_i\xi|_{q_i}$ is sufficiently large (independent of $\alpha$). Since $|b_i|_{q_i}\asymp 1$ in the support of $\Psi$, we may replace $|b_i\xi|_{q_i}$ by $|\xi|_{q_i}$. Finally, for any $\alpha$ and $\xi$ we have the trivial estimate
\[
\Omega_{i,<}(s)\ll\int_{\QQ_{q_i}}\int_{Y_{\epsilon,<}}\left|\frac{1}{|b_i|_{q_i}^{s+u}}\psi_i(y_i,b_i) |y_i^2-4c|_{q_i}'^{u/2}\right|\rmd y_i\rmd b_i\ll_{\sigma,\tau,\psi_i} 1.
\]
since $Y_{\epsilon_i,<}$ is bounded.

Therefore we have
\[
\Omega_{i,<}(s)\ll (1+|\xi|_{q_i})^{-M}(1+|\alpha|_{q_i})^{-N},
\]
where the implied constant depends only on $\psi_i$, $\sigma$, $\tau$, $M$ and $N$. 

\underline{\emph{Case 2:}}\ \ $|y\xi|_{q_i}>|\alpha|_{q_i}$.

In this case $|y_i\xi+\alpha|_{q_i}=|y_i\xi|_{q_i}$. Hence by \autoref{cor:absolutespherical} we obtain
\[
|\Omega_{i,>}(s)|=\left|\int_{\QQ_{q_i}}\int_{Y_{\epsilon_i,>}}\frac{1}{|b_i|_{q_i}^{s}} \psi_i(y_i,b_i)|y_i^2-4c|_{q_i}'^{u/2}\rme_{q_i}(-b_iy_i\xi)\rmd y_i\rmd b_i\right|.
\]
As in Case 1, the integration over $b_i$ vanishes if $|y_i\xi|_{q_i}$ is sufficiently large (independent of $\alpha$). Since $|y_i\xi|_{q_i}>|\alpha|_{q_i}$, $\Omega_{i,>}(s)=0$ for $|\alpha|_{q_i}$ sufficiently large. 

Now we assume that $|\alpha|_{q_i}\ll 1$. In this case, by \cite[Theorem A.5]{cheng2025b}, we have
\[
\int_{Y_{\epsilon_i,>}}\psi(y_i,b_i)|y^2-4c|_{q_i}'^{u/2}\rme_{q_i}(-b_iy_i\xi)\rmd y\ll (1+|b_i\xi|_{q_i})^{-d/2-\tau/2}
\]
Since $|b|_{q_i}\asymp 1$, we know that the above integral is $(1+|\xi|_{q_i})^{-d_i/2-\tau/2}$.

Therefore we have
\[
\Phi_>(s)\ll (1+|\xi|_{q_i})^{-d_i/2-\tau/2}(1+|\alpha|_{q_i})^{-N},
\]
where the implied constant depends only on $\psi_i$, $\sigma$, $\tau$ and $N$. 

\underline{\emph{Case 3:}}\ \ $|y\xi|_{q_i}=|\alpha|_{q_i}$.

First we assume that $|\alpha|_{q_i}\gg 1$. Then we have
\[
\Omega_{i,=}(s)=\frac{1}{|\alpha|_{q_i}^N}\int_{\QQ_{q_i}}\int_{Y_{\epsilon_i,=}} \frac{1}{|b_i|_{q_i}^s}\psi_i(y_i,b_i)|y_i^2-4c|_p'^{u/2} |y_i\xi|_{q_i}^N\rme_{q_i}(-b_iy_i\xi)\rme_{q_i}(-b_i\alpha)\rmd y_i\rmd b_i
\]
for any $N>0$.

Since $\psi_i$ is compactly supported with respect to $y_i$, we have $|y_i|_{q_i}^N\ll 1$ and hence by \cite[Theorem A.5]{cheng2025b}, for any $M>0$ we have
\[
\int_{Y_{\epsilon_i,=}}\psi_i(y_i,b_i)|y_i^2-4c|_{q_i}'^{u/2}|y_i\xi|_p^N\rme_{q_i}(-b_iy_i\xi)\rmd y_i\ll |\xi|_{q_i}^N(1+|b_i\xi|_{q_i})^{-M-N}\asymp (1+|\xi|_{q_i})^{-d_i/2-\tau/2}
\]
since $|b_i|_{q_i}\asymp 1$. Therefore we obtain
\[
\Omega_{i,=}(s)\ll |\alpha|_{q_i}^{-N}(1+|\xi|_{q_i})^{-d_i/2-\tau/2}.
\]

If $|\alpha|_{q_i}\ll 1$, we bound the integral over $y_i$ directly and obtain
\[
\int_{Y_{\epsilon_i,=}}\psi_i(y_i,b_i)|y_i^2-4c|_{q_i}'^{u/2}\rme_{q_i}(-b_iy_i\xi)\rmd y_i\ll (1+|b_i\xi|_{q_i})^{-M}\asymp (1+|\xi|_{q_i})^{-d_i/2-\tau/2}.
\]
Combining them we obtain 
\[
\Omega_{i,=}(s)\ll (1+|\alpha|_{q_i})^{-N}(1+|\xi|_{q_i})^{-d_i/2-\tau/2}.
\]
where the implied constant depends only on $\psi_i$, $\sigma$, $\tau$ and $N$.

Since $d_i\in \ZZ_{\geq -1}$, we obtain
\[
\Omega_i(s)=\Omega_{i,<}(s)+\Omega_{i,=}(s)+\Omega_{i,>}(s)\ll (1+|\alpha|_{q_i})^{-N}(1+|\xi|_{q_i})^{-M}
\]
for $\tau$ sufficiently large.

Finally, we move the $u$-contour to $(\tau)$ sufficiently large and obtain
\[
\Psi(s)
\ll\int_{(\tau)}|\widetilde{H}(u)||x^2-4c|_\infty^{u/2}|a|^\tau\prod_{i=1}^{r} |\Omega_i(s)|\ll |a|^L\prod_{i=1}^{r}(1+|\xi|_{q_i})^{-M}(1+|\alpha|_{q_i})^{-N}
\]
since $\widetilde{H}(u)$ is of rapid decay vertically.
\end{insertproof}
This lemma then yields \autoref{thm:globalnintegralestimatefinal}.
\end{proof}

\section*{Acknowledgements}
First, I would like to thank all the professors, postdocs and the graduate students teaching me or communicating with me about mathematics, in particular: \emph{C\'edric Bonnaf\'e}, \emph{Jin Cao}, \emph{Fei Chen}, \emph{Xiaojiang Cheng}, \emph{Yitwah Cheung}, \emph{Taiwang Deng}, \emph{Hansheng Diao}, \emph{William Donovan}, \emph{Heng Du}, \emph{Lecouturier Emmanuel}, \emph{Lei Fu}, \emph{Honghao Gao}, \emph{Yueke Hu}, \emph{Chang Huang}, \emph{Zhen Huang}, \emph{Wenjia Jing}, \emph{Krishnarjun Krishnamoorthy}, \emph{Hao Li}, \emph{Huajie Li}, \emph{Pengcheng Li}, \emph{Si Li}, \emph{Sixu Liu}, \emph{Yu Liu}, \emph{Zhengwei Liu}, \emph{Zhilin Luo}, \emph{Cezar Lupu}, \emph{Qiang Ma}, \emph{Yunjian Peng}, \emph{Yu Qiu}, \emph{Yiannis Sakellaridis}, \emph{Peter Sarnak}, \emph{Peng Shan}, \emph{Koji Shimizu}, \emph{Chen Wan}, \emph{Xin Wan}, \emph{Jun Wang}, \emph{Zida Wang}, \emph{Yunhui Wu}, \emph{Bin Xu}, \emph{Jinxin Xue}, \emph{Shuhang Yang}, \emph{Xiaokui Yang}, \emph{Liyuan Ye}, \emph{Chenglong Yu}, \emph{Cheng Zhang}, \emph{Jie Zhou}, \emph{Yi Zhu}, \emph{Yihang Zhu}, and \emph{Yongchang Zhu}.

Next, I would like to thank all the academic affairs of Qiuzhen college, especially \emph{Ziyu Cheng}, \emph{Mingli Li}, \emph{Xin Liu}, \emph{Zhaoqing Jiang}, \emph{Hui Wang}, \emph{Xiaofang Wang} and \emph{Xinmeng Zhao}, although a few of them have left employment.

Finally, I would really like to thank my family that gives me lots of support and the opportunity of exploring mathematics, what I really want to do as a career in my life. Perhaps it is my unique past experiences that have shaped who I am today. I also want to express my heartfelt gratitude to those I truly love or have once loved, for they have given me the strength to overcome difficulties.

\nocite{cheng2026}
\bibliography{ref.bib}
\bibliographystyle{amsalpha}

\end{document}